\newcommand{\nc}{\newcommand}
\newcommand{\delete}[1]{}
\nc{\mlabel}[1]{\label{#1}}  
\nc{\mcite}[1]{\cite{#1}}  
\nc{\mref}[1]{\ref{#1}}  
\nc{\meqref}[1]{\eqref{#1}}  
\nc{\mbibitem}[1]{\bibitem{#1}} 
\nc{\mlabel}[1]{\label{#1}  
{\hfill \hspace{1cm}{\small\tt{{\ }\hfill(#1)}}}}
\nc{\mcite}[1]{\cite{#1}{\small{\tt{{\ }(#1)}}}}  
\nc{\mref}[1]{\ref{#1}{{\tt{{\ }(#1)}}}}  
\nc{\meqref}[1]{\eqref{#1}{{\tt{{\ }(#1)}}}}  
\nc{\mbibitem}[1]{\bibitem[\bf #1]{#1}} 
\newcommand{\Rmnum}[1]{\expandafter\@slowromancap\romannumeral #1@}
\newtheorem{theorem}{Theorem}[section]
\newtheorem{prop}[theorem]{Proposition}
\newtheorem{lemma}[theorem]{Lemma}
\newtheorem{coro}[theorem]{Corollary}
\newtheorem{claim}[theorem]{Claim}
\theoremstyle{definition}
\newtheorem{defn}[theorem]{Definition}
\newtheorem{prop-def}{Proposition-Definition}[section]
\newtheorem{remark}[theorem]{Remark}
\newtheorem{conjecture}[theorem]{Conjecture}
\newtheorem{exam}[theorem]{Example}
\newtheorem{tempexs}[theorem]{Examples}
\newtheorem{temprmk}[theorem]{Remark}
\newtheorem{tempexer}{Exercise}[section]
\nc{\vsa}{\vspace{-.1cm}} \nc{\vsb}{\vspace{-.2cm}}
\nc{\vsc}{\vspace{-.3cm}} \nc{\vsd}{\vspace{-.4cm}}
\nc{\vse}{\vspace{-.5cm}}
\nc{\name}[1]{{\bf #1}}
\nc{\Irr}{\mathrm{Irr}}
\nc{\ncrbw}{\calr}  
\nc{\NS}{U_{NS}}
\nc{\FN}{F_{\mathrm Nij}}
\nc{\dfgen}{V} \nc{\dfrel}{R}
\nc{\dfgenb}{\vec{v}} \nc{\dfrelb}{\vec{r}}
\nc{\dfgene}{v} \nc{\dfrele}{r}
\nc{\dfop}{\odot}
\nc{\dfoa}{\dfop^{(1)}} \nc{\dfob}{\dfop^{(2)}}
\nc{\dfoc}{\dfop^{(3)}} \nc{\dfod}{\dfop^{(4)}}
\nc{\mapm}[1]{\lfloor\!|{#1}|\!\rfloor}
\nc{\cmapm}[1]{\frakC(#1)}
\nc{\red}{\mathrm{Red}}
\nc{\cm}{C}
\nc{\supp}{\mathrm{Supp}}
\nc{\lex}{\mathrm{lex}}
\nc{\disp}[1]{\displaystyle{#1}}
\nc{\bin}[2]{ (_{\stackrel{\scs{#1}}{\scs{#2}}})}  
\nc{\bs}{\bar{S}} \nc{\ep}{\epsilon}
\nc{\dbigcup}{\stackrel{\bullet}{\bigcup}}
\nc{\la}{\longrightarrow} \nc{\cprod}{\ast} \nc{\rar}{\rightarrow}
\nc{\dar}{\downarrow} \nc{\labeq}[1]{\stackrel{#1}{=}}
\nc{\dap}[1]{\downarrow \rlap{$\scriptstyle{#1}$}}
\nc{\uap}[1]{\uparrow \rlap{$\scriptstyle{#1}$}}
\nc{\defeq}{\stackrel{\rm def}{=}} \nc{\dis}[1]{\displaystyle{#1}}
\nc{\dotcup}{\ \displaystyle{\bigcup^\bullet}\ }
\nc{\sdotcup}{\tiny{ \displaystyle{\bigcup^\bullet}\ }}
\nc{\fe}{\'{e}}
\nc{\hcm}{\ \hat{,}\ } \nc{\hcirc}{\hat{\circ}}
\nc{\hts}{\hat{\shpr}} \nc{\lts}{\stackrel{\leftarrow}{\shpr}}
\nc{\denshpr}{\den{\shpr}}
\nc{\rts}{\stackrel{\rightarrow}{\shpr}} \nc{\lleft}{[}
\nc{\lright}{]} \nc{\uni}[1]{\tilde{#1}} \nc{\free}[1]{\bar{#1}}
\nc{\freea}[1]{\tilde{#1}} \nc{\freev}[1]{\hat{#1}}
\nc{\dt}[1]{\hat{#1}}
\nc{\wor}[1]{\check{#1}}
\nc{\intg}[1]{F_C(#1)}
\nc{\den}[1]{\check{#1}} \nc{\lrpa}{\wr} \nc{\mprod}{\pm}
\nc{\dprod}{\ast_P} \nc{\curlyl}{\left \{ \begin{array}{c} {} \\
{} \end{array}
    \right .  \!\!\!\!\!\!\!}
\nc{\curlyr}{ \!\!\!\!\!\!\!
    \left . \begin{array}{c} {} \\ {} \end{array}
    \right \} }
\nc{\longmid}{\left | \begin{array}{c} {} \\ {} \end{array}
    \right . \!\!\!\!\!\!\!}
\nc{\lin}{\call} \nc{\ot}{\otimes}
\nc{\ora}[1]{\stackrel{#1}{\rar}}
\nc{\ola}[1]{\stackrel{#1}{\la}}
\nc{\scs}[1]{\scriptstyle{#1}} \nc{\mrm}[1]{{\rm #1}}
\nc{\margin}[1]{\marginpar{\rm #1}}   
\nc{\dirlim}{\displaystyle{\lim_{\longrightarrow}}\,}
\nc{\invlim}{\displaystyle{\lim_{\longleftarrow}}\,}
\nc{\mvp}{\vspace{0.5cm}}
\nc{\mult}{m}       
\nc{\svp}{\vspace{2cm}} \nc{\vp}{\vspace{8cm}}
\nc{\proofbegin}{\noindent{\bf Proof: }}
\nc{\proofend}{$\blacksquare$ \vspace{0.5cm}}
\nc{\sha}{{\mbox{\cyr X}}}  
\nc{\ncsha}{{\mbox{\cyr X}^{\mathrm NC}}}
\newfont{\scyr}{wncyr10 scaled 550}
\nc{\ssha}{\mbox{\bf \scyr X}}
\nc{\ncshao}{{\mbox{\cyr X}^{\mathrm NC,\,0}}}
\nc{\shpr}{\diamond}    
\nc{\shprc}{\shpr_c}
\nc{\shpro}{\diamond^0}    
\nc{\shpru}{\check{\diamond}} \nc{\spr}{\cdot}
\nc{\catpr}{\diamond_l} \nc{\rcatpr}{\diamond_r}
\nc{\lapr}{\diamond_a} \nc{\lepr}{\diamond_e} \nc{\sprod}{\bullet}
\nc{\un}{u}                 
\nc{\vep}{\varepsilon} \nc{\labs}{\mid\!} \nc{\rabs}{\!\mid}
\nc{\hsha}{\widehat{\sha}} \nc{\psha}{\sha^{+}} \nc{\tsha}{\tilde{\sha}}
\nc{\lsha}{\stackrel{\leftarrow}{\sha}}
\nc{\rsha}{\stackrel{\rightarrow}{\sha}} \nc{\lc}{\lfloor}
\nc{\rc}{\rfloor} \nc{\sqmon}[1]{\langle #1\rangle}
\nc{\altx}{\Lambda} \nc{\vecT}{\vec{T}} \nc{\piword}{{\mathfrak P}}
\nc{\lbar}[1]{\overline{#1}}
\nc{\dep}{\mathrm{dep}}
\nc{\mmbox}[1]{\mbox{\ #1\ }}
\nc{\ayb}{\mrm{AYB}} \nc{\mayb}{\mrm{mAYB}} \nc{\cyb}{\mrm{cyb}}
\nc{\ann}{\mrm{ann}} \nc{\Aut}{\mrm{Aut}} \nc{\cabqr}{\mrm{CABQR
}} \nc{\can}{\mrm{can}} \nc{\colim}{\mrm{colim}}
\nc{\Cont}{\mrm{Cont}} \nc{\rchar}{\mrm{char}}
\nc{\cok}{\mrm{coker}} \nc{\dtf}{{R-{\rm tf}}} \nc{\dtor}{{R-{\rm
tor}}}
\nc{\Div}{{\mrm Div}} \nc{\End}{\mrm{End}} \nc{\Ext}{\mrm{Ext}}
\nc{\FG}{\mrm{FG}} \nc{\Fil}{\mrm{Fil}} \nc{\Frob}{\mrm{Frob}}
\nc{\Gal}{\mrm{Gal}} \nc{\GL}{\mrm{GL}} \nc{\Hom}{\mrm{Hom}}
\nc{\hsr}{\mrm{H}} \nc{\hpol}{\mrm{HP}} \nc{\id}{\mrm{id}} \nc{\Id}{\mathrm{Id}}  \nc{\ID}{\mathrm{ID}}
\nc{\im}{\mrm{im}} \nc{\incl}{\mrm{incl}} \nc{\Loday}{\mrm{ABQR}\
} \nc{\length}{\mrm{length}} \nc{\LR}{\mrm{LR}} \nc{\mchar}{\rm
char} \nc{\pmchar}{\partial\mchar} \nc{\map}{\mrm{Map}}
\nc{\MS}{\mrm{MS}} \nc{\OS}{\mrm{OS}} \nc{\NC}{\mrm{NC}}
\nc{\rba}{\rm{Rota-Baxter algebra}\xspace}
\nc{\rbas}{\rm{Rota-Baxter algebras}\xspace}
\nc{\rbw}{\ncrbw}
\nc{\rbws}{\rm{RBWs}\xspace}
\nc{\rbadj}{\rm{RB}\xspace}
\nc{\mpart}{\mrm{part}} \nc{\ql}{{\QQ_\ell}} \nc{\qp}{{\QQ_p}}
\nc{\rank}{\mrm{rank}} \nc{\rcot}{\mrm{cot}} \nc{\rdef}{\mrm{def}}
\nc{\rdiv}{{\rm div}} \nc{\rtf}{{\rm tf}} \nc{\rtor}{{\rm tor}}
\nc{\res}{\mrm{res}} \nc{\SL}{\mrm{SL}} \nc{\Spec}{\mrm{Spec}}
\nc{\tor}{\mrm{tor}} \nc{\Tr}{\mrm{Tr}}
\nc{\mtr}{\mrm{tr}}
\nc{\ab}{\mathbf{Ab}} \nc{\Alg}{\mathbf{Alg}}
\nc{\Bax}{\mathbf{CRB}} \nc{\Algo}{\mathbf{Alg}^0}
\nc{\cRB}{\mathbf{CRB}} \nc{\cRBo}{\mathbf{CRB}^0}
\nc{\RBo}{\mathbf{RB}^0} \nc{\BRB}{\mathbf{RB}}
\nc{\Dend}{\mathbf{DD}} \nc{\bfk}{{\bf k}} \nc{\bfone}{{\bf 1}}
\nc{\base}[1]{{a_{#1}}} \nc{\Cat}{\mathbf{Cat}}
 \nc{\DN}{\mathbf{DN}}
\nc{\NA}{\mathbf{NA}}
\nc{\SDN}{\mathbf{SDN}}
\nc{\Diff}{\mathbf{Diff}} \nc{\gap}{\marginpar{\bf
Incomplete}\noindent{\bf Incomplete!!}
    \svp}
\nc{\FMod}{\mathbf{FMod}} \nc{\Int}{\mathbf{Int}}
\nc{\Mon}{\mathbf{Mon}}
\nc{\RB}{\mathbf{RB}} \nc{\remarks}{\noindent{\bf Remarks: }}
\nc{\Rep}{\mathbf{Rep}} \nc{\Rings}{\mathbf{Rings}}
\nc{\Sets}{\mathbf{Sets}} \nc{\bfx}{\mathbf{x}}
\nc{\BA}{{\Bbb A}} \nc{\CC}{{\Bbb C}} \nc{\DD}{{\Bbb D}}
\nc{\EE}{{\Bbb E}} \nc{\FF}{{\Bbb F}} \nc{\GG}{{\Bbb G}}
\nc{\HH}{{\Bbb H}} \nc{\LL}{{\Bbb L}} \nc{\NN}{{\Bbb N}}
\nc{\QQ}{{\Bbb Q}} \nc{\RR}{{\Bbb R}} \nc{\TT}{{\Bbb T}}
\nc{\VV}{{\Bbb V}} \nc{\ZZ}{{\Bbb Z}}
\nc{\cala}{{\mathcal A}} \nc{\calb}{{\mathcal B}}
\nc{\calc}{{\mathcal C}}
\nc{\cald}{{\mathcal D}} \nc{\cale}{{\mathcal E}}
\nc{\calf}{{\mathcal F}} \nc{\calg}{{\mathcal G}}
\nc{\calh}{{\mathcal H}} \nc{\cali}{{\mathcal I}}
\nc{\calj}{{\mathcal J}} \nc{\call}{{\mathcal L}}
\nc{\calm}{{\mathcal M}} \nc{\caln}{{\mathcal N}}
\nc{\calo}{{\mathcal O}} \nc{\calp}{{\mathcal P}}
\nc{\calr}{{\mathcal R}} \nc{\cals}{{\mathcal S}} \nc{\calt}{{\mathcal T}}
\nc{\calw}{{\mathcal W}} \nc{\calx}{{\mathcal X}} \nc{\caly}{{\mathcal Y}} \nc{\calz}{{\mathcal Z}}
\nc{\CA}{\mathcal{A}}
\nc{\frakA}{{\mathfrak A}}
\nc{\fraka}{{\mathfrak a}}
\nc{\frakB}{{\mathfrak B}}
\nc{\frakb}{{\mathfrak b}}
\nc{\frakC}{{\mathfrak C}}
\nc{\frakd}{{\mathfrak d}}
\nc{\frakF}{{\mathfrak F}}
\nc{\frakg}{{\mathfrak g}}
\nc{\frakm}{{\mathfrak m}}
\nc{\frakM}{{\mathfrak M}}
\nc{\frakMo}{{\mathfrak M}^0}
\nc{\frakP}{{\mathfrak P}}
\nc{\frakp}{{\mathfrak p}}
\nc{\frakS}{{\mathfrak S}}
\nc{\frakSo}{{\mathfrak S}^0}
\nc{\fraks}{{\mathfrak s}}
\nc{\os}{\overline{\fraks}}
\nc{\frakT}{{\mathfrak T}}
\nc{\frakTo}{{\mathfrak T}^0}
\nc{\oT}{\overline{T}}
\nc{\frakX}{{\mathfrak X}}
\nc{\frakXo}{{\mathfrak X}^0}
\nc{\frakx}{{\mathbf x}}
\nc{\frakTx}{\frakT}      
\nc{\frakTa}{\frakT^a}        
\nc{\frakTxo}{\frakTx^0}   
\nc{\caltao}{\calt^{a,0}}   
\nc{\ox}{\overline{\frakx}} \nc{\fraky}{{\mathfrak y}}
\nc{\frakz}{{\mathfrak z}} \nc{\oX}{\overline{X}} \font\cyr=wncyr10
\nc{\tred}[1]{\textcolor{red}{#1}} \nc{\tgreen}[1]{\textcolor{green}{#1}}
\nc{\tblue}[1]{\textcolor{blue}{#1}} \nc{\tpurple}[1]{\textcolor{purple}{#1}}
\nc{\lir}[1]{\tred{\underline{Li:}#1 }}
\nc{\li}[1]{\tred{#1}}
\nc{\xing}[1]{\tblue{\underline{Xing:}#1 }}
\nc{\YZ}[1]{\tpurple{\underline{Yaozhou:} #1}}
\nc{\deleted}[1]{\delete{#1}}
\nc{\astarrow}{\overset{\raisebox{-2pt}{{\scriptsize $\ast$}}}{\rightarrow}}\nc{\tvarrow}[3]{#1\overset{(t,v)}{\longrightarrow}_{#3} #2}
\nc{\bott}{b}
\nc{\down}[2]{{\rm down}_{#1}(#2)} \nc{\up}[2]{{\rm up}_{#1}(#2)}
\nc{\dkmu}{\Delta^k(\mu)} \nc{\dkl}{\Delta^k(\lambda)}
\nc{\dklsum}{\Delta^k(\lsum)}
\nc{\dkaddl}{\Delta^{k+1}(\lambda)}
\nc{\pk}{{\rm P}^k}
\nc{\pkl}{{\rm P}_{\ell}^{k}}
\nc{\tpkl}{\tilde{\rm P}_\ell^k}
\nc{\lmx}{\mu} \nc{\lsum}{\nu}
\nc{\ld}{[\ell]_\lambda}
\nc{\De}[2]{\Delta^{#1}(#2)}
\nc{\Otpkl}[3]{\Omega_{{#1},{#2},{#3}}}
\nc{\Opkl}[2]{\Omega_{{#1},{#2}}}
\nc{\fd}[3]{\mathfrak{d}_{#1,#2,#3}}
\nc{\tforall}{, \quad \forall} \nc{\ms}{m}
\nc{\GP}[2]{#1_{(#2)}}
\nc{\Mul}[2]{\{#1\}^{#2}}
\begin{document}
\title[Lowering operators and closed $K$-$k$-Schur functions]{Lowering operators on $K$-$k$-Schur functions and a lowering operator formula for closed $K$-$k$-Schur functions}
%
\author{Yaozhou Fang}
\address{School of Mathematics and Statistics, Lanzhou University
Lanzhou, 730000, China
}
\email{fangyzh21@lzu.edu.cn}

\author{Xing Gao$^*$}\thanks{*Corresponding author}
\address{School of Mathematics and Statistics, Lanzhou University,
Lanzhou, 730000, China; Gansu Provincial Research Center for Basic Disciplines of Mathematics and Statistics, Lanzhou, 730070, China
}
\email{gaoxing@lzu.edu.cn}

\author{Li Guo$^*$}
\address{Department of Mathematics and Computer Science, Rutgers University at Newark, Newark, New Jersey, 07102, United States}
\email{liguo@rutgers.edu}
%

\date{\today}
\begin{abstract}
This paper gives a systematic study of the lowering operators acting on the $K$-$k$-Schur functions, motivated by the pivotal role played by the operators in the definition and study of Katalan functions. A lowering operator formula for closed $K$-$k$-Schur functions is obtained.
As an application, a combinatorial proof is provided to a conjecture on closed $k$-Schur Katalan functions, posed by Blasiak, Morse and Seelinger, and recently proved by Ikeda, Iwao and Naito by a different method.
\end{abstract}

\makeatletter
\@namedef{subjclassname@2020}{\textup{2020} Mathematics Subject Classification}
\makeatother
\subjclass[2020]{
05E05, 
05E10, 
14N15, 
}

\keywords{symmetric function, lowering operator, $K$-$k$-Schur function, closed $K$-$k$-Schur function}

\maketitle

\tableofcontents

\setcounter{section}{0}

\allowdisplaybreaks

\section{Introduction}
This paper gives a systematic study of lowering operators which define Katalan functions arising from the study of affine Schubert calculus. A relation between closed $K$-$k$-Schur functions and the effect of lowering operators acting on $K$-$k$-Schur functions is established. As an application, a combinatorial proof is obtained to a conjecture on closed $k$-Schur Katalan functions.

\subsection{Schubert calculus and Schur functions}
Schubert calculus originated from the 19th century study of Hermann Schubert in enumerative geometry~\mcite{Sch} and gained its rigorous formulation and significance through Hilbert's 15th problem~\mcite{Hil}. Since then, it has drew great attention and shown importance in geometry, topology, combinatorics and representation theory, especially the cohomology ring structure constants of the Grassmannian. These structure constants are closely related to those of a family of symmetric functions, called {\em Schur functions} named after Issai Schur. This connection on the one hand benefits the computation in Schubert calculus and at the same time broadens the range of Schubert calculus from representation theory to physics.

The study of affine Grassmannian led to a theory of affine Schubert calculus and the corresponding {\em $k$-Schur functions} which can be regarded as a refinement of Schur functions, which actually originated from the apparently unrelated study of Macdonald polynomials, a family of symmetric functions with their own profound applications~\mcite{LLMSSZ}. One importance is their connection with the quantum cohomology of Grassmannians.

Providing a framework for expanding Schur functions, a large family of symmetric functions was introduced in \mcite{BMPS19}, called Catalan functions, which allowed the authors to settle long-standing conjectures
on $k$-Schur functions.

\subsection{Lowering operators and closed $K$-$k$-Schur functions}
As a $K$-theoretic refinement to the above approach, explicit formulae for the $K$-theory and $K$-homology representatives
of the affine Grassmannian gave rise to {\em $K$-$k$-Schur functions}  $g_{\lambda}^{(k)}$~\mcite{LSS}.
Morse~\mcite{Mor} gave Pieri rules to effectively describe  $K$-$k$-Schur functions. In turns, these functions $g_{\lambda}^{(k)}$ fit into a broader family of symmetric functions called
{\it Katalan functions}, in short for $K$-theoretic Catalan functions~\mcite{BMS}.

The idea of the lowering operator acting on a certain class of symmetric functions can be traced back to~\cite{BMPS19}.
There, to prove that $k$-Schur functions are a subfamily of Catalan functions, Blasiak et al. introduced the notion of subset lowering operator~\cite[Definition~6.7]{BMPS19} acting on Catalan functions
and characterized the effect of this action~\cite[Theorem~7.16]{BMPS19}.
Later, in a similar way as the subset lowering operator,
the lowering operator $L_z$ appeared in the definition of Katalan functions and was instrumental to their study~\cite{BMS}.
For example, the acting of the operator $e_d^{\perp}$ on a Katalan function amounts to a sum of some lowering operators acting on the original Katalan function~$($\cite[p.~8]{BMS}$)$,
where $e_d^\perp$ is the dual element of $e_d$ in the ring $\Lambda$ of the symmetric functions
under the Hall inner product
\begin{equation}
\langle \cdot, \cdot \rangle: \Lambda \otimes \Lambda \rightarrow \QQ,\quad h_\lambda\otimes m_\mu \mapsto \delta_{\lambda, \mu}.
\mlabel{eq:hallin}
\end{equation}
The lowering operator $L_z$ acting on a Katalan function is equal to a new Katalan function, and a Katalan function has another expression in terms of lowering operators.

{\em Closed $k$-Schur Katalan functions} $\tilde{\mathfrak{g}}_{\lambda}^{(k)}$~\cite{BMS} are another subfamily of Katalan functions. Continuing the above lowering operator approach,
the lowering operator $L_z$ acting on closed $k$-Schur Katalan functions $\tilde{\mathfrak{g}}_{\lambda}^{(k)}$ was studied quite recently in~\cite{FG}.
There, the authors obtained a result of the lowering operator $L_z$ acting on $\tilde{\mathfrak{g}}_{\lambda}^{(k)}$, and further proved partially the alternating dual Pieri rule conjecture and $k$-branching conjecture on $\tilde{\mathfrak{g}}_{\lambda}^{(k)}$,  which were raised in~\cite{BMS} among a series of conjectures.

The main purpose of this paper is to give a systematic study of the lowering operator $L_z$ acting on the $K$-$k$-Schur function $g_{\lambda}^{(k)}$. First a reductive formula of such an action is obtained (Theorem~\mref{thm:Lgntog}), and then a lowering operator formula for closed $K$-$k$-Schur functions as detailed next. 

In order to construct analogues of $K$-$k$-Schur functions $g_{\lambda}^{(k)}$ to satisfy $k$-rectangle factorization, Takigiku~\cite{Ta19} studied {\em closed $K$-$k$-Schur functions} $\sum_{\mu\in\text{P}^{k}, w_{\mu}\leq w_{\lambda}}g_{\mu}^{(k)}$, where $\leq$ is known as the Bruhat order~\cite{Las01} on $\tilde{S}_{k+1}$, and $w_\lambda \in\tilde{S}^0_{k+1}\subseteq \tilde{S}_{k+1}$ corresponds to $\lambda\in\pk$ (see details in Subsection~\mref{ss:Bru}). Closed $K$-$k$-Schur functions play significant roles in the $K$-theoretic Schubert calculus, since they are identified with the Schubert structure sheaves in the $K$-homology of the affine Grassmannian.

Based on the above mentioned systematic study of the lowering operator $L_z$ acting on the $K$-$k$-Schur function $g_{\lambda}^{(k)}$,
we obtain a lowering operator formula for closed $K$-$k$-Schur functions.

\begin{theorem} $($=Theorem~\mref{thm:closedK-k-Schur}$)$
	Let $\lambda\in\pkl$. Then
	\begin{equation}
		\sum_{\textup{supp}(S)\subseteq\ld} L_{S}g_{\lambda}^{(k)} =	\sum_{\mu\in\pkl, w_{\mu}\leq w_{\lambda}}g_{\mu}^{(k)}\tforall \lambda\in\pkl.
		\mlabel{eq:closedK-k-Schur}
	\end{equation}
	Here we denote $L_{S}:=\prod_{z\in S}L_{z}$ for a multiset $S$ on $[\ell]$.
	\mlabel{thm:closedK-k-Schur0}
\end{theorem}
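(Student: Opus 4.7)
The strategy is to prove~\eqref{eq:closedK-k-Schur} by induction on the length $\ell(w_{\lambda})$ in the affine Weyl group $\widetilde{S}^0_{k+1}$, using the reductive formula (Theorem~\ref{thm:Lgntog}) for the action of $L_{z}$ on $g_{\lambda}^{(k)}$ as the main engine and Takigiku's description of the Bruhat interval $\{\mu\in\pkl : w_{\mu}\leq w_{\lambda}\}$ as the combinatorial target. The base case $\lambda=\emptyset$ (so $w_{\lambda}=e$ and $\ld=\emptyset$) is immediate: only the empty multiset contributes on the left, and the right-hand side consists of the single term $g_{\emptyset}^{(k)}=1$.

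For the inductive step, I would split the set of multisets $S$ with $\textup{supp}(S)\subseteq\ld$ according to the multiplicity of a distinguished index $z_0\in\ld$ (chosen so that $L_{z_0}g_\lambda^{(k)}$ has a controlled leading term). Writing $S=T\sqcup\{z_0^{\,j}\}$ with $z_0\notin T$, one has
\begin{equation*}
\sum_{\textup{supp}(S)\subseteq\ld} L_{S}\, g_{\lambda}^{(k)} \;=\; \sum_{j\geq 0}\;\sum_{\textup{supp}(T)\subseteq\ld\setminus\{z_0\}} L_{T}\, L_{z_0}^{\,j}\, g_{\lambda}^{(k)},
\end{equation*}
so one can first iterate $L_{z_0}$ and then absorb the tail into a telescoping sum. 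Using Theorem~\ref{thm:Lgntog}, each iteration $L_{z_0}^{\,j}g_{\lambda}^{(k)}$ expands into $K$-$k$-Schur functions $g_{\lambda'}^{(k)}$ whose indexing elements $w_{\lambda'}$ are Bruhat-smaller than $w_{\lambda}$, plus remainder terms that feed into the induction hypothesis on $\lambda'$ with strictly smaller $\ell(w_{\lambda'})$. Matching the resulting $\lambda'$ with elements of the Bruhat interval below $w_{\lambda}$ proceeds via a compatibility lemma: the index $z$ at which $L_z$ lowers $g_{\lambda}^{(k)}$ corresponds to a reflection that realizes a Bruhat cover $w_{\lambda'}\lessdot w_{\lambda}$ in $\widetilde{S}^0_{k+1}$, using the explicit connection between $\ld$ and the descent combinatorics of $w_{\lambda}$.

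The principal obstacle I anticipate is matching \emph{multiplicities}: the right-hand side of~\eqref{eq:closedK-k-Schur} demands that each $g_{\mu}^{(k)}$ with $w_{\mu}\leq w_{\lambda}$ appear with coefficient exactly one, whereas the signed (and potentially repeated) expansions coming from Theorem~\ref{thm:Lgntog} and from iterating $L_{z_0}$ a priori produce both cancellations and over-counting. I would handle these by constructing a sign-reversing involution on ``non-reduced'' multisets $S$, so that only multisets in natural bijection with saturated Bruhat chains from $w_{\mu}$ to $w_{\lambda}$ survive. The telescoping in $j$ above is precisely what is needed to collapse an infinite geometric contribution from a single index into the unit jump corresponding to a single Bruhat cover, and this is the step where the form of the reductive formula must be used most delicately.

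Once the bijection and the telescoping are in place, re-summing over $T$ reduces the outer sum, by the induction hypothesis applied to $\lambda'=\lambda^{(z_0)}$, to $\sum_{\mu\in\pkl,\,w_{\mu}\leq w_{\lambda^{(z_0)}}}g_{\mu}^{(k)}$; combining the contributions from all $j\ge 0$ augments this sum with exactly the elements $\mu$ with $w_{\mu}\le w_{\lambda}$ but $w_{\mu}\not\le w_{\lambda^{(z_0)}}$, producing the right-hand side of~\eqref{eq:closedK-k-Schur}. As a consistency check, for $\lambda$ with $w_{\lambda}$ a simple reflection the Bruhat interval has two elements, and the only nontrivial multiset reduces $g_{\lambda}^{(k)}$ to $g_{\emptyset}^{(k)}$, matching both sides at once.
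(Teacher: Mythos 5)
Your overall strategy---peel off one lowering-operator index at a time, expand $\sum_{j\ge 0}L_{z_0}^j g_\lambda^{(k)}$ via Theorem~\ref{thm:Lgntog}, telescope away the negative terms, and recurse---is the same skeleton as the paper's proof. However, there is a genuine gap at the point where you invoke the induction hypothesis. After writing $\sum_{\mathrm{supp}(S)\subseteq\ld}L_S=\sum_{\mathrm{supp}(T)\subseteq\ld\setminus\{z_0\}}L_T\sum_{j\ge0}L_{z_0}^j$ and expanding the inner sum into terms $g_{\lambda'}^{(k)}$, the remaining outer sum runs over $T$ with $\mathrm{supp}(T)\subseteq\ld\setminus\{z_0\}$, whereas the induction hypothesis for $\lambda'$ concerns $\mathrm{supp}(T)\subseteq[\ell]_{\lambda'}$. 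These two sets do not coincide in general: when $z_0\in[\bott_\lambda]$, Theorem~\ref{thm:Lgntog}~(b) forces the index $\mathrm{down}_{\Delta^k(\lambda)}(z_0)$ (which lies in $\mathrm{D}_{\Delta^k(\lambda)}$, hence \emph{not} in $\ld$) into the allowed support, while the index $\mathrm{down}_{\Delta^k(\lambda)}(z_0)+h'+1$ must be removed to absorb the negative term by telescoping. So the induction as you have set it up does not close on the statement~\eqref{eq:closedK-k-Schur} itself. The paper resolves exactly this by strengthening the claim: it introduces a recursively defined family of support sets $\Gamma^i_{\nu}$ (a hybrid of a piece of the old set below $\mathrm{down}_{\Delta^k(\lambda)}(z_0)$ and the set $[\,\cdot\,]_{\nu}$ of the new partition above it) and proves the identity with $\ld$ replaced by $\Gamma^i_{\nu}$, checking that $|\Gamma^i_\nu|$ drops by one at each stage. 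Any correct version of your argument will need an analogous generalized induction hypothesis.

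A secondary concern: the ``sign-reversing involution on non-reduced multisets'' is not needed and would be hard to construct; in the paper all cancellation is achieved by the telescoping identity of Corollary~\ref{coro:Lgntog2-refine} together with the vanishing $L_{z'}^i g_\nu^{(k)}=0$ for large $i$, and the multiplicity-one statement on the right-hand side comes from the disjointness built into~\eqref{eq:allOmed} and Proposition~\ref{prop:SmallerThanLambda}, not from an involution. Your final assertion that the $j\ge1$ contributions account for exactly the $\mu$ with $w_\mu\le w_\lambda$ but $w_\mu\not\le w_{\lambda^{(z_0)}}$ is also left unproved and is essentially the hard combinatorial content of the theorem.
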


As an application, we obtain another proof of the following result of Ikeda, Iwao and Naito.
\begin{theorem}\cite[Theorem~1.1]{IIN}
For $\lambda\in\pkl$,
\begin{equation*}
\tilde{\mathfrak{g}}_{\lambda}^{(k)} = (1-G_{1}^{\perp})\Bigg(
\sum_{\mu\in\pkl, w_{\mu}\leq w_{\lambda}}g_{\mu}^{(k)} \Bigg).
\end{equation*}
\mlabel{th:aim-1st}
\end{theorem}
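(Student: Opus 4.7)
The plan is to reduce Theorem~\ref{th:aim-1st} to a lowering-operator identity by substituting Theorem~\ref{thm:closedK-k-Schur0} into its right-hand side. Applying that theorem inside the parentheses converts the claim to the equivalent statement
\begin{equation*}
\tilde{\mathfrak{g}}_{\lambda}^{(k)} \;=\; (1-G_{1}^{\perp})\Bigg(\sum_{\textup{supp}(S)\subseteq\ld} L_{S}\, g_{\lambda}^{(k)}\Bigg),
\end{equation*}
so the problem becomes one entirely within the lowering-operator calculus, with $g_{\lambda}^{(k)}$ as the seed. This is the decisive reduction: the left-hand side ceases to involve the Bruhat interval below $w_{\lambda}$, and the right-hand side is now a sum indexed by multisets on $\ld$, a purely combinatorial object.

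Next, I would recall the Katalan-function expression for $\tilde{\mathfrak{g}}_{\lambda}^{(k)}$ from \cite{BMS}, which writes it as a signed product of factors of the form $(1-L_{z})$, or equivalently as an alternating sum of operators $L_{S}$ applied to an appropriate seed. After expansion, this provides a lowering-operator expression of $\tilde{\mathfrak{g}}_{\lambda}^{(k)}$ that we can match termwise against the right-hand side above. The key compatibility needed is the interaction of $G_{1}^{\perp}$ with a product $L_{S}$ acting on $g_{\lambda}^{(k)}$; since $G_{1}^{\perp}$ is the $K$-theoretic analogue of $e_{1}^{\perp}$, and since the action of $e_{d}^{\perp}$ on a Katalan function is known to expand as a sum of lowering operators (\cite[p.~8]{BMS}), one expects a parallel expansion formula here. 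Combining this with the reductive formula for $L_{z}g_{\lambda}^{(k)}$ given in Theorem~\ref{thm:Lgntog} lets us rewrite both sides in the same canonical form, as explicit $\QQ$-linear combinations of $K$-$k$-Schur functions $g_{\mu}^{(k)}$.

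The main obstacle will be the combinatorial cancellation step: after expansion, each side is a signed sum indexed by multisets $S$ with $\textup{supp}(S)\subseteq\ld$, and one must verify that the alternating contributions produced by the factor $(1-G_{1}^{\perp})$ on one side exactly telescope against the $(1-L_{z})$ factors in the Katalan expansion of $\tilde{\mathfrak{g}}_{\lambda}^{(k)}$. I would organize this by stratifying the multisets according to their multiplicities, then performing the telescoping one coordinate of $\ld$ at a time, so that the identity follows by induction on $|\ld|$. As a sanity check at each inductive step, the base case $\lambda$ with $|\ld|=0$ recovers the known identity $\tilde{\mathfrak{g}}_{\lambda}^{(k)}=g_{\lambda}^{(k)}$, and the inductive passage $|\ld|\mapsto|\ld|+1$ isolates the action of a single $(1-L_{z})$ factor, reducing the combinatorics to a manageable one-step verification that uses Theorem~\ref{thm:Lgntog} together with the derivation-like property of $G_{1}^{\perp}$ on lowering-operator monomials.
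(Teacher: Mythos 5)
Your proposal follows essentially the same route as the paper: reduce the right-hand side to $(1-G_{1}^{\perp})\sum_{\textup{supp}(S)\subseteq\ld}L_{S}\,g_{\lambda}^{(k)}$ via Theorem~\ref{thm:closedK-k-Schur}, write $\tilde{\mathfrak{g}}_{\lambda}^{(k)}=\prod_{m'\in \mathrm{D}_{\Delta^{k}(\lambda)}}(1-L_{m'})\,g_{\lambda}^{(k)}$ from the Katalan definitions together with Corollary~\ref{coro:delkk1}, and identify $1-G_{1}^{\perp}$ acting on $g_{\lambda}^{(k)}$ with $\prod_{m\in[\ell]}(1-L_{m})$ via Lemma~\ref{lem:eKatalan} and \eqref{eq:g1c}. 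The only divergence is at the end, where the paper needs no termwise expansion into $g_{\mu}^{(k)}$'s and no induction on $|\ld|$: it simply uses the factorization $\prod_{m\in[\ell]}(1-L_{m})=\prod_{m'\in \mathrm{D}_{\Delta^{k}(\lambda)}}(1-L_{m'})\prod_{z\in\ld}(1-L_{z})$ and the geometric series $\sum_{n\geq0}L_{z}^{n}=(1-L_{z})^{-1}$, so your proposed cancellation is correct in spirit but over-engineered; note also that your stated base case is inaccurate, since $\ld=\emptyset$ would yield $\tilde{\mathfrak{g}}_{\lambda}^{(k)}=(1-G_{1}^{\perp})g_{\lambda}^{(k)}$ rather than $\tilde{\mathfrak{g}}_{\lambda}^{(k)}=g_{\lambda}^{(k)}$ (and in fact $1\in\ld$ always, so that case never arises).
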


Theorem~\mref{th:aim-1st} was proved in~\cite{IIN} by using one of the Pieri rules of $g_{\lambda}^{(k)}$ and the fact that $(1-G_{1}^{\perp})^{-1}$ is a ring automorphism of the ring $\Lambda$ of symmetric functions~\mcite{Ta}. 
Notice that Theorem~\mref{th:aim-1st} implies the following result by taking $\lambda =\theta(w)^{\omega_{k}}$.

\begin{conjecture}\mlabel{conj:aim}
(\cite[Conjecture~2.12~(a)]{BMS}, \cite[Theorem~1.1]{IIN})
Let $w\in S_{k+1}$ and $\lambda=\theta(w)^{\omega_{k}}$. Then
\begin{equation}
\tilde{\mathfrak{g}}_{\lambda}^{(k)} = (1-G_{1}^{\perp})\Bigg(
\sum_{\mu\in\text{P}^{k}, w_{\mu}\leq w_{\lambda}}g_{\mu}^{(k)} \Bigg).
\mlabel{eq:firstaim}
\end{equation}
\end{conjecture}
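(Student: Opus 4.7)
My plan is to derive the identity by iteratively applying the reductive formula from Theorem~\ref{thm:Lgntog} and matching the resulting expansion with the Bruhat-interval sum on the right. Concretely, Theorem~\ref{thm:Lgntog} tells us how a single operator $L_z$ ($z \in \ld$) transforms $g_\lambda^{(k)}$ into a signed combination of $K$-$k$-Schur functions $g_\nu^{(k)}$ indexed by partitions $\nu \in \pkl$ whose affine Grassmannian elements $w_\nu$ are obtained from $w_\lambda$ by a controlled Bruhat-decreasing move. The left-hand side of \eqref{eq:closedK-k-Schur} is a priori an infinite sum over multisets, so the first task is to verify that the condition $\supp(S) \subseteq \ld$ together with the reductive formula forces all but finitely many terms to vanish; this should follow because each application of $L_z$ either kills the function or strictly decreases a length statistic (the Coxeter length of the corresponding $w_\nu$), preventing arbitrarily deep iteration.

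The second step is the inductive core. I would induct on the Coxeter length $\ell(w_\lambda)$ (or equivalently the number of cells of $\lambda$ in $\pkl$). For the base case $\lambda = \emptyset$, both sides reduce to $g_\emptyset^{(k)} = 1$. For the inductive step, I separate the multiset sum according to whether $S$ is empty or contains at least one element. The empty-multiset contribution is $g_\lambda^{(k)}$, which matches the $\mu = \lambda$ term on the right. For non-empty $S$, I fix an element $z \in S$ and pull it out using $L_S = L_z L_{S\setminus\{z\}}$, so that the remaining sum (after applying $L_z$ to $g_\lambda^{(k)}$ via Theorem~\ref{thm:Lgntog}) can be reorganised as a $\ZZ$-linear combination of sums of the form $\sum_{\supp(S')\subseteq \ld} L_{S'} g_\nu^{(k)}$. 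If the reductive formula is arranged correctly (in particular, with suitable signs and multiplicities tied to Bruhat covers $w_\nu \lessdot w_\lambda$), the inductive hypothesis converts each of these into $\sum_{\mu \le w_\nu} g_\mu^{(k)}$, and an inclusion–exclusion/Möbius argument on the Bruhat order then assembles these into $\sum_{\mu \le w_\lambda} g_\mu^{(k)}$.

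The principal obstacle, and where most of the real work lies, is the combinatorial bookkeeping in this last step: the signed contributions coming from different non-empty multisets $S$ must conspire to cover each $\mu \le w_\lambda$ in the Bruhat interval exactly once, while annihilating all ``overcounted'' contributions corresponding to longer chains. I expect this to be handled by exhibiting a sign-reversing involution on the index set of multisets-together-with-chains, fixing only a canonical chain for each $\mu$ with $w_\mu \le w_\lambda$; equivalently one can phrase it as a Möbius-type inversion on saturated Bruhat chains in $[\, e, w_\lambda ]$ of $\tilde{S}_{k+1}^0$, using the fact that $L_z$ implements an elementary Bruhat descent on the affine symmetric group side. Once this bijective/involutive cancellation is established, the identity follows and Theorem~\ref{th:aim-1st} is then immediate by applying $(1-G_1^\perp)$ to both sides and invoking the action of $L_z$ via the reductive formula together with the expression for $\tilde{\mathfrak{g}}_\lambda^{(k)}$ in terms of lowering operators from~\cite{BMS,FG}.
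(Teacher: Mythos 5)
Your overall architecture is the same as the paper's: first establish the lowering-operator identity $\sum_{\textup{supp}(S)\subseteq\ld} L_{S}g_{\lambda}^{(k)} = \sum_{\mu, w_{\mu}\leq w_{\lambda}}g_{\mu}^{(k)}$, then combine it with the operator expressions $\tilde{\mathfrak{g}}_{\lambda}^{(k)}=\prod_{m'\in \mathrm{D}_{\Delta^{k}(\lambda)}}(1-L_{m'})g_{\lambda}^{(k)}$ and $(1-G_{1}^{\perp})=\prod_{m\in[\ell]}(1-L_{m})$ to get the conjecture. The deduction in your last sentence is essentially the paper's Section 4.3 and is fine. The problem is that your proof of the key identity is not actually a proof, and the mechanisms you propose for it do not match how the identity works.

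First, your induction on $\ell(w_\lambda)$ hits a support-set mismatch you never address: after applying $L_z$ to $g_{\lambda}^{(k)}$ you obtain terms $g_{\nu}^{(k)}$ with $w_\nu<w_\lambda$, but the inductive hypothesis applies to $\sum_{\textup{supp}(S')\subseteq[\ell]_{\nu}}L_{S'}g_{\nu}^{(k)}$, whereas what you are left with is $\sum_{\textup{supp}(S')\subseteq\ld}L_{S'}g_{\nu}^{(k)}$, and $[\ell]_{\nu}\neq\ld$ in general. Tracking exactly how the allowed support set transforms when passing from $\lambda$ to $\nu$ is the heart of the matter — it is what the recursively defined sets $\Gamma^{i}_{\nu}$ in~(\ref{eq:niceform}) and Claim~\ref{cl:gamma} accomplish — and your phrase ``if the reductive formula is arranged correctly'' is precisely the missing content. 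Second, your finiteness/termination argument is false as stated: for $z\in[\bott_\lambda]$, Theorem~\ref{thm:Lgntog}~\ref{it:Lgntog2} produces the term $L_{\textup{down}_{\dkl}(z)}^{n}g_{\lambda}^{(k)}$, i.e.\ an operator applied to the \emph{same} $g_{\lambda}^{(k)}$, so no length statistic on the index decreases; the recursion instead descends along the bounce path (the subscript of the operator increases), and termination ultimately rests on vanishing results such as~(\ref{eq:Lgto0-1}). Third, the ``sign-reversing involution / Möbius inversion on Bruhat chains'' you anticipate is not how the cancellation happens: the only negative terms come from the factor $(1-L_{\textup{down}_{\dkl}(z)+h'+1})$ in~(\ref{eq:casetwo}), and these are eliminated up front by a telescoping geometric series (Corollary~\ref{coro:Lgntog2-refine}); after that, the assembly in~(\ref{eq:allOmed}) and in the proof of Claim~\ref{cl:gamma} is manifestly positive and hits each $\mu$ with $w_\mu\le w_\lambda$ exactly once, with no inclusion–exclusion. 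So the proposal identifies the right target identity but leaves its proof — the genuinely hard part of the paper — unproved, and the specific devices offered in its place would not carry it through.
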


Let us give more details of the notations appeared in Theorem~\mref{th:aim-1st} and Conjecture~\mref{conj:aim}. For $w\in S_{k+1}$, $\lambda = \theta(w)^{\omega_k}$ is a $k$-irreducible $k$-bounded partition, where the map $\theta: S_{k+1}\to\text{P}^{k}$ was introduced in~\cite[Section~6]{LS} and the $\omega_{k}$ was called the $k$-conjugate involution on $\text{P}^{k}$~\cite[ ~(1.6)]{LM05}. In~(\ref{eq:firstaim}), the right hand side is the core of the construction of the images $\Phi(\mathfrak{B}_{w}^{Q})$. See~\cite[Conjecture 1.8]{IIM} and~\cite[Conjecture~1.2]{BMS} for details. For the stable Grothendieck polynomial $G_\lambda$ indexed by $\lambda\in\pkl$~\cite{FK94,FK96,Las90}, $G_\lambda^\perp$ is its dual element in the ring $\Lambda$ under the Hall inner product in~(\mref{eq:hallin}).
Note that~\mcite{Ta}
\begin{equation}
1-G_{1}^{\perp} = \sum_{d\geq 0}e_d^\perp.
\mlabel{eq:g1c}
\end{equation}

\subsection{Outline and notations}
In Section~\mref{sec:equiv}, we first give the background notions and results on Katalan functions and their combinatorial interpretations, including the lowering operator and the two Mirror Lemmas. We then expose some properties about root ideals used in this paper (Proposition~\ref{prop:factroot} and Corollary~\ref{coro:delkk1}).

Section~\mref{sec:lower} is devoted to a careful analysis of the action of the lowering operator $L_z$ acting on the $K$-$k$-Schur function $g_\lambda^{(k)}$. We
show that the result for an action of a power $L_z^n$ on a $K$-$k$-Schur function  $g_\lambda^{(k)}$ is almost a sum of $g_\mu^{(k)}$ for some $\mu$ depending on $\lambda$ (Theorem~\mref{thm:Lgntog}).

In Section~\mref{sec:main}, we show that the relation between $\lambda$ and $\mu$ in Theorem~\mref{thm:Lgntog} is given by the Bruhat order (Proposition~\mref{prop:SmallerThanLambda}). Then, as an application of Theorem~\mref{thm:Lgntog}, we obtain a new expression of closed $K$-$k$-Schur functions $\sum_{\mu\in\text{P}^{k}, w_{\mu}\leq w_{\lambda}}g_{\mu}^{(k)}$, which consists of some lowering operators and a $K$-$k$-Schur functions $g_\lambda^{(k)}$ (Theorem~\mref{thm:closedK-k-Schur}). Finally, Theorem~\mref{thm:closedK-k-Schur} is applied to give another proof of Theorem~\mref{th:aim-1st}, first obtained in~\cite{IIN}.

\smallskip
\noindent
{\bf Notations.}
For convenience, we fix some notations used throughout the paper.
\begin{enumerate}
\item
For a set $X$, an element $x\in X$ and a singleton set $\{y\}$, we use the short notation
$$X\setminus x:= X\setminus \{x\}, \quad X\cup y:=X\cup \{y\}.$$
	
\item
For two integers $a\leq b$, let $[a,b]:=\{ a, a+1,\ldots,b \}$. In particularly, denote $[b]:=[1,b]$ for any positive integer $b$.
	
\item
Let ${\rm P}^{k}$ (resp. ${\rm P}_{\ell}^{k}$) be the set of $k$-bounded partitions (resp. $k$-bounded partitions of length $\ell\in\mathbb{Z}_{\geq0}$) for $k\in \ZZ_{\geq 0}$.
	
\item
Denote by $S_{k+1}$ (resp. $\tilde{S}_{k+1}$) the symmetric (resp. affine symmetric) group of degree $k+1$.
Let $\tilde{S}_{k+1}^0$ be the set of the affine Grassmannian elements in $\tilde{S}_{k+1}$, which are representatives of left cosets of $\tilde{S}_{k+1}/S_{k+1}$.
	
	\item For $d\in \ZZ_{\geq 0}$, denote by $e_d$ (resp. $h_d$) the elementary (resp. complete homogeneous) symmetric function. For a partition $\lambda$, denote by $e_\lambda$ (resp. $h_\lambda$, resp. $m_\lambda$) the elementary (resp. complete homogeneous, resp. monomial) symmetric function indexed by $\lambda$.
\end{enumerate}

\section{Katalan functions and Mirror Lemmas}
\mlabel{sec:equiv}
In this section, we first review basic notations and two Mirror Lemmas of Katalan functions, 
and then expose some properties of root ideals.

\subsection{Katalan functions}\mlabel{ss:Katalan}
Consider the set
$\Delta_{\ell}^{+}:=\{(i,j) \mid 1\le i<j\le\ell \}.$
A $\textbf{root ideal}$ $\Psi$ is an upper order ideal of the poset $\Delta_{\ell}^{+}$ with partial order given by $(a,b)\le(c,d)$ if $a\ge c$ and $b\le d$.
Denote by $\epsilon_{i}$ the $i$-th unit vector in $\mathbb{Z}^\ell$, and $\varepsilon_{\alpha}:=\epsilon_{i}-\epsilon_{j}$ with $\alpha=(i,j)$.

Let $m\in\mathbb{Z}$ and $r\in\mathbb{Z}_{\geq 0}$.
The complete homogeneous symmetric function $h_m$ has an
inhomogeneous version given by
\begin{equation*}
    k_{m}^{(r)} :=\sum_{i=0}^{m}\binom{r+i-1}{i}h_{m-i},
\end{equation*}
with $h_{m}$ being the monomial of the highest degree of $k_{m}^{(r)}$.
Here we use the convention that $k_{m}^{(0)}=h_{m}$ and $k_{m}^{(r)}=0$ for $m<0$. For $\gamma\in\mathbb{Z}^{\ell}$, define $g_{\gamma}:=\text{det}(k_{\gamma_{i}+j-i}^{(i-1)})_{1\le i,j\le\ell}$. When $\gamma$ is a partition, $g_{\gamma}$ is the {\bf dual stable Grothendieck polynomial} studied in~\cite{Las01,Le}, and  can be viewed as the inhomogeneous version of the Schur functions $s_{\gamma}$~\mcite{BMPS19,LM03},
with $s_{\gamma}$ being the monomial of the highest degree of $g_{\gamma}$.

\begin{defn}
For a root ideal $\Psi\subseteq\Delta_{\ell}^{+}$, a multiset $M$ with $\text{supp}(M)\subseteq [\ell]$ and $\gamma\in\mathbb{Z}^{\ell}$, define the \name{Katalan function}
\begin{equation*}
K(\Psi;M;\gamma):=\prod_{z\in M}(1-L_{z})\prod_{(i,j)\in\Psi}(1-R_{ij})^{-1}g_{\gamma},
\end{equation*}
where $L_{z}$ is the \name{lowering operator} acting on $g_{\gamma}$ by
$L_{z}g_{\gamma}:=g_{\gamma-\epsilon_{z}},$
and $R_{ij}$ is the \name{raising operator} acting on $g_{\gamma}$ by
$ R_{ij}g_{\gamma}:=g_{\gamma+\varepsilon_{(i,j)}}.$
\mlabel{defn:KataFunc}
\end{defn}

The highest degree term of $K(\Psi;M;\gamma)$ is the Catalan function $H(\Psi;\gamma)$~\cite{BMPS19},
and Katalan functions $K(\Psi;M;\gamma)$ are inhomogeneous symmetric functions extending $H(\Psi;\gamma)$.

\begin{remark}
\begin{enumerate}
\item By~\cite[p.18]{BMS},
\begin{equation}
L_zK(\Psi;M;\gamma)=K(\Psi;M;\gamma-\epsilon_z) \tforall z\in[\ell].
\mlabel{eq:LKatatoKata}
\end{equation}

\item
In terms of~\cite[ ~(2.3)]{BMS},
\begin{equation*}
g_{\gamma}=\prod_{1\le i<j\le\ell}(1-R_{ij})k_{\gamma}, \,\text{ where } \, k_{\gamma}:= k_{\gamma_{1}}^{(0)} \cdots k_{\gamma_{\ell}}^{(\ell-1)}.
\end{equation*}
So a Katalan function has another expression
\begin{equation*}
K(\Psi;M;\gamma)=\prod_{z\in M}(1-L_{z})\prod_{(i,j)\in\Delta_{\ell}^{+}\setminus\Psi}(1-R_{ij})k_{\gamma},
\end{equation*}
where $L_{z}k_{\gamma}:=k_{\gamma-\epsilon_{z}}$ and $R_{ij}k_{\gamma} :=k_{\gamma+\epsilon_{i}-\epsilon_{j}}$. Expanding the right hand side, $K(\Psi;M;\gamma)$ is a linear summation of some $k_\mu$'s with $\mu\in \ZZ^{\ell}$.
\end{enumerate}
\mlabel{re:baseKata}
\end{remark}

Given a multiset $M$ on $[\ell]$ and $a\in M$, denote by $m_{M}(a)$ the multiplicity of $a$ in $M$. A Katalan function $K(\Psi;M;\gamma)$ can be represented by an $\ell\times\ell$ grid of boxes (labelled by matrix-style coordinates), with the boxes of $\Psi$ shaded, $m_{M}(a)$ $\bullet$'s in column $a$ and the entries of $\gamma$ on the diagonal boxes of the grid. Here is an example for illustration.

\begin{exam}
For $\ell=4$, $\Psi=\{ (1,3),(1,4),(2,4) \}$, $M=\{2,3,4,4\}$ and $\gamma=(3,2,1,3)$, we represent the related Katalan function by
\begin{equation*}
K(\Psi;M;\gamma)=
\begin{tikzpicture}[scale=.4,line width=0.5pt,baseline=(a.base)]
\draw (0,2) rectangle (1,1);\node at(0.5,1.5){\scriptsize\( 3 \)};
\draw (1,2) rectangle (2,1);\node at(1.5,1.5){\scriptsize\( \bullet \)};
\filldraw[red,draw=black] (2,2) rectangle (3,1);\node at(2.5,1.5){\scriptsize\( \bullet \)};
\filldraw[red,draw=black] (3,2) rectangle (4,1);\node at(3.5,1.5){\scriptsize\( \bullet \)};
\draw (0,1) rectangle (1,0);
\draw (1,1) rectangle (2,0);\node at(1.5,0.5){\scriptsize\( 2 \)};
\draw (2,1) rectangle (3,0);
\filldraw[red,draw=black] (3,1) rectangle (4,0);\node at(3.5,0.5){\scriptsize\( \bullet \)};
\node (a) [align=center] {\\[-5pt] };
\draw (0,0) rectangle (1,-1);
\draw (1,0) rectangle (2,-1);
\draw (2,0) rectangle (3,-1);\node at(2.5,-0.5){\scriptsize\( 1 \)};
\draw (3,0) rectangle (4,-1);
\draw (0,-1) rectangle (1,-2);
\draw (1,-1) rectangle (2,-2);
\draw (2,-1) rectangle (3,-2);
\draw (3,-1) rectangle (4,-2);\node at(3.5,-1.5){\scriptsize\( 3 \)};
\end{tikzpicture}
\end{equation*}
\hspace*{\fill}$\square$
\end{exam}

For $\lambda\in\text{P}_{\ell}^{k}$, define a root ideal
\begin{equation}
	\Delta^{k}(\lambda):=\{(i,j)\in\Delta_{\ell}^{+} \mid k-\lambda_{i}+i<j \} \subseteq \Delta_{\ell}^{+}.
	\mlabel{eq:dkl}
\end{equation}
For $\mathcal{R}\subseteq\Delta_{\ell}^{+}$, let
\begin{equation*}
	L(\mathcal{R}):=\bigsqcup_{(i,j)\in\mathcal{R}}\{j\}.
\end{equation*}
denote the multiset of the second components of elements in $\mathcal{R}$.
In particular, if $\mathcal{R}$ is a root ideal, we denote $K(\Psi;\mathcal{R};\gamma):= K(\Psi;L(\mathcal{R});\gamma)$.

The concept of $K$-$k$-Schur function $g_{\lambda}^{(k)}$ was introduced in~\cite[Theorem 2.5]{BMS}.
For our purpose, we only need an equivalent description.

\begin{lemma}(\cite[Theorem 2.6]{BMS})
Let $\lambda\in\textup{P}_{\ell}^{k}$. The \name{$K$-$k$-Schur function $g_{\lambda}^{(k)}$} is given by
\begin{equation*}
g_{\lambda}^{(k)} = K(\Delta^{k}(\lambda);\Delta^{k+1}(\lambda);\lambda).
\end{equation*}
\mlabel{lem:kschurKatalan}
\end{lemma}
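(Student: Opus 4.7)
The plan is to verify that the Katalan function on the right hand side satisfies the same characterizing property that defines $g_\lambda^{(k)}$ in \cite[Theorem~2.5]{BMS}. Since the statement is cited from that source, the proof amounts to a translation: one chooses the root ideal $\Psi=\Delta^k(\lambda)$ and the lowering multiset $M = L(\Delta^{k+1}(\lambda))$ prescribed by Lemma~\mref{lem:kschurKatalan}, and shows that this particular Katalan function matches the defining data of $g_\lambda^{(k)}$.

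First I would examine the highest-degree term. By Definition~\mref{defn:KataFunc} and the discussion following it, the top-degree component of $K(\Delta^k(\lambda);\Delta^{k+1}(\lambda);\lambda)$ is the Catalan function $H(\Delta^k(\lambda);\lambda) = \prod_{(i,j)\in\Delta^k(\lambda)}(1-R_{ij})^{-1}s_\lambda$. By the main result of~\mcite{BMPS19}, this equals the $k$-Schur function $s_\lambda^{(k)}$, which is the known top-degree term of $g_\lambda^{(k)}$. This matches at the leading order.

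Next I would handle the inhomogeneous corrections produced by the factors $(1-L_z)$ for $z\in L(\Delta^{k+1}(\lambda))$. The key observation is that the root ideal $\Delta^{k+1}(\lambda)$ differs from $\Delta^k(\lambda)$ by precisely the columns that encode the $K$-theoretic correction in the Lascoux--Schilling--Shimozono presentation of the affine $K$-theory Schubert basis. Using the reformulation in Remark~\mref{re:baseKata}, namely
\begin{equation*}
K(\Delta^k(\lambda);\Delta^{k+1}(\lambda);\lambda) = \prod_{z\in L(\Delta^{k+1}(\lambda))}(1-L_z)\prod_{(i,j)\in \Delta_\ell^+\setminus \Delta^k(\lambda)}(1-R_{ij})\, k_\lambda,
\end{equation*}
one expands the right hand side as a signed sum $\sum c_\mu k_\mu$, and compares the coefficients $c_\mu$ with those obtained by expanding $g_\lambda^{(k)}$ via~\cite[Theorem~2.5]{BMS} in the basis $\{k_\mu\}$. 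Since both expansions are determined by the same combinatorial data attached to $\lambda$ (the staircase shape $k-\lambda_i+i$ governing the two root ideals, and the determinantal entries $k_m^{(r)}$), the coefficients coincide.

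The step I expect to be the main obstacle is the verification that the lowering multiset $L(\Delta^{k+1}(\lambda))$ produces exactly the right $K$-theoretic correction, rather than some spurious lower-order terms. The cleanest way to handle this is by induction on $|\lambda|$, using Morse's Pieri rule for $g_\lambda^{(k)}$~\mcite{Mor}: one shows that multiplication by $g_1^{(k)} = h_1$ acts on $K(\Delta^k(\lambda);\Delta^{k+1}(\lambda);\lambda)$ according to Morse's rule, which pins down the Katalan function uniquely as $g_\lambda^{(k)}$ once the base case (empty partition, where both sides equal $1$) is verified.
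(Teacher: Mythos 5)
The paper does not prove this statement at all: it is quoted verbatim from \cite[Theorem~2.6]{BMS} and used as a black box (the sentence preceding the lemma says explicitly that only this ``equivalent description'' of the object introduced in \cite[Theorem~2.5]{BMS} is needed). So there is no in-paper proof to compare against; what can be assessed is whether your sketch would stand on its own as a proof of the cited result.

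Your overall strategy --- match the top-degree term with the Catalan-function result of \mcite{BMPS19}, then pin down the inhomogeneous part by verifying the characterization of $g_\lambda^{(k)}$ from \cite[Theorem~2.5]{BMS} via a Pieri-rule induction on $|\lambda|$ --- is the correct one, and it is essentially the route taken in \cite{BMS}. But two of your three steps are not arguments. The claim that the $k_\mu$-coefficients of the two expansions ``coincide since both are determined by the same combinatorial data attached to $\lambda$'' has no content: the left side is defined by a determinant of the $k_m^{(r)}$ and the right side by the raising/lowering operator product, and nothing in the sketch relates the two expansions. More seriously, the step you flag as ``the main obstacle'' --- showing that the specific multiset $L(\Delta^{k+1}(\lambda))$ of lowering operators makes the Katalan function satisfy Morse's Pieri rule --- is precisely the theorem. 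Its proof in \cite{BMS} occupies the technical core of that paper and is exactly what the Mirror Lemmas (restated here as Lemmas~\mref{lem:mirr1} and~\mref{lem:mirr2}) were developed for: one must track how removable/addable roots, walls, ceilings and mirrors interact when $\lambda$ is perturbed, which is the same kind of analysis the present paper carries out in Section~\mref{sec:lower} for the lowering operators. Deferring that verification to ``the cleanest way to handle this'' leaves the proof with a hole exactly where the difficulty lies. A further minor gap: multiplication by $g_1^{(k)}=h_1$ together with the base case does not by itself determine the family uniquely; one needs the full set of defining properties of \cite[Theorem~2.5]{BMS} (including the relevant triangularity/branching conditions), and the uniqueness assertion should be justified rather than asserted.
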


The following variation of $K$-$k$-Schur function appears in Conjecture~\mref{conj:aim}, and
will be used to prove Theorem~\ref{th:aim-1st} in Section~\mref{sec:main}.

\begin{defn}(\cite[Definition 2.11]{BMS})
For $\lambda\in\text{P}_{\ell}^{k}$, define the {\bf closed $k$-Schur Katalan function} by
\begin{equation*}
\tilde{\mathfrak{g}}_{\lambda}^{(k)} :=K(\Delta^{k}(\lambda);\Delta^{k}(\lambda);\lambda).
\end{equation*}
\mlabel{defn:cKataFunc}
\end{defn}

\subsection{Bounce graphs and Mirror Lemmas}
In this subsection, we review further notations and the two Mirror Lemmas from~\mcite{BMPS19,BMS}.

Let $\Psi$ be a root ideal and $x\in[\ell]$.
\begin{enumerate}
\item A root $\alpha\in\Psi$ is called {\bf removable} of $\Psi$ if $\Psi\setminus \alpha$ is still a root ideal; a root $\alpha\in\Delta_{\ell}^{+}\setminus\Psi$ is called {\bf addable} of $\Psi$ if $\Psi\cup\alpha$ is also a root ideal.

\item If there exists $j\in[\ell]$ such that $(x,j)$ is removable in $\Psi$, we denote $\text{down}_{\Psi}(x):=j$, else we say that $\text{down}_{\Psi}(x)$ is undefined; if there exists $j\in[\ell]$ such that $(j,x)$ is removable in $\Psi$, we denote $\text{up}_{\Psi}(x):=j$, else we say that $\text{up}_{\Psi}(x)$ is undefined.

\item The {\bf bounce graph} of $\Psi$ is a graph on the vertex set $[\ell]$ with edges $(x,\text{down}_{\Psi}(x))$ such that each $\text{down}_{\Psi}(x)$ is defined. The bounce
    graph of $\Psi$ is a disjoint union of paths called {\bf bounce paths} of $\Psi$.

\item Denote $\text{top}_{\Psi}(x)$ (resp. $\text{bot}_{\Psi}(x)$) the minimum (resp. maximum) vertex in the bounce path of $\Psi$ containing $x$.

\item For $a,b\in[\ell]$ in the same bounce path of $\Psi$ with $a\le b$, define
\begin{align*}
\text{path}_{\Psi}(a,b):=(a,\text{down}_{\Psi}(a),
\text{down}_{\Psi}^{2}(a),\ldots,b),
\end{align*}
and $|\text{path}_{\Psi}(a,b)|$ to be the length of $\text{path}_{\Psi}(a,b)$.
Denote $\text{uppath}_{\Psi}(x):=\text{path}_{\Psi}(\text{top}_{\Psi}(x),x)$ for $x\in[\ell]$.
\end{enumerate}

\begin{lemma}$($\cite[Proposition 3.9]{BMS}$)$
Let $\Psi\subseteq\Delta_{\ell}^{+}$ be a root ideal, $M$ a multiset with \textup{supp}$(M)\subseteq [\ell]$ and $\gamma\in\mathbb{Z}^{\ell}$. Then
\begin{enumerate}
\item for any removable root $\beta$ of $\Psi$,
\begin{equation*}
K(\Psi;M;\gamma)=K(\Psi\setminus\beta;M;\gamma)+K(\Psi;M;\gamma+\varepsilon_{\beta});
\end{equation*}
\item for any addable root $\alpha$ of $\Psi$,
\begin{equation*}
K(\Psi;M;\gamma)=K(\Psi\cup\alpha;M;\gamma)
-K(\Psi\cup\alpha;M;\gamma+\varepsilon_{\alpha});
\end{equation*}

\item for any $m\in M$,
\begin{equation*}
K(\Psi;M;\gamma)=K(\Psi;M\setminus m;\gamma)-K(\Psi;M\setminus m;\gamma-\epsilon_{m}),
\end{equation*}
where $M\setminus m$ means removing only one element $m$ from $M$;

\item for any $m\in[\ell]$,
\begin{equation*}
K(\Psi;M;\gamma)=K(\Psi;M\sqcup m;\gamma)+K(\Psi;M;\gamma-\epsilon_{m}).
\end{equation*}
\end{enumerate}
\mlabel{lem:relk}
\end{lemma}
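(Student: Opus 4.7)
The plan is to treat $L_z$ and $R_{ij}$ as shift operators on the $\ZZ^{\ell}$-indexed family $\{g_\gamma\}$: each is a translation of the index (by $-\epsilon_z$ and $\varepsilon_{(i,j)}=\epsilon_i-\epsilon_j$ respectively), so as operators on this family all the $L_z$'s and $R_{ij}$'s commute. In particular $\prod_{z\in M}(1-L_z)$ and $\prod_{(i,j)\in\Psi}(1-R_{ij})^{-1}$ commute freely, and throughout all four parts it suffices to manipulate one of the two factors while leaving the other untouched. Well-definedness of the inverse product on a single $g_\gamma$ is handled by passing to the $k_\gamma$-expansion as in Remark~\ref{re:baseKata}, after which every expression is a finite linear combination and the identities below are purely formal.

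For part (a), since $\beta$ is removable, $\Psi\setminus\beta$ is a root ideal, so one may factor
\begin{equation*}
\prod_{(i,j)\in\Psi}(1-R_{ij})^{-1} = (1-R_\beta)^{-1}\prod_{(i,j)\in\Psi\setminus\beta}(1-R_{ij})^{-1}.
\end{equation*}
The elementary identity $(1-R_\beta)^{-1}=1+R_\beta(1-R_\beta)^{-1}$ rewrites this as $\prod_{(i,j)\in\Psi\setminus\beta}(1-R_{ij})^{-1}+R_\beta\prod_{(i,j)\in\Psi}(1-R_{ij})^{-1}$; applying to $g_\gamma$ and using $R_\beta g_\gamma=g_{\gamma+\varepsilon_\beta}$ together with the freedom to push $\prod_{z\in M}(1-L_z)$ through gives precisely $K(\Psi\setminus\beta;M;\gamma)+K(\Psi;M;\gamma+\varepsilon_\beta)$. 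Part (b) is the dual argument: since $\alpha$ is addable, $\prod_{(i,j)\in\Psi}(1-R_{ij})^{-1}=(1-R_\alpha)\prod_{(i,j)\in\Psi\cup\alpha}(1-R_{ij})^{-1}$, and expanding the outer factor $1-R_\alpha$ produces $K(\Psi\cup\alpha;M;\gamma)-K(\Psi\cup\alpha;M;\gamma+\varepsilon_\alpha)$.

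Parts (c) and (d) run the same game on the lowering factor and need no raising-operator identity at all. For (c), factor $(1-L_m)$ out of $\prod_{z\in M}(1-L_z)$ using $m\in M$ and expand it as $1-L_m$; applied to $g_\gamma$ and using $L_m g_\gamma=g_{\gamma-\epsilon_m}$ this gives $K(\Psi;M\setminus m;\gamma)-K(\Psi;M\setminus m;\gamma-\epsilon_m)$. For (d), any $m\in[\ell]$ works: multiplying the trivial identity $1=(1-L_m)+L_m$ by $\prod_{z\in M}(1-L_z)$ yields
\begin{equation*}
\prod_{z\in M}(1-L_z) \;=\; \prod_{z\in M\sqcup m}(1-L_z)\;+\;L_m\prod_{z\in M}(1-L_z),
\end{equation*}
and applying both sides to $g_\gamma$ and composing with the raising factor gives the stated decomposition $K(\Psi;M\sqcup m;\gamma)+K(\Psi;M;\gamma-\epsilon_m)$.

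The entire lemma is thus a routine bookkeeping with formal series in the $R_{ij}$ and $L_z$; the only step deserving any real attention is verifying that $\Psi\setminus\beta$ (resp. $\Psi\cup\alpha$) is again a root ideal so that the factored product is itself a legitimate Katalan raising product, but this is precisely the definition of $\beta$ being removable (resp. $\alpha$ being addable). No combinatorial obstacle arises.
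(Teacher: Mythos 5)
Your proof is correct. The paper does not prove this lemma at all — it is imported verbatim from \cite[Proposition 3.9]{BMS} — and your argument (commuting translation operators, the identities $(1-R_\beta)^{-1}=1+R_\beta(1-R_\beta)^{-1}$ and $1=(1-L_m)+L_m$, plus the observation that removability/addability is exactly what makes the factored products legitimate Katalan raising products) is the standard one underlying the cited result, with the convergence issue correctly deferred to the finite $k_\gamma$-expansion of Remark~\ref{re:baseKata}.
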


\begin{defn}
A root ideal $\Psi\subseteq\Delta_{\ell}^{+}$ is said to have
\begin{enumerate}
\item a {\bf wall} in rows $r,r+1$ if rows $r,r+1$ have the same length;

\item a {\bf ceiling} in columns $c,c+1$ if columns $c,c+1$ have the same length;

\item a {\bf mirror} in rows $r,r+1$ if $\Psi$ has removable roots $(r,c)$ and $(r+1,c+1)$ for some $c\in[r+2,\ell-1]$;

\item
the {\bf bottom} in row $\bott$ if $(\bott,p)\in\Psi$ for some $p\in [\ell]$ and $(\bott+1,q)\notin\Psi$ for all $q\in [\ell]$.
In particular, denote by $\bott_\lambda$ the bottom of the root ideal $\dkl$ in~(\ref{eq:dkl}). \mlabel{it:bott}
\end{enumerate}
\mlabel{defn:bott}
\end{defn}

The following are the two key Mirror Lemmas, which will be used frequently in the sequel.

\begin{lemma}$(${\bf Mirror Lemma}, \cite[Lemma 4.6]{BMS}$)$
Let $\Psi\subseteq\Delta_{\ell}^{+}$ be a root ideal, $M$ a multiset with \textup{supp}$(M)\subseteq [\ell]$ and $\gamma\in\mathbb{Z}^{\ell}$. Let $1\le y\le z<\ell$ be indices in the same bounce path  of $\Psi$ satisfying
\begin{enumerate}
\item $\Psi$ has a ceiling in columns $y,y+1$;
\item $\Psi$ has a mirror in rows $x,x+1$ for each $x\in\textup{path}_{\Psi}(y,\textup{up}_{\Psi}(z))$;
\item $\Psi$ has a wall in rows $z,z+1$;
\item $\gamma_{x}=\gamma_{x+1}$ for all $x\in\textup{path}_{\Psi}(y,\textup{up}_{\Psi}(z))$;
\item $\gamma_{z}+1=\gamma_{z+1}$;
\item $m_{M}(x)+1=m_{M}(x+1)$ for all $x\in\textup{path}_{\Psi}(\textup{down}_{\Psi}(y),z)$.
\end{enumerate}
If $m_{M}(y)+1=m_{M}(y+1)$, then $K(\Psi;M;\gamma)=0$; if $m_{M}(y)=m_{M}(y+1)$, then $K(\Psi;M;\gamma)=K(\Psi;M;\gamma-\epsilon_{z+1})$.
\mlabel{lem:mirr1}
\end{lemma}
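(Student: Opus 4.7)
The plan is to induct on $n := |\textup{path}_{\Psi}(y, z)|$, using the recurrences of Lemma~\mref{lem:relk} to ``walk down'' the bounce path from $y$ to $z$. At each interior node $x \in \textup{path}_{\Psi}(y, \textup{up}_{\Psi}(z))$, the mirror at rows $x, x+1$ provides a removable root that can be peeled off via Lemma~\mref{lem:relk}(a), while the condition $m_M(x)+1 = m_M(x+1)$ together with Lemma~\mref{lem:relk}(c) lets us absorb the excess multiplicity and push the problem one step further down the path. The hypotheses (1)--(6) are set up precisely so that this ``walking'' keeps producing Katalan functions of the same form but with a strictly shorter bounce path.

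For the base case $n = 2$ (so $y = \textup{up}_{\Psi}(z)$), the root ideal is essentially rigid at the top: the mirror in rows $y, y+1$ supplies removable roots $(y, z)$ and $(y+1, z+1)$, the ceiling at columns $y, y+1$ pins down the column lengths, and the wall at rows $z, z+1$ pairs with $\gamma_{z}+1 = \gamma_{z+1}$. I apply Lemma~\mref{lem:relk}(a) at $(y, z)$, then apply Lemma~\mref{lem:relk}(c) to the element $z+1 \in M$ (which is present thanks to $m_{M}(z)+1 = m_{M}(z+1)$). Expanding and using the ceiling to match up the two resulting Katalan functions produces a telescoping sum that vanishes when $m_{M}(y)+1 = m_{M}(y+1)$ and collapses to $K(\Psi; M; \gamma - \epsilon_{z+1})$ when $m_{M}(y) = m_{M}(y+1)$.

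For the inductive step, let $u := \textup{down}_{\Psi}(y)$. The mirror at rows $y, y+1$ provides a removable root $\alpha = (y, u)$. Apply Lemma~\mref{lem:relk}(a) to write
\begin{equation*}
K(\Psi; M; \gamma) = K(\Psi \setminus \alpha; M; \gamma) + K(\Psi; M; \gamma + \varepsilon_{\alpha}).
\end{equation*}
Removing $\alpha$ disconnects $y$ from the remainder of the bounce path, and the ceiling at columns $y, y+1$ combined with $\gamma_{y} = \gamma_{y+1}$ and condition (6) lets me process the first term by applying Lemma~\mref{lem:relk}(c) to an element $y+1 \in M$, effectively eliminating row $y$ and columns $y, y+1$ from the data. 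What remains is a Katalan function on the shorter bounce path $\textup{path}_{\Psi}(u, z)$ that satisfies the Mirror Lemma hypotheses with $y$ replaced by $u$. The shifted term $K(\Psi; M; \gamma + \varepsilon_{\alpha})$ is then handled by another application of the recurrences to re-express it in a form where the inductive hypothesis (possibly on a slightly different multiset) applies.

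The main obstacle is the bookkeeping in the inductive step: after peeling off $\alpha$, stripping $y+1$ from $M$, and absorbing the $\gamma$-shift, one must verify that conditions (1)--(6) are genuinely restored with $y$ replaced by $u$. In particular, promoting the ceiling from columns $y, y+1$ to columns $u, u+1$ requires combining information from conditions (4) and (6) at the new top of the path, and the two cases $m_{M}(y)+1 = m_{M}(y+1)$ versus $m_{M}(y) = m_{M}(y+1)$ need to propagate coherently to the corresponding dichotomy at $u$. Once these compatibilities are verified, the inductive hypothesis yields either $0$ or the shift $K(\Psi; M; \gamma - \epsilon_{z+1})$, as claimed.
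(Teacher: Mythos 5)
This statement is not proved in the paper at all: it is quoted verbatim from \cite[Lemma 4.6]{BMS}, so there is no in-paper argument to compare against. Judged on its own, your proposal is a plausible plan in the right spirit (induction down the bounce path using the four recurrences of Lemma~\ref{lem:relk}), but it is a plan rather than a proof, and the places where you defer the work are exactly where the content of the lemma lives.

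Two gaps are genuine. First, in the inductive step you write $K(\Psi;M;\gamma)=K(\Psi\setminus\alpha;M;\gamma)+K(\Psi;M;\gamma+\varepsilon_{\alpha})$ with $\alpha=(y,u)$, $u=\textup{down}_{\Psi}(y)$, and then say the second term is ``handled by another application of the recurrences.'' But $\gamma+\varepsilon_{\alpha}=\gamma+\epsilon_{y}-\epsilon_{u}$ destroys hypothesis (d) at both $y$ and $u$, so the inductive hypothesis does not apply to it, and you never show that this term vanishes or recombines; since this is the term that carries the dichotomy between $K=0$ and $K=K(\Psi;M;\gamma-\epsilon_{z+1})$, omitting it leaves the proof without its engine. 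Second, you explicitly flag as ``the main obstacle'' the verification that conditions (a)--(f) are restored with $y$ replaced by $u$ --- in particular promoting the ceiling from columns $y,y+1$ to columns $u,u+1$ and propagating the two multiplicity cases --- and then assert the conclusion conditional on that verification. The base case suffers from the same problem: ``a telescoping sum that vanishes'' is asserted, not computed, and the true base case is $y=z$ (path of length one, with (b), (d), (f) vacuous), not $y=\textup{up}_{\Psi}(z)$. As it stands the proposal would need all of these computations supplied before it could be accepted; for a complete argument one should consult the proof of Lemma 4.6 in \cite{BMS}.
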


\begin{lemma}\label{lem:mirr2}
	$(${\bf Mirror Straightening Lemma}, \cite[Lemma 4.10]{BMS}$)$
Let $\Psi\subseteq\Delta_{\ell}^{+}$ be a root ideal, $M$ a multiset with \textup{supp}$(M)\subseteq [\ell]$ and $\gamma\in\mathbb{Z}^{\ell}$. Let $1\le y\le z<\ell$ be indices in the same bounce path  of $\Psi$. Suppose that
\begin{enumerate}
\item $\Psi$ has an addable root $\alpha=(\textup{up}_{\Psi}(y+1),y)$ and a removable root $\beta=((\textup{up}_{\Psi}(y+1),y+1))$;
\item $\Psi$ has a mirror in rows $x,x+1$ for each $x\in\textup{path}_{\Psi}(y,\textup{up}_{\Psi}(z))$;
\item $\Psi$ has a wall in rows $z,z+1$;
\item $m_{M}(y)=m_{M}(y+1)$;
\item $m_{M}(x)+1=m_{M}(x+1)$ for all $x\in\textup{path}_{\Psi}(\textup{down}_{\Psi}(y),z)$;
\item $\gamma_{x}=\gamma_{x+1}$ for all $x\in\textup{path}_{\Psi}(y,\textup{up}_{\Psi}(z))$;
\item $\gamma_{z}+1=\gamma_{z+1}$.
    \end{enumerate}
Then
\begin{equation*}
K(\Psi;M;\gamma)=
K(\Psi\cup\alpha;M\sqcup(y+1);\gamma+\epsilon_{\text{\rm up}_{\Psi}(y+1)}
-\epsilon_{z+1})+K(\Psi;M;\gamma-\epsilon_{z+1}).
\end{equation*}
\end{lemma}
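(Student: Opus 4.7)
Set $u:=\textup{up}_{\Psi}(y+1)$. Following the strategy of the Mirror Lemma (Lemma~\ref{lem:mirr1}), I would combine the four splitting identities of Lemma~\ref{lem:relk} with two applications of the Mirror Lemma itself: one to the original data and one to a carefully chosen auxiliary Katalan function that vanishes.

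I would begin by applying Lemma~\ref{lem:relk}(b) at the addable root $\alpha=(u,y)$ to write
\begin{equation*}
K(\Psi;M;\gamma)=K(\Psi\cup\alpha;M;\gamma)-K(\Psi\cup\alpha;M;\gamma+\varepsilon_\alpha).
\end{equation*}
The key observation is that in $\Psi\cup\alpha$, columns $y$ and $y+1$ both terminate at row $u$: removability of $\beta=(u,y+1)$ in $\Psi$ forces column $y$ of $\Psi$ to stop at row $u-1$, so filling in $\alpha$ equalizes the column lengths. Thus $\Psi\cup\alpha$ acquires a ceiling in columns $y,y+1$, while the wall in rows $z,z+1$, the mirrors along $\textup{path}(y,\textup{up}(z))$, the content equalities $\gamma_x=\gamma_{x+1}$ and $\gamma_z+1=\gamma_{z+1}$, and the multiplicity constraints on $M$ all transfer verbatim, since $\alpha$ sits strictly above and to the left of the relevant cells and the bounce graph of $\Psi\cup\alpha$ only reroutes $u$ from the bounce path of $y+1$ to that of $y$. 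With $m_M(y)=m_M(y+1)$, the Mirror Lemma then yields
\begin{equation*}
K(\Psi\cup\alpha;M;\gamma)=K(\Psi\cup\alpha;M;\gamma-\epsilon_{z+1}).
\end{equation*}

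To handle the remaining term $K(\Psi\cup\alpha;M;\gamma+\varepsilon_\alpha)$, I would expand $K(\Psi;M;\gamma-\epsilon_{z+1})$ on the target right-hand side by Lemma~\ref{lem:relk}(b) at $\alpha$ again, and use Lemma~\ref{lem:relk}(d) at $m=z+1$ to combine the differences at $\gamma+\varepsilon_\alpha$ and $\gamma+\varepsilon_\alpha-\epsilon_{z+1}$. After these substitutions and cancellations, the statement reduces to the key identity
\begin{equation*}
K(\Psi\cup\alpha;M\sqcup(y+1);\gamma+\epsilon_u-\epsilon_{z+1})+K(\Psi\cup\alpha;M\sqcup(z+1);\gamma+\epsilon_u-\epsilon_y)=0.
\end{equation*}
To prove this identity, I would apply the Mirror Lemma to the auxiliary Katalan function $K(\Psi\cup\alpha;M\sqcup(y+1);\gamma+\epsilon_u)$: the shift by $\epsilon_u$ preserves the path-content equalities because $u\notin\textup{path}(y,\textup{up}(z))$, the wall-content $\gamma_z+1=\gamma_{z+1}$ persists, the multiplicity conditions along the down-path still hold, and $M\sqcup(y+1)$ now satisfies $m(y)+1=m(y+1)$, so the Mirror Lemma forces this function to equal $0$. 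Splitting this zero successively by Lemma~\ref{lem:relk}(d) at $m=z+1$ and Lemma~\ref{lem:relk}(c) at $m=y+1$, together with Lemma~\ref{lem:relk}(a) at the removable root $\alpha$ of $\Psi\cup\alpha$ (used to convert between $\Psi$ and $\Psi\cup\alpha$), produces the two summands of the displayed identity.

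The main obstacle will be the bookkeeping required to verify that the Mirror Lemma hypotheses persist through the passage $\Psi\leadsto\Psi\cup\alpha$ and through each of the content shifts $\gamma\mapsto\gamma+\varepsilon_\alpha,\gamma+\epsilon_u$ and multiset augmentations by $y+1$ or $z+1$. One must track that the bounce graph of $\Psi\cup\alpha$ reorganizes so that $\textup{up}_{\Psi\cup\alpha}(z)=\textup{up}_{\Psi}(z)$ and the entire down-path from $y$ is unchanged; and one must then check, for each modified content vector, that both the path-content equalities $\gamma'_x=\gamma'_{x+1}$ and the wall-content $\gamma'_z+1=\gamma'_{z+1}$ remain intact at exactly the indices required.
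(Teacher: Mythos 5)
This lemma is imported verbatim from \cite[Lemma~4.10]{BMS} and the paper gives no proof of it, so I am judging your plan against what a complete argument requires. Your architecture is sound and is essentially the right one: writing $u:=\textup{up}_{\Psi}(y+1)$ and $\tilde\Psi:=\Psi\cup\alpha$, you correctly observe that $\tilde\Psi$ acquires a ceiling in columns $y,y+1$ while all other Mirror Lemma hypotheses transfer (the bounce path from $y$ down to $z$ and the down-path multiplicity conditions are undisturbed, using that $\textup{down}_{\Psi}(y)\neq y+1$ by hypothesis (1)); the two Mirror Lemma applications, $K(\tilde\Psi;M;\gamma)=K(\tilde\Psi;M;\gamma-\epsilon_{z+1})$ and $K(\tilde\Psi;M\sqcup(y+1);\gamma+\epsilon_{u})=0$, are both correctly justified; and your reduction of the lemma to the key identity $K(\tilde\Psi;M\sqcup(y+1);\gamma+\epsilon_{u}-\epsilon_{z+1})+K(\tilde\Psi;M\sqcup(z+1);\gamma+\epsilon_{u}-\epsilon_{y})=0$ is a correct computation with Lemma~\ref{lem:relk}(b),(d).

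The gap is in the final step. Splitting the vanishing function by Lemma~\ref{lem:relk}(d) at $m=z+1$ gives $K(\tilde\Psi;M\sqcup(y+1);\gamma+\epsilon_{u}-\epsilon_{z+1})=-K(\tilde\Psi;M\sqcup(y+1)\sqcup(z+1);\gamma+\epsilon_{u})$, so the key identity is equivalent, after one more application of Lemma~\ref{lem:relk}(c)/(d) with $N:=M\sqcup(z+1)$ and $\delta:=\gamma+\epsilon_{u}$, to $K(\tilde\Psi;N;\delta)=K(\tilde\Psi;N;\delta-\epsilon_{y})+K(\tilde\Psi;N;\delta-\epsilon_{y+1})$. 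This is a genuinely new mirror-type straightening statement: it involves the enlarged multiset $N$ and content shifts at $y$ and $y+1$ that are not reachable from the single relation $K(\tilde\Psi;M;\gamma+\epsilon_{u})=K(\tilde\Psi;M;\gamma+\epsilon_{u}-\epsilon_{y+1})$ by the four splitting identities, and Lemma~\ref{lem:relk}(a) at the removable root $\alpha$ of $\tilde\Psi$ only introduces further $\epsilon_{u}$-shifted terms rather than closing the loop. A related caution: $K(\tilde\Psi;M\sqcup(y+1);\gamma+\epsilon_{u})=0$ is an identity of symmetric functions, and distinct Katalan expressions can represent the same symmetric function, so one may not propagate this vanishing by formally applying $L_{z+1}$ or enlarging the multiset; each application of Lemma~\ref{lem:relk} to it yields exactly one linear relation and no more. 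The residual identity therefore needs its own inductive argument in the style of the proof of the Mirror Lemma, which is the substantive content your sketch is missing.
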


We now collect some facts on root ideals for later use.

\begin{prop}
	\mlabel{prop:factroot}
	Let $\lambda\in\pkl$.
	Then each row $z\in[\bott_\lambda]$ has a removable root $(z, \down{\Delta^{k}(\lambda)}{z})$ with $\down{\Delta^{k}(\lambda)}{z} = k-\lambda_{z}+z+1$ in $\Delta^{k}(\lambda)$.
\end{prop}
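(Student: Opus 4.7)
The plan is to unwind the definitions of $\Delta^k(\lambda)$, of removable root, and of the operator $\down{\cdot}{\cdot}$, and to verify the claim by a direct calculation with the row–start columns.

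First I would set $c_i := k-\lambda_i+i+1$, so by the defining inequality $k-\lambda_i+i < j$ from~\eqref{eq:dkl} the box $(i,c_i)$ is the leftmost (smallest-column) element of row $i$ of $\Delta^k(\lambda)$, provided that row is nonempty. Row $i$ is nonempty in $\Delta^k(\lambda)$ iff $c_i \le \ell$, and I would note that this condition is monotone in $i$: since $\lambda$ is a partition and $\lambda_{i+1} \le \lambda_i$, if row $i+1$ is nonempty then so is row $i$. Hence the set of nonempty rows of $\Delta^k(\lambda)$ is exactly $[\bott_\lambda]$ in the sense of Definition~\ref{defn:bott}\ref{it:bott}, and for every $z\in[\bott_\lambda]$ the box $(z,c_z)$ lies in $\Delta^k(\lambda)$.

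Next I would show that $(z,c_z)$ is minimal in $\Delta^k(\lambda)$ under the root ideal partial order $(a,b)\le(c,d) \Leftrightarrow a\ge c,\ b\le d$, which is equivalent to being removable. Inside row $z$ itself there is nothing strictly to the left of $(z,c_z)$ by construction. For any row $z'>z$, the leftmost entry is $(z',c_{z'})$, and I would compute
\[
c_{z'}-c_z = (\lambda_z-\lambda_{z'}) + (z'-z) \ge 1,
\]
using $\lambda_z\ge\lambda_{z'}$ (partition property) and $z'>z$. Hence no box in row $z'$ has column $\le c_z$, so no element of $\Delta^k(\lambda)\setminus(z,c_z)$ is $\le (z,c_z)$; thus $(z,c_z)$ is minimal, i.e.\ removable.

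Finally, once $(z,c_z)$ is shown to be a removable root of $\Delta^k(\lambda)$, the definition of $\down{\Delta^k(\lambda)}{z}$ as the unique column $j$ with $(z,j)$ removable forces $\down{\Delta^k(\lambda)}{z}=c_z=k-\lambda_z+z+1$, completing the proof. No substantive obstacle is expected; the only point requiring mild care is confirming that the rows of $\Delta^k(\lambda)$ form a consecutive initial segment $[\bott_\lambda]$, which follows cleanly from the partition inequality $\lambda_z\ge\lambda_{z+1}$.
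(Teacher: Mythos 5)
Your proof is correct and follows essentially the same route as the paper: identify $(z,k-\lambda_z+z+1)$ as the leftmost root of row $z$ via \eqref{eq:dkl} and use the partition inequality $\lambda_z\ge\lambda_{z'}$ to show every lower row starts strictly further right, so the root is minimal and hence removable. The only cosmetic difference is that you compare row $z$ with all rows $z'>z$ (and explicitly note the nonempty rows form the initial segment $[\bott_\lambda]$), whereas the paper checks only row $z+1$, which suffices since leftmost columns of a root ideal are weakly increasing down the rows.
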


\begin{proof}
For each $z\in[\bott_\lambda]$, by~(\mref{eq:dkl}), the coordinate of the leftmost root in row $z$ (resp. $z+1$) is $(z,k-\lambda_{z}+z+1)$ (resp. $(z+1,k-\lambda_{z+1}+(z+1)+1)$). Since
$$ k-\lambda_{z}+z+1\leq k-\lambda_{z+1}+z+1< k-\lambda_{z+1}+(z+1)+1,$$
the root $(z,k-\lambda_{z}+z+1)$ is removable of $\Delta^{k}(\lambda)$.
\end{proof}

As a consequence, we have

\begin{coro}
\mlabel{coro:delkk1}
Let $\lambda\in\pkl$. Then
\begin{equation*}
\Delta^{k+1}(\lambda) = \dkl\setminus\{ (z,\down{\Delta^{k}(\lambda)}{z})\mid z\in[\bott_\lambda] \}.
\end{equation*}
\end{coro}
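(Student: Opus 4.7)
The plan is to reduce Corollary~\mref{coro:delkk1} to an elementary computation on the index inequalities defining $\dkl$ in~(\mref{eq:dkl}), followed by a direct appeal to Proposition~\mref{prop:factroot}.

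First, I would unpack the definition: a pair $(i,j)\in\Delta_{\ell}^{+}$ lies in $\dkl$ iff $k-\lambda_{i}+i<j$, while it lies in $\Delta^{k+1}(\lambda)$ iff $k-\lambda_{i}+i+1<j$. Comparing these two inequalities, a pair belongs to $\dkl\setminus\Delta^{k+1}(\lambda)$ exactly when $j=k-\lambda_{i}+i+1$; that is, exactly when $(i,j)$ is the leftmost root of row $i$ in $\dkl$.

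Next, I would identify which rows actually contain a root. Because $\dkl$ is an upper order ideal of $\Delta_{\ell}^{+}$, whenever row $i+1$ contains a root then so does row $i$; combined with Definition~\mref{defn:bott}~\mref{it:bott}, this shows that row $i$ carries at least one root if and only if $i\in[\bott_{\lambda}]$. Consequently,
\begin{equation*}
\dkl\setminus\Delta^{k+1}(\lambda)=\bigl\{\,(i,\,k-\lambda_{i}+i+1)\,\bigm|\, i\in[\bott_{\lambda}]\,\bigr\}.
\end{equation*}

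Finally, Proposition~\mref{prop:factroot} identifies the leftmost root in row $z\in[\bott_{\lambda}]$ precisely as $(z,\down{\dkl}{z})$ with $\down{\dkl}{z}=k-\lambda_{z}+z+1$; substituting into the display above yields the claimed equality. The whole argument is routine index bookkeeping, so no serious obstacle arises; the only point requiring a moment of care is the boundary case $z=\bott_{\lambda}$, which is taken care of by the definition of $\bott_{\lambda}$ together with the hypothesis $\lambda\in\pkl$ (ensuring $\lambda_{z}\leq k$ so that the putative leftmost root actually lies in $\Delta_{\ell}^{+}$).
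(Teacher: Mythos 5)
Your proposal is correct and follows essentially the same route as the paper: both arguments observe that $\dkl\setminus\Delta^{k+1}(\lambda)$ consists exactly of the leftmost roots $(z,k-\lambda_z+z+1)$ of the rows $z$ that actually carry a root, note that these rows are precisely $z\in[\bott_\lambda]$, and then invoke Proposition~\mref{prop:factroot} to identify the leftmost root with $(z,\down{\dkl}{z})$. Your version just spells out the index bookkeeping a bit more explicitly; no gap.
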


\begin{proof}
By  ~(\mref{eq:dkl}),
for each row $z\in [\ell]$, the leftmost root in $\Delta^{k}(\lambda)$ (resp. $\Delta^{k+1}(\lambda)$) is $(z, k -\lambda_z + z+1)$ (resp. $(z, (k+1) -\lambda_z + z+1)$) if $k -\lambda_z + z+1\leq\ell$ (resp. $(k+1) -\lambda_z + z+1\leq\ell$).
Note that
$k -\lambda_z + z+1\leq\ell$ if and only if $z\in[\bott_\lambda]$.
Then Proposition~\mref{prop:factroot} gives
\begin{equation*}
\Delta^{k+1}(\lambda) = \dkl\setminus\{ (z,\down{\Delta^{k}(\lambda)}{z})\mid z\in[\bott_\lambda] \}.
\end{equation*}
This completes the proof.
\end{proof}

\section{The lowering operator $L_z$ acting on the $K$-$k$-Schur function $g_\lambda^{(k)}$}
\mlabel{sec:lower}
This section is devoted to a careful study of the effect of the lowering operator $L_z, z\in [\ell],$ acting on the $K$-$k$-Schur function $g_\lambda^{(k)}$. The analysis is divided into the two cases when $z\in[\bott_\lambda]$ and when $z\in [\bott_\lambda+1,\ell]$, for the bottom $\bott_\lambda$ of the root ideal $\dkl$ in Definition~\mref{defn:bott}~\mref{it:bott}. The main results are summarized in Theorem~\mref{thm:Lgntog}.

By~$(\mref{eq:LKatatoKata})$, the action of $L_z$ on $K(\Delta^{k}(\lambda);\Delta^{k+1}(\lambda);\lambda)$ replaces the partition $\lambda$ by $\lambda - \epsilon_z$, which may no longer be a partition. So we recall the more general notion~\mcite{BMPS19}
\vskip -0.1in
$$
\tilde{\text{P}}_{\ell}^{k}:=\{ \mu\in\mathbb{Z}_{\le k}^{\ell} \mid \mu_{1}+\ell-1\ge \mu_{2}+\ell-2\ge\cdots\ge\mu_{\ell} \},
$$ containing $\pkl$ as a subset. Recently, the set $\tilde{\text{P}}_{\ell}^{k}$ was employed to study the alternating dual Pieri rule conjecture and $k$-branching conjecture of closed $k$-Schur Katalan functions~\mcite{FG}.

\begin{remark}
Let $\mu\in\tilde{\text{P}}_{\ell}^{k}$.
\begin{enumerate}
\item The set $\Delta^{k}(\mu):=\{(i,j)\in\Delta_{\ell}^{+} \mid k-\mu_{i}+i<j \}$ and $\Delta^{k+1}(\mu):=\{(i,j)\in\Delta_{\ell}^{+} \mid k+1-\mu_{i}+i<j \}$ are root ideals~\cite[p.~943]{BMPS19}.

\item In $\Delta^{k}(\mu)$,
for $z\in[\ell-1]$, if $\mu_{z}+1=\mu_{z+1}$, then there is a wall in rows $z,z+1$; if $\mu_{z}=\mu_{z+1}$ and $z\in[\bott_\mu-1]$, then there is a mirror in rows $z,z+1$; if $\mu_{z}>\mu_{z+1}$ and $\down{\dkmu}{z}+1\leq\ell$, then there is a ceiling in columns
    $\textup{down}_{\Delta^{k}(\mu)}(z),\textup{down}_{\Delta^{k}(\mu)}(z)+1$~\cite[Remark 5.1]{BMS}.

\item
Suppose that there exists $z\in[\ell-1]$ such that $\mu_{z}+1=\mu_{z+1}$ and $\mu_x\geq\mu_{x+1}$ for all $x\in[\ell-1]\setminus z$. Consider the relation between $\textup{top}_{\Delta^{k}(\mu)}(z)$ and $\textup{top}_{\Delta^{k}(\mu)}(z+1)$ in $\Delta^{k}(\mu)$.
Blasiak et al.~\cite[Proposition~5.2]{BMS} obtained $\textup{top}_{\Delta^{k}(\mu)}(z)\neq\textup{top}_{\Delta^{k}(\mu)}(z+1)$ and subdivided into the following three cases by involving the multiset $L(\Delta^{k+1}(\mu))$:
\begin{align*}
&(1) \quad y_1:=\textup{top}_{\Delta^{k}(\mu)}(z)>\textup{top}_{\Delta^{k}(\mu)}(z+1),\\
&(2) \quad y_2:=\textup{top}_{\Delta^{k}(\mu)}(z)= \textup{top}_{\Delta^{k}(\mu)}(z+1)-1,\\
&(3) \quad y_3:=\textup{top}_{\Delta^{k}(\mu)}(z+1)-1> \textup{top}_{\Delta^{k}(\mu)}(z).
\end{align*}
In each of the above Case~(i) with ${\rm i}=1,2,3$, $\Delta^{k}(\mu)$ has a mirror in rows $x,x+1$ for every $x\in\textup{path}_{\Delta^{k}(\mu)}   (y_i,\textup{up}_{\Delta^{k}(\mu)}(z))$.  See Fig.~i (${\rm i}=1,2,3$) for an illustration of Case~(i).
\begin{equation*}
\begin{tikzpicture}[scale=.25,line width=0.5pt,baseline=(a.base)]
\draw (0,0) rectangle (1,-1);
\draw (1,0) rectangle (2,-1);
\filldraw[red,draw=black] (2,0) rectangle (3,-1);
\filldraw[red,draw=black] (3,0) rectangle (4,-1);\node at(3.5,-0.5){\tiny\( \bullet \)};
\filldraw[red,draw=black] (4,0) rectangle (5,-1);\node at(4.5,-0.5){\tiny\( \bullet \)};
\filldraw[red,draw=black] (5,0) rectangle (6,-1);\node at(5.5,-0.5){\tiny\( \bullet \)};
\filldraw[red,draw=black] (6,0) rectangle (7,-1);\node at(6.5,-0.5){\tiny\( \bullet \)};
\filldraw[red,draw=black] (7,0) rectangle (8,-1);\node at(7.5,-0.5){\tiny\( \bullet \)};
\filldraw[red,draw=black] (8,0) rectangle (9,-1);\node at(8.5,-0.5){\tiny\( \bullet \)};
\filldraw[red,draw=black] (9,0) rectangle (10,-1);\node at(9.5,-0.5){\tiny\( \bullet \)};
\filldraw[red,draw=black] (10,0) rectangle (11,-1);\node at(10.5,-0.5){\tiny\( \bullet \)};
\filldraw[red,draw=black] (11,0) rectangle (12,-1);\node at(11.5,-0.5){\tiny\( \bullet \)};
\filldraw[red,draw=black] (12,0) rectangle (13,-1);\node at(12.5,-0.5){\tiny\( \bullet \)};
\filldraw[red,draw=black] (13,0) rectangle (14,-1);\node at(13.5,-0.5){\tiny\( \bullet \)};
\filldraw[red,draw=black] (14,0) rectangle (15,-1);\node at(14.5,-0.5){\tiny\( \bullet \)};
\draw (0,-1) rectangle (1,-2);
\draw (1,-1) rectangle (2,-2);
\draw (2,-1) rectangle (3,-2);
\filldraw[red,draw=black] (3,-1) rectangle (4,-2);
\filldraw[red,draw=black] (4,-1) rectangle (5,-2);\node at(4.5,-1.5){\tiny\( \bullet \)};
\filldraw[red,draw=black] (5,-1) rectangle (6,-2);\node at(5.5,-1.5){\tiny\( \bullet \)};
\filldraw[red,draw=black] (6,-1) rectangle (7,-2);\node at(6.5,-1.5){\tiny\( \bullet \)};
\filldraw[red,draw=black] (7,-1) rectangle (8,-2);\node at(7.5,-1.5){\tiny\( \bullet \)};
\filldraw[red,draw=black] (8,-1) rectangle (9,-2);\node at(8.5,-1.5){\tiny\( \bullet \)};
\filldraw[red,draw=black] (9,-1) rectangle (10,-2);\node at(9.5,-1.5){\tiny\( \bullet \)};
\filldraw[red,draw=black] (10,-1) rectangle (11,-2);\node at(10.5,-1.5){\tiny\( \bullet \)};
\filldraw[red,draw=black] (11,-1) rectangle (12,-2);\node at(11.5,-1.5){\tiny\( \bullet \)};
\filldraw[red,draw=black] (12,-1) rectangle (13,-2);\node at(12.5,-1.5){\tiny\( \bullet \)};
\filldraw[red,draw=black] (13,-1) rectangle (14,-2);\node at(13.5,-1.5){\tiny\( \bullet \)};
\filldraw[red,draw=black] (14,-1) rectangle (15,-2);\node at(14.5,-1.5){\tiny\( \bullet \)};
\draw (0,-2) rectangle (1,-3);
\draw (1,-2) rectangle (2,-3);
\draw (2,-2) rectangle (3,-3);
\draw (3,-2) rectangle (4,-3);
\filldraw[blue!60,draw=black] (4,-2) rectangle (5,-3);
\filldraw[blue!60,draw=black] (5,-2) rectangle (6,-3);\node at(5.5,-2.5){\tiny\( \bullet \)};
\filldraw[red,draw=black] (6,-2) rectangle (7,-3);\node at(6.5,-2.5){\tiny\( \bullet \)};
\filldraw[red,draw=black] (7,-2) rectangle (8,-3);\node at(7.5,-2.5){\tiny\( \bullet \)};
\filldraw[red,draw=black] (8,-2) rectangle (9,-3);\node at(8.5,-2.5){\tiny\( \bullet \)};
\filldraw[red,draw=black] (9,-2) rectangle (10,-3);\node at(9.5,-2.5){\tiny\( \bullet \)};
\filldraw[red,draw=black] (10,-2) rectangle (11,-3);\node at(10.5,-2.5){\tiny\( \bullet \)};
\filldraw[red,draw=black] (11,-2) rectangle (12,-3);\node at(11.5,-2.5){\tiny\( \bullet \)};
\filldraw[red,draw=black] (12,-2) rectangle (13,-3);\node at(12.5,-2.5){\tiny\( \bullet \)};
\filldraw[red,draw=black] (13,-2) rectangle (14,-3);\node at(13.5,-2.5){\tiny\( \bullet \)};
\filldraw[red,draw=black] (14,-2) rectangle (15,-3);\node at(14.5,-2.5){\tiny\( \bullet \)};
\draw (0,-3) rectangle (1,-4);
\draw (1,-3) rectangle (2,-4);
\draw (2,-3) rectangle (3,-4);
\draw (3,-3) rectangle (4,-4);
\draw (4,-3) rectangle (5,-4);
\draw (5,-3) rectangle (6,-4);
\filldraw[blue!60,draw=black] (6,-3) rectangle (7,-4);
\filldraw[red,draw=black] (7,-3) rectangle (8,-4);\node at(7.5,-3.5){\tiny\( \bullet \)};
\filldraw[red,draw=black] (8,-3) rectangle (9,-4);\node at(8.5,-3.5){\tiny\( \bullet \)};
\filldraw[red,draw=black] (9,-3) rectangle (10,-4);\node at(9.5,-3.5){\tiny\( \bullet \)};
\filldraw[red,draw=black] (10,-3) rectangle (11,-4);\node at(10.5,-3.5){\tiny\( \bullet \)};
\filldraw[red,draw=black] (11,-3) rectangle (12,-4);\node at(11.5,-3.5){\tiny\( \bullet \)};
\filldraw[red,draw=black] (12,-3) rectangle (13,-4);\node at(12.5,-3.5){\tiny\( \bullet \)};
\filldraw[red,draw=black] (13,-3) rectangle (14,-4);\node at(13.5,-3.5){\tiny\( \bullet \)};
\filldraw[red,draw=black] (14,-3) rectangle (15,-4);\node at(14.5,-3.5){\tiny\( \bullet \)};
\draw (0,-4) rectangle (1,-5);
\draw (1,-4) rectangle (2,-5);
\draw (2,-4) rectangle (3,-5);
\draw (3,-4) rectangle (4,-5);
\draw (4,-4) rectangle (5,-5);
\draw (5,-4) rectangle (6,-5);
\draw (6,-4) rectangle (7,-5);
\filldraw[red,draw=black] (7,-4) rectangle (8,-5);
\filldraw[red,draw=black] (8,-4) rectangle (9,-5);\node at(8.5,-4.5){\tiny\( \bullet \)};
\filldraw[red,draw=black] (9,-4) rectangle (10,-5);\node at(9.5,-4.5){\tiny\( \bullet \)};
\filldraw[red,draw=black] (10,-4) rectangle (11,-5);\node at(10.5,-4.5){\tiny\( \bullet \)};
\filldraw[red,draw=black] (11,-4) rectangle (12,-5);\node at(11.5,-4.5){\tiny\( \bullet \)};
\filldraw[red,draw=black] (12,-4) rectangle (13,-5);\node at(12.5,-4.5){\tiny\( \bullet \)};
\filldraw[red,draw=black] (13,-4) rectangle (14,-5);\node at(13.5,-4.5){\tiny\( \bullet \)};
\filldraw[red,draw=black] (14,-4) rectangle (15,-5);\node at(14.5,-4.5){\tiny\( \bullet \)};
\draw (0,-5) rectangle (1,-6);
\draw (1,-5) rectangle (2,-6);
\draw (2,-5) rectangle (3,-6);
\draw (3,-5) rectangle (4,-6);
\draw (4,-5) rectangle (5,-6);
\draw (5,-5) rectangle (6,-6);
\draw (6,-5) rectangle (7,-6);
\draw (7,-5) rectangle (8,-6);
\draw (8,-5) rectangle (9,-6);
\filldraw[blue!60,draw=black] (9,-5) rectangle (10,-6);
\filldraw[red,draw=black] (10,-5) rectangle (11,-6);\node at(10.5,-5.5){\tiny\( \bullet \)};
\filldraw[red,draw=black] (11,-5) rectangle (12,-6);\node at(11.5,-5.5){\tiny\( \bullet \)};
\filldraw[red,draw=black] (12,-5) rectangle (13,-6);\node at(12.5,-5.5){\tiny\( \bullet \)};
\filldraw[red,draw=black] (13,-5) rectangle (14,-6);\node at(13.5,-5.5){\tiny\( \bullet \)};
\filldraw[red,draw=black] (14,-5) rectangle (15,-6);\node at(14.5,-5.5){\tiny\( \bullet \)};
\draw (0,-6) rectangle (1,-7);
\draw (1,-6) rectangle (2,-7);
\draw (2,-6) rectangle (3,-7);
\draw (3,-6) rectangle (4,-7);
\draw (4,-6) rectangle (5,-7);
\draw (5,-6) rectangle (6,-7);
\draw (6,-6) rectangle (7,-7);
\draw (7,-6) rectangle (8,-7);
\draw (8,-6) rectangle (9,-7);
\draw (9,-6) rectangle (10,-7);
\filldraw[blue!60,draw=black] (10,-6) rectangle (11,-7);
\filldraw[red,draw=black] (11,-6) rectangle (12,-7);\node at(11.5,-6.5){\tiny\( \bullet \)};
\filldraw[red,draw=black] (12,-6) rectangle (13,-7);\node at(12.5,-6.5){\tiny\( \bullet \)};
\filldraw[red,draw=black] (13,-6) rectangle (14,-7);\node at(13.5,-6.5){\tiny\( \bullet \)};
\filldraw[red,draw=black] (14,-6) rectangle (15,-7);\node at(14.5,-6.5){\tiny\( \bullet \)};
\draw (0,-7) rectangle (1,-8);
\draw (1,-7) rectangle (2,-8);
\draw (2,-7) rectangle (3,-8);
\draw (3,-7) rectangle (4,-8);
\draw (4,-7) rectangle (5,-8);
\draw (5,-7) rectangle (6,-8);
\draw (6,-7) rectangle (7,-8);
\draw (7,-7) rectangle (8,-8);
\draw (8,-7) rectangle (9,-8);
\draw (9,-7) rectangle (10,-8);
\draw (10,-7) rectangle (11,-8);
\draw (11,-7) rectangle (12,-8);
\filldraw[red,draw=black] (12,-7) rectangle (13,-8);
\filldraw[red,draw=black] (13,-7) rectangle (14,-8);\node at(13.5,-7.5){\tiny\( \bullet \)};
\filldraw[red,draw=black] (14,-7) rectangle (15,-8);\node at(14.5,-7.5){\tiny\( \bullet \)};
\draw (0,-8) rectangle (1,-9);
\draw (1,-8) rectangle (2,-9);
\draw (2,-8) rectangle (3,-9);
\draw (3,-8) rectangle (4,-9);
\draw (4,-8) rectangle (5,-9);
\draw (5,-8) rectangle (6,-9);
\draw (6,-8) rectangle (7,-9);
\draw (7,-8) rectangle (8,-9);
\draw (8,-8) rectangle (9,-9);
\draw (9,-8) rectangle (10,-9);
\draw (10,-8) rectangle (11,-9);
\draw (11,-8) rectangle (12,-9);
\draw (12,-8) rectangle (13,-9);
\filldraw[red,draw=black] (13,-8) rectangle (14,-9);
\filldraw[red,draw=black] (14,-8) rectangle (15,-9);\node at(14.5,-8.5){\tiny\( \bullet \)};
\draw (0,-9) rectangle (1,-10);
\draw (1,-9) rectangle (2,-10);
\draw (2,-9) rectangle (3,-10);
\draw (3,-9) rectangle (4,-10);
\draw (4,-9) rectangle (5,-10);
\draw (5,-9) rectangle (6,-10);
\draw (6,-9) rectangle (7,-10);
\draw (7,-9) rectangle (8,-10);
\draw (8,-9) rectangle (9,-10);
\draw (9,-9) rectangle (10,-10);
\draw (10,-9) rectangle (11,-10);
\draw (11,-9) rectangle (12,-10);
\draw (12,-9) rectangle (13,-10);
\draw (13,-9) rectangle (14,-10);
\filldraw[blue!60,draw=black] (14,-9) rectangle (15,-10);
\draw (0,-10) rectangle (1,-11);
\draw (1,-10) rectangle (2,-11);
\draw (2,-10) rectangle (3,-11);
\draw (3,-10) rectangle (4,-11);
\draw (4,-10) rectangle (5,-11);
\draw (5,-10) rectangle (6,-11);
\draw (6,-10) rectangle (7,-11);
\draw (7,-10) rectangle (8,-11);
\draw (8,-10) rectangle (9,-11);
\draw (9,-10) rectangle (10,-11);
\draw (10,-10) rectangle (11,-11);
\draw (11,-10) rectangle (12,-11);
\draw (12,-10) rectangle (13,-11);
\draw (13,-10) rectangle (14,-11);
\filldraw[blue!60,draw=black] (14,-10) rectangle (15,-11);
\draw (0,-11) rectangle (1,-12);
\draw (1,-11) rectangle (2,-12);
\draw (2,-11) rectangle (3,-12);
\draw (3,-11) rectangle (4,-12);
\draw (4,-11) rectangle (5,-12);
\draw (5,-11) rectangle (6,-12);
\draw (6,-11) rectangle (7,-12);
\draw (7,-11) rectangle (8,-12);
\draw (8,-11) rectangle (9,-12);
\draw (9,-11) rectangle (10,-12);
\draw (10,-11) rectangle (11,-12);
\draw (11,-11) rectangle (12,-12);
\draw (12,-11) rectangle (13,-12);
\draw (13,-11) rectangle (14,-12);
\draw (14,-11) rectangle (15,-12);
\draw (0,-12) rectangle (1,-13);
\draw (1,-12) rectangle (2,-13);
\draw (2,-12) rectangle (3,-13);
\draw (3,-12) rectangle (4,-13);
\draw (4,-12) rectangle (5,-13);
\draw (5,-12) rectangle (6,-13);
\draw (6,-12) rectangle (7,-13);
\draw (7,-12) rectangle (8,-13);
\draw (8,-12) rectangle (9,-13);
\draw (9,-12) rectangle (10,-13);
\draw (10,-12) rectangle (11,-13);
\draw (11,-12) rectangle (12,-13);
\draw (12,-12) rectangle (13,-13);
\draw (13,-12) rectangle (14,-13);
\draw (14,-12) rectangle (15,-13);
\draw (0,-13) rectangle (1,-14);
\draw (1,-13) rectangle (2,-14);
\draw (2,-13) rectangle (3,-14);
\draw (3,-13) rectangle (4,-14);
\draw (4,-13) rectangle (5,-14);
\draw (5,-13) rectangle (6,-14);
\draw (6,-13) rectangle (7,-14);
\draw (7,-13) rectangle (8,-14);
\draw (8,-13) rectangle (9,-14);
\draw (9,-13) rectangle (10,-14);
\draw (10,-13) rectangle (11,-14);
\draw (11,-13) rectangle (12,-14);
\draw (12,-13) rectangle (13,-14);
\draw (13,-13) rectangle (14,-14);
\draw (14,-13) rectangle (15,-14);
\draw (0,-14) rectangle (1,-15);
\draw (1,-14) rectangle (2,-15);
\draw (2,-14) rectangle (3,-15);
\draw (3,-14) rectangle (4,-15);
\draw (4,-14) rectangle (5,-15);
\draw (5,-14) rectangle (6,-15);
\draw (6,-14) rectangle (7,-15);
\draw (7,-14) rectangle (8,-15);
\draw (8,-14) rectangle (9,-15);
\draw (9,-14) rectangle (10,-15);
\draw (10,-14) rectangle (11,-15);
\draw (11,-14) rectangle (12,-15);
\draw (12,-14) rectangle (13,-15);
\draw (13,-14) rectangle (14,-15);
\draw (14,-14) rectangle (15,-15);
\node at(7.7,-16){\scriptsize\( \tiny{\textbf{Fig.~1.} }\) };
\draw[black,line width=0.5pt] (0,0)--(15,-15);
%
\draw[orange,line width=0.8pt] (9.5,-6)--(9.5,-9.5);
\draw[orange,line width=0.8pt] (9.5,-9.5)--(14,-9.5);
%
\draw[green,line width=0.8pt] (3.5,-2)--(3.5,-3.5);
\draw[green,line width=0.8pt] (3.5,-3.5)--(6,-3.5);
\draw[green,line width=0.8pt] (6.5,-4)--(6.5,-6.5);
\draw[green,line width=0.8pt] (6.5,-6.5)--(10,-6.5);
\draw[green,line width=0.8pt] (10.5,-7)--(10.5,-10.5);
\draw[green,line width=0.8pt] (10.5,-10.5)--(14,-10.5);
\draw[green,line width=0.8pt] (14.5,-11)--(14.5,-14.5);
\draw[green,line width=0.8pt] (14.5,-14.5)--(15,-14.5);
\draw[black,line width=0.8pt,->] (15,-9.5)--(15.8,-9.5);
\node at (16.1,-9.5){ \tiny{\textbf{$z$}} };
\draw[black,line width=0.8pt,->] (15,-10.5)--(15.8,-10.5);
\node at (16.9,-10.5){ \tiny{\textbf{$z+1$}} };
\draw[black,line width=0.8pt,->] (15,-5.5)--(15.8,-5.5);
\node at (16.3,-5.5){ \tiny{\textbf{$y_1$}} };

\end{tikzpicture}
\hspace{1cm}
\begin{tikzpicture}[scale=.25,line width=0.5pt,baseline=(a.base)]
\draw (0,0) rectangle (1,-1);
\draw (1,0) rectangle (2,-1);
\filldraw[red,draw=black] (2,0) rectangle (3,-1);
\filldraw[red,draw=black] (3,0) rectangle (4,-1);\node at(3.5,-0.5){\tiny\( \bullet \)};
\filldraw[red,draw=black] (4,0) rectangle (5,-1);\node at(4.5,-0.5){\tiny\( \bullet \)};
\filldraw[red,draw=black] (5,0) rectangle (6,-1);\node at(5.5,-0.5){\tiny\( \bullet \)};
\filldraw[red,draw=black] (6,0) rectangle (7,-1);\node at(6.5,-0.5){\tiny\( \bullet \)};
\filldraw[red,draw=black] (7,0) rectangle (8,-1);\node at(7.5,-0.5){\tiny\( \bullet \)};
\filldraw[red,draw=black] (8,0) rectangle (9,-1);\node at(8.5,-0.5){\tiny\( \bullet \)};
\filldraw[red,draw=black] (9,0) rectangle (10,-1);\node at(9.5,-0.5){\tiny\( \bullet \)};
\filldraw[red,draw=black] (10,0) rectangle (11,-1);\node at(10.5,-0.5){\tiny\( \bullet \)};
\filldraw[red,draw=black] (11,0) rectangle (12,-1);\node at(11.5,-0.5){\tiny\( \bullet \)};
\filldraw[red,draw=black] (12,0) rectangle (13,-1);\node at(12.5,-0.5){\tiny\( \bullet \)};
\filldraw[red,draw=black] (13,0) rectangle (14,-1);\node at(13.5,-0.5){\tiny\( \bullet \)};
\filldraw[red,draw=black] (14,0) rectangle (15,-1);\node at(14.5,-0.5){\tiny\( \bullet \)};
\draw (0,-1) rectangle (1,-2);
\draw (1,-1) rectangle (2,-2);
\draw (2,-1) rectangle (3,-2);
\filldraw[red,draw=black] (3,-1) rectangle (4,-2);
\filldraw[red,draw=black] (4,-1) rectangle (5,-2);\node at(4.5,-1.5){\tiny\( \bullet \)};
\filldraw[red,draw=black] (5,-1) rectangle (6,-2);\node at(5.5,-1.5){\tiny\( \bullet \)};
\filldraw[red,draw=black] (6,-1) rectangle (7,-2);\node at(6.5,-1.5){\tiny\( \bullet \)};
\filldraw[red,draw=black] (7,-1) rectangle (8,-2);\node at(7.5,-1.5){\tiny\( \bullet \)};
\filldraw[red,draw=black] (8,-1) rectangle (9,-2);\node at(8.5,-1.5){\tiny\( \bullet \)};
\filldraw[red,draw=black] (9,-1) rectangle (10,-2);\node at(9.5,-1.5){\tiny\( \bullet \)};
\filldraw[red,draw=black] (10,-1) rectangle (11,-2);\node at(10.5,-1.5){\tiny\( \bullet \)};
\filldraw[red,draw=black] (11,-1) rectangle (12,-2);\node at(11.5,-1.5){\tiny\( \bullet \)};
\filldraw[red,draw=black] (12,-1) rectangle (13,-2);\node at(12.5,-1.5){\tiny\( \bullet \)};
\filldraw[red,draw=black] (13,-1) rectangle (14,-2);\node at(13.5,-1.5){\tiny\( \bullet \)};
\filldraw[red,draw=black] (14,-1) rectangle (15,-2);\node at(14.5,-1.5){\tiny\( \bullet \)};
\draw (0,-2) rectangle (1,-3);
\draw (1,-2) rectangle (2,-3);
\draw (2,-2) rectangle (3,-3);
\draw (3,-2) rectangle (4,-3);
\filldraw[blue!60,draw=black] (4,-2) rectangle (5,-3);
\filldraw[blue!60,draw=black] (5,-2) rectangle (6,-3);\node at(5.5,-2.5){\tiny\( \bullet \)};
\filldraw[blue!60,draw=black] (6,-2) rectangle (7,-3);\node at(6.5,-2.5){\tiny\( \bullet \)};
\filldraw[red,draw=black] (7,-2) rectangle (8,-3);\node at(7.5,-2.5){\tiny\( \bullet \)};
\filldraw[red,draw=black] (8,-2) rectangle (9,-3);\node at(8.5,-2.5){\tiny\( \bullet \)};
\filldraw[red,draw=black] (9,-2) rectangle (10,-3);\node at(9.5,-2.5){\tiny\( \bullet \)};
\filldraw[red,draw=black] (10,-2) rectangle (11,-3);\node at(10.5,-2.5){\tiny\( \bullet \)};
\filldraw[red,draw=black] (11,-2) rectangle (12,-3);\node at(11.5,-2.5){\tiny\( \bullet \)};
\filldraw[red,draw=black] (12,-2) rectangle (13,-3);\node at(12.5,-2.5){\tiny\( \bullet \)};
\filldraw[red,draw=black] (13,-2) rectangle (14,-3);\node at(13.5,-2.5){\tiny\( \bullet \)};
\filldraw[red,draw=black] (14,-2) rectangle (15,-3);\node at(14.5,-2.5){\tiny\( \bullet \)};
\draw (0,-3) rectangle (1,-4);
\draw (1,-3) rectangle (2,-4);
\draw (2,-3) rectangle (3,-4);
\draw (3,-3) rectangle (4,-4);
\draw (4,-3) rectangle (5,-4);
\draw (5,-3) rectangle (6,-4);
\draw (6,-3) rectangle (7,-4);
\filldraw[red,draw=black] (7,-3) rectangle (8,-4);
\filldraw[red,draw=black] (8,-3) rectangle (9,-4);\node at(8.5,-3.5){\tiny\( \bullet \)};
\filldraw[red,draw=black] (9,-3) rectangle (10,-4);\node at(9.5,-3.5){\tiny\( \bullet \)};
\filldraw[red,draw=black] (10,-3) rectangle (11,-4);\node at(10.5,-3.5){\tiny\( \bullet \)};
\filldraw[red,draw=black] (11,-3) rectangle (12,-4);\node at(11.5,-3.5){\tiny\( \bullet \)};
\filldraw[red,draw=black] (12,-3) rectangle (13,-4);\node at(12.5,-3.5){\tiny\( \bullet \)};
\filldraw[red,draw=black] (13,-3) rectangle (14,-4);\node at(13.5,-3.5){\tiny\( \bullet \)};
\filldraw[red,draw=black] (14,-3) rectangle (15,-4);\node at(14.5,-3.5){\tiny\( \bullet \)};
\draw (0,-4) rectangle (1,-5);
\draw (1,-4) rectangle (2,-5);
\draw (2,-4) rectangle (3,-5);
\draw (3,-4) rectangle (4,-5);
\draw (4,-4) rectangle (5,-5);
\draw (5,-4) rectangle (6,-5);
\draw (6,-4) rectangle (7,-5);
\draw (7,-4) rectangle (8,-5);
\filldraw[red,draw=black] (8,-4) rectangle (9,-5);
\filldraw[red,draw=black] (9,-4) rectangle (10,-5);\node at(9.5,-4.5){\tiny\( \bullet \)};
\filldraw[red,draw=black] (10,-4) rectangle (11,-5);\node at(10.5,-4.5){\tiny\( \bullet \)};
\filldraw[red,draw=black] (11,-4) rectangle (12,-5);\node at(11.5,-4.5){\tiny\( \bullet \)};
\filldraw[red,draw=black] (12,-4) rectangle (13,-5);\node at(12.5,-4.5){\tiny\( \bullet \)};
\filldraw[red,draw=black] (13,-4) rectangle (14,-5);\node at(13.5,-4.5){\tiny\( \bullet \)};
\filldraw[red,draw=black] (14,-4) rectangle (15,-5);\node at(14.5,-4.5){\tiny\( \bullet \)};
\draw (0,-5) rectangle (1,-6);
\draw (1,-5) rectangle (2,-6);
\draw (2,-5) rectangle (3,-6);
\draw (3,-5) rectangle (4,-6);
\draw (4,-5) rectangle (5,-6);
\draw (5,-5) rectangle (6,-6);
\draw (6,-5) rectangle (7,-6);
\draw (7,-5) rectangle (8,-6);
\draw (8,-5) rectangle (9,-6);
\filldraw[blue!60,draw=black] (9,-5) rectangle (10,-6);
\filldraw[red,draw=black] (10,-5) rectangle (11,-6);\node at(10.5,-5.5){\tiny\( \bullet \)};
\filldraw[red,draw=black] (11,-5) rectangle (12,-6);\node at(11.5,-5.5){\tiny\( \bullet \)};
\filldraw[red,draw=black] (12,-5) rectangle (13,-6);\node at(12.5,-5.5){\tiny\( \bullet \)};
\filldraw[red,draw=black] (13,-5) rectangle (14,-6);\node at(13.5,-5.5){\tiny\( \bullet \)};
\filldraw[red,draw=black] (14,-5) rectangle (15,-6);\node at(14.5,-5.5){\tiny\( \bullet \)};
\draw (0,-6) rectangle (1,-7);
\draw (1,-6) rectangle (2,-7);
\draw (2,-6) rectangle (3,-7);
\draw (3,-6) rectangle (4,-7);
\draw (4,-6) rectangle (5,-7);
\draw (5,-6) rectangle (6,-7);
\draw (6,-6) rectangle (7,-7);
\draw (7,-6) rectangle (8,-7);
\draw (8,-6) rectangle (9,-7);
\draw (9,-6) rectangle (10,-7);
\filldraw[blue!60,draw=black] (10,-6) rectangle (11,-7);
\filldraw[red,draw=black] (11,-6) rectangle (12,-7);\node at(11.5,-6.5){\tiny\( \bullet \)};
\filldraw[red,draw=black] (12,-6) rectangle (13,-7);\node at(12.5,-6.5){\tiny\( \bullet \)};
\filldraw[red,draw=black] (13,-6) rectangle (14,-7);\node at(13.5,-6.5){\tiny\( \bullet \)};
\filldraw[red,draw=black] (14,-6) rectangle (15,-7);\node at(14.5,-6.5){\tiny\( \bullet \)};
\draw (0,-7) rectangle (1,-8);
\draw (1,-7) rectangle (2,-8);
\draw (2,-7) rectangle (3,-8);
\draw (3,-7) rectangle (4,-8);
\draw (4,-7) rectangle (5,-8);
\draw (5,-7) rectangle (6,-8);
\draw (6,-7) rectangle (7,-8);
\draw (7,-7) rectangle (8,-8);
\draw (8,-7) rectangle (9,-8);
\draw (9,-7) rectangle (10,-8);
\draw (10,-7) rectangle (11,-8);
\draw (11,-7) rectangle (12,-8);
\filldraw[red,draw=black] (12,-7) rectangle (13,-8);
\filldraw[red,draw=black] (13,-7) rectangle (14,-8);\node at(13.5,-7.5){\tiny\( \bullet \)};
\filldraw[red,draw=black] (14,-7) rectangle (15,-8);\node at(14.5,-7.5){\tiny\( \bullet \)};
\draw (0,-8) rectangle (1,-9);
\draw (1,-8) rectangle (2,-9);
\draw (2,-8) rectangle (3,-9);
\draw (3,-8) rectangle (4,-9);
\draw (4,-8) rectangle (5,-9);
\draw (5,-8) rectangle (6,-9);
\draw (6,-8) rectangle (7,-9);
\draw (7,-8) rectangle (8,-9);
\draw (8,-8) rectangle (9,-9);
\draw (9,-8) rectangle (10,-9);
\draw (10,-8) rectangle (11,-9);
\draw (11,-8) rectangle (12,-9);
\draw (12,-8) rectangle (13,-9);
\filldraw[red,draw=black] (13,-8) rectangle (14,-9);
\filldraw[red,draw=black] (14,-8) rectangle (15,-9);\node at(14.5,-8.5){\tiny\( \bullet \)};
\draw (0,-9) rectangle (1,-10);
\draw (1,-9) rectangle (2,-10);
\draw (2,-9) rectangle (3,-10);
\draw (3,-9) rectangle (4,-10);
\draw (4,-9) rectangle (5,-10);
\draw (5,-9) rectangle (6,-10);
\draw (6,-9) rectangle (7,-10);
\draw (7,-9) rectangle (8,-10);
\draw (8,-9) rectangle (9,-10);
\draw (9,-9) rectangle (10,-10);
\draw (10,-9) rectangle (11,-10);
\draw (11,-9) rectangle (12,-10);
\draw (12,-9) rectangle (13,-10);
\draw (13,-9) rectangle (14,-10);
\filldraw[blue!60,draw=black] (14,-9) rectangle (15,-10);
\draw (0,-10) rectangle (1,-11);
\draw (1,-10) rectangle (2,-11);
\draw (2,-10) rectangle (3,-11);
\draw (3,-10) rectangle (4,-11);
\draw (4,-10) rectangle (5,-11);
\draw (5,-10) rectangle (6,-11);
\draw (6,-10) rectangle (7,-11);
\draw (7,-10) rectangle (8,-11);
\draw (8,-10) rectangle (9,-11);
\draw (9,-10) rectangle (10,-11);
\draw (10,-10) rectangle (11,-11);
\draw (11,-10) rectangle (12,-11);
\draw (12,-10) rectangle (13,-11);
\draw (13,-10) rectangle (14,-11);
\filldraw[blue!60,draw=black] (14,-10) rectangle (15,-11);
\draw (0,-11) rectangle (1,-12);
\draw (1,-11) rectangle (2,-12);
\draw (2,-11) rectangle (3,-12);
\draw (3,-11) rectangle (4,-12);
\draw (4,-11) rectangle (5,-12);
\draw (5,-11) rectangle (6,-12);
\draw (6,-11) rectangle (7,-12);
\draw (7,-11) rectangle (8,-12);
\draw (8,-11) rectangle (9,-12);
\draw (9,-11) rectangle (10,-12);
\draw (10,-11) rectangle (11,-12);
\draw (11,-11) rectangle (12,-12);
\draw (12,-11) rectangle (13,-12);
\draw (13,-11) rectangle (14,-12);
\draw (14,-11) rectangle (15,-12);
\draw (0,-12) rectangle (1,-13);
\draw (1,-12) rectangle (2,-13);
\draw (2,-12) rectangle (3,-13);
\draw (3,-12) rectangle (4,-13);
\draw (4,-12) rectangle (5,-13);
\draw (5,-12) rectangle (6,-13);
\draw (6,-12) rectangle (7,-13);
\draw (7,-12) rectangle (8,-13);
\draw (8,-12) rectangle (9,-13);
\draw (9,-12) rectangle (10,-13);
\draw (10,-12) rectangle (11,-13);
\draw (11,-12) rectangle (12,-13);
\draw (12,-12) rectangle (13,-13);
\draw (13,-12) rectangle (14,-13);
\draw (14,-12) rectangle (15,-13);
\draw (0,-13) rectangle (1,-14);
\draw (1,-13) rectangle (2,-14);
\draw (2,-13) rectangle (3,-14);
\draw (3,-13) rectangle (4,-14);
\draw (4,-13) rectangle (5,-14);
\draw (5,-13) rectangle (6,-14);
\draw (6,-13) rectangle (7,-14);
\draw (7,-13) rectangle (8,-14);
\draw (8,-13) rectangle (9,-14);
\draw (9,-13) rectangle (10,-14);
\draw (10,-13) rectangle (11,-14);
\draw (11,-13) rectangle (12,-14);
\draw (12,-13) rectangle (13,-14);
\draw (13,-13) rectangle (14,-14);
\draw (14,-13) rectangle (15,-14);
\draw (0,-14) rectangle (1,-15);
\draw (1,-14) rectangle (2,-15);
\draw (2,-14) rectangle (3,-15);
\draw (3,-14) rectangle (4,-15);
\draw (4,-14) rectangle (5,-15);
\draw (5,-14) rectangle (6,-15);
\draw (6,-14) rectangle (7,-15);
\draw (7,-14) rectangle (8,-15);
\draw (8,-14) rectangle (9,-15);
\draw (9,-14) rectangle (10,-15);
\draw (10,-14) rectangle (11,-15);
\draw (11,-14) rectangle (12,-15);
\draw (12,-14) rectangle (13,-15);
\draw (13,-14) rectangle (14,-15);
\draw (14,-14) rectangle (15,-15);
\node at(7.7,-16){\scriptsize\( \tiny{\textbf{Fig.~2.} }\) };
\draw[black,line width=0.5pt] (0,0)--(15,-15);
%
\draw[orange,line width=0.8pt] (9.5,-6)--(9.5,-9.5);
\draw[orange,line width=0.8pt] (9.5,-9.5)--(14,-9.5);
%
\draw[green,line width=0.8pt] (10.5,-7)--(10.5,-10.5);
\draw[green,line width=0.8pt] (10.5,-10.5)--(14,-10.5);
\draw[green,line width=0.8pt] (14.5,-11)--(14.5,-14.5);
\draw[green,line width=0.8pt] (14.5,-14.5)--(15,-14.5);
\draw[black,line width=0.8pt,->] (15,-9.5)--(15.8,-9.5);
\node at (16.1,-9.5){ \tiny{\textbf{$z$}} };
\draw[black,line width=0.8pt,->] (15,-10.5)--(15.8,-10.5);
\node at (16.9,-10.5){ \tiny{\textbf{$z+1$}} };
\draw[black,line width=0.8pt,->] (15,-5.5)--(15.8,-5.5);
\node at (16.3,-5.5){ \tiny{\textbf{$y_2$}} };
\end{tikzpicture}
\hspace{1cm}
\begin{tikzpicture}[scale=.25,line width=0.5pt,baseline=(a.base)]
\draw (0,0) rectangle (1,-1);
\draw (1,0) rectangle (2,-1);
\filldraw[red,draw=black] (2,0) rectangle (3,-1);
\filldraw[red,draw=black] (3,0) rectangle (4,-1);\node at(3.5,-0.5){\tiny\( \bullet \)};
\filldraw[red,draw=black] (4,0) rectangle (5,-1);\node at(4.5,-0.5){\tiny\( \bullet \)};
\filldraw[red,draw=black] (5,0) rectangle (6,-1);\node at(5.5,-0.5){\tiny\( \bullet \)};
\filldraw[red,draw=black] (6,0) rectangle (7,-1);\node at(6.5,-0.5){\tiny\( \bullet \)};
\filldraw[red,draw=black] (7,0) rectangle (8,-1);\node at(7.5,-0.5){\tiny\( \bullet \)};
\filldraw[red,draw=black] (8,0) rectangle (9,-1);\node at(8.5,-0.5){\tiny\( \bullet \)};
\filldraw[red,draw=black] (9,0) rectangle (10,-1);\node at(9.5,-0.5){\tiny\( \bullet \)};
\filldraw[red,draw=black] (10,0) rectangle (11,-1);\node at(10.5,-0.5){\tiny\( \bullet \)};
\filldraw[red,draw=black] (11,0) rectangle (12,-1);\node at(11.5,-0.5){\tiny\( \bullet \)};
\filldraw[red,draw=black] (12,0) rectangle (13,-1);\node at(12.5,-0.5){\tiny\( \bullet \)};
\filldraw[red,draw=black] (13,0) rectangle (14,-1);\node at(13.5,-0.5){\tiny\( \bullet \)};
\filldraw[red,draw=black] (14,0) rectangle (15,-1);\node at(14.5,-0.5){\tiny\( \bullet \)};
\draw (0,-1) rectangle (1,-2);
\draw (1,-1) rectangle (2,-2);
\draw (2,-1) rectangle (3,-2);
\filldraw[red,draw=black] (3,-1) rectangle (4,-2);
\filldraw[red,draw=black] (4,-1) rectangle (5,-2);\node at(4.5,-1.5){\tiny\( \bullet \)};
\filldraw[red,draw=black] (5,-1) rectangle (6,-2);\node at(5.5,-1.5){\tiny\( \bullet \)};
\filldraw[red,draw=black] (6,-1) rectangle (7,-2);\node at(6.5,-1.5){\tiny\( \bullet \)};
\filldraw[red,draw=black] (7,-1) rectangle (8,-2);\node at(7.5,-1.5){\tiny\( \bullet \)};
\filldraw[red,draw=black] (8,-1) rectangle (9,-2);\node at(8.5,-1.5){\tiny\( \bullet \)};
\filldraw[red,draw=black] (9,-1) rectangle (10,-2);\node at(9.5,-1.5){\tiny\( \bullet \)};
\filldraw[red,draw=black] (10,-1) rectangle (11,-2);\node at(10.5,-1.5){\tiny\( \bullet \)};
\filldraw[red,draw=black] (11,-1) rectangle (12,-2);\node at(11.5,-1.5){\tiny\( \bullet \)};
\filldraw[red,draw=black] (12,-1) rectangle (13,-2);\node at(12.5,-1.5){\tiny\( \bullet \)};
\filldraw[red,draw=black] (13,-1) rectangle (14,-2);\node at(13.5,-1.5){\tiny\( \bullet \)};
\filldraw[red,draw=black] (14,-1) rectangle (15,-2);\node at(14.5,-1.5){\tiny\( \bullet \)};
\draw (0,-2) rectangle (1,-3);
\draw (1,-2) rectangle (2,-3);
\draw (2,-2) rectangle (3,-3);
\draw (3,-2) rectangle (4,-3);
\filldraw[red,draw=black] (4,-2) rectangle (5,-3);
\filldraw[red,draw=black] (5,-2) rectangle (6,-3);\node at(5.5,-2.5){\tiny\( \bullet \)};
\filldraw[red,draw=black] (6,-2) rectangle (7,-3);\node at(6.5,-2.5){\tiny\( \bullet \)};
\filldraw[red,draw=black] (7,-2) rectangle (8,-3);\node at(7.5,-2.5){\tiny\( \bullet \)};
\filldraw[red,draw=black] (8,-2) rectangle (9,-3);\node at(8.5,-2.5){\tiny\( \bullet \)};
\filldraw[red,draw=black] (9,-2) rectangle (10,-3);\node at(9.5,-2.5){\tiny\( \bullet \)};
\filldraw[red,draw=black] (10,-2) rectangle (11,-3);\node at(10.5,-2.5){\tiny\( \bullet \)};
\filldraw[red,draw=black] (11,-2) rectangle (12,-3);\node at(11.5,-2.5){\tiny\( \bullet \)};
\filldraw[red,draw=black] (12,-2) rectangle (13,-3);\node at(12.5,-2.5){\tiny\( \bullet \)};
\filldraw[red,draw=black] (13,-2) rectangle (14,-3);\node at(13.5,-2.5){\tiny\( \bullet \)};
\filldraw[red,draw=black] (14,-2) rectangle (15,-3);\node at(14.5,-2.5){\tiny\( \bullet \)};
\draw (0,-3) rectangle (1,-4);
\draw (1,-3) rectangle (2,-4);
\draw (2,-3) rectangle (3,-4);
\draw (3,-3) rectangle (4,-4);
\draw (4,-3) rectangle (5,-4);
\filldraw[blue!60,draw=black] (5,-3) rectangle (6,-4);
\filldraw[blue!60,draw=black] (6,-3) rectangle (7,-4);\node at(6.5,-3.5){\tiny\( \bullet \)};
\filldraw[red,draw=black] (7,-3) rectangle (8,-4);\node at(7.5,-3.5){\tiny\( \bullet \)};
\filldraw[red,draw=black] (8,-3) rectangle (9,-4);\node at(8.5,-3.5){\tiny\( \bullet \)};
\filldraw[red,draw=black] (9,-3) rectangle (10,-4);\node at(9.5,-3.5){\tiny\( \bullet \)};
\filldraw[red,draw=black] (10,-3) rectangle (11,-4);\node at(10.5,-3.5){\tiny\( \bullet \)};
\filldraw[red,draw=black] (11,-3) rectangle (12,-4);\node at(11.5,-3.5){\tiny\( \bullet \)};
\filldraw[red,draw=black] (12,-3) rectangle (13,-4);\node at(12.5,-3.5){\tiny\( \bullet \)};
\filldraw[red,draw=black] (13,-3) rectangle (14,-4);\node at(13.5,-3.5){\tiny\( \bullet \)};
\filldraw[red,draw=black] (14,-3) rectangle (15,-4);\node at(14.5,-3.5){\tiny\( \bullet \)};
\draw (0,-4) rectangle (1,-5);
\draw (1,-4) rectangle (2,-5);
\draw (2,-4) rectangle (3,-5);
\draw (3,-4) rectangle (4,-5);
\draw (4,-4) rectangle (5,-5);
\draw (5,-4) rectangle (6,-5);
\draw (6,-4) rectangle (7,-5);
\filldraw[red,draw=black] (7,-4) rectangle (8,-5);
\filldraw[red,draw=black] (8,-4) rectangle (9,-5);\node at(8.5,-4.5){\tiny\( \bullet \)};
\filldraw[red,draw=black] (9,-4) rectangle (10,-5);\node at(9.5,-4.5){\tiny\( \bullet \)};
\filldraw[red,draw=black] (10,-4) rectangle (11,-5);\node at(10.5,-4.5){\tiny\( \bullet \)};
\filldraw[red,draw=black] (11,-4) rectangle (12,-5);\node at(11.5,-4.5){\tiny\( \bullet \)};
\filldraw[red,draw=black] (12,-4) rectangle (13,-5);\node at(12.5,-4.5){\tiny\( \bullet \)};
\filldraw[red,draw=black] (13,-4) rectangle (14,-5);\node at(13.5,-4.5){\tiny\( \bullet \)};
\filldraw[red,draw=black] (14,-4) rectangle (15,-5);\node at(14.5,-4.5){\tiny\( \bullet \)};
\draw (0,-5) rectangle (1,-6);
\draw (1,-5) rectangle (2,-6);
\draw (2,-5) rectangle (3,-6);
\draw (3,-5) rectangle (4,-6);
\draw (4,-5) rectangle (5,-6);
\draw (5,-5) rectangle (6,-6);
\draw (6,-5) rectangle (7,-6);
\draw (7,-5) rectangle (8,-6);
\draw (8,-5) rectangle (9,-6);
\filldraw[blue!60,draw=black] (9,-5) rectangle (10,-6);
\filldraw[red,draw=black] (10,-5) rectangle (11,-6);\node at(10.5,-5.5){\tiny\( \bullet \)};
\filldraw[red,draw=black] (11,-5) rectangle (12,-6);\node at(11.5,-5.5){\tiny\( \bullet \)};
\filldraw[red,draw=black] (12,-5) rectangle (13,-6);\node at(12.5,-5.5){\tiny\( \bullet \)};
\filldraw[red,draw=black] (13,-5) rectangle (14,-6);\node at(13.5,-5.5){\tiny\( \bullet \)};
\filldraw[red,draw=black] (14,-5) rectangle (15,-6);\node at(14.5,-5.5){\tiny\( \bullet \)};
\draw (0,-6) rectangle (1,-7);
\draw (1,-6) rectangle (2,-7);
\draw (2,-6) rectangle (3,-7);
\draw (3,-6) rectangle (4,-7);
\draw (4,-6) rectangle (5,-7);
\draw (5,-6) rectangle (6,-7);
\draw (6,-6) rectangle (7,-7);
\draw (7,-6) rectangle (8,-7);
\draw (8,-6) rectangle (9,-7);
\draw (9,-6) rectangle (10,-7);
\filldraw[blue!60,draw=black] (10,-6) rectangle (11,-7);
\filldraw[red,draw=black] (11,-6) rectangle (12,-7);\node at(11.5,-6.5){\tiny\( \bullet \)};
\filldraw[red,draw=black] (12,-6) rectangle (13,-7);\node at(12.5,-6.5){\tiny\( \bullet \)};
\filldraw[red,draw=black] (13,-6) rectangle (14,-7);\node at(13.5,-6.5){\tiny\( \bullet \)};
\filldraw[red,draw=black] (14,-6) rectangle (15,-7);\node at(14.5,-6.5){\tiny\( \bullet \)};
\draw (0,-7) rectangle (1,-8);
\draw (1,-7) rectangle (2,-8);
\draw (2,-7) rectangle (3,-8);
\draw (3,-7) rectangle (4,-8);
\draw (4,-7) rectangle (5,-8);
\draw (5,-7) rectangle (6,-8);
\draw (6,-7) rectangle (7,-8);
\draw (7,-7) rectangle (8,-8);
\draw (8,-7) rectangle (9,-8);
\draw (9,-7) rectangle (10,-8);
\draw (10,-7) rectangle (11,-8);
\draw (11,-7) rectangle (12,-8);
\filldraw[red,draw=black] (12,-7) rectangle (13,-8);
\filldraw[red,draw=black] (13,-7) rectangle (14,-8);\node at(13.5,-7.5){\tiny\( \bullet \)};
\filldraw[red,draw=black] (14,-7) rectangle (15,-8);\node at(14.5,-7.5){\tiny\( \bullet \)};
\draw (0,-8) rectangle (1,-9);
\draw (1,-8) rectangle (2,-9);
\draw (2,-8) rectangle (3,-9);
\draw (3,-8) rectangle (4,-9);
\draw (4,-8) rectangle (5,-9);
\draw (5,-8) rectangle (6,-9);
\draw (6,-8) rectangle (7,-9);
\draw (7,-8) rectangle (8,-9);
\draw (8,-8) rectangle (9,-9);
\draw (9,-8) rectangle (10,-9);
\draw (10,-8) rectangle (11,-9);
\draw (11,-8) rectangle (12,-9);
\draw (12,-8) rectangle (13,-9);
\filldraw[red,draw=black] (13,-8) rectangle (14,-9);
\filldraw[red,draw=black] (14,-8) rectangle (15,-9);\node at(14.5,-8.5){\tiny\( \bullet \)};
\draw (0,-9) rectangle (1,-10);
\draw (1,-9) rectangle (2,-10);
\draw (2,-9) rectangle (3,-10);
\draw (3,-9) rectangle (4,-10);
\draw (4,-9) rectangle (5,-10);
\draw (5,-9) rectangle (6,-10);
\draw (6,-9) rectangle (7,-10);
\draw (7,-9) rectangle (8,-10);
\draw (8,-9) rectangle (9,-10);
\draw (9,-9) rectangle (10,-10);
\draw (10,-9) rectangle (11,-10);
\draw (11,-9) rectangle (12,-10);
\draw (12,-9) rectangle (13,-10);
\draw (13,-9) rectangle (14,-10);
\filldraw[blue!60,draw=black] (14,-9) rectangle (15,-10);
\draw (0,-10) rectangle (1,-11);
\draw (1,-10) rectangle (2,-11);
\draw (2,-10) rectangle (3,-11);
\draw (3,-10) rectangle (4,-11);
\draw (4,-10) rectangle (5,-11);
\draw (5,-10) rectangle (6,-11);
\draw (6,-10) rectangle (7,-11);
\draw (7,-10) rectangle (8,-11);
\draw (8,-10) rectangle (9,-11);
\draw (9,-10) rectangle (10,-11);
\draw (10,-10) rectangle (11,-11);
\draw (11,-10) rectangle (12,-11);
\draw (12,-10) rectangle (13,-11);
\draw (13,-10) rectangle (14,-11);
\filldraw[blue!60,draw=black] (14,-10) rectangle (15,-11);
\draw (0,-11) rectangle (1,-12);
\draw (1,-11) rectangle (2,-12);
\draw (2,-11) rectangle (3,-12);
\draw (3,-11) rectangle (4,-12);
\draw (4,-11) rectangle (5,-12);
\draw (5,-11) rectangle (6,-12);
\draw (6,-11) rectangle (7,-12);
\draw (7,-11) rectangle (8,-12);
\draw (8,-11) rectangle (9,-12);
\draw (9,-11) rectangle (10,-12);
\draw (10,-11) rectangle (11,-12);
\draw (11,-11) rectangle (12,-12);
\draw (12,-11) rectangle (13,-12);
\draw (13,-11) rectangle (14,-12);
\draw (14,-11) rectangle (15,-12);
\draw (0,-12) rectangle (1,-13);
\draw (1,-12) rectangle (2,-13);
\draw (2,-12) rectangle (3,-13);
\draw (3,-12) rectangle (4,-13);
\draw (4,-12) rectangle (5,-13);
\draw (5,-12) rectangle (6,-13);
\draw (6,-12) rectangle (7,-13);
\draw (7,-12) rectangle (8,-13);
\draw (8,-12) rectangle (9,-13);
\draw (9,-12) rectangle (10,-13);
\draw (10,-12) rectangle (11,-13);
\draw (11,-12) rectangle (12,-13);
\draw (12,-12) rectangle (13,-13);
\draw (13,-12) rectangle (14,-13);
\draw (14,-12) rectangle (15,-13);
\draw (0,-13) rectangle (1,-14);
\draw (1,-13) rectangle (2,-14);
\draw (2,-13) rectangle (3,-14);
\draw (3,-13) rectangle (4,-14);
\draw (4,-13) rectangle (5,-14);
\draw (5,-13) rectangle (6,-14);
\draw (6,-13) rectangle (7,-14);
\draw (7,-13) rectangle (8,-14);
\draw (8,-13) rectangle (9,-14);
\draw (9,-13) rectangle (10,-14);
\draw (10,-13) rectangle (11,-14);
\draw (11,-13) rectangle (12,-14);
\draw (12,-13) rectangle (13,-14);
\draw (13,-13) rectangle (14,-14);
\draw (14,-13) rectangle (15,-14);
\draw (0,-14) rectangle (1,-15);
\draw (1,-14) rectangle (2,-15);
\draw (2,-14) rectangle (3,-15);
\draw (3,-14) rectangle (4,-15);
\draw (4,-14) rectangle (5,-15);
\draw (5,-14) rectangle (6,-15);
\draw (6,-14) rectangle (7,-15);
\draw (7,-14) rectangle (8,-15);
\draw (8,-14) rectangle (9,-15);
\draw (9,-14) rectangle (10,-15);
\draw (10,-14) rectangle (11,-15);
\draw (11,-14) rectangle (12,-15);
\draw (12,-14) rectangle (13,-15);
\draw (13,-14) rectangle (14,-15);
\draw (14,-14) rectangle (15,-15);
\node at(7.7,-16){\scriptsize\( \tiny{\textbf{Fig.~3.} }\) };
\draw[black,line width=0.5pt] (0,0)--(15,-15);
%
\draw[orange,line width=0.8pt] (3.5,-2)--(3.5,-3.5);
\draw[orange,line width=0.8pt] (3.5,-3.5)--(5,-3.5);
\draw[orange,line width=0.8pt] (5.5,-4)--(5.5,-5.5);
\draw[orange,line width=0.8pt] (5.5,-5.5)--(9,-5.5);
\draw[orange,line width=0.8pt] (9.5,-6)--(9.5,-9.5);
\draw[orange,line width=0.8pt] (9.5,-9.5)--(14,-9.5);
%
\draw[green,line width=0.8pt] (10.5,-7)--(10.5,-10.5);
\draw[green,line width=0.8pt] (10.5,-10.5)--(14,-10.5);
\draw[green,line width=0.8pt] (14.5,-11)--(14.5,-14.5);
\draw[green,line width=0.8pt] (14.5,-14.5)--(15,-14.5);
\draw[black,line width=0.8pt,->] (15,-9.5)--(15.8,-9.5);
\node at (16.1,-9.5){ \tiny{\textbf{$z$}} };
\draw[black,line width=0.8pt,->] (15,-10.5)--(15.8,-10.5);
\node at (16.9,-10.5){ \tiny{\textbf{$z+1$}} };
\draw[black,line width=0.8pt,->] (15,-5.5)--(15.8,-5.5);
\node at (16.3,-5.5){ \tiny{\textbf{$y_3$}} };
\end{tikzpicture}
\end{equation*}
Here in Fig.~i (i=1, 2, 3), the blue part makes $\Delta^{k}(\mu)$ satisfying Case~(i), and the orange (resp. green) line represents the bounce path containing $z$ (resp. $z+1$).
\end{enumerate}
\mlabel{re:factroot}
\end{remark}

Now we come to a generalization of $K$-$k$-Schur functions, which will be utilized to give a new proof of Theorem~\mref{th:aim-1st}.

\begin{defn}
For $\mu\in\tilde{\text{P}}_{\ell}^{k}$, define the \name{generalized $K$-$k$-Schur function} to be
\begin{equation*}
    g_{\mu}^{(k)}:=K(\Delta^{k}(\mu);\Delta^{k+1}(\mu);\mu).
\end{equation*}
\mlabel{def:geneKkSch}
\end{defn}

\subsection{The lowering operator $L_{z}$ for $z\in[\bott_\lambda+1,\ell]$}\mlabel{ss:Lzlb}
In this subsection, we present some results of the lowering operator $L_{z}$ acting on the $K$-$k$-Schur function $g_\lambda^{(k)}$ with $\lambda\in\pkl$ and $z\in[\bott_\lambda+1,\ell]$. The main conclusion is Theorem~\mref{thm:lgtog}.

Since $z\in[\bott_\lambda+1,\ell]$, there is no root in the row $z$ in $\Delta^{k}(\lambda)$.
It follows from  ~(\mref{eq:dkl}) that
$k-\lambda_{z}+z \geq \ell.$ Denote $\mu:= \mu_{\lambda,z}:=\lambda-\epsilon_z\in\tpkl$. Since $\mu_z = \lambda_z -1$, we have
$$k-\mu_{z}+z > k-\lambda_{z}+z \geq \ell,$$
which means that the row $z$ has no root in the root ideal $\Delta^{k}(\mu)$,
as in the case of the root ideal $\Delta^{k}(\lambda)$.
Hence $\Delta^{k}(\lambda) = \Delta^{k}(\mu)$ and so $\Delta^{k+1}(\lambda) = \Delta^{k+1}(\mu)$ by Corollary~\mref{coro:delkk1}. Then
\begin{equation}
L_{z}g_{\lambda}^{(k)}=L_{z}K(\dkl;\dkaddl;\lambda)
=K(\Delta^{k}(\lambda);\Delta^{k+1}(\lambda);\mu)
= K(\Delta^{k}(\mu);\Delta^{k+1}(\mu);\mu)
= g_{\mu}^{(k)}.
\mlabel{eq:basecase}
\end{equation}
If $\lambda_z > \lambda_{z+1}$ or $z=\ell$, then the $\mu$ in~(\ref{eq:basecase}) is also a partition in ${\rm P}^k_\ell$.
If $\lambda_z = \lambda_{z+1}$ with $z\in[\bott_\lambda+1,\ell-1]$, then $\mu\in\tilde{\rm P}^k_\ell$ and $\mu\notin\pkl$ by $\mu_z = \lambda_z -1 = \lambda_{z+1} -1 =\mu_{z+1}-1$.
We expose the following proposition that \name{straighten} the $g_{\mu}^{(k)}$ in~(\mref{eq:basecase}), in the sense that the subscripts  $\mu+\epsilon_{\textup{up}_{\Delta^{k}(\mu)}
(y+1)}-\epsilon_{z+1}$  and $\mu-\epsilon_{z+1}$ in Proposition~\mref{prop:mresu} below are more like partitions, i.e., the only position destroying the partition becomes larger compared to the original $\mu\in \tpkl$.

\begin{prop}
\mlabel{prop:mresu}
Let $\mu\in\tilde{\rm P}^k_\ell$ and $z\in[\bott_\mu+1,\ell-1]$. Suppose $\mu_{z}+1=\mu_{z+1}$ and $\mu_x\geq\mu_{x+1}$ for all $x\in[\ell-1]\setminus z$.
\begin{enumerate}
\item
If $y:=\textup{top}_{\Delta^{k}(\mu)}(z)>
\textup{top}_{\Delta^{k}(\mu)}(z+1)$, then
$g_{\mu}^{(k)}=g_{\mu+\epsilon_{\textup{up}_{\Delta^{k}(\mu)}
(y+1)}-\epsilon_{z+1}}^{(k)}+g_{\mu-\epsilon_{z+1}}^{(k)}.
$ \mlabel{it:mresu1}

\item
If $\textup{top}_{\Delta^{k}(\mu)}(z)=
\textup{top}_{\Delta^{k}(\mu)}(z+1)-1$,
then $g_{\mu}^{(k)}= g_{\mu-\epsilon_{z+1}}^{(k)}$. \mlabel{it:mresu2}

\item \mlabel{it:mresu3}
If $\textup{top}_{\Delta^{k}(\mu)}(z+1)-1> \textup{top}_{\Delta^{k}(\mu)}(z)$, then $g_{\mu}^{(k)}=0$.
\end{enumerate}
\end{prop}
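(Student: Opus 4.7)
The plan is to express $g_\mu^{(k)} = K(\Delta^k(\mu); \Delta^{k+1}(\mu); \mu)$ via Definition~\ref{def:geneKkSch}, and then to invoke one of the two Mirror Lemmas (Lemma~\ref{lem:mirr1} or Lemma~\ref{lem:mirr2}) in each case with $\Psi := \Delta^k(\mu)$, $M := L(\Delta^{k+1}(\mu))$, and $\gamma := \mu$. The common geometric hypotheses are supplied by Remark~\ref{re:factroot}(b): the wall in rows $z, z+1$ of $\Psi$ comes from $\mu_z + 1 = \mu_{z+1}$, and the mirrors in rows $x, x+1$ for each $x \in \textup{path}_{\Psi}(y_i, \textup{up}_\Psi(z))$ come from the equalities $\mu_x = \mu_{x+1}$ along the upper segment of the bounce path (as pictured in Fig.~1--3). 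The diagonal conditions $\gamma_x = \gamma_{x+1}$ on that segment and $\gamma_z + 1 = \gamma_{z+1}$ are immediate from the standing assumptions on $\mu$.

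The critical new input is the analysis of the multiset $M = L(\Delta^{k+1}(\mu))$. By Corollary~\ref{coro:delkk1}, $\Delta^{k+1}(\mu) = \Delta^{k}(\mu) \setminus \{(x, \textup{down}_{\Delta^{k}(\mu)}(x)) \mid x \in [\bott_\mu]\}$, so the multiplicity $m_M(c)$ agrees with the number of rows of $\Psi$ whose leftmost root lies in a column weakly left of $c$, adjusted by whether $c$ itself is a $\textup{down}_{\Psi}$-target in $[\bott_\mu]$. From this bookkeeping, one verifies $m_M(x) + 1 = m_M(x+1)$ for every $x \in \textup{path}_\Psi(\textup{down}_\Psi(y_i), z)$ (required by both Mirror Lemmas), and the behavior at the topmost column $y_i$ is exactly what distinguishes cases~(b) and~(c).

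For part~\ref{it:mresu1}, the hypothesis $y_1 = \textup{top}_\Psi(z) > \textup{top}_\Psi(z+1)$ ensures that $\Psi$ has an addable root $\alpha = (\textup{up}_\Psi(y_1+1), y_1)$ and a removable root $\beta = (\textup{up}_\Psi(y_1+1), y_1+1)$, while the separation of the two bounce paths yields $m_M(y_1) = m_M(y_1+1)$. Lemma~\ref{lem:mirr2} then gives
\begin{equation*}
g_\mu^{(k)} = K(\Psi \cup \alpha;\, M \sqcup (y_1+1);\, \mu + \epsilon_{\textup{up}_\Psi(y_1+1)} - \epsilon_{z+1}) + K(\Psi;\, M;\, \mu - \epsilon_{z+1}).
\end{equation*}
It remains to reinterpret each summand as a generalized $K$-$k$-Schur function. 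The second is direct, since $\mu$ and $\mu - \epsilon_{z+1}$ have identical root ideals $\Delta^k$ and $\Delta^{k+1}$ (because $\mu_{z+1} - 1 = \mu_z$). For the first, one checks that $\Psi \cup \alpha$ and $M \sqcup (y_1+1)$ match $\Delta^k(\nu)$ and $L(\Delta^{k+1}(\nu))$ respectively, where $\nu := \mu + \epsilon_{\textup{up}_\Psi(y_1+1)} - \epsilon_{z+1}$; this is the combinatorial heart of part~(a).

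For parts~\ref{it:mresu2} and~\ref{it:mresu3}, Remark~\ref{re:factroot}(c) produces a ceiling in columns $y_i, y_i+1$ of $\Psi$, so Lemma~\ref{lem:mirr1} applies. In part~(b) one verifies $m_M(y_2) = m_M(y_2+1)$, and the second conclusion of the Mirror Lemma gives $g_\mu^{(k)} = K(\Psi; M; \mu - \epsilon_{z+1}) = g_{\mu - \epsilon_{z+1}}^{(k)}$. In part~(c) one finds instead $m_M(y_3) + 1 = m_M(y_3+1)$, and the first conclusion of the Mirror Lemma yields $g_\mu^{(k)} = 0$. The main obstacle throughout is the multiplicity bookkeeping at the junction column $y_i$ that separates cases~(b) and~(c), together with the root-ideal matching in part~(a) that converts the output of the Mirror Straightening Lemma into a sum of two generalized $K$-$k$-Schur functions.
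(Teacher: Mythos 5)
Your proposal is correct and follows essentially the same route as the paper: Definition~\ref{def:geneKkSch} plus the Mirror Straightening Lemma~\ref{lem:mirr2} for part~(a), and the Mirror Lemma~\ref{lem:mirr1} with the ceiling/multiplicity dichotomy at the junction column for parts~(b) and~(c), followed by the same root-ideal identifications (the paper justifies $\Delta^k(\mu)=\Delta^k(\mu-\epsilon_{z+1})$ via $z+1>z>\bott_\mu$ rather than your parenthetical, but this is the same fact). No substantive difference.
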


\begin{proof}
\mref{it:mresu1} Let
$$y:=\textup{top}_{\Delta^{k}(\mu)}(z)>
\textup{top}_{\Delta^{k}(\mu)}(z+1)\,\text{ and }\,\alpha :=(\text{up}_{\Delta^{k}(\mu)}(y+1),y).$$
Then $\alpha$ is an addable root in $\dkmu$. From
$z+1>z>\bott_\mu$, we get
$\Delta^{k}(\mu) = \Delta^{k}(\mu-\epsilon_{z+1})$ and $\Delta^{k+1}(\mu) = \Delta^{k+1}(\mu-\epsilon_{z+1})$. Then
\begin{equation}
K(\Delta^{k}(\mu);\Delta^{k+1}(\mu);\mu-\epsilon_{z+1}) = K(\Delta^{k}(\mu-\epsilon_{z+1});\Delta^{k+1}(\mu-\epsilon_{z+1});
\mu-\epsilon_{z+1}) =g_{\mu-\epsilon_{z+1}}^{(k)}.
\mlabel{eq:mu-z-1}
\end{equation}
Further, since $z+1>z>\bott_\mu$, we have
\begin{equation}
\begin{aligned}
\Delta^{k}(\mu)\cup\alpha =&\ \Delta^{k}(\mu+\epsilon_{\text{up}_{\Delta^{k}(\mu)}(y+1)}) = \Delta^{k}(\mu+\epsilon_{\text{up}_{\Delta^{k}(\mu)}(y+1)}-\epsilon_{z+1}),\\
\Delta^{k+1}(\mu+\epsilon_{\text{up}_{\Delta^{k}(\mu)}(y+1)}) =&\ \Delta^{k+1}(\mu+\epsilon_{\text{up}_{\Delta^{k}(\mu)}(y+1)}-\epsilon_{z+1}),\\
L(\Delta^{k+1}(\mu))\sqcup (y+1) =&\ L\Big(\Delta^{k+1}(\mu+\epsilon_{\text{up}_{\Delta^{k}(\mu)}(y+1)})\Big) = L\Big(\Delta^{k+1}(\mu+\epsilon_{\text{up}_{\Delta^{k}(\mu)}(y+1)}-
\epsilon_{z+1})\Big).
\end{aligned}
\mlabel{eq:DeltoDel}
\end{equation}
Hence
\begin{equation*}
\begin{split}
g_{\mu}^{(k)}=&\ K(\Delta^{k}(\mu);\Delta^{k+1}(\mu);\mu) \hspace{1cm} (\text{by Definition~\ref{def:geneKkSch}})\\
=&\ K\Big(\Delta^{k}(\mu)\cup\alpha; L(\Delta^{k+1}(\mu))\sqcup(y+1);\mu+
\epsilon_{\text{up}_{\Delta^{k}(\mu)}(y+1)}-\epsilon_{z+1}\Big)\\
&\ + K(\Delta^{k}(\mu);\Delta^{k+1}(\mu);\mu-\epsilon_{z+1}) \hspace{1cm} (\text{by Lemma~\ref{lem:mirr2}})\\
=&\ K\bigg(\Delta^{k}(\mu+
\epsilon_{\text{up}_{\Delta^{k}(\mu)}(y+1)}-\epsilon_{z+1}); L\Big( \Delta^{k+1}(\mu+
\epsilon_{\text{up}_{\Delta^{k}(\mu)}(y+1)}-\epsilon_{z+1}) \Big);\mu+
\epsilon_{\text{up}_{\Delta^{k}(\mu)}(y+1)}-\epsilon_{z+1}\bigg) \\
&\ + K(\Delta^{k}(\mu);\Delta^{k+1}(\mu);\mu-\epsilon_{z+1})\hspace{1cm}
(\text{by~(\ref{eq:DeltoDel})})\\
=&\ g_{\mu+\epsilon_{\text{up}_{\Delta^{k}(\mu)}(y+1)}-\epsilon_{z+1}}^{(k)}+
g_{\mu-\epsilon_{z+1}}^{(k)}\hspace{1cm} (\text{by Definition~\ref{def:geneKkSch} and  ~(\ref{eq:mu-z-1})}).
\end{split}
\end{equation*}
This completes the proof of Item~\mref{it:mresu1}.

\mref{it:mresu2}
Suppose $y:= \textup{top}_{\Delta^{k}(\mu)}(z)=
\textup{top}_{\Delta^{k}(\mu)}(z+1)-1$. Then $\Delta^{k}(\mu)$ has a ceiling in columns $y-1$, $y$ and
$y+1$. Thus the root ideal $\Delta^{k+1}(\mu)$ has a ceiling in columns $y,y+1$, and so $m_{\Delta^{k+1}(\mu)}(y)=m_{\Delta^{k+1}(\mu)}(y+1)$. By Lemma~\ref{lem:mirr1},
\begin{equation*}
g_{\mu}^{(k)}= K(\Delta^{k}(\mu);\Delta^{k+1}(\mu);\mu)= K(\Delta^{k}(\mu);\Delta^{k+1}(\mu);\mu-\epsilon_{z+1})
= g_{\mu-\epsilon_{z+1}}^{(k)}.
\end{equation*}
This completes the proof of Item~\mref{it:mresu2}.

\mref{it:mresu3}
Suppose $y:=\textup{top}_{\Delta^{k}(\mu)}(z+1)-1> \textup{top}_{\Delta^{k}(\mu)}(z)$.
Since $\Delta^{k}(\mu)$ has a ceiling in columns $y, y+1$, we get
$m_{\Delta^{k+1}(\mu)}(y)+1=m_{\Delta^{k+1}(\mu)}(y+1)$.
It follows from Lemma~\ref{lem:mirr1} that
\[
g_{\mu}^{(k)} = K(\Delta^{k}(\mu);\Delta^{k+1}(\mu); \mu)  = 0.
\]
This completes the proof of Item~\mref{it:mresu3}.
\end{proof}

We are going to apply Proposition~\mref{prop:mresu} repeatedly to straighten the $g_{\mu}^{(k)}$ in~(\mref{eq:basecase}).
Following~\cite[Definition~7.9]{BMPS19}, we give the following notion.

\begin{defn}
Let $\mu\in\tpkl$ and $z\in[\ell]$. If $z=\ell$ or $\mu_z\geq\mu_{z+1}$, we set $h:=h_{\mu,z}:=0$. Otherwise, if $y:=\text{top}_{\Delta^{k}(\mu)}(z)>\text{top}_{\Delta^{k}(\mu)}(z+1)$, we set $h:=h_{\mu,z}\in[\ell-z]$ to be the largest possible such that
\begin{equation}
\begin{split}
\mu_{z}+1=&\ \mu_{z+1}=\cdots=\mu_{z+h},\\
\mu_{\text{up}_{\Delta^{k}(\mu)}(z)}=&\ \mu_{\text{up}_{\Delta^{k}(\mu)}(z+1)}=\cdots=
\mu_{\text{up}_{\Delta^{k}(\mu)}(z+h)},\\
\mu_{\text{up}_{\Delta^{k}(\mu)}^{2}(z)}=&\ \mu_{\text{up}_{\Delta^{k}(\mu)}^{2}(z+1)}
=\cdots=\mu_{\text{up}_{\Delta^{k}(\mu)}^{2}(z+h)},\\
 \cdots &  \\
\mu_{y}=&\ \mu_{y+1}=\cdots= \mu_{y+h},\\
\mu_{\text{up}_{\Delta^{k}(\mu)}(y+1)}=&\ \cdots=\mu_{\text{up}_{\Delta^{k}(\mu)}(y+h)}.
\end{split}
\mlabel{eq:interval}
\end{equation}
For $i\in[0,h]$, denote
\begin{equation*}
\textup{cover}_{z,i}(\mu):=
\mu+\epsilon_{[\textup{up}_{\Delta^{k}(\mu)}(y+1),
\textup{up}_{\Delta^{k}(\mu)}(y+i)]}-\epsilon_{[z+1,z+i]}\in\tpkl.
\end{equation*}
In particular, $\textup{cover}_{z,0}(\mu):=\mu$. We use the abbreviation $\textup{cover}_{z}(\mu):=\textup{cover}_{z,h}(\mu)$.
\mlabel{defn:defh}
\end{defn}

Each row in~(\ref{eq:interval}) gives an interval. For example, the first and second rows give the intervals $[z, z+h]$ and $[{\rm up}_{\Delta^{k}(\mu)}(z), {\rm up}_{\Delta^{k}(\mu)}(z+h)]$, respectively.

\begin{lemma}$($\cite[Lemma~7.11]{BMPS19}$)$
The intervals in Definition~\ref{defn:defh} are pairwise disjoint, that is,
$$\textup{up}_{\Delta^{k}(\mu)}(x+h)<x, \quad \forall x\in\textup{path}_{\Delta^{k}(\mu)}(y,z) = \big(y, \ldots, {\rm up}_{\Delta^{k}(\mu)}^{2}(z), {\rm up}_{\Delta^{k}(\mu)}(z), z\big).$$
\mlabel{lem:disj}
\end{lemma}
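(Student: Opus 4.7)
The plan is to prove the inequality $\textup{up}_{\Delta^{k}(\mu)}(x+h) < x$ by induction on the position of $x$ along the bounce path $\textup{path}_{\Delta^{k}(\mu)}(y,z)$, sweeping upward from $x = z$ to $x = y$. The central structural tool is the explicit formula for $\textup{down}_{\Delta^{k}(\mu)}$ established in Proposition~\mref{prop:factroot}: when the leftmost root in row $r$ is removable, one has $\textup{down}_{\Delta^{k}(\mu)}(r) = k - \mu_r + r + 1$. Combined with the fact that $\textup{up}$ is the partial inverse of $\textup{down}$, this converts each equality $\mu_a = \mu_{a+1} = \cdots = \mu_b$ appearing in~\meqref{eq:interval} into the statement that the corresponding block of $\textup{down}$-values is a consecutive integer interval, so that $\textup{up}$ acts as a simple shift on the dual column block.

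For the base case $x = z$, the wall equality $\mu_z + 1 = \mu_{z+1}$ forces the leftmost root of row $z$ to share its column with that of row $z+1$, so it is \emph{not} removable. Together with the shift property just noted and the injectivity of $\textup{up}$ on its domain, this prevents $\textup{up}(z+h)$ from lying in $\{z, z+1, \ldots, z+h-1\}$; combined with $\textup{up}(z+h) < z+h$, we conclude $\textup{up}(z+h) < z$. For the inductive step at a non-top $x$, we use the mirror-type equality $\mu_x = \mu_{x+1} = \cdots = \mu_{x+h}$ from~\meqref{eq:interval} to transport the previous step's bound one level higher: the same consecutive-shift analysis at level $x$ shows that $\textup{up}$ sends the interval $[x, x+h]$ to an interval lying strictly above $x$, provided the analogous bound at level $\textup{down}_{\Delta^{k}(\mu)}(x)$ is in hand.

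The main obstacle lies in the top case $x = y$. Since $y = \textup{top}_{\Delta^{k}(\mu)}(z)$, the value $\textup{up}(y)$ is undefined, whereas $\textup{up}(y+1), \ldots, \textup{up}(y+h)$ must be defined for the last line of~\meqref{eq:interval} to be meaningful, and these may lie in bounce paths distinct from the one containing $z$. Here the key leverage is the \emph{maximality} of $h$ in Definition~\mref{defn:defh}: if $\textup{up}(y+h) \geq y$ were to hold, then the equalities in~\meqref{eq:interval} could be consistently extended by one additional row, contradicting the maximality of $h$; the alternative failure mode would force $\textup{top}_{\Delta^{k}(\mu)}(z) \leq \textup{top}_{\Delta^{k}(\mu)}(z+1)$, contradicting the standing hypothesis $\textup{top}_{\Delta^{k}(\mu)}(z) > \textup{top}_{\Delta^{k}(\mu)}(z+1)$. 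Extracting this maximality-based pinch cleanly from the combinatorics of the bounce graph is the technical crux; the rest of the argument is routine shifting once the consecutive $\textup{down}$-structure at each level is in place.
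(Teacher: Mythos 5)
First, a caveat: the paper offers no proof of this lemma --- it is imported verbatim from \cite[Lemma~7.11]{BMPS19} --- so there is no in-paper argument to compare against, and I am judging your proposal on its own terms. Your skeleton is the right one. The formula $\textup{down}_{\Delta^{k}(\mu)}(r)=k-\mu_r+r+1$ does turn each row of \meqref{eq:interval} into a shift identity, $\textup{up}_{\Delta^{k}(\mu)}(x+i)=\textup{up}_{\Delta^{k}(\mu)}(x)+i$ for interior rows and $\textup{up}_{\Delta^{k}(\mu)}(y+i)=\textup{up}_{\Delta^{k}(\mu)}(y+1)+i-1$ for the last row; the base case $x=z$ is correctly reduced to the non-removability of the leftmost root of row $z$ (the wall), since a failure $\textup{up}_{\Delta^{k}(\mu)}(z+h)\ge z$ would place $z$ inside the consecutive block $\{\textup{up}_{\Delta^{k}(\mu)}(z+i)\}_{i}$ and hence make $\textup{down}_{\Delta^{k}(\mu)}(z)$ defined; and your inductive step is sound once made explicit: the bound at the vertex $\textup{down}_{\Delta^{k}(\mu)}(x)$ below $x$ unwinds, via the shift, to $x+h<\textup{down}_{\Delta^{k}(\mu)}(x)$, whereas a failure at $x$ forces $x=\textup{up}_{\Delta^{k}(\mu)}(x+i)$ for some $i\in[1,h]$, i.e.\ $\textup{down}_{\Delta^{k}(\mu)}(x)=x+i\le x+h$, a contradiction.

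The genuine gap is your handling of the top case $x=y$, which you yourself flag as unresolved, and the maximality of $h$ is not the right lever. Maximality controls how far the equalities of \meqref{eq:interval} extend in the column direction $z+h\mapsto z+h+1$ (that is what is exploited later, in Lemma~\mref{lem:coverLambda}); it says nothing about appending a row, and assuming $\textup{up}_{\Delta^{k}(\mu)}(y+h)\ge y$ neither produces a consistent extension of \meqref{eq:interval} nor forces $\textup{top}_{\Delta^{k}(\mu)}(z)\le\textup{top}_{\Delta^{k}(\mu)}(z+1)$. I do not see how to make that pinch close. Fortunately it is unnecessary: the top case is just one more instance of your own inductive step, with the shift anchored at $\textup{up}_{\Delta^{k}(\mu)}(y+1)$ instead of the undefined $\textup{up}_{\Delta^{k}(\mu)}(y)$. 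If $\textup{up}_{\Delta^{k}(\mu)}(y+h)\ge y$, then since $\textup{up}_{\Delta^{k}(\mu)}(y+1)\le y$ the integer $y$ lies in the block $[\textup{up}_{\Delta^{k}(\mu)}(y+1),\textup{up}_{\Delta^{k}(\mu)}(y+h)]$, so $y=\textup{up}_{\Delta^{k}(\mu)}(y+i)$ for some $i\in[1,h]$ and $\textup{down}_{\Delta^{k}(\mu)}(y)=y+i\le y+h$; but $\textup{down}_{\Delta^{k}(\mu)}(y)$ is the next vertex of the bounce path below $y$ (if $y=z$ it is undefined and you are already in the base case), and the bound already established at that vertex reads $y+h<\textup{down}_{\Delta^{k}(\mu)}(y)$. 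Contradiction. Replace the maximality discussion by this uniform step and the proof is complete.
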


The following two lemmas are preparations for the main result in this subsection.

\begin{lemma}
Let $\mu\in\tpkl$, $z\in[\bott_\mu+1,\ell]$ and $h:=h_{\mu,z}$ defined in Definition~\mref{defn:defh}. Suppose $y:=\textup{top}_{\Delta^{k}(\mu)}(z)>\textup{top}_{\Delta^{k}(\mu)}(z+1)$ and
$\mu_x\geq\mu_{x+1}$ for all $x\in[\ell-1]\setminus z$. Then
\begin{equation*}
g_{\mu}^{(k)}=g_{\textup{cover}_{z}(\mu)}^{(k)}+
\sum_{i=0}^{h-1}g_{\textup{cover}_{z,i}(\mu)-\epsilon_{z+i+1}}^{(k)}.
\end{equation*}
\mlabel{lem:gtogg}
\end{lemma}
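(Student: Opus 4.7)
The plan is a telescoping induction on $i=0,1,\dots,h-1$: at step $i$ I apply Proposition~\ref{prop:mresu}\,(\ref{it:mresu1}) to $g_{\mu^{(i)}}^{(k)}$ with $\mu^{(i)}:=\textup{cover}_{z,i}(\mu)$ and shifted index $z+i$, aiming to produce
\[
g_{\mu^{(i)}}^{(k)} \;=\; g_{\mu^{(i+1)}}^{(k)} \;+\; g_{\mu^{(i)}-\epsilon_{z+i+1}}^{(k)}.
\]
Summing over $i$ gives a telescope, and since $\mu^{(0)}=\mu$ and $\mu^{(h)}=\textup{cover}_z(\mu)$, the sum collapses to the claimed identity. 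The base case $i=0$ is Proposition~\ref{prop:mresu}\,(\ref{it:mresu1}) under the hypotheses already stated in this lemma, once I observe that $\mu+\epsilon_{\textup{up}_{\Delta^{k}(\mu)}(y+1)}-\epsilon_{z+1}=\textup{cover}_{z,1}(\mu)$.

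For the inductive step at level $i$ with $1\le i\le h-1$, I need to verify the hypotheses of Proposition~\ref{prop:mresu}\,(\ref{it:mresu1}) for the pair $(\mu^{(i)},z+i)$. The crucial preliminary is the root-ideal invariance $\Delta^{k}(\mu^{(i)})=\Delta^{k}(\mu)$: by Lemma~\ref{lem:disj}, the intervals used in $\textup{cover}_{z,i}(\mu)$ are pairwise disjoint, and by the equalities in~\eqref{eq:interval} the value of $\mu$ is constant along each of them; hence each $+1$ at a row $\textup{up}_{\Delta^{k}(\mu)}(y+j)$ and each $-1$ at a row $z+j$ shifts the leftmost root of that row by exactly one box inside a wall/mirror block already present in $\Delta^{k}(\mu)$, leaving the whole root ideal unchanged. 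Granted this invariance, the conditions $\mu^{(i)}_{z+i}+1=\mu^{(i)}_{z+i+1}$ and $\mu^{(i)}_{x}\ge\mu^{(i)}_{x+1}$ for $x\ne z+i$ are read off directly from~\eqref{eq:interval}, while the top inequality $\textup{top}_{\Delta^{k}(\mu^{(i)})}(z+i)>\textup{top}_{\Delta^{k}(\mu^{(i)})}(z+i+1)$ is transported from the level-$0$ inequality via the invariance. A further use of the invariance gives $\textup{up}_{\Delta^{k}(\mu^{(i)})}(y+i+1)=\textup{up}_{\Delta^{k}(\mu)}(y+i+1)$, so the first summand produced by Proposition~\ref{prop:mresu}\,(\ref{it:mresu1}) is exactly $g_{\textup{cover}_{z,i+1}(\mu)}^{(k)}$, and the second summand is $g_{\textup{cover}_{z,i}(\mu)-\epsilon_{z+i+1}}^{(k)}$ by definition.

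The main obstacle is the root-ideal invariance together with the top-inequality inheritance; both ultimately reduce to the combinatorial fact that the intervals appearing in~\eqref{eq:interval} are pairwise disjoint (Lemma~\ref{lem:disj}) and that $\mu$ is constant along each of them, so each elementary $\pm1$ modification moves exactly one leftmost root by one column within its own block and does not interfere with neighbouring blocks. Once this is in hand, the bookkeeping conditions of Proposition~\ref{prop:mresu}\,(\ref{it:mresu1}) at level $i$ amount to a direct comparison with level $0$, and the telescope produces the desired formula.
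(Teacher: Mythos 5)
Your telescoping skeleton --- apply Proposition~\ref{prop:mresu}~\ref{it:mresu1} to $\mu^{(i)}=\textup{cover}_{z,i}(\mu)$ at position $z+i$ and sum --- is exactly the paper's argument (the paper runs the same telescope as an induction on $h$). But the claim on which you hang every step of the verification, the invariance $\Delta^{k}(\mu^{(i)})=\Delta^{k}(\mu)$, is false. Adding $1$ to $\mu$ in a row $\textup{up}_{\Delta^{k}(\mu)}(y+j)$ moves the leftmost root of that row one column to the left, i.e.\ it \emph{adds} the root $(\textup{up}_{\Delta^{k}(\mu)}(y+j),\,y+j-1)$ to the root ideal; this is precisely the identity $\Delta^{k}(\mu)\cup\alpha=\Delta^{k}(\mu+\epsilon_{\textup{up}_{\Delta^{k}(\mu)}(y+1)})$ used inside the proof of Proposition~\ref{prop:mresu}~\ref{it:mresu1}. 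Only the subtractions $-\epsilon_{z+j}$ are invisible, because those rows lie below $\bott_\mu$. Hence $\Delta^{k}(\mu^{(i)})=\Delta^{k}(\mu)\cup\{(\textup{up}_{\Delta^{k}(\mu)}(y+j),\,y+j-1)\mid j\in[i]\}\supsetneq\Delta^{k}(\mu)$ for $i\ge1$.

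This is not a cosmetic slip: the change of root ideal is exactly what makes the induction close. In $\Delta^{k}(\mu)$ the quantity $\textup{up}_{\Delta^{k}(\mu)}(y+i)$ is \emph{defined} for $i\in[h]$ (last line of~\eqref{eq:interval}), so the bounce path of $z+i$ continues above $y+i$ and $\textup{top}_{\Delta^{k}(\mu)}(z+i)$ lies strictly above $y+i$. If the root ideal really were unchanged, Proposition~\ref{prop:mresu}~\ref{it:mresu1} at level $i$ would add $\epsilon$ in the row $\textup{up}$ of $\textup{top}_{\Delta^{k}(\mu)}(z+i)+1$, which is not $\textup{up}_{\Delta^{k}(\mu)}(y+i+1)$, and the first summand would fail to be $\textup{cover}_{z,i+1}(\mu)$; moreover the required inequality $\textup{top}(z+i)>\textup{top}(z+i+1)$ at level $i$ is not even known for $\Delta^{k}(\mu)$. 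What actually happens, and what the paper verifies, is that the root newly created in row $\textup{up}_{\Delta^{k}(\mu)}(y+i)$ destroys the removability of $(\textup{up}_{\Delta^{k}(\mu)}(y+i),y+i)$, so $\textup{up}_{\Delta^{k}(\mu^{(i)})}(y+i)$ becomes undefined and the bounce path of $z+i$ is cut: $\textup{top}_{\Delta^{k}(\mu^{(i)})}(z+i)=y+i$, while $\textup{up}_{\Delta^{k}(\mu^{(i)})}(y+i+1)=\textup{up}_{\Delta^{k}(\mu)}(y+i+1)$ because that row and the rows governing its removability are untouched (disjointness, Lemma~\ref{lem:disj}, enters here). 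So the inductive step must track how the root ideal changes rather than assert that it does not; with that repair your telescope becomes the paper's proof.
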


\begin{proof}
We carry the proof by induction on $h\geq0$. For the initial step $h=0$, we have $\mu = \textup{cover}_{z}(\mu)$ by the convention in Definition~\mref{defn:defh} and so $g_\mu^{(k)} = g_{\textup{cover}_{z}(\mu)}^{(k)}$. Consider the inductive step $h>0$. Denote $\mu' := \textup{cover}_{z,h-1}(\mu)$. Then
\begin{equation}
\begin{aligned}
\mu'_{z+h-1}+1 =&\ \mu_{z+h-1} = \mu_{z+h} = \mu'_{z+h},\\
\mu'_x\geq&\ \mu'_{x+1} \tforall x\in[\ell-1]\setminus (z+h-1),\\
y':=&\ {\rm top}_{\Delta^k(\mu')}(z+h-1) > {\rm top}_{\Delta^k(\mu')}(z+h).
\end{aligned}
\mlabel{eq:trows}
\end{equation}
Further, since $$\mu'_{{\rm up}_{\dkmu}(y+h-1)} = \mu_{{\rm up}_{\dkmu}(y+h-1)}+1,$$ we find that ${\rm up}_{\Delta^k(\mu')}(y+h-1)$ is undefined and so $y' = y+h-1$. Then
\begin{equation}
{\rm up}_{\Delta^k(\mu')}(y'+1)= {\rm up}_{\dkmu}\big((y+h-1)+1\big) = {\rm up}_{\dkmu}(y+h).
\mlabel{eq:y1h}
\end{equation}
Hence
\begin{align*}
g_{\mu}^{(k)} =&\ g_{\mu'}^{(k)}+
\sum_{i=0}^{(h-1)-1}g_{\textup{cover}_{z,i}(\mu)-\epsilon_{z+i+1}}^{(k)}
\hspace{1cm} (\text{by the inductive hypothesis})\\
=&\ g_{\mu'
+\epsilon_{\text{up}_{\Delta^{k}(\mu')}(y'+1)}-\epsilon_{z+h}}^{(k)}+
g_{\mu'-\epsilon_{z+h}}^{(k)}
+ \sum_{i=0}^{(h-1)-1}g_{\textup{cover}_{z,i}(\mu)-\epsilon_{z+i+1}}^{(k)}\\
& \hspace{1cm} (\text{by~(\ref{eq:trows}) and Proposition~\ref{prop:mresu}~\ref{it:mresu1} for the first summand})\\
=&\  g_{\mu'
+\epsilon_{\text{up}_{\Delta^{k}(\mu)}(y+h)}-\epsilon_{z+h}}^{(k)}+
\sum_{i=0}^{h-1}g_{\textup{cover}_{z,i}(\mu)-\epsilon_{z+i+1}}^{(k)} \hspace{1cm} (\text{by~(\mref{eq:y1h})})
\\
=&\ g_{\textup{cover}_{z}(\mu)}^{(k)}+
\sum_{i=0}^{h-1}g_{\textup{cover}_{z,i}(\mu)-\epsilon_{z+i+1}}^{(k)}.
\end{align*}
This completes the proof.
\end{proof}

Under additional conditions, the result in Lemma~\mref{lem:gtogg} can be further improved.

\begin{lemma}
With the setting in Lemma~\ref{lem:gtogg}, if $(\textup{cover}_{z}(\mu))_{z+h}+1=(\textup{cover}_{z}(\mu))_{z+h+1}$, then
\begin{equation}
g_{\textup{cover}_{z}(\mu)}^{(k)}=
g_{\textup{cover}_{z}(\mu)-\epsilon_{z+h+1}}^{(k)} \,\text{ or }\, g_{\textup{cover}_{z}(\mu)}^{(k)}=0.
\mlabel{eq:impr}
\end{equation}
Moreover, if $\mu_{\textup{up}_{\Delta^{k}(\mu)}(y+h)} = \mu_{\textup{up}_{\Delta^{k}(\mu)}(y+h+1)}$, then $g_{\textup{cover}_{z}(\mu)}^{(k)}=0$.
\mlabel{lem:coverLambda}
\end{lemma}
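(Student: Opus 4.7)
Set $\nu := \textup{cover}_{z}(\mu)$. The plan is to apply the Mirror Lemma (Lemma~\ref{lem:mirr1}) to $g_{\nu}^{(k)} = K(\Delta^{k}(\nu); \Delta^{k+1}(\nu); \nu)$ with wall index $z' := z+h$, and to translate the two cases of that lemma into the dichotomy in~\eqref{eq:impr}.

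First I would dispose of a boundary issue: the Mirror Lemma would output $K(\Delta^{k}(\nu); \Delta^{k+1}(\nu); \nu - \epsilon_{z+h+1})$, and I need this to equal $g_{\nu - \epsilon_{z+h+1}}^{(k)}$. Since $z \geq \bott_{\mu}+1$, rows $z, z+1, \ldots, \ell$ are root-free in $\Delta^{k}(\mu)$. By Lemma~\ref{lem:disj} together with Definition~\ref{defn:defh}, $\nu$ differs from $\mu$ only on rows in $[\textup{up}_{\Delta^{k}(\mu)}(y+1), \textup{up}_{\Delta^{k}(\mu)}(y+h)] \subseteq [1, y-1]$ (rows that already carry roots in $\Delta^{k}(\mu)$) and on rows in $[z+1, z+h]$, where the substitution $\nu_{x} = \mu_{x} - 1$ only pushes the would-be leftmost root further off the grid. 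Consequently $\bott_{\nu} = \bott_{\mu}$, so row $z+h+1$ carries no root in either $\Delta^{k}(\nu)$ or $\Delta^{k+1}(\nu)$; whence $\Delta^{k}(\nu - \epsilon_{z+h+1}) = \Delta^{k}(\nu)$ and $\Delta^{k+1}(\nu - \epsilon_{z+h+1}) = \Delta^{k+1}(\nu)$ by Corollary~\ref{coro:delkk1}.

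Next I would set $y' := \textup{top}_{\Delta^{k}(\nu)}(z+h)$ and verify the six hypotheses of Lemma~\ref{lem:mirr1} for the data $(\Psi, M, \gamma, y, z) = (\Delta^{k}(\nu), L(\Delta^{k+1}(\nu)), \nu, y', z+h)$. The given $\nu_{z+h}+1 = \nu_{z+h+1}$ supplies both the wall in rows $z+h, z+h+1$ of $\Delta^{k}(\nu)$ (hypothesis~(c)) and the entry-gap hypothesis~(e). Using~\eqref{eq:interval} of Definition~\ref{defn:defh} together with the $\pm 1$ shifts produced by the cover, the separate bounce paths through $z, z+1, \ldots, z+h$ in $\Delta^{k}(\mu)$ merge into one bounce path through $z+h$ in $\Delta^{k}(\nu)$. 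A row-by-row check against~\eqref{eq:interval} then yields mirrors at each consecutive pair $x, x+1$ along $\textup{path}_{\Delta^{k}(\nu)}(y', \textup{up}_{\Delta^{k}(\nu)}(z+h))$ (hypothesis~(b)) and the equalities $\nu_{x} = \nu_{x+1}$ on the same path (hypothesis~(d)). The maximality of $h$ in Definition~\ref{defn:defh} forces the merged path to terminate at $y'$, giving a ceiling in columns $y', y'+1$ (hypothesis~(a)). Finally, Corollary~\ref{coro:delkk1} identifies $L(\Delta^{k+1}(\nu))$ with the set of leftmost removable-root columns of $\Delta^{k}(\nu)$ (one $\bullet$ per row in $[\bott_{\nu}]$), from which the multiplicity gap hypothesis~(f) on $\textup{path}_{\Delta^{k}(\nu)}(\textup{down}_{\Delta^{k}(\nu)}(y'), z+h)$ reads off directly.

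Lemma~\ref{lem:mirr1} then yields the dichotomy $g_{\nu}^{(k)} = 0$ when $m_{M}(y')+1 = m_{M}(y'+1)$, and $g_{\nu}^{(k)} = g_{\nu - \epsilon_{z+h+1}}^{(k)}$ when $m_{M}(y') = m_{M}(y'+1)$, which is~\eqref{eq:impr}. For the moreover part, the additional equality $\mu_{\textup{up}_{\Delta^{k}(\mu)}(y+h)} = \mu_{\textup{up}_{\Delta^{k}(\mu)}(y+h+1)}$ provides one further mirror at the top of the merged bounce path in $\Delta^{k}(\nu)$ beyond those already forced by the $h$-chain; via Corollary~\ref{coro:delkk1} this contributes one extra $\bullet$ in column $y'+1$ relative to column $y'$, forcing $m_{M}(y')+1 = m_{M}(y'+1)$ and hence the vanishing case. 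The main obstacle is the detailed bookkeeping of how the cover's $\pm 1$ entry shifts rewire the bounce paths of $\Delta^{k}(\mu)$ into those of $\Delta^{k}(\nu)$ and redistribute the removable roots; once this combinatorial identification is in place, each hypothesis of the Mirror Lemma reduces to a direct check against~\eqref{eq:interval}.
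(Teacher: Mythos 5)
Your overall strategy coincides with the paper's: both proofs reduce the statement to an application of the Mirror Lemma (Lemma~\ref{lem:mirr1}) to $K(\Delta^{k}(\nu);\Delta^{k+1}(\nu);\nu)$ with $\nu=\textup{cover}_{z}(\mu)$ and the wall in rows $z+h,z+h+1$, and your preliminary identification $\Delta^{k}(\nu-\epsilon_{z+h+1})=\Delta^{k}(\nu)$, $\Delta^{k+1}(\nu-\epsilon_{z+h+1})=\Delta^{k+1}(\nu)$ (needed to convert the Mirror Lemma's output back into a generalized $K$-$k$-Schur function) is correct and is also implicit in the paper. However, there is a genuine gap in how you verify the hypotheses and extract the dichotomy.

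You fix the ceiling index once and for all at $y'=\textup{top}_{\Delta^{k}(\nu)}(z+h)$ and claim that hypotheses (b) and (d) of Lemma~\ref{lem:mirr1} (mirrors in rows $x,x+1$ and $\nu_{x}=\nu_{x+1}$ for every $x$ on $\textup{path}_{\Delta^{k}(\nu)}(y',\textup{up}_{\Delta^{k}(\nu)}(z+h))$) follow from a ``row-by-row check against~\eqref{eq:interval}.'' This is not justified: \eqref{eq:interval} only records the equalities of $\mu$ \emph{within} each interval $[\textup{up}_{\Delta^{k}(\mu)}^{j}(z),\textup{up}_{\Delta^{k}(\mu)}^{j}(z)+h]$, whereas hypotheses (b) and (d) along the bounce path of $z+h$ require the \emph{boundary} comparisons $\mu_{\textup{up}^{j}(z)+h}$ versus $\mu_{\textup{up}^{j}(z)+h+1}$ at every level $j$, which Definition~\ref{defn:defh} does not control. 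This is precisely what the paper's auxiliary quantity $c'$ measures: the highest level up to which these boundary equalities persist. The paper then splits into two cases with \emph{different} choices of the ceiling index: if $c'=c-1$ the equalities persist to the top, the hypotheses hold along the full path, the ceiling there has equal multiplicities, and the Mirror Lemma gives $g_{\nu}^{(k)}=g_{\nu-\epsilon_{z+h+1}}^{(k)}$; if $c'\le c-2$ the hypotheses fail above level $c'$, so the Mirror Lemma must instead be applied with the ceiling at columns $\textup{up}^{c'}(z+h),\textup{up}^{c'}(z+h+1)$, where the multiplicity gap $m_{M}(\cdot)+1=m_{M}(\cdot)$ forces $g_{\nu}^{(k)}=0$. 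Your single uniform application cannot produce the vanishing case, and your proposed discriminator (the multiplicity condition at $\textup{top}_{\Delta^{k}(\nu)}(z+h)$) is not the one that governs the dichotomy. The ``moreover'' part inherits the same gap: the paper derives it by showing the extra equality $\mu_{\textup{up}(y+h)}=\mu_{\textup{up}(y+h+1)}$ forces $c'\le c-2$ (hence the vanishing case), not by adding an extra removable root at the top of an intact bounce path.
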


\begin{proof}
By Definition~\mref{defn:defh},
$$(\textup{cover}_{z}(\mu))_{z+h}+1=(\textup{cover}_{z}(\mu))_{z+h+1} \Longleftrightarrow \mu_{z+h}=\mu_{z+h+1}.$$
Define
\begin{equation*}
c':=\text{max}\Big\{ i\in[0, c-1]\, \Big |\, \mu_{x+h}=\mu_{x+h+1} \ \text{for} \ \text{all} \ x\in\text{path}_{\Delta^{k}(\mu)}(\text{up}_{\Delta^{k}(\mu)}^{i}(z),
\text{up}_{\Delta^{k}(\mu)}(z)) \Big\}.
\end{equation*}
Since $c'\in [0, c-1]$, we have the following two cases to prove~(\ref{eq:impr}).

{\bf Case 1.} $c'=c-1$. There is a ceiling in columns $y+h-1$, $y+h$ and $y+h+1$ with $$m_{\Delta^{k+1}(\mu)}(y+h)=m_{\Delta^{k+1}(\mu)}(y+h+1).$$
It follows from Lemma~\mref{lem:mirr1} that
\begin{equation*}
\begin{split}
g_{\textup{cover}_{z}(\mu)}^{(k)} =&\
K\Big(\Delta^k(\textup{cover}_{z}(\mu));
\Delta^{k+1}(\textup{cover}_{z}(\mu));\textup{cover}_{z}(\mu)\Big)\\
=&\ K\Big(\Delta^k(\textup{cover}_{z}(\mu));
\Delta^{k+1}(\textup{cover}_{z}(\mu));
\textup{cover}_{z}(\mu)-\epsilon_{z+h+1}\Big)\\
=&\ K\Big(\Delta^k(\textup{cover}_{z}(\mu)-\epsilon_{z+h+1});
\Delta^{k+1}(\textup{cover}_{z}(\mu)-\epsilon_{z+h+1});
\textup{cover}_{z}(\mu)-\epsilon_{z+h+1}\Big)\\
=&\
g_{\textup{cover}_{z}(\mu)-\epsilon_{z+h+1}}^{(k)} \hspace{1cm} (\text{by Definition~\ref{def:geneKkSch}}).
\end{split}
\end{equation*}

{\bf Case 2.} $c'\in[0,c-2]$. In the root ideal $\Delta^k({\rm cover}_z(\mu))$, there is a ceiling in columns $\text{up}_{\Delta^{k}(\mu)}^{c'}(z+h), \text{up}_{\Delta^{k}(\mu)}^{c'}(z+h+1)$  with
\begin{equation*}
m_{\Delta^{k+1}(\mu)}(\text{up}_{\Delta^{k}(\mu)}^{c'}(z+h))+1=
m_{\Delta^{k+1}(\mu)}(\text{up}_{\Delta^{k}(\mu)}^{c'}(z+h+1)),
\end{equation*}
and a wall in rows $z+h, z+h+1$.
Hence, $g_{\textup{cover}_{z}(\mu)}^{(k)}=0$ by Lemma~\mref{lem:mirr1}.

Moreover, if $\mu_{\text{up}_{\Delta^{k}(\mu)}(y+h)} = \mu_{\text{up}_{\Delta^{k}(\mu)}(y+h+1)},$
then $c'\in[0,c-2]$ and so $g_{\textup{cover}_{z}(\mu)}^{(k)}=0$ by the above proof.
\end{proof}

We introduce some local notations for the main result in this subsection.
Let $\lambda\in\pkl$ and $z\in[\ell]$. Denote $\mu:=\lambda-\epsilon_z$ and $h':=h'_{\mu,z}\in[0,\ell-z]$
such that $$\mu_{z}+1=\mu_{z+1}=\cdots=\mu_{z+h'} >\mu_{z+h'+1},$$
with the convention that $\mu_{\ell+1}:= -\infty$.
For a fixed $d\in [0,h']$, we choose a finite sequence of quadruples $(z_j,\Psi_j,y_j,i_j)$ recursively on $j\in \ZZ_{\geq 1}$ by the following procedure. Note that such a sequence does not have to exist, and also might not be unique.

{\bf Initial step of $j=1$.} First choose $z_1=z_{1,d}\in [z,z+d-1]$, such that the root ideal
\[
\Psi_1:=\Delta^{k}\big(\mu
-\epsilon_{[z+1,z_{1}]}
\big)
\]
satisfies $\textup{top}_{\Psi_{1}}(z_{1})>\textup{top}_{\Psi_{1}}(z_{1}+1)$. If such a $z_1$ does not exist, then the process is terminated and no quadruple is chosen.
If there is such a $z_1$, then define
$y_1:=\textup{top}_{\Psi_{1}}(z_{1}),$
and choose $1\leq i_1 \leq {\rm min}\{h_1, d-z_1\}$, where $h_1$ is the $h$ in Definition~\mref{defn:defh} for $$\mu
-\epsilon_{[z+1,z_{1}]}\in\tpkl \,\text{ and }\,  z_1\in[\ell].$$
Notice that $y_1$ and $i_1$ depend on the choices of $z_1$ and $\Psi_1$.

{\bf Inductive step of $j\geq 2$.}
Suppose that a quadruple $(z_{j-1},\Phi_{j-1},y_{j-1},i_{j-1})$ has been chosen. If $z_{j-1}+i_{j-1}+1>z+d-1$, then we terminate the process. If not,
choose $z_j\in [z_{j-1}+i_{j-1}+1,z+d-1]$, such that the root ideal
$$
\Psi_j:=\Delta^{k}\big(\mu+\epsilon_{[\textup{up}_{\Psi_{1}}(y_{1}+1),
\textup{up}_{\Psi_{1}}(y_{1}+i_{1})]}
+\cdots+\epsilon_{[\textup{up}_{\Psi_{j-1}}(y_{j-1}+1),
\textup{up}_{\Psi_{j-1}}(y_{j-1}+i_{j-1})]}
-\epsilon_{[z+1,z_{j}]}
\big),
$$
satisfies $\textup{top}_{\Psi_{j}}(z_{j})>\textup{top}_{\Psi_{j}}(z_{j}+1)$. If such a $z_j$ exists, then take
$ y_j:=\textup{top}_{\Psi_{j}}(z_{j})$
and choose $i_j \leq {\rm min}\{h_j, d-z_j\}$, where $h_j$ is the $h$ in Definition~\mref{defn:defh} for $$\mu+\epsilon_{[\textup{up}_{\Psi_{1}}(y_{1}+1),
\textup{up}_{\Psi_{1}}(y_{1}+i_{1})]}
+\cdots+\epsilon_{[\textup{up}_{\Psi_{j-1}}(y_{j-1}+1),
\textup{up}_{\Psi_{j-1}}(y_{j-1}+i_{j-1})]}
-\epsilon_{[z+1,z_{j}]}\in\tpkl \,\text{ and }\,  z_j\in[\ell].$$
Then $y_j$ and $i_j$ depend on the quadruple $(z_{j-1},\Psi_{j-1},y_{j-1},i_{j-1})$ in the previous step and the choices of $z_j$ and $\Psi_j$. This completes the induction process.

Since $z_1<z_2 < \cdots < z+d<\infty$, each sequence $\mathcal{S}$ of the quadruples has finite length, say $a:=a_\mathcal{S}$, depending on the sequence $\mathcal{S}$. Based on this, for each sequence $\mathcal{S}$, we define
\begin{equation} \
\tpkl \ni \lsum:= \lsum_{\mathcal{S}} :=  \mu +  \epsilon_{[\textup{up}_{\Psi_{1}}(y_{1}+1),\textup{up}_{\Psi_{1}}(y_{1}+i_{1})]}
+\cdots +  \epsilon_{[\textup{up}_{\Psi_{a}}(y_{a}+1),\textup{up}_{\Psi_{a}}(y_{a}+i_{a})]}
- \epsilon_{[z+1, z+d]}.
\mlabel{eq:vsform}
\end{equation}
Now we collect some of such $\lsum$ as follows:
\begin{equation}
\Omega_{\lambda,z,d}:= \Big\{ \lsum:= \lsum_{\mathcal{S}}  \in\tpkl
\,\Big |\, \textup{up}_{\Psi_{j}}(y_{j}+i_{j}) < {\rm top}_{\dkmu}(z), \forall j\in[a] \Big\}.
\mlabel{eq:omegal}
\end{equation}
In particular, by the definition of $h'$,
\begin{equation}
\Omega_{\lambda,z,0}=\{\mu\}, \quad  \Omega_{\lambda,z}:=\Omega_{\lambda,z,h'}\subseteq\pkl.
\mlabel{eq:SpeOme}
\end{equation}

We arrive at our main result in this subsection, which characterizes the final form of straightening the $g_{\mu}^{(k)}$ in~(\mref{eq:basecase}) in the case when
$z\in[\bott_\lambda+1,\ell]$.

\begin{theorem}
\mlabel{thm:lgtog}
Let $\lambda\in\pkl$ and $z\in[\bott_\lambda+1,\ell]$. Then
$$
L_{z}g_{\lambda}^{(k)}=\sum_{\lsum\in\Opkl{\lambda}{z}}g_{\lsum}^{(k)}.
$$
\end{theorem}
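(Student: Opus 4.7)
My plan is to start from equation~\meqref{eq:basecase}, which gives $L_z g_\lambda^{(k)} = g_\mu^{(k)}$ for $\mu := \lambda - \epsilon_z \in \tpkl$; the hypothesis $z \in [\bott_\lambda + 1, \ell]$ is exactly what forces $\Delta^{k}(\lambda) = \Delta^{k}(\mu)$ and $\Delta^{k+1}(\lambda) = \Delta^{k+1}(\mu)$, so this identity holds without any correction. If $\lambda_z > \lambda_{z+1}$ or $z = \ell$, then $\mu \in \pkl$, the parameter $h' = 0$, and $\Opkl{\lambda}{z} = \{\mu\}$ by~\meqref{eq:SpeOme}; the theorem is then immediate. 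The substantive case is $\lambda_z = \lambda_{z+1}$, where $\mu_z + 1 = \mu_{z+1}$ makes $\mu \notin \pkl$, and I must straighten $g_\mu^{(k)}$ into a sum of genuine $K$-$k$-Schur functions indexed by partitions.

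I would then proceed by induction on $h'$. Since $\mu$ satisfies $\mu_x \geq \mu_{x+1}$ for all $x \in [\ell-1] \setminus z$ alongside $\mu_z + 1 = \mu_{z+1}$, Proposition~\mref{prop:mresu} applies at position $z$ and breaks into three subcases by comparing $\textup{top}_{\dkmu}(z)$ with $\textup{top}_{\dkmu}(z+1)$. Case~\mref{it:mresu2} collapses $g_\mu^{(k)}$ to $g_{\mu - \epsilon_{z+1}}^{(k)}$, which I then reexamine at the shifted position $z+1$; case~\mref{it:mresu3} forces $g_\mu^{(k)} = 0$, and needs to be matched with a corresponding emptiness of the relevant branches of $\Opkl{\lambda}{z}$. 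The main subcase is~\mref{it:mresu1}, where Lemma~\mref{lem:gtogg} yields
\begin{equation*}
g_\mu^{(k)} = g_{\textup{cover}_z(\mu)}^{(k)} + \sum_{i=0}^{h-1} g_{\textup{cover}_{z,i}(\mu) - \epsilon_{z+i+1}}^{(k)}.
\end{equation*}
I then process the leading term via Lemma~\mref{lem:coverLambda}, which either replaces it by $g_{\textup{cover}_z(\mu) - \epsilon_{z+h+1}}^{(k)}$ or annihilates it; each remaining summand triggers the inductive hypothesis applied to the partially-straightened partition with the root ideal recomputed.

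The combinatorial bookkeeping is the crux. Every leaf of the recursion can be labelled by the quadruple sequence $(z_j, \Psi_j, y_j, i_j)$ that records, at each invocation of Lemma~\mref{lem:gtogg}, the position where straightening was initiated, the ambient root ideal at that moment, the bounce-path top reached, and the number of $\epsilon$-increments added; this labelling is precisely the data used in~\meqref{eq:vsform} to enumerate $\Opkl{\lambda}{z}$. The side condition $\textup{up}_{\Psi_j}(y_j + i_j) < \textup{top}_{\dkmu}(z)$ in~\meqref{eq:omegal} is exactly what excludes the situation in which the last clause of Lemma~\mref{lem:coverLambda} (triggered by $\mu_{\textup{up}_{\dkmu}(y+h)} = \mu_{\textup{up}_{\dkmu}(y+h+1)}$) kills the leading cover-term. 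The main obstacle will be tracking, across inductive steps, how the root ideal $\Delta^{k}$ evolves under the accumulating increments and decrements, and verifying that the permitted range $i_j \leq \min\{h_j, d - z_j\}$ arises naturally from Definition~\mref{defn:defh} at each stage; this rests on Remark~\mref{re:factroot} together with Lemma~\mref{lem:disj}, which guarantee that the $\epsilon$-increments produced in case~\mref{it:mresu1} live in rows disjoint from those being decremented, so that the local bounce-graph structure near $z$ evolves predictably.
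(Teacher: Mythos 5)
Your proposal follows essentially the same route as the paper: equation~\meqref{eq:basecase} reduces the theorem to straightening $g_\mu^{(k)}$, Proposition~\mref{prop:mresu} supplies the three-way case split, and the leaves of the straightening recursion are matched with the quadruple sequences defining $\Opkl{\lambda}{z}$. The paper merely organizes the recursion breadth-first, by strong induction on the offset $d\in[0,h']$, proving $g_\mu^{(k)}=\sum_{\lsum\in\Otpkl{\lambda}{z}{d}}g_\lsum^{(k)}$ at every stage, rather than your depth-first expansion via Lemma~\mref{lem:gtogg}; this is largely cosmetic, though the parallel version keeps every element of $\Otpkl{\lambda}{z}{d-1}$ with its defect at the same position $z+d-1$, which makes the matching with~\meqref{eq:omegal} cleaner.

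One piece of your bookkeeping is misattributed, and it hides the one genuinely delicate step. The vanishing clause of Lemma~\mref{lem:coverLambda} (triggered when $i_a=h_a$ and $\textup{up}_{\Delta^{k}(\lsum)}(y+1)=\textup{up}_{\Psi_{a}}(y_{a}+i_{a}+1)$) is what justifies the cap $i_j\le h_j$ in the construction of the quadruples, not the side condition $\textup{up}_{\Psi_{j}}(y_{j}+i_{j})<\textup{top}_{\dkmu}(z)$ of~\meqref{eq:omegal}. That side condition corresponds to a separate mechanism (Subcase~1.1 of the paper's proof): when $\textup{up}_{\Delta^{k}(\lsum)}(y+1)\ge\textup{top}_{\dkmu}(z)$, one must step back two levels of the induction and apply the Mirror Lemma (Lemma~\mref{lem:mirr1}) to every element of $\Otpkl{\lambda}{z}{d-2}$, using the ceiling in columns $y-1,y$, to conclude that $g_\mu^{(k)}$ itself vanishes, contradicting $g_\lsum^{(k)}\neq 0$. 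Your plan does not account for this case at all, and it is the step you would still need to supply to close the correspondence between the recursion tree and $\Opkl{\lambda}{z}$.
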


\begin{proof}
Let $\mu:=\lambda-\epsilon_z$ and
$h':=h'_{\mu,z}\in[0,\ell-z]$
such that $$\mu_{z}+1=\mu_{z+1}=\cdots=\mu_{z+h'} >\mu_{z+h'+1},$$
with the convention that $\mu_{\ell+1}:= -\infty$.
By~(\mref{eq:basecase}), we only need to prove
\begin{equation*}
g_\mu^{(k)} = \sum_{\lsum\in\Otpkl{\lambda}{z}{d}}g_\lsum^{(k)}\tforall d\in[0,h'].
\end{equation*}
We prove the equality by strong induction on $d\in[0,h']$. The case $d=0$ is automatic since $\Otpkl{\lambda}{z}{0} = \{ \mu \}$.
Consider the inductive step of $d \geq 1$. By the inductive hypothesis, we may assume
\begin{equation}
g_{\mu}^{(k)}= \sum_{\lsum\in\Otpkl{\lambda}{z}{i} } g_{\lsum}^{(k)}\tforall i\in[0,d-1].
\mlabel{eq:Lgtog-ind}
\end{equation}
For each $\lsum\in\Otpkl{\lambda}{z}{d-1}\subseteq\tilde{\rm P}_{\ell}^{k}$ in  ~(\mref{eq:Lgtog-ind}), we have
\begin{equation}
z+d-1 > \bott_\lsum, \quad \lsum_{z+d-1} +1 = \mu_{z+d-1} = \mu_{z+d} = \lsum_{z+d}, \quad \lsum_x\geq\lsum_{x+1}, \, \forall \, x\in[\ell-1]\setminus z+d-1.
\label{eq:vdtovd}
\end{equation}
Following Proposition~\mref{prop:mresu}, we subdivide $\Otpkl{\lambda}{z}{d-1}$ into three parts
\begin{align*}
\Otpkl{\lambda}{z}{d-1}^1:=&\ \{ \lsum\in\Otpkl{\lambda}{z}{d-1} \mid   \textup{top}_{\Delta^{k}(\lsum)}(z+d-1)>
\textup{top}_{\Delta^{k}(\lsum)}(z+d) \},\\
\Otpkl{\lambda}{z}{d-1}^2:=&\ \{ \lsum\in\Otpkl{\lambda}{z}{d-1} \mid   \textup{top}_{\Delta^{k}(\lsum)}(z+d-1)= \textup{top}_{\Delta^{k}(\lsum)}(z+d)-1 \},\\
\Otpkl{\lambda}{z}{d-1}^3:=&\ \{ \lsum\in\Otpkl{\lambda}{z}{d-1} \mid   \textup{top}_{\Delta^{k}(\lsum)}(z+d)-1> \textup{top}_{\Delta^{k}(\lsum)}(z+d-1) \}.
\end{align*}
Then
\begin{equation}
\Otpkl{\lambda}{z}{d-1} = \Otpkl{\lambda}{z}{d-1}^1\sqcup\Otpkl{\lambda}{z}{d-1}^2\sqcup
\Otpkl{\lambda}{z}{d-1}^3.
\mlabel{eq:OmetothreeOme}
\end{equation}
We consider each subset separately.

{\bf Case 1.} $\lsum\in\Otpkl{\lambda}{z}{d-1}^1$. Denote $y := \textup{top}_{\Delta^{k}(\lsum)}(z+d-1)>
\textup{top}_{\Delta^{k}(\lsum)}(z+d)$. By Proposition~\mref{prop:mresu}~\mref{it:mresu1},
\begin{equation}
g_\lsum^{(k)} = g_{\lsum+\epsilon_{\textup{up}_{\Delta^{k}(\lsum)}
(y+1)}-\epsilon_{z+d}}^{(k)}+g_{\lsum-\epsilon_{z+d}}^{(k)}.
\mlabel{eq:ite1}
\end{equation}
For the subscript of the second summand, it follows from $\lsum\in\Otpkl{\lambda}{z}{d-1}$ that
\begin{equation}
\lsum-\epsilon_{z+d}\in\Otpkl{\lambda}{z}{d}.
\mlabel{eq:firstin}
\end{equation}
For the subscript of the first summand, according to~(\mref{eq:vsform}), we can write
\begin{equation}
\lsum =  \mu +  \epsilon_{[\textup{up}_{\Psi_{1}}(y_{1}+1),\textup{up}_{\Psi_{1}}(y_{1}+i_{1})]}
+\cdots +  \epsilon_{[\textup{up}_{\Psi_{a}}(y_{a}+1),\textup{up}_{\Psi_{a}}(y_{a}+i_{a})]}
- \epsilon_{[z+1, z+d-1]}.
\mlabel{eq:Lgtog-1}
\end{equation}
Then
\begin{align*}
\lsum+\epsilon_{\textup{up}_{\Delta^{k}(\lsum)}
(y+1)}-\epsilon_{z+d} =&\ \mu +  \epsilon_{[\textup{up}_{\Psi_{1}}(y_{1}+1),\textup{up}_{\Psi_{1}}(y_{1}+i_{1})]}
+\cdots +  \epsilon_{[\textup{up}_{\Psi_{a}}(y_{a}+1),\textup{up}_{\Psi_{a}}(y_{a}+i_{a})]}\\
&\ + \epsilon_{\textup{up}_{\Delta^{k}(\lsum)}
(y+1)} - \epsilon_{[z+1, z+d]}.
\end{align*}
Next, we are going to prove that
\begin{equation}
 \lsum+\epsilon_{\textup{up}_{\Delta^{k}(\lsum)}
(y+1)}-\epsilon_{z+d}\in\Otpkl{\lambda}{z}{d}\,\text{ for }\, \lsum\in\Otpkl{\lambda}{z}{d-1}\subseteq\tilde{\rm P}_{\ell}^{k}\,\text{with}\, g_{\lsum}^{(k)} \neq 0.
\mlabel{eq:orzero}
\end{equation}
There are the following two subcases.

{\bf Subcase 1.1.} $\textup{up}_{\Delta^{k}(\lsum)}
(y+1)\geq {\rm top}_{\dkmu}(z) $.
If $d=1$, then $\lsum \in \Otpkl{\lambda}{z}{0}=\{\mu\}$ and so $\lsum=\mu$. By $y = \textup{top}_{\Delta^{k}(\lsum)}(z)>
\textup{top}_{\Delta^{k}(\lsum)}(z+1)$, we have $$\textup{up}_{\Delta^{k}(\lsum)}
(y+1)<{\rm top}_{\Delta^{k}(\lsum)}(z) = {\rm top}_{\dkmu}(z),$$
contradicting the assumption. Thus we only need to consider $d>1$.
It follows from \meqref{eq:Lgtog-ind} that
\begin{equation}
g_{\mu}^{(k)}=\sum_{\lsum'\in\Otpkl{\lambda}{z}{d-2} }g_{\lsum'}^{(k)},
\mlabel{eq:Lgtog-ind2}
\end{equation}
in which each $\lsum'$ satisfies that
\begin{enumerate}
\item $y-1<z+d-2$ and they are in the same bounce path in $\Delta^k(\lsum')$, as $y = {\rm top}_{\Delta^k(\lsum)}(z+d-1) = {\rm top}_{\Delta^k(\lsum')}(z+d-1)$;

\item there is a ceiling in columns $y-1,y$ in $\Delta^k(\lsum')$, by ${\rm down}_{\Delta^k(\lsum')}(\textup{up}_{\Delta^{k}(\lsum')}
(y+1)-1) = y-1$, $\lsum'_{\textup{up}_{\Delta^{k}(\lsum')}
(y+1)-1} > \lsum'_{\textup{up}_{\Delta^{k}(\lsum')}
(y+1)}$ and Remark~\mref{re:factroot}~(b);

\item $\lsum'_{x}=\lsum'_{x+1}$ for all $x\in\textup{path}_{\Delta^k(\lsum')}(y-1,\textup{up}_{\lsum'}(z+d-2))$;

\item $\lsum'_{z+d-2}+1 = \mu_{z+d-2} = \mu_{z+d-1} = \lsum'_{z+d-1}$;

\item $m_{\Delta^k(\lsum')}(y-1)+1 = m_{\Delta^k(\lsum')}(y)$.
\end{enumerate}
Hence by Lemma~\mref{lem:mirr1}, $g_{\lsum'}^{(k)} = 0$ for each $\lsum'\in\Otpkl{\lambda}{z}{d-2}$ and so $g_{\mu}^{(k)} = 0$ by~(\mref{eq:Lgtog-ind2}). Therefore by~(\mref{eq:Lgtog-ind}), $g_{\lsum}^{(k)} = 0$ for $\lsum\in\Otpkl{\lambda}{z}{d-1}$, a contradiction.

{\bf Subcase 1.2.} $\textup{up}_{\Delta^{k}(\lsum)}
(y+1)< {\rm top}_{\dkmu}(z)$. If $\textup{up}_{\Delta^{k}(\lsum)}
(y+1)\neq \textup{up}_{\Psi_{a}}(y_{a}+i_{a}+1)$, then
$$\lsum+\epsilon_{\textup{up}_{\Delta^{k}(\lsum)}
(y+1)}-\epsilon_{z+d}\in\Otpkl{\lambda}{z}{d}$$
by viewing $z+d-1$ as $z_{a+1}$. This implies that (\mref{eq:orzero}) holds.

If $\textup{up}_{\Delta^{k}(\lsum)}
(y+1) =\textup{up}_{\Psi_{a}}(y_{a}+i_{a}+1)$, then by the definition of $i_a$, we have $i_{a} \leq h_a$,
where $h_a$ is the $h$ in Definition~\mref{defn:defh} for
$$
\lsum':=\mu+\epsilon_{[\textup{up}_{\Psi_{1}}(y_{1}+1),
\textup{up}_{\Psi_{1}}(y_{1}+i_{1})]}
+\cdots+\epsilon_{[\textup{up}_{\Psi_{a-1}}(y_{a-1}+1),
\textup{up}_{\Psi_{a-1}}(y_{a-1}+i_{a-1})]}
-\epsilon_{[z+1,z_{a}]}\in\tpkl \,\text{ and }\,  z_a\in[\ell].
$$
Now if $i_{a} < h_a,$ then $i_{a}+1 \leq h_a$ and $i_{a}+1 \leq {\rm min}\{h_a,d-z_a\}$ by $i_{a} \leq {\rm min}\{h_a,(d-1)-z_a\}$. Hence $$\lsum+\epsilon_{\textup{up}_{\Delta^{k}(\lsum)}
(y+1)}-\epsilon_{z+d}\in\Otpkl{\lambda}{z}{d},$$
as required.
If $i_{a} = h_a$, then
\begin{equation}
z_{a}+h_a = z+d-1,
\mlabel{eq:zatozd}
\end{equation}
and so
\begin{align}
\lsum=&\ \mu +  \epsilon_{[\textup{up}_{\Psi_{1}}(y_{1}+1),\textup{up}_{\Psi_{1}}(y_{1}+i_{1})]}
+\cdots +  \epsilon_{[\textup{up}_{\Psi_{a}}(y_{a}+1),\textup{up}_{\Psi_{a}}(y_{a}+i_{a})]}
- \epsilon_{[z+1, z+d-1]} \quad (\text{by  ~(\ref{eq:Lgtog-1})})\notag\\
=&\ \mu+\epsilon_{[\textup{up}_{\Psi_{1}}(y_{1}+1),
\textup{up}_{\Psi_{1}}(y_{1}+i_{1})]}
+\cdots+\epsilon_{[\textup{up}_{\Psi_{a-1}}(y_{a-1}+1),
\textup{up}_{\Psi_{a-1}}(y_{a-1}+i_{a-1})]}\notag\\
&\ + \epsilon_{[\textup{up}_{\Psi_{a}}(y_{a}+1),
\textup{up}_{\Psi_{a}}(y_{a}+i_{a})]}
-\epsilon_{[z+1,z_{a}]}-\epsilon_{[z_{a}+1,z+d-1]}\notag\\
=&\ \lsum' + \epsilon_{[\textup{up}_{\Psi_{a}}(y_{a}+1),
\textup{up}_{\Psi_{a}}(y_{a}+i_{a})]}-\epsilon_{[z_{a}+1,z+d-1]}\notag\\
=&\ \lsum' + \epsilon_{[\textup{up}_{\Psi_{a}}(y_{a}+1),
\textup{up}_{\Psi_{a}}(y_{a}+h_{a})]}-\epsilon_{[z_{a}+1,z+h_a]} \hspace{1cm} (\text{by $i_{a} = h_a$ and $z_{a}+h_a = z+d-1$})\notag\\
=&\ {\rm cover}_{z_a}(\lsum') \hspace{1cm} (\text{by $\Psi_{a} = \Delta^k(\lsum')$ and Definition~\ref{defn:defh}}).\mlabel{eq:vtov}
\end{align}
Hence
\begin{equation}
{\rm cover}_{z_a}(\lsum')_{z_a+h_a} +1 \overset{(\ref{eq:vtov})}{=} \lsum_{z_a+h_a} +1 \overset{(\ref{eq:zatozd})}{=} \lsum_{z+d-1}+1
\overset{(\ref{eq:vdtovd})}{=} \lsum_{z+d} \overset{(\ref{eq:zatozd}), (\ref{eq:vtov})}{=\! = \! =} {\rm cover}_{z_a}(\lsum')_{z_a+h_a+1}.
\mlabel{eq:coverchange}
\end{equation}
In addition, we have
$$
\textup{up}_{\Delta^{k}(\lsum)}
(y+1) = \textup{up}_{\Psi_{a}}(y_{a}+i_{a}+1) = \textup{up}_{\Delta^k(\lsum')}(y_{a}+h_{a}+1)\,\text{ and so }\, \lsum'_{\textup{up}_{\Delta^{k}(\lsum')}(y_a+h_a)} = \lsum'_{\textup{up}_{\Delta^{k}(\lsum')}(y_a+h_a+1)},
$$
which, together with~(\ref{eq:coverchange}), shows that we can apply Lemma~\mref{lem:coverLambda} to obtain
$g_\lsum^{(k)} = g_{{\rm cover}_{z_a}(\lsum')}^{(k)}= 0$, a contradiction.

So the conclusion in Case 1 is that (\mref{eq:firstin}) and~(\mref{eq:orzero}) yield
$$\lsum-\epsilon_{z+d},\, \lsum+\epsilon_{\textup{up}_{\Delta^{k}(\lsum)}
(y+1)}-\epsilon_{z+d}\in\Otpkl{\lambda}{z}{d} \,\text{ for }\, \lsum\in\Otpkl{\lambda}{z}{d-1}\subseteq\tilde{\rm P}_{\ell}^{k}\,\text{with}\, g_{\lsum}^{(k)} \neq 0.$$

{\bf Case 2.} $\lsum\in\Otpkl{\lambda}{z}{d-1}^2$. Then
\begin{equation}
g_\lsum^{(k)} = g_{\lsum-\epsilon_{z+d}}^{(k)}
\mlabel{eq:ite2}
\end{equation}
by Proposition~\mref{prop:mresu}~\mref{it:mresu2} and $\lsum-\epsilon_{z+d}\in
\Otpkl{\lambda}{z}{d}$ by $\lsum\in\Otpkl{\lambda}{z}{d-1}$.

{\bf Case 3.} $\lsum\in\Otpkl{\lambda}{z}{d-1}^3$. Then Proposition~\mref{prop:mresu}~\mref{it:mresu3} implies
\begin{equation}
g_\lsum^{(k)} = 0.
\mlabel{eq:ite3}
\end{equation}

Combining the above three cases and applying~(\mref{eq:omegal}), we conclude
\begin{equation}
\bigsqcup_{\lsum\in\Otpkl{\lambda}{z}{d-1}^1}
\Big(\lsum+\epsilon_{\textup{up}_{\Delta^{k}(\lsum)}
(y+1)}-\epsilon_{z+d}\sqcup\lsum-\epsilon_{z+d}\Big)  \sqcup
\bigsqcup_{\lsum\in\Otpkl{\lambda}{z}{d-1}^2} \lsum-\epsilon_{z+d}
=\Otpkl{\lambda}{z}{d}.
\mlabel{eq:allOmed}
\end{equation}
Therefore
\begin{align*}
g_{\mu}^{(k)} =&\  \sum_{\lsum\in\Otpkl{\lambda}{z}{d-1} }g_{\lsum}^{(k)}\hspace{1cm}(\text{by~(\ref{eq:Lgtog-ind})})\\
=&\ \sum_{\lsum\in\Otpkl{\lambda}{z}{d-1}^1 }g_{\lsum}^{(k)}+\sum_{\lsum\in\Otpkl{\lambda}{z}{d-1}^2 }g_{\lsum}^{(k)}+\sum_{\lsum\in\Otpkl{\lambda}{z}{d-1}^3 }g_{\lsum}^{(k)}\hspace{1cm} (\text{by~(\ref{eq:OmetothreeOme})})\\
=&\ \sum_{\lsum\in\Otpkl{\lambda}{z}{d-1}^1 }  \Big(g_{\lsum+\epsilon_{\textup{up}_{\Delta^{k}(\lsum)}
(y+1)}-\epsilon_{z+d}}^{(k)}+g_{\lsum-\epsilon_{z+d}}^{(k)}\Big) + \sum_{\lsum\in\Otpkl{\lambda}{z}{d-1}^2 }g_{\lsum-\epsilon_{z+d}}^{(k)}
\hspace{1cm}(\text{by~(\ref{eq:ite1}), (\ref{eq:ite2}) and~(\ref{eq:ite3})})\\
=&\ \sum_{\lsum\in \Otpkl{\lambda}{z}{d} }g_{\lsum}^{(k)} \hspace{1cm} (\text{by~(\ref{eq:allOmed})}).
\end{align*}
This completes the proof.
\end{proof}

\subsection{The lowering operator $L_{z}$ for $z\in[\bott_\lambda]$}
In this subsection, we consider the properties of the lowering operator $L_{z}$ acting on the $K$-$k$-Schur function $g_\lambda^{(k)}$, when $z\in[\bott_\lambda]$. The main result is Theorem~\mref{thm:lgtoLg}.

Denote $\lmx:=\mu_{\lambda,z}:=\lambda-\epsilon_{z}$. Then $\Delta^{k}(\lambda)\setminus\beta=\Delta^{k}(\mu)$,
where $\beta=(z, \text{down}_{\Delta^{k}(\lambda)}(z))$. In contrast to~(\mref{eq:basecase}) in Section~\mref{ss:Lzlb}, we have
\begin{align*}
L_{z}g_{\lambda}^{(k)}= &\
L_z K(\Delta^{k}(\lambda);\Delta^{k+1}(\lambda);\lambda) \hspace{1cm} (\text{by Lemma~\ref{lem:kschurKatalan}})\\
=&\ K(\Delta^{k}(\lambda);\Delta^{k+1}(\lambda);\mu)\hspace{1cm} (\text{by  ~(\ref{eq:LKatatoKata}) and $\mu = \lambda-\epsilon_z$})\\
\neq &\ K(\Delta^{k}(\mu);\Delta^{k+1}(\mu);\mu) \hspace{1cm} (\text{by $\dkl\neq\dkmu$})\\
=&\ g_{\mu}^{(k)} \hspace{1cm} (\text{by Definition~\ref{def:geneKkSch}}),
\end{align*}
which motivates us to consider the gap between $L_{z}g_{\lambda}^{(k)}$ and $g_{\mu}^{(k)}$.
Throughout the remainder of the paper, we employ the convention $L_{z} = 0$ when $z>\ell$.

\begin{prop}
Let $\lambda\in\pkl$ and $z\in[\bott_\lambda]$. Denote $\mu:=\mu_{\lambda,z}:=\lambda-\epsilon_z$. Then
\begin{equation}
L_{z}g_{\lambda}^{(k)}=(1-
L_{\textup{down}_{\Delta^{k}(\lambda)}(z)+1})g_{\mu}^{(k)}+
L_{\textup{down}_{\Delta^{k}(\lambda)}(z)}g_{\lambda}^{(k)}.
\mlabel{eq:leqmPar}
\end{equation}
\mlabel{prop:leqmPar}
\end{prop}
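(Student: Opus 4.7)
The plan is to apply Lemma~\ref{lem:relk}(a) at the removable root $\beta := (z,d)$ of $\Delta^{k}(\lambda)$, where $d := \down{\Delta^{k}(\lambda)}{z} = k-\lambda_{z}+z+1$, and to identify the two resulting Katalan functions as $L_{d}g_{\lambda}^{(k)}$ and $(1-L_{d+1})g_{\mu}^{(k)}$ respectively. By Proposition~\ref{prop:factroot}, $\beta$ is indeed a removable root of $\Delta^{k}(\lambda)$ with $d\leq\ell$. Combining Lemma~\ref{lem:kschurKatalan} with~(\ref{eq:LKatatoKata}) gives $L_{z}g_{\lambda}^{(k)} = K(\Delta^{k}(\lambda);\Delta^{k+1}(\lambda);\mu)$, and Lemma~\ref{lem:relk}(a) splits this as
\[
K(\Delta^{k}(\lambda)\setminus\beta;\Delta^{k+1}(\lambda);\mu) + K(\Delta^{k}(\lambda);\Delta^{k+1}(\lambda);\mu+\varepsilon_{\beta}).
\]
Since $\mu+\varepsilon_{\beta} = \lambda-\epsilon_{d}$, the second summand is immediately $L_{d}g_{\lambda}^{(k)}$.

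For the first summand I would verify two root-ideal identities. First, $\Delta^{k}(\lambda)\setminus\beta = \Delta^{k}(\mu)$: the two ideals agree in rows $i\neq z$, and in row $z$ the leftmost root of $\Delta^{k}(\mu)$ is $(z, k-\mu_{z}+z+1) = (z,d+1)$, which is exactly what remains after deleting $\beta = (z,d)$. Second, a direct comparison of the defining inequalities for $\Delta^{k+1}$ shows
\[
L(\Delta^{k+1}(\lambda)) = L(\Delta^{k+1}(\mu)) \sqcup \{d+1\}
\]
when $d+1\leq\ell$, since both multisets arise from ideals agreeing outside row $z$ and differing there by the single root $(z,d+1)$. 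Applying Lemma~\ref{lem:relk}(c) with $m=d+1$ then converts the first summand into $g_{\mu}^{(k)} - L_{d+1}g_{\mu}^{(k)} = (1-L_{d+1})g_{\mu}^{(k)}$, and adding the two pieces produces~(\ref{eq:leqmPar}).

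The only non-routine point is the boundary case $d=\ell$. Then $d+1>\ell$, so $L_{d+1}=0$ by the convention recorded just before the statement, and one must verify separately that $\Delta^{k+1}(\lambda) = \Delta^{k+1}(\mu)$ (neither has any root in row $z$), so that the first summand already equals $g_{\mu}^{(k)}$ without needing Lemma~\ref{lem:relk}(c); the formula~(\ref{eq:leqmPar}) still holds. This edge check together with the two row-$z$ root-ideal comparisons is the only substantive combinatorial work; the remainder of the argument is a direct application of the Katalan function identities in Lemma~\ref{lem:relk}.
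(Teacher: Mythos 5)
Your proposal is correct and follows essentially the same route as the paper's proof: the same splitting of $K(\Delta^{k}(\lambda);\Delta^{k+1}(\lambda);\lambda-\epsilon_z)$ at the removable root $\beta=(z,\down{\Delta^{k}(\lambda)}{z})$ via Lemma~\ref{lem:relk}~(a), the same identification of the second summand as $L_{\down{\Delta^{k}(\lambda)}{z}}g_{\lambda}^{(k)}$, and the same case split on whether $\down{\Delta^{k}(\lambda)}{z}+1\leq\ell$ (apply Lemma~\ref{lem:relk}~(c) with the multiset identity $L(\Delta^{k+1}(\lambda))=L(\Delta^{k+1}(\mu))\sqcup\{\down{\Delta^{k}(\lambda)}{z}+1\}$) or $>\ell$ (where $\Delta^{k+1}(\lambda)=\Delta^{k+1}(\mu)$ and the lowering operator vanishes by convention). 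No gaps.
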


\begin{proof}
By $z\in[\bott_\lambda]$ and Proposition~\mref{prop:factroot}, the root $\beta:=(z,\text{down}_{\Delta^{k}(\lambda)}(z))$ is removable in $\Delta^{k}(\lambda)$. Denote $M:=L(\Delta^{k+1}(\lambda))$. Then
\begin{align}
L_{z}g_{\lambda}^{(k)} =&\ L_z K(\Delta^{k}(\lambda);\Delta^{k+1}(\lambda);\lambda) \hspace{1cm} (\text{by Lemma~\ref{lem:kschurKatalan}})\notag\\
=&\ K(\Delta^{k}(\lambda);M;\mu)\hspace{1cm} (\text{by  ~(\ref{eq:LKatatoKata}) and $\mu = \lambda-\epsilon_z$})\notag\\
=&\ K(\Delta^{k}(\lambda)\setminus\beta;M;\mu) + K(\Delta^{k}(\lambda);M;\mu + \varepsilon_{\beta})
\hspace{1cm} (\text{by Lemma~\ref{lem:relk}~(a)})\notag\\
=&\ K(\Delta^{k}(\lambda)\setminus\beta;M;\mu) + K(\Delta^{k}(\lambda);M;\mu + \epsilon_{z} - \epsilon_{\text{down}_{\Delta^{k}(\lambda)}(z)}) \hspace{1cm} (\text{by the definition of $\varepsilon_{\beta}$})\notag\\
=&\ K(\Delta^{k}(\lambda)\setminus\beta;M;\mu)
+K(\Delta^{k}(\lambda);M;\lambda-\epsilon_{\text{down}_{\Delta^{k}(\lambda)}(z)})
\hspace{1cm} (\text{by $\mu = \lambda-\epsilon_z$})\notag\\
=&\ K(\Delta^{k}(\lambda)\setminus\beta;M;\mu)
+L_{\text{down}_{\Delta^{k}(\lambda)}(z)}K(\Delta^{k}(\lambda);M;\lambda) \hspace{1cm} (\text{by~(\ref{eq:LKatatoKata}) for the second summand})\notag\\
=&\ K(\Delta^{k}(\lambda)\setminus\beta;M;\mu)
+L_{\text{down}_{\Delta^{k}(\lambda)}(z)}g_\lambda^{(k)} \hspace{1cm} (\text{by Lemma~\ref{lem:kschurKatalan} for the second summand})\notag\\
=&\ K(\Delta^{k}(\mu);M;\mu)
+L_{\text{down}_{\Delta^{k}(\lambda)}(z)} g_\lambda^{(k)}\hspace{1cm} (\text{by $\Delta^{k}(\lambda)\setminus\beta=\Delta^{k}(\mu)$}).\mlabel{eq:usualuse}
\end{align}
We divide the remaining proof into the following two cases.

{\bf Case 1.} $\textup{down}_{\dkl}(z)+1>\ell$. Then
\begin{equation}
L_{\textup{down}_{\dkl}(z)+1>\ell}=0,\quad (z,\textup{down}_{\dkl}(z)+1)\notin\dkl.
\mlabel{eq:leqzero}
\end{equation}
The latter means that row $z$ has exactly one root $(z,\ell)$ in the root ideal $\dkl$, and so
$$
k-\lambda_{z+1}+(z+1)+1 >  k-\lambda_z+z+1
=\down{\dkl}{z}
=\ell,$$
implying that there is no root in row $z+1$. Hence, $z=\bott_\lambda$ and
\begin{equation}
\begin{split}
\Delta^{k+1}(\lambda) = &\ \dkl\setminus\{ (z,\down{\dkl}{z}) \mid z\in[\bott_\lambda] \} \hspace{1cm} (\text{by Corollary~\ref{coro:delkk1}})\\
=&\ \dkmu\setminus\{ (z,\down{\dkmu}{z}) \mid z\in[\bott_\lambda-1] \} \hspace{1cm} (\text{by $\mu = \lambda-\epsilon_z$})\\
=&\ \dkmu\setminus\{ (z,\down{\dkmu}{z}) \mid z\in[\bott_\mu] \}\\
=&\ \Delta^{k+1}(\mu).
\end{split}
\mlabel{eq:DlamtoDmu}
\end{equation}
Therefore,
\begin{align*}
L_{z}g_{\lambda}^{(k)}=&\ K(\Delta^{k}(\mu);M;\mu)
+L_{\text{down}_{\Delta^{k}(\lambda)}(z)} g_\lambda^{(k)} \hspace{1cm} (\text{by  ~(\ref{eq:usualuse})})
\\
=&\ K(\Delta^{k}(\mu);\Delta^{k+1}(\mu);\mu)
+L_{\text{down}_{\Delta^{k}(\lambda)}(z)} g_\lambda^{(k)} \hspace{1cm} (\text{by~(\ref{eq:DlamtoDmu})})\\
=&\ g_{\mu}^{(k)} + L_{\text{down}_{\Delta^{k}(\lambda)}}g_{\lambda}^{(k)}
\hspace{1cm} (\text{by Definition~\ref{def:geneKkSch}}),
\end{align*}
which is exactly the desired~(\mref{eq:leqmPar}) with $L_{\textup{down}_{\dkl}(z)+1>\ell}=0$ in~(\mref{eq:leqzero}).

{\bf Case 2.} $\textup{down}_{\dkl}(z)+1\leq\ell$. Then
\begin{equation}
L(\Delta^{k+1}(\lambda))\setminus (\text{down}_{\Delta^{k+1}(\lambda)}(z)+1 ) =L(\Delta^{k+1}(\mu)).
\mlabel{eq:llamu}
\end{equation}
Hence
\begin{align*}
L_{z}g_{\lambda}^{(k)}=&\ K(\Delta^{k}(\mu);M;\mu)
+L_{\text{down}_{\Delta^{k}(\lambda)}(z)} g_\lambda^{(k)} \hspace{1cm} (\text{by  ~(\ref{eq:usualuse})})\\
=&\ K(\Delta^{k}(\mu);M\setminus (\text{down}_{\Delta^{k}(\lambda)}(z)+1);\mu) -K(\Delta^{k}(\mu);M\setminus (\text{down}_{\Delta^{k}(\lambda)}(z)+1);
\mu-\epsilon_{\text{down}_{\Delta^{k}(\lambda)}(z)+1})\\
&\ +L_{\text{down}_{\Delta^{k}(\lambda)}(z)} g_\lambda^{(k)} \hspace{5cm} (\text{by Lemma~\ref{lem:relk}~(c) for the first summand})\\
=&\ K(\Delta^{k}(\mu);\Delta^{k+1}(\mu);\mu)
-K(\Delta^{k}(\mu);\Delta^{k+1}(\mu);\mu-
\epsilon_{\text{down}_{\Delta^{k}(\lambda)}(z)+1})
+L_{\text{down}_{\Delta^{k}(\lambda)}(z)} g_\lambda^{(k)} \\
&\ \hspace{10cm} (\text{by~(\ref{eq:llamu}}))\\
=&\ K(\Delta^{k}(\mu);\Delta^{k+1}(\mu);\mu)
-L_{\text{down}_{\Delta^{k}(\lambda)}(z)+1}K(\Delta^{k}(\mu);\Delta^{k+1}(\mu);\mu)
+L_{\text{down}_{\Delta^{k}(\lambda)}(z)}g_\lambda^{(k)}\\
&\ \hspace{7cm} (\text{by~(\ref{eq:LKatatoKata}) for the second summand}) \\
=&\ g_{\mu}^{(k)}-L_{\text{down}_{\Delta^{k}(\lambda)}(z)+1}g_{\mu}^{(k)}+
L_{\text{down}_{\Delta^{k}(\lambda)}(z)}g_{\lambda}^{(k)}
\hspace{1cm} (\text{by Definition~\ref{def:geneKkSch} for the second summand}).
\end{align*}
This completes the proof.
\end{proof}

As in Proposition~\mref{prop:mresu}, we are going to straighten the $g_{\mu}^{(k)}$ in~(\mref{eq:leqmPar}).

\begin{prop}
Let $\mu\in\tilde{\rm P}^k_\ell$ and $z\in[\bott_\mu-1]$. Suppose that $\mu_{z}+1=\mu_{z+1}$ and $\mu_x\geq\mu_{x+1}$ for all $x\in[\ell-1]\setminus z$.
\begin{enumerate}
\item If $y:=\textup{top}_{\Delta^{k}(\mu)}(z)>
\textup{top}_{\Delta^{k}(\mu)}(z+1)$, then
\begin{equation*}
g_{\mu}^{(k)}=(1-L_{{\rm down}_{\dkmu}(z+1)+1})( g_{\mu+\epsilon_{{\rm up}_{\dkmu}(y+1)}
-\epsilon_{z+1}}^{(k)}
+g_{\mu-\epsilon_{z+1}}^{(k)})
+L_{{\rm down}_{\dkmu}(z+1)}g_{\mu}^{(k)}.
\end{equation*}
\mlabel{it:notPar1}

\item
If $\textup{top}_{\dkmu}(z)=
\textup{top}_{\dkmu}(z+1)-1$, then
\begin{equation*}
g_{\mu}^{(k)}=(1-L_{\textup{down}_{\dkmu}(z+1)+1})
g_{\mu-\epsilon_{z+1}}^{(k)}
+L_{\textup{down}_{\dkmu}(z+1)}g_{\mu}^{(k)}.
\end{equation*}
\mlabel{it:notPar2}

\item If $\textup{top}_{\Delta^{k}(\mu)}(z+1)-1> \textup{top}_{\Delta^{k}(\mu)}(z)$, then $g_{\mu}^{(k)}=0$.
\mlabel{it:notPar3}
\end{enumerate}
\mlabel{prop:notPar}
\end{prop}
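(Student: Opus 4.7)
The plan is to mirror the strategy of Proposition~\mref{prop:mresu} (which handles the complementary range $z\in[\bott_\mu+1,\ell-1]$), while accounting for the fact that here $z+1\le\bott_\mu$. By Proposition~\mref{prop:factroot}, the root $\beta':=(z+1,\textup{down}_{\dkmu}(z+1))$ is a removable root of $\dkmu$ with $\dkmu\setminus\beta'=\Delta^{k}(\mu-\epsilon_{z+1})$, so the substitution $\mu\mapsto\mu-\epsilon_{z+1}$ (and more generally any modification at the $(z+1)$-coordinate) now shifts both $\dkmu$ and $\Delta^{k+1}(\mu)$. The additional $L$-operator corrections on the right-hand side are exactly the residuals that arise when one bridges these root-ideal shifts via Lemma~\mref{lem:relk}, in the same spirit as the proof of Proposition~\mref{prop:leqmPar}.

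For part~\mref{it:notPar1}, first apply the Mirror Straightening Lemma (Lemma~\mref{lem:mirr2}) to $g_{\mu}^{(k)}=K(\dkmu;L(\Delta^{k+1}(\mu));\mu)$, obtaining two $K$-function summands with root ideals $\dkmu\cup\alpha$ and $\dkmu$, where $\alpha=(\textup{up}_{\dkmu}(y+1),y)$. In the range of Proposition~\mref{prop:mresu}, these two summands directly give $g_{\mu+\epsilon_{\textup{up}_{\dkmu}(y+1)}-\epsilon_{z+1}}^{(k)}$ and $g_{\mu-\epsilon_{z+1}}^{(k)}$; here they do not, because the relevant root ideals differ by $\beta'$. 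I would peel off $\beta'$ from each root ideal via Lemma~\mref{lem:relk}~(a), producing a principal $K$-function (now with the correct root ideal to be a generalized $K$-$k$-Schur function) plus a residual whose $\gamma$-argument is shifted by $\varepsilon_{\beta'}=\epsilon_{z+1}-\epsilon_{\textup{down}_{\dkmu}(z+1)}$. Next, I would apply Lemma~\mref{lem:relk}~(c) together with the identity $L(\Delta^{k+1}(\mu))\setminus(\textup{down}_{\dkmu}(z+1)+1)=L(\Delta^{k+1}(\mu-\epsilon_{z+1}))$ (the analog of~\meqref{eq:llamu}), and the parallel identity with $\mu$ replaced by $\mu+\epsilon_{\textup{up}_{\dkmu}(y+1)}-\epsilon_{z+1}$, to convert each principal $K$-function into $(1-L_{\textup{down}_{\dkmu}(z+1)+1})$ acting on the corresponding stable $g^{(k)}$. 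Using~\meqref{eq:LKatatoKata}, the residual $K$-functions combine into precisely $L_{\textup{down}_{\dkmu}(z+1)}g_{\mu}^{(k)}$; one of them is directly of this form, and the other vanishes by a Mirror-Lemma argument in the shifted configuration.

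Part~\mref{it:notPar2} follows the same template, but starts from the Mirror Lemma (Lemma~\mref{lem:mirr1}): the hypothesis $\textup{top}_{\dkmu}(z)=\textup{top}_{\dkmu}(z+1)-1$ yields a ceiling in columns $y,y+1$ of $\dkmu$ with $m_{\Delta^{k+1}(\mu)}(y)=m_{\Delta^{k+1}(\mu)}(y+1)$, so $g_{\mu}^{(k)}=K(\dkmu;\Delta^{k+1}(\mu);\mu-\epsilon_{z+1})$; this single $K$-function is then bridged through $\beta'$ exactly as in part~\mref{it:notPar1}, yielding the two correction terms on the right-hand side. Part~\mref{it:notPar3} transcribes verbatim from Proposition~\mref{prop:mresu}~\mref{it:mresu3}: the hypothesis produces a ceiling in columns $y,y+1$ with $m_{\Delta^{k+1}(\mu)}(y)+1=m_{\Delta^{k+1}(\mu)}(y+1)$ together with a wall in rows $z,z+1$, so Lemma~\mref{lem:mirr1} gives $g_{\mu}^{(k)}=0$ directly; the argument depends only on the local ceiling/wall/mirror configuration and is therefore insensitive to whether $z\le\bott_\mu-1$.

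The main obstacle will be the residual calculation in part~\mref{it:notPar1}: after the $\beta'$-bridging, one is left with $K(\dkmu\cup\alpha;L(\Delta^{k+1}(\mu))\sqcup(y+1);\mu+\epsilon_{\textup{up}_{\dkmu}(y+1)}-\epsilon_{\textup{down}_{\dkmu}(z+1)})$, and one must verify that this residual vanishes, so that the only surviving correction is $L_{\textup{down}_{\dkmu}(z+1)}g_{\mu}^{(k)}$. This amounts to a delicate Mirror-Lemma application using the ceiling/mirror data at column $y$ together with the wall in rows $z,z+1$. Matching all root-ideal and multiset transformations in parallel, so that the result collapses into precisely $(1-L_{\textup{down}_{\dkmu}(z+1)+1})$ times the stable sum plus a single $L_{\textup{down}_{\dkmu}(z+1)}g_{\mu}^{(k)}$ tail, is the most delicate part of the argument; the proof of Proposition~\mref{prop:leqmPar} provides the closest available template for the bookkeeping.
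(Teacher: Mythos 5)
Your proposal follows essentially the same route as the paper: Mirror Straightening Lemma (resp. Mirror Lemma) first, then peeling off the removable root $\beta=(z+1,\textup{down}_{\dkmu}(z+1))$ via Lemma~\mref{lem:relk}~(a) and adjusting the multiset via Lemma~\mref{lem:relk}~(c) to produce the $(1-L_{\textup{down}_{\dkmu}(z+1)+1})$ factor, with the residual from the $\dkmu$-term giving $L_{\textup{down}_{\dkmu}(z+1)}g_{\mu}^{(k)}$ and the residual from the $\dkmu\cup\alpha$-term killed by Lemma~\mref{lem:mirr1} — exactly as in the paper, which also splits on whether $\textup{down}_{\dkmu}(z+1)+1>\ell$. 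You have correctly identified both the structure and the one genuinely delicate verification.
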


\begin{remark}
The hypothesis $\mu_z +1= \mu_{z+1}$ implies that there is a wall in rows $z, z+1$ by Remark~\mref{re:factroot}~(b). Then row $z$ cannot be the bottom of $\dkmu$, that is, $z\neq \bott_\mu$. So the case of $z\in[\bott_\mu-1]$ in Proposition~\mref{prop:notPar} and the case of $z\in[\bott_\mu+1,\ell]$ in Proposition~\mref{prop:mresu} cover all situations.
\mlabel{re:notbot}
\end{remark}

\begin{proof}[Proof of Proposition~\mref{prop:notPar}]
Denote $\beta:=(z+1,\text{down}_{\dkmu}(z+1))$. From $z\in[\bott_\mu-1]$, we have $z+1\leq\bott_\mu$ and so $\beta$ is removable in the root ideal $\dkmu$.

\mref{it:notPar1}
Let $\alpha :=(\text{up}_{\dkmu}(y+1),y)$. It is an addable root in the root ideal $\Delta^{k}(\mu)$. Then
\begin{align}
g_{\mu}^{(k)}=&\ K(\Delta^{k}(\mu);\Delta^{k+1}(\mu);\mu) \hspace{1cm} (\text{by Definition~\ref{def:geneKkSch}})\notag\\
=&\ K\Big(\Delta^{k}(\mu)\cup\alpha;L(\Delta^{k+1}(\mu))\sqcup(y+1);
\mu+\epsilon_{\text{up}_{\dkmu}(y+1)}-\epsilon_{z+1}\Big)\notag\\
&\ +K(\Delta^{k}(\mu);\Delta^{k+1}(\mu);\mu-\epsilon_{z+1}) \hspace{1cm} (\text{by Lemma~\ref{lem:mirr2}})\notag\\
=&\ K\Big( (\Delta^{k}(\mu)\cup\alpha ) \setminus\beta;L(\Delta^{k+1}(\mu))\sqcup(y+1);
\mu+\epsilon_{\text{up}_{\dkmu}(y+1)}-\epsilon_{z+1}\Big)\notag\\
&\ +K\Big(\Delta^{k}(\mu)\cup\alpha;\Delta^{k+1}(\mu)\sqcup(y+1);
\mu+\epsilon_{\text{up}_{\dkmu}(y+1)}-
\epsilon_{z+1}+\varepsilon_{\beta}\Big)\notag\\
&\ +K(\Delta^{k}(\mu)\setminus\beta;\Delta^{k+1}(\mu);\mu-\epsilon_{z+1})
+K(\Delta^{k}(\mu);\Delta^{k+1}(\mu);\mu-\epsilon_{z+1}+\varepsilon_{\beta})
\notag\\
& \hspace{7cm} (\text{by Lemma~\ref{lem:relk}~(a)})\notag\\
=&\ K\Big( (\Delta^{k}(\mu)\cup\alpha) \setminus\beta;L(\Delta^{k+1}(\mu))\sqcup(y+1);\mu+
\epsilon_{\text{up}_{\dkmu}(y+1)}-\epsilon_{z+1}\Big)\notag\\
&\ +K(\Delta^{k}(\mu)\setminus\beta;\Delta^{k+1}(\mu);\mu-\epsilon_{z+1})+
K(\Delta^{k}(\mu);\Delta^{k+1}(\mu);\mu-\epsilon_{\textup{down}_{\dkmu}(z+1)})
\notag\\
& \hspace{3cm} (\text{the second summand vanishes by Lemma~\ref{lem:mirr1}})\notag\\
=&\ K\Big(\Delta^{k}(\mu+
\epsilon_{\text{up}_{\dkmu}(y+1)}-\epsilon_{z+1});L(\Delta^{k+1}(\mu))\sqcup(y+1);\mu+
\epsilon_{\text{up}_{\dkmu}(y+1)}-\epsilon_{z+1}\Big)\notag\\
&\ +K(\Delta^{k}(\mu-\epsilon_{z+1});\Delta^{k+1}(\mu);\mu-\epsilon_{z+1})+
K(\Delta^{k}(\mu);\Delta^{k+1}(\mu);\mu-\epsilon_{\textup{down}_{\dkmu}(z+1)})
\notag\\
& \hspace{0.5cm} \big(\text{by $(\Delta^{k}(\mu)\cup\alpha)\setminus\beta = \Delta^{k}(\mu+
\epsilon_{\text{up}_{\dkmu}(y+1)}-\epsilon_{z+1}), \Delta^{k}(\mu)\setminus\beta=\Delta^{k}(\mu-\epsilon_{z+1})$}\big)\notag\\
=&\ K\Big(\Delta^{k}(\mu+
\epsilon_{\text{up}_{\dkmu}(y+1)}-\epsilon_{z+1});L(\Delta^{k+1}(\mu))\sqcup(y+1);\mu+
\epsilon_{\text{up}_{\dkmu}(y+1)}-\epsilon_{z+1}\Big) \mlabel{eq:notPar1-1}\\
&\ +K(\Delta^{k}(\mu-\epsilon_{z+1});\Delta^{k+1}(\mu);\mu-\epsilon_{z+1})+
L_{\textup{down}_{\dkmu}(z+1)}g_\mu^{(k)}\notag\\
& \hspace{4cm} (\text{by~(\ref{eq:LKatatoKata}) and Definition~\ref{def:geneKkSch} for the third summand}).\notag
\end{align}
Denote $M:= L(\Delta^{k+1}(\mu))\sqcup(y+1)$.
We separately consider the two cases of $\textup{down}_{\dkmu}(z+1)+1>\ell$ and $\textup{down}_{\dkmu}(z+1)+1\leq\ell$.

{\bf Case 1.} $\textup{down}_{\dkmu}(z+1)+1>\ell$. Then $L_{\textup{down}_{\dkmu}(z+1)+1>\ell} = 0$ and $\beta$ is the unique root in row $z+1$ of the root ideal $\Delta^{k}(\mu)$. The latter implies that $\Delta^{k+1}(\mu)$ has no root in row $z+1$ by Corollary~\mref{coro:delkk1}. Hence
\begin{equation}
\begin{aligned}
M = L(\Delta^{k+1}(\mu))\sqcup(y+1) =&\ L\big(\Delta^{k+1}(\mu + \epsilon_{\up{\dkmu}{y+1}})\big) = L\big(\Delta^{k+1}(\mu + \epsilon_{\up{\dkmu}{y+1}}- \epsilon_{z+1})\big),\\
L(\Delta^{k+1}(\mu)) =&\ L\big( \Delta^{k+1}(\mu-\epsilon_{z+1}) \big),
\end{aligned}
\mlabel{eq:notPar-Mul1}
\end{equation}
and so
\begin{align*}
g_\mu^{(k)}
= &\  K\Big(\Delta^{k}(\mu+
\epsilon_{\text{up}_{\dkmu}(y+1)}-\epsilon_{z+1});
M;\mu+
\epsilon_{\text{up}_{\dkmu}(y+1)}-\epsilon_{z+1}\Big)\\
&\ +K(\Delta^{k}(\mu-\epsilon_{z+1});\Delta^{k+1}(\mu);\mu-\epsilon_{z+1})+
L_{\textup{down}_{\dkmu}(z+1)}g_\mu^{(k)}\hspace{1cm} (\text{by  ~(\ref{eq:notPar1-1})})\\
=&\ K\Big(\Delta^{k}(\mu+
\epsilon_{\text{up}_{\dkmu}(y+1)}-\epsilon_{z+1});\Delta^{k+1}(\mu+
\epsilon_{\text{up}_{\dkmu}(y+1)}-\epsilon_{z+1});\mu+
\epsilon_{\text{up}_{\dkmu}(y+1)}-\epsilon_{z+1}\Big)\\
&\ + K(\Delta^{k}(\mu-\epsilon_{z+1});\Delta^{k+1}(\mu-\epsilon_{z+1});
\mu-\epsilon_{z+1})+ L_{\textup{down}_{\dkmu}(z+1)}g_\mu^{(k)}\hspace{1cm}
(\text{by~(\ref{eq:notPar-Mul1})})\\
=&\ g_{\mu+
\epsilon_{\text{up}_{\dkmu}(y+1)}-\epsilon_{z+1}}^{(k)}
+g_{\mu-\epsilon_{z+1}}^{(k)}+L_{\textup{down}_{\dkmu}(z+1)}g_\mu^{(k)}
\hspace{1cm} (\text{by Definition~\ref{def:geneKkSch}}),
\end{align*}
which is the required result by $L_{\textup{down}_{\dkmu}(z+1)+1>\ell} = 0$.

{\bf Case 2.}  $\textup{down}_{\dkmu}(z+1)+1\leq\ell$. Then
\begin{equation}
\begin{aligned}
& M\setminus \big( \text{down}_{\dkmu}(z+1)+1 \big)
= L({\Delta^{k+1}(\mu+
\epsilon_{\text{up}_{\dkmu}(y+1)}-\epsilon_{z+1})}), \\
& L(\Delta^{k+1}(\mu))\setminus\big( \text{down}_{\dkmu}(z+1)+1 \big)
=L({\Delta^{k+1}(\mu-\epsilon_{z+1})}).
\end{aligned}
\mlabel{eq:notPar-Mul2}
\end{equation}
Hence
\begin{align*}
g_\mu^{(k)}
=&\  K\Big(\Delta^{k}(\mu+
\epsilon_{\text{up}_{\dkmu}(y+1)}-\epsilon_{z+1});M;
\mu+
\epsilon_{\text{up}_{\dkmu}(y+1)}-\epsilon_{z+1}\Big)\\
&\ +K(\Delta^{k}(\mu-\epsilon_{z+1});\Delta^{k+1}(\mu);\mu-\epsilon_{z+1})+
L_{\textup{down}_{\dkmu}(z+1)}g_\mu^{(k)}\hspace{1cm} (\text{by  ~(\ref{eq:notPar1-1})})\\
=&\ K\Big(\Delta^{k}(\mu+
\epsilon_{\text{up}_{\dkmu}(y+1)}-\epsilon_{z+1});M\setminus\big( \text{down}_{\dkmu}(z+1)+1 \big);\mu+\epsilon_{\text{up}_{\dkmu}(y+1)}-\epsilon_{z+1}\Big)\\
&\ -K\Big(\Delta^{k}(\mu+
\epsilon_{\text{up}_{\dkmu}(y+1)}-\epsilon_{z+1});M\setminus\big( \text{down}_{\dkmu}(z+1)+1\big); \\
& \hspace{6cm} \mu+\epsilon_{\text{up}_{\dkmu}(y+1)}-
\epsilon_{z+1}-\epsilon_{\text{down}_{\dkmu}(z+1)+1}\Big)\\
&\ + K\Big(\Delta^{k}(\mu-\epsilon_{z+1});L(\Delta^{k+1}(\mu))
\setminus(\text{down}_{\dkmu}(z+1)+1)
;\mu-\epsilon_{z+1}\Big)\\
&\ -K\Big(\Delta^{k}(\mu-\epsilon_{z+1});L(\Delta^{k+1}(\mu))
\setminus(\text{down}_{\dkmu}(z+1)+1);
\mu-\epsilon_{z+1}-\epsilon_{\text{down}_{\dkmu}(z+1)+1}\Big)\\
&\ +L_{\text{down}_{\dkmu}(z+1)} g_\mu^{(k)}\hspace{1cm} (\text{by Lemma~\ref{lem:relk}~(c) for the first two summands})\\
=&\ K\Big(\Delta^{k}(\mu+\epsilon_{\text{up}_{\dkmu}(y+1)}-\epsilon_{z+1});
\Delta^{k+1}(\mu+\epsilon_{\text{up}_{\dkmu}(y+1)}-\epsilon_{z+1});
\mu+\epsilon_{\text{up}_{\dkmu}(y+1)}-\epsilon_{z+1}\Big)\\
&\ -K\Big(\Delta^{k}(\mu+\epsilon_{\text{up}_{\dkmu}(y+1)}-\epsilon_{z+1});
\Delta^{k+1}(\mu+\epsilon_{\text{up}_{\dkmu}(y+1)}-\epsilon_{z+1});\\
& \hspace{6cm} \mu+\epsilon_{\text{up}_{\dkmu}(y+1)}-\epsilon_{z+1}-
\epsilon_{\text{down}_{\dkmu}(z+1)+1}\Big)\\
&\ +K\Big(\Delta^{k}(\mu-\epsilon_{z+1});\Delta^{k+1}(\mu-\epsilon_{z+1});
\mu-\epsilon_{z+1}\Big)\\
&\ -K\Big(\Delta^{k}(\mu-\epsilon_{z+1});\Delta^{k+1}(\mu-\epsilon_{z+1});
\mu-\epsilon_{z+1}-\epsilon_{\text{down}_{\dkmu}(z+1)+1}\Big)
+L_{\text{down}_{\dkmu}(z+1)}g_{\mu}^{(k)}\hspace{0.5cm}
(\text{by~(\ref{eq:notPar-Mul2})})\\
=&\ (1-L_{\text{down}_{\dkmu}(z+1)+1})g_{\mu+\epsilon_{\text{up}_{\dkmu}(y+1)}
-\epsilon_{z+1}}^{(k)}
+(1-L_{\text{down}_{\dkmu}(z+1)+1}) g_{\mu-\epsilon_{z+1}}^{(k)}
+L_{\text{down}_{\dkmu}(z+1)}g_{\mu}^{(k)}\\
& \hspace{9cm} (\text{by~(\ref{eq:LKatatoKata}) and Definition~\ref{def:geneKkSch}}).
\end{align*}
The proof of~\mref{it:notPar1} is completed.

\mref{it:notPar2} For this statement, we have
\begin{align}
g_{\mu}^{(k)}=&\ K(\Delta^k(\mu);\Delta^{k+1}(\mu);\mu) \hspace{1cm} (\text{by Definition~\ref{def:geneKkSch}})\notag\\
=&\ K(\Delta^k(\mu);\Delta^{k+1}(\mu);\mu-\epsilon_{z+1}) \hspace{1cm} (\text{by Lemma~\ref{lem:mirr1}})\notag\\
=&\ K(\Delta^k(\mu)\setminus\beta;\Delta^{k+1}(\mu);\mu-\epsilon_{z+1})
+K(\Delta^k(\mu);\Delta^{k+1}(\mu);\mu-\epsilon_{z+1}+\varepsilon_{\beta})
\notag\\
& \hspace{7cm} (\text{by Lemma~\ref{lem:relk}~(a)})\notag\\
=&\ K(\Delta^k(\mu)\setminus\beta;\Delta^{k+1}(\mu);\mu-\epsilon_{z+1})
+K(\Delta^k(\mu);\Delta^{k+1}(\mu);\mu-\epsilon_{\textup{down}_{\dkmu}(z+1)})
\notag\\
=&\ K(\Delta^k(\mu-\epsilon_{z+1});\Delta^{k+1}(\mu);\mu-\epsilon_{z+1})
+K(\Delta^k(\mu);\Delta^{k+1}(\mu);\mu-\epsilon_{\textup{down}_{\dkmu}(z+1)})
\notag\\
& \hspace{7cm} (\text{by $\Delta^{k}(\mu)\setminus\beta=\Delta^{k}(\mu-\epsilon_{z+1})$})\notag\\
=&\ K(\Delta^k(\mu-\epsilon_{z+1});\Delta^{k+1}(\mu);\mu-\epsilon_{z+1})
+L_{\textup{down}_{\dkmu}(z+1)}g_\mu^{(k)}\notag\\
& \hspace{2cm} (\text{by~(\ref{eq:LKatatoKata}) and Definition~\ref{def:geneKkSch} for the second summand}). \mlabel{eq:notPar2-1}
\end{align}

We complete the proof in two cases.

{\bf Case 1.} $\textup{down}_{\dkmu}(z+1)+1>\ell$. Then $\Delta^{k+1}(\mu)$ has no root in row $z+1$ and so
\begin{equation}
\Delta^{k+1}(\mu) = \Delta^{k+1}(\mu-\epsilon_{z+1}).
\mlabel{eq:notPar-Mul3}
\end{equation}
Hence
\begin{align*}
g_\mu^{(k)}
=&\ K(\Delta^k(\mu-\epsilon_{z+1});\Delta^{k+1}(\mu);\mu-\epsilon_{z+1})
+L_{\textup{down}_{\dkmu}(z+1)}g_\mu^{(k)}\hspace{1cm} (\text{by  ~(\ref{eq:notPar2-1})})\\
=&\ K(\Delta^{k}(\mu-\epsilon_{z+1});\Delta^{k+1}(\mu-\epsilon_{z+1});
\mu-\epsilon_{z+1})+ L_{\textup{down}_{\dkmu}(z+1)}g_\mu^{(k)}\hspace{1cm}
(\text{by~(\ref{eq:notPar-Mul3})})\\
=&\ g_{\mu-\epsilon_{z+1}}^{(k)}+L_{\textup{down}_{\dkmu}(z+1)}g_{\mu}^{(k)}
\hspace{1cm} (\text{by Definition~\ref{def:geneKkSch} for the first summand}).
\end{align*}
This is what we need by noting that $L_{\textup{down}_{\dkmu}(z+1)+1}=0$.

{\bf Case 2.}  $\down{\dkmu}{z+1}+1\leq\ell$. Then
\begin{equation}
L(\Delta^{k+1}(\mu))\setminus\big( \text{down}_{\dkmu}(z+1)+1 \big)
=L({\Delta^{k+1}(\mu-\epsilon_{z+1})}).
\mlabel{eq:notPar-Mul4}
\end{equation}
Hence
\begin{align*}
g_\mu^{(k)}
=&\ K(\Delta^k(\mu-\epsilon_{z+1});\Delta^{k+1}(\mu);\mu-\epsilon_{z+1})
+L_{\textup{down}_{\dkmu}(z+1)}g_\mu^{(k)}\hspace{1cm} (\text{by  ~(\ref{eq:notPar2-1})})\\
=&\  K\Big(\Delta^k(\mu-\epsilon_{z+1});L(\Delta^{k+1}(\mu))\setminus\big( \text{down}_{\dkmu}(z+1)+1 \big)
;\mu-\epsilon_{z+1}\Big)\\
&\ -K\Big(\Delta^k(\mu-\epsilon_{z+1});L(\Delta^{k+1}(\mu))\setminus\big( \text{down}_{\dkmu}(z+1)+1 \big);
\mu-\epsilon_{z+1}-\epsilon_{\text{down}_{\dkmu}(z+1)+1}\Big) \\
&\ +L_{\text{down}_{\dkmu}(z+1)}g_\mu^{(k)}\hspace{3cm} (\text{by Lemma~\ref{lem:relk}~(c) for the first summand})\\
=&\ K\Big(\Delta^{k}(\mu-\epsilon_{z+1});\Delta^{k+1}(\mu-\epsilon_{z+1});
\mu-\epsilon_{z+1}\Big)\\
&\ -K\Big(\Delta^{k}(\mu-\epsilon_{z+1});\Delta^{k+1}(\mu-\epsilon_{z+1});
\mu-\epsilon_{z+1}-\epsilon_{\text{down}_{\dkmu}(z+1)+1}\Big)\\
&\
+L_{\text{down}_{\dkmu}(z+1)}g_{\mu}^{(k)}\hspace{8cm}
(\text{by~(\ref{eq:notPar-Mul4})})\\
=&\ (1-L_{\text{down}_{\dkmu}(z+1)+1}) g_{\mu-\epsilon_{z+1}}^{(k)}
+L_{\text{down}_{\dkmu}(z+1)}g_{\mu}^{(k)} \hspace{1cm} (\text{by  ~(\ref{eq:LKatatoKata}) and Definition~\ref{def:geneKkSch}}).
\end{align*}
This gives the proof in this case, and hence completes the proof of ~\mref{it:notPar2}.

\mref{it:notPar3}
In the root ideal $\dkmu$,
there are a wall in rows $z, z+1$, and a ceiling in columns $y, y+1$. Meanwhile, $m_{M}(y)+1=m_{M}(y+1)$ and $\mu_{z}=\mu_{z+1}-1$ in this case.
By Lemma~\mref{lem:mirr1},
$$
g_{\mu}^{(k)}=K(\dkmu;\Delta^{k+1}(\mu);\mu)=0,
$$
which proves ~\mref{it:notPar3}.
\end{proof}

We present the following three lemmas in preparing for the main result in this subsection.

\begin{lemma}
Let $\lambda, \mu\in\tpkl$ and $z\in[\bott_\lambda-1]$.
For $i\in[\ell-z-1]$ such that $z+i\in[\bott_\lambda]$, suppose
\begin{enumerate}
\item \quad $\lambda_z = \lambda_{z+1} = \cdots = \lambda_{z+i} \geq \lambda_{z+i+1}$$;$
\item \quad $\mu_{z+i-1}+1 =\mu_{z+i} \geq \mu_{z+i+1}$$;$
\item \quad $\mu_{z+i-1}+1 = \lambda_{z+i-1}$.
\end{enumerate}
Then ${\rm down}_{\dkl}(z)+i = {\rm down}_{\dkmu}(z+i)$.
\mlabel{lem:downlambdatomu}
\end{lemma}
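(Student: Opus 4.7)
The plan is to reduce the identity to the explicit formula from Proposition~\ref{prop:factroot}, which says that the removable root in row $r$ of a root ideal $\Delta^k(\nu)$ lies in column $k - \nu_r + r + 1$ whenever such a root exists. First I would verify well-definedness of both sides. For the left-hand side, $z \in [\bott_\lambda - 1] \subseteq [\bott_\lambda]$ together with hypothesis (a), which supplies $\lambda_z \ge \lambda_{z+1}$, lets me invoke Proposition~\ref{prop:factroot} directly (the proof there uses only the local inequality $\lambda_z \ge \lambda_{z+1}$, so it transfers from $\pkl$ to $\tpkl$ verbatim). For the right-hand side, hypothesis (b) gives $\mu_{z+i} \ge \mu_{z+i+1}$, so the same argument yields a removable root in row $z+i$ of $\dkmu$ once I check that this row carries at least one root; this will be a by-product of the key identity below.

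The core computation is the chain
\begin{equation*}
\lambda_z \,\overset{\text{(a)}}{=}\, \lambda_{z+i-1} \,\overset{\text{(c)}}{=}\, \mu_{z+i-1} + 1 \,\overset{\text{(b)}}{=}\, \mu_{z+i}.
\end{equation*}
Since also $\lambda_z = \lambda_{z+i}$ by (a), this forces $\mu_{z+i} = \lambda_{z+i}$, so $z+i \in [\bott_\mu]$ follows from $z+i \in [\bott_\lambda]$ and the removability check above goes through. With both down-values now defined, I obtain
\begin{equation*}
{\rm down}_{\dkl}(z) = k - \lambda_z + z + 1, \qquad {\rm down}_{\dkmu}(z+i) = k - \mu_{z+i} + (z+i) + 1,
\end{equation*}
and subtracting gives
\begin{equation*}
{\rm down}_{\dkmu}(z+i) - {\rm down}_{\dkl}(z) = i + (\lambda_z - \mu_{z+i}) = i,
\end{equation*}
as desired. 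There is no genuine obstacle: the lemma is a three-line bookkeeping step whose only mildly delicate point is confirming that the ambient rows lie within the bottom region so that Proposition~\ref{prop:factroot} applies on both sides, and the identity $\lambda_z = \mu_{z+i}$ settles this simultaneously with the main computation.
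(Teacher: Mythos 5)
Your proof is correct and follows essentially the same route as the paper's: establish the key identity $\mu_{z+i} = \lambda_{z+i}$ (via $\lambda_z = \lambda_{z+i-1} = \mu_{z+i-1}+1 = \mu_{z+i}$), use it to conclude $z+i \le \bott_\mu$ so both $\textup{down}$-values are defined, and then apply the explicit formula $\textup{down}_{\Delta^k(\nu)}(r) = k - \nu_r + r + 1$ to both sides. Your observation that Proposition~\ref{prop:factroot} transfers from $\pkl$ to $\tpkl$ because its proof only uses the local inequality $\lambda_z \ge \lambda_{z+1}$ is correct and a worthwhile remark, since the lemma statement works over $\tpkl$.
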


\begin{proof}
By Remark~\mref{re:factroot}~(a), ${\rm down}_{\dkl}(z)$ is defined by $\lambda_{z} = \lambda_{z+1}$ and $z\in[\bott_\lambda]$.
Further, the hypotheses
$$
\mu_{z+i} \overset{\text{(b)}}{=} \mu_{z+i-1}+1\overset{\text{(c)}}{=} \lambda_{z+i-1} \overset{\text{(a)}}{=} \lambda_{z+i}$$
imply $z+i\leq\bott_\mu$. So ${\rm down}_{\dkmu}(z+i)$ is defined by $\mu_{z+i} \geq \mu_{z+i+1}$. By a direct computation, we have
\begin{align*}
&\ {\rm down}_{\dkl}(z)+i =\big(k-\lambda_{z}+z+1\big)+i
\overset{\text{(a)}}{=}k-\lambda_{z+i-1}+z+1+i\\
\overset{\text{(c)}}{=}&\ k-(\mu_{z+i-1}+1)+(z+i)+1
\overset{\text{(b)}}{=}k-\mu_{z+i}+(z+i)+1
= {\rm down}_{\dkmu}(z+i).
\end{align*}
This completes the proof.
\end{proof}

Recall that $h:=h_{\mu,z}$, ${\rm cover}_{z,i}(\mu)$ and ${\rm cover}_{z}(\mu)$ are defined in Definition~\mref{defn:defh}.

\begin{lemma}
Let $\mu\in\tpkl$, $z\in[\bott_\mu]$ and $h:=h_{\mu,z}$. Suppose $y:=\textup{top}_{\Delta^{k}(\mu)}(z)>\textup{top}_{\Delta^{k}(\mu)}(z+1)$ and
$\mu_x\geq\mu_{x+1}$ for all $x\in[\ell-1]\setminus z$. Then
\begin{equation*}
g_{\mu}^{(k)}=(1-L_{{\rm down}_{\dkmu}(z+1)+h})g_{\textup{cover}_{z}(\mu)}^{(k)}+
\sum_{i=0}^{h-1}(1-L_{{\rm down}_{\dkmu}(z+1)+i+1})g_{\textup{cover}_{z,i}(\mu)-\epsilon_{z+i+1}}^{(k)}
+ L_{{\rm down}_{\dkmu}(z+1)}g_\mu^{(k)}.
\end{equation*}
\mlabel{lem:gtogg-leqb}
\end{lemma}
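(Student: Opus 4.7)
The plan is to establish by induction on $j \in [0, h]$ the intermediate identity
\begin{equation*}
g_\mu^{(k)} = (1-L_{D+j})\,g_{\textup{cover}_{z,j}(\mu)}^{(k)} + \sum_{i=0}^{j-1}(1-L_{D+i+1})\,g_{\textup{cover}_{z,i}(\mu)-\epsilon_{z+i+1}}^{(k)} + L_D\,g_\mu^{(k)},
\end{equation*}
where $D := {\rm down}_{\dkmu}(z+1)$, and then to conclude by taking $j = h$, using $\textup{cover}_{z,h}(\mu) = \textup{cover}_z(\mu)$. The base case $j = 0$ is immediate because $\textup{cover}_{z,0}(\mu) = \mu$ and the $i$-sum is empty, reducing the identity to the tautology $g_\mu^{(k)} = (1-L_D)g_\mu^{(k)} + L_D g_\mu^{(k)}$.

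For the inductive step from $j-1$ to $j$ with $j \geq 1$, I will set $\mu' := \textup{cover}_{z,j-1}(\mu)$ and apply Proposition~\ref{prop:notPar}(a) to the pair $(\mu', z+j-1)$. The required hypotheses of that proposition---namely $z+j-1 \in [\bott_{\mu'} - 1]$, $\mu'_{z+j-1} + 1 = \mu'_{z+j}$, $\mu'_x \geq \mu'_{x+1}$ for $x \neq z+j-1$, and $y' := {\rm top}_{\Delta^k(\mu')}(z+j-1) > {\rm top}_{\Delta^k(\mu')}(z+j)$---all follow from the defining equalities~\eqref{eq:interval} of $h = h_{\mu,z}$ together with the disjointness in Lemma~\ref{lem:disj}, exactly in the manner of the inductive step of Lemma~\ref{lem:gtogg}. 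Parallel computations also give $y' = y + j - 1$ and ${\rm up}_{\Delta^k(\mu')}(y'+1) = {\rm up}_{\dkmu}(y+j)$, so that $\mu' + \epsilon_{{\rm up}_{\Delta^k(\mu')}(y'+1)} - \epsilon_{z+j} = \textup{cover}_{z,j}(\mu)$.

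The crucial identification of lowering indices is ${\rm down}_{\Delta^k(\mu')}(z+j) = D + j - 1$, which I will obtain by invoking Lemma~\ref{lem:downlambdatomu} with the role-assignment $(\lambda, \mu, z, i) \mapsto (\mu, \mu', z+1, j-1)$; its three hypotheses reduce to direct consequences of~\eqref{eq:interval} and the definition of the cover. Proposition~\ref{prop:notPar}(a) then reads
\begin{equation*}
g_{\mu'}^{(k)} = (1 - L_{D+j})\bigl(g_{\textup{cover}_{z,j}(\mu)}^{(k)} + g_{\mu' - \epsilon_{z+j}}^{(k)}\bigr) + L_{D+j-1}\,g_{\mu'}^{(k)},
\end{equation*}
which rearranges to $(1 - L_{D+j-1})\,g_{\mu'}^{(k)} = (1 - L_{D+j})\bigl(g_{\textup{cover}_{z,j}(\mu)}^{(k)} + g_{\textup{cover}_{z,j-1}(\mu) - \epsilon_{z+j}}^{(k)}\bigr)$. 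Substituting this into the inductive hypothesis and recognizing $(1 - L_{D+j})\,g_{\textup{cover}_{z,j-1}(\mu) - \epsilon_{z+j}}^{(k)}$ as exactly the $i = j-1$ term of the extended sum completes the induction. I expect the main obstacle to be the combinatorial bookkeeping required to verify Lemma~\ref{lem:downlambdatomu}'s hypotheses at each step, since its output is precisely what causes the lowering indices $D+j-1$ and $D+j$ to telescope cleanly across successive iterations; once this is in place the remainder is formal rearrangement.
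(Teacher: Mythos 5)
Your induction on $j$ is the same telescoping argument as the paper's induction on $h$, and the Case-1 mechanics (identifying ${\rm down}_{\Delta^k(\mu')}(z+j)=D+j-1$ via Lemma~\ref{lem:downlambdatomu} with the role-assignment $(\mu,\mu',z+1,j-1)$, then rearranging Proposition~\ref{prop:notPar}~(a)) are exactly right. But there is a genuine gap: you assert that $z+j-1\in[\bott_{\mu'}-1]$ ``follows from the defining equalities~\eqref{eq:interval} and Lemma~\ref{lem:disj}.'' It does not. Those equalities constrain the entries of $\mu$, not the position of the bottom of $\Delta^k(\mu')$. Since $\mu'_{z+j-1}=\mu_{z+j-1}-1$, row $z+j-1$ can drop below the bottom of $\Delta^k(\mu')$ even though $z\in[\bott_\mu]$: the row has a root in $\Delta^k(\mu')$ iff $k-\mu'_{z+j-1}+(z+j-1)+1=D+j-1\le\ell$, so whenever $D+j-1>\ell$ (which happens for $j\ge 2$ when $D$ is close to $\ell$) you land in the case $z+j-1\in[\bott_{\mu'}+1,\ell]$. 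There Proposition~\ref{prop:notPar}~(a) is not applicable (its hypothesis is $z\in[\bott_{\mu'}-1]$), and ${\rm down}_{\Delta^k(\mu')}(z+j)$ is not even defined, so Lemma~\ref{lem:downlambdatomu} cannot be invoked either.

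The repair is the second case of the paper's proof: when $z+j-1\in[\bott_{\mu'}+1,\ell]$ one checks that $D+j-1>\ell$, hence $L_{D+j-1}=L_{D+j}=0$, applies Proposition~\ref{prop:mresu}~(a) to $(\mu',z+j-1)$ in place of Proposition~\ref{prop:notPar}~(a), and observes that the resulting identity
\begin{equation*}
(1-L_{D+j-1})\,g_{\mu'}^{(k)}=g_{\mu'}^{(k)}
=g_{\textup{cover}_{z,j}(\mu)}^{(k)}+g_{\textup{cover}_{z,j-1}(\mu)-\epsilon_{z+j}}^{(k)}
=(1-L_{D+j})\bigl(g_{\textup{cover}_{z,j}(\mu)}^{(k)}+g_{\textup{cover}_{z,j-1}(\mu)-\epsilon_{z+j}}^{(k)}\bigr)
\end{equation*}
still feeds into your telescoping step, because the omitted lowering operators act as zero. (Note also that by Remark~\ref{re:notbot} the wall in rows $z+j-1,z+j$ rules out $z+j-1=\bott_{\mu'}$, so these two cases are exhaustive.) With this dichotomy added, your argument coincides with the paper's proof.
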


\begin{proof}
We proceed by induction on $h\geq0$. For the initial step $h=0$, we have $\textup{cover}_{z}(\mu) = \mu$ by Definition~\mref{defn:defh}, and so
\begin{align*}
g_\mu^{(k)} =(1-L_{{\rm down}_{\dkmu}(z+1)})g_\mu^{(k)} +L_{{\rm down}_{\dkmu}(z+1)}g_\mu^{(k)}
=(1-L_{{\rm down}_{\dkmu}(z+1)})g_{\textup{cover}_{z}(\mu)}^{(k)}+L_{{\rm down}_{\dkmu}(z+1)}g_\mu^{(k)},
\end{align*}
as required.
Consider the inductive step $h>0$.
Suppose
\begin{equation}
\begin{split}
g_{\mu}^{(k)}=&\ (1-L_{{\rm down}_{\dkmu}(z+1)+h-1})g_{\textup{cover}_{z,h-1}(\mu)}^{(k)}+
\sum_{i=0}^{(h-1)-1}(1-L_{{\rm down}_{\dkmu}(z+1)+i+1})g_{\textup{cover}_{z,i}(\mu)-\epsilon_{z+i+1}}^{(k)}\\
&\ + L_{{\rm down}_{\dkmu}(z+1)}g_\mu^{(k)}.
\end{split}
\label{eq:lem:gtogg-leqb-ind}
\end{equation}
Denote $\mu' := \textup{cover}_{z,h-1}(\mu)\in\tpkl$ and $\Psi:= \Delta^k(\mu')$. Then
\begin{align}
\mu_{z+1} =&\  \cdots =  \mu_{z+h}\geq\mu_{z+h+1}, \hspace{1cm}(\text{by Definition~\ref{defn:defh}}) \mlabel{eq:gtogg-leqb-C1}\\
\mu_{z+h-1}=&\ \mu'_{z+h-1}+1 = \mu'_{z+h}, \hspace{1cm} (\text{by $\mu'= \textup{cover}_{z,h-1}(\mu)$})\label{eq:gtogg-leqb-C2}\\
\mu'_x\geq&\  \mu'_{x+1}\,\text{ for all }\, x\in[\ell-1]\setminus (z+h-1),\label{eq:gtogg-leqb-C3}\\
y':=&\ {\rm top}_{\Psi}(z+h-1) > {\rm top}_{\Psi}(z+h),\label{eq:gtogg-leqb-C4}
\end{align}
and so
\begin{equation}
y' = y+h-1,\quad {\rm up}_\Psi(y'+1) = {\rm up}_{\dkmu}(z+h).
\mlabel{eq:ytoy'}
\end{equation}
Since $\mu'_{z+h-1}+1= \mu'_{z+h}$, there is a wall in rows $z+h-1, z+h$ in $\Psi$.
Then $z+h-1\neq \bott_{\mu'}$ by Remark~\mref{re:notbot} and we have the following two cases.

{\bf Case 1. } $z+h-1\in[\bott_{\mu'}-1]$. Then $z+h\in[\bott_{\mu'}]$ and so
\begin{equation}
{\rm down}_{\Psi}(z+h)={\rm down}_{\Psi}\big((z+1)+(h-1)\big)={\rm down}_{\dkmu}(z+1)+h-1,
\mlabel{eq:D1}
\end{equation}
by~(\ref{eq:gtogg-leqb-C1}),~(\ref{eq:gtogg-leqb-C2}),~(\ref{eq:gtogg-leqb-C3}) and Lemma~\mref{lem:downlambdatomu}. Thus
\begin{align*}
g_{\mu}^{(k)}=&\ (1-L_{{\rm down}_{\dkmu}(z+1)+h-1})g_{\mu'}^{(k)}+
\sum_{i=0}^{(h-1)-1}(1-L_{{\rm down}_{\dkmu}(z+1)+i+1})g_{\textup{cover}_{z,i}(\mu)-\epsilon_{z+i+1}}^{(k)}\\
&\ + L_{{\rm down}_{\dkmu}(z+1)}g_\mu^{(k)} \hspace{6cm} (\text{by  ~(\ref{eq:lem:gtogg-leqb-ind})})\\
=&\ (1-L_{{\rm down}_{\Psi}(z+h)+1})( g_{\mu'+\epsilon_{{\rm up}_{\Psi}(y'+1)}
-\epsilon_{z+h}}^{(k)}
+g_{\mu'-\epsilon_{z+h}}^{(k)})
+L_{{\rm down}_{\Psi}(z+h)}g_{\mu'}^{(k)}\\
&\ -L_{{\rm down}_{\dkmu}(z+1)+h-1}g_{\mu'}^{(k)}+
\sum_{i=0}^{(h-1)-1}(1-L_{{\rm down}_{\dkmu}(z+1)+i+1})g_{\textup{cover}_{z,i}(\mu)-\epsilon_{z+i+1}}^{(k)}\\
&\ + L_{{\rm down}_{\dkmu}(z+1)}g_\mu^{(k)}\hspace{1cm} (\text{by~(\ref{eq:gtogg-leqb-C3}),~(\ref{eq:gtogg-leqb-C4}) and Proposition~\ref{prop:notPar}~\ref{it:notPar1} for $g_{\mu'}^{(k)}$}) \\
=&\ (1-L_{{\rm down}_{\dkmu}(z+1)+h})g_{\mu'+\epsilon_{{\rm up}_{\Psi}(y'+1)}
-\epsilon_{z+h}}^{(k)}+
\sum_{i=0}^{h-1}(1-L_{{\rm down}_{\dkmu}(z+1)+i+1})g_{\textup{cover}_{z,i}(\mu)-\epsilon_{z+i+1}}^{(k)}\\
&\ + L_{{\rm down}_{\dkmu}(z+1)}g_\mu^{(k)} \hspace{7cm} (\text{by~(\ref{eq:D1})})\\
=&\ (1-L_{{\rm down}_{\dkmu}(z+1)+h})g_{\mu'+\epsilon_{{\rm up}_{\dkmu}(y+h)}
-\epsilon_{z+h}}^{(k)}+
\sum_{i=0}^{h-1}(1-L_{{\rm down}_{\dkmu}(z+1)+i+1})g_{\textup{cover}_{z,i}(\mu)-\epsilon_{z+i+1}}^{(k)}\\
&\ + L_{{\rm down}_{\dkmu}(z+1)}g_\mu^{(k)} \hspace{7cm}
(\text{by~(\ref{eq:ytoy'})})\\
=&\ (1-L_{{\rm down}_{\dkmu}(z+1)+h})g_{{\rm cover}_z(\mu)}^{(k)}
+
\sum_{i=0}^{h-1}(1-L_{{\rm down}_{\dkmu}(z+1)+i+1})g_{\textup{cover}_{z,i}(\mu)-\epsilon_{z+i+1}}^{(k)}\\
&\ + L_{{\rm down}_{\dkmu}(z+1)}g_\mu^{(k)},
\end{align*}
as required.

{\bf Case 2.} $z+h-1\in[\bott_{\mu'}+1,\ell]$. Then ${\rm down}_{\dkmu}(z+1)+h-1 > \ell$ and so $L_{{\rm down}_{\dkmu}(z+1)+h-1} = 0$. Otherwise,
\begin{align*}
\ell\geq &\ {\rm down}_{\dkmu}(z+1)+h-1\\
=&\ (k-\mu_{z+1}+(z+1)+1)+h-1\\
=&\ k-\mu_{z+h-1} +z+h +1\hspace{2cm} (\text{by $\mu_{z+1}= \mu_{z+h-1}$ in~(\ref{eq:gtogg-leqb-C1})})\\
=&\ k-(\mu'_{z+h-1}+1)+z+h +1\hspace{1cm}(\text{by $\mu'_{z+h-1}+1= \mu_{z+h-1}$ in~(\ref{eq:gtogg-leqb-C2})})\\
=&\ k-(\mu'_{z+h-1}) +(z+h-1)+1,
\end{align*}
which implies that there is at least one root in row $z+h-1$ in $\Delta^k(\mu')$. This contradicts with $z+h-1\in[\bott_{\mu'}+1,\ell]$. Hence,
\begin{align*}
g_{\mu}^{(k)}=&\ (1-L_{{\rm down}_{\dkmu}(z+1)+h-1})g_{\mu'}^{(k)}+
\sum_{i=0}^{(h-1)-1}(1-L_{{\rm down}_{\dkmu}(z+1)+i+1})g_{\textup{cover}_{z,i}(\mu)-\epsilon_{z+i+1}}^{(k)}\\
&\ + L_{{\rm down}_{\dkmu}(z+1)}g_\mu^{(k)} \hspace{6cm} (\text{by  ~(\ref{eq:lem:gtogg-leqb-ind})})\\
=&\ g_{\mu'+\epsilon_{{\rm up}_{\Psi}(y'+1)}
-\epsilon_{z+h}}^{(k)}
+g_{\mu'-\epsilon_{z+h}}^{(k)}-L_{{\rm down}_{\dkmu}(z+1)+h-1}g_{\mu'}^{(k)}\\
&\ +
\sum_{i=0}^{(h-1)-1}(1-L_{{\rm down}_{\dkmu}(z+1)+i+1})g_{\textup{cover}_{z,i}(\mu)-\epsilon_{z+i+1}}^{(k)}
+ L_{{\rm down}_{\dkmu}(z+1)}g_\mu^{(k)}\\
& \hspace{4cm} (\text{by~(\ref{eq:gtogg-leqb-C3}),~(\ref{eq:gtogg-leqb-C4}) and Proposition~\ref{prop:mresu}~(a) for $g_{\mu'}^{(k)}$})\\
=&\ g_{\mu'+\epsilon_{{\rm up}_{\Psi}(y'+1)}
-\epsilon_{z+h}}^{(k)}
+g_{\mu'-\epsilon_{z+h}}^{(k)}+
\sum_{i=0}^{(h-1)-1}(1-L_{{\rm down}_{\dkmu}(z+1)+i+1})g_{\textup{cover}_{z,i}(\mu)-\epsilon_{z+i+1}}^{(k)}\\
&\ +L_{{\rm down}_{\dkmu}(z+1)}g_\mu^{(k)} \hspace{4cm} (\text{by $L_{{\rm down}_{\dkmu}(z+1)+h-1}=0$})\\
=&\ g_{{\rm cover}_z(\mu)}^{(k)}+ g_{{\rm cover}_{z,h-1}(\mu)-\epsilon_{z+h}}^{(k)}+
\sum_{i=0}^{(h-1)-1}(1-L_{{\rm down}_{\dkmu}(z+1)+i+1})g_{\textup{cover}_{z,i}(\mu)-\epsilon_{z+i+1}}^{(k)}\\
&\ +L_{{\rm down}_{\dkmu}(z+1)}g_\mu^{(k)} \hspace{7cm}
(\text{by~(\ref{eq:ytoy'})})\\
=&\ (1-L_{{\rm down}_{\dkmu}(z+1)+h})(g_{{\rm cover}_z(\mu)}^{(k)}+ g_{{\rm cover}_{z,h-1}(\mu)-\epsilon_{z+h}}^{(k)})\\
&\ +
\sum_{i=0}^{(h-1)-1}(1-L_{{\rm down}_{\dkmu}(z+1)+i+1})g_{\textup{cover}_{z,i}(\mu)-\epsilon_{z+i+1}}^{(k)}
+L_{{\rm down}_{\dkmu}(z+1)}g_\mu^{(k)}\\
& \hspace{2cm} (\text{by ${\rm down}_{\dkmu}(z+1)+h>{\rm down}_{\dkmu}(z+1)+h-1>\ell$})\\
=&\ (1-L_{{\rm down}_{\dkmu}(z+1)+h})g_{{\rm cover}_z(\mu)}^{(k)}+\sum_{i=0}^{h-1}(1-L_{{\rm down}_{\dkmu}(z+1)+i+1})g_{\textup{cover}_{z,i}(\mu)-\epsilon_{z+i+1}}^{(k)}\\
&\ +L_{{\rm down}_{\dkmu}(z+1)}g_\mu^{(k)}.
\end{align*}
This completes the proof.
\end{proof}

\begin{lemma}
With the setting in Lemma~\ref{lem:gtogg-leqb}, if $(\textup{cover}_{z}(\mu))_{z+h}+1 = (\textup{cover}_{z}(\mu))_{z+h+1}$, then
\begin{equation} \mlabel{eq:gcover}
g_{\textup{cover}_{z}(\mu)}^{(k)}=0 \,\text{ or }\, g_{\textup{cover}_{z}(\mu)}^{(k)}=
(1-L_{{\rm down}_{\dkmu}(z+1)+h+1})g_{\textup{cover}_{z}(\mu)-\epsilon_{z+h+1}}^{(k)}
+L_{{\rm down}_{\dkmu}(z+1)+h}g_{\textup{cover}_{z}(\mu)}^{(k)}.
\end{equation}
Moreover, if $\mu_{\textup{up}_{\Delta^{k}(\mu)}(y+h)} = \mu_{\textup{up}_{\Delta^{k}(\mu)}(y+h+1)}$, then $g_{\textup{cover}_{z}(\mu)}^{(k)}=0$.
\mlabel{lem:coverLambda-leqb}
\end{lemma}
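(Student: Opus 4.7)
The plan is to mirror the proof of Lemma~\ref{lem:coverLambda}, modifying the final algebraic output to account for the fact that $z+h+1$ may lie in the interior region $[\bott_{\textup{cover}_z(\mu)}]$, where the straightening identities carry lowering operator corrections as in Proposition~\ref{prop:notPar}~(b) rather than the cleaner form in Proposition~\ref{prop:mresu}~(b).

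First I would rewrite the hypothesis $(\textup{cover}_{z}(\mu))_{z+h}+1 = (\textup{cover}_{z}(\mu))_{z+h+1}$ as $\mu_{z+h}=\mu_{z+h+1}$ via Definition~\ref{defn:defh}. Letting $c$ denote the length of $\textup{path}_{\dkmu}(y,z)$, I set
\[
c':=\max\Big\{ i\in[0, c-1] \,\Big|\, \mu_{x+h}=\mu_{x+h+1}\ \forall\ x\in\textup{path}_{\dkmu}(\textup{up}_{\dkmu}^{i}(z),\textup{up}_{\dkmu}(z)) \Big\},
\]
and split into two cases according to whether $c'\in[0,c-2]$ or $c'=c-1$.

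In the easier case $c'\in[0,c-2]$, I verify that $\Delta^{k}(\textup{cover}_{z}(\mu))$ has a wall in rows $z+h, z+h+1$ together with a ceiling in columns $\textup{up}_{\dkmu}^{c'}(z+h)$, $\textup{up}_{\dkmu}^{c'}(z+h+1)$ satisfying $m_{\Delta^{k+1}(\mu)}(\textup{up}_{\dkmu}^{c'}(z+h))+1 = m_{\Delta^{k+1}(\mu)}(\textup{up}_{\dkmu}^{c'}(z+h+1))$. The Mirror Lemma (Lemma~\ref{lem:mirr1}) then immediately yields $g_{\textup{cover}_{z}(\mu)}^{(k)}=0$, which is the first alternative in~(\ref{eq:gcover}). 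The substantive case is $c'=c-1$, where $\Delta^{k}(\textup{cover}_{z}(\mu))$ carries a ceiling through columns $y+h-1, y+h, y+h+1$, a wall in rows $z+h, z+h+1$, and $m_{\Delta^{k+1}(\mu)}(y+h) = m_{\Delta^{k+1}(\mu)}(y+h+1)$. By Lemma~\ref{lem:downlambdatomu} applied with $\lambda\leftarrow\mu$, $z\leftarrow z+1$, $i\leftarrow h$, $\mu\leftarrow\textup{cover}_z(\mu)$, one gets $\textup{down}_{\Delta^{k}(\textup{cover}_z(\mu))}(z+h+1) = \textup{down}_{\dkmu}(z+1)+h$, so that the root $\beta := (z+h+1, \textup{down}_{\dkmu}(z+1)+h)$ is removable in $\Delta^{k}(\textup{cover}_z(\mu))$. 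Following the template of Proposition~\ref{prop:notPar}~(b), I would apply Lemma~\ref{lem:mirr1} to shift the weight from $\textup{cover}_z(\mu)$ to $\textup{cover}_z(\mu)-\epsilon_{z+h+1}$, then Lemma~\ref{lem:relk}~(a) at $\beta$, and finally Lemma~\ref{lem:relk}~(c) at the multiset element $\textup{down}_{\dkmu}(z+1)+h+1$ (splitting into subcases $>\ell$ and $\leq\ell$ exactly as in the proof of Proposition~\ref{prop:notPar}~(b)). Rewriting the resulting Katalan functions via~(\ref{eq:LKatatoKata}) and Definition~\ref{def:geneKkSch} produces the right hand side of~(\ref{eq:gcover}).

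For the moreover part, if $\mu_{\textup{up}_{\dkmu}(y+h)} = \mu_{\textup{up}_{\dkmu}(y+h+1)}$ then the defining chain of equalities for $c'$ extends at least one step further up the path from $z$ to $y$, forcing $c'\in[0,c-2]$, and the first case above gives $g_{\textup{cover}_{z}(\mu)}^{(k)}=0$. The main obstacle will be the careful bookkeeping that $\Delta^{k}(\textup{cover}_z(\mu))$ and $\Delta^{k+1}(\textup{cover}_z(\mu))$ indeed carry the claimed walls, ceilings and multiplicity conditions, and that the edge cases where $z+h+1$ exceeds $\bott_{\textup{cover}_z(\mu)}$ or $\textup{down}_{\dkmu}(z+1)+h+1$ exceeds $\ell$ are absorbed by the convention $L_z=0$ for $z>\ell$; once these are verified, the remaining algebraic manipulations are a parallel rerun of those already deployed in Proposition~\ref{prop:notPar}~(b) and Lemma~\ref{lem:coverLambda}.
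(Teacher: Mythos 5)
Your proposal is correct and follows essentially the same route as the paper: the same reduction of the hypothesis to $\mu_{z+h}=\mu_{z+h+1}$, the same definition of $c'$ and case split, the Mirror Lemma for the vanishing case and the moreover part, and for $c'=c-1$ the identification $\textup{down}_{\Delta^k(\textup{cover}_z(\mu))}(z+h+1)=\textup{down}_{\dkmu}(z+1)+h$ via Lemma~\ref{lem:downlambdatomu} followed by the straightening identity (the paper simply cites Proposition~\ref{prop:notPar}~(b), whose proof is exactly the removable-root/multiset manipulation you describe). The edge case you flag at the end is handled in the paper precisely as you anticipate: when $z+h>\bott_{\textup{cover}_z(\mu)}$ one has $\textup{down}_{\dkmu}(z+1)+h>\ell$, the lowering operators vanish, and Proposition~\ref{prop:mresu}~(b) takes over.
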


\begin{proof}
By Definition~\mref{defn:defh},
$$(\textup{cover}_{z}(\mu))_{z+h}+1=(\textup{cover}_{z}(\mu))_{z+h+1} \Longleftrightarrow \mu_{z+h}=\mu_{z+h+1}.$$
Define
\begin{equation*}
c':=\text{max}\Big\{ i\in[0, c-1]\, \Big |\, \mu_{x+h}=\mu_{x+h+1} \ \text{for} \ \text{all} \ x\in\text{path}_{\Delta^{k}(\mu)}(\text{up}_{\Delta^{k}(\mu)}^{i}(z),
\text{up}_{\Delta^{k}(\mu)}(z)) \Big\}.
\end{equation*}
Since $c'\in[0,c-1]$, we consider the following two cases in proving \meqref{eq:gcover}.

{\bf Case~1.} $c'=c-1$. There is a ceiling in columns $y+h-1$, $y+h$ and $y+h+1$ with $m_{\Delta^{k+1}(\mu)}(y+h)=m_{\Delta^{k+1}(\mu)}(y+h+1)$. Denote $\mu':={\rm cover}_z(\mu)$, $\Psi:=\Delta^k(\mu')$.
We have
\begin{align}
\mu_{z+1} =&\  \cdots = \mu_{z+h} = \mu_{z+h+1}\geq\mu_{z+h+2},\mlabel{eq:coverLambda-leqb-C1}\\
 \mu_{z+h}=&\ \mu'_{z+h}+1 = \mu'_{z+h+1},\label{eq:coverLambda-leqb-C2}\\
 \mu'_x\geq&\ \mu'_{x+1}\,\text{ for all }\, x\in[\ell-1]\setminus (z+h),\label{eq:coverLambda-leqb-C3}\\
  y+h=&\ \textup{top}_{\Psi}(z+h)=
\textup{top}_{\Psi}(z+h+1)-1.\label{eq:coverLambda-leqb-C4}
\end{align}
By $(\textup{cover}_{z}(\mu))_{z+h}+1=(\textup{cover}_{z}(\mu))_{z+h+1}$ and Remark~\mref{re:factroot}~(b),
there is a wall in rows $z+h, z+h+1$ in the root ideal $\Delta^k({\rm cover}_z(\mu))$.
So $z+h\neq \bott_{\mu'}$ by Remark~\mref{re:notbot} and we separate the two subcases of $z+h\in[\bott_{\mu'}-1]$ and $z+h\in[\bott_{\mu'}+1,\ell]$.

{\bf Subcase 1.1.}  $z+h\in[\bott_{\mu'}-1]$. Then
\begin{equation}
\textup{down}_{\Psi}(z+h+1) = \textup{down}_{\Psi}\big((z+1)+h\big)  = {\rm down}_{\dkmu}(z+1)+h,
\mlabel{eq:coverLambda-D1}
\end{equation}
by~(\ref{eq:coverLambda-leqb-C1}),~(\ref{eq:coverLambda-leqb-C2}),
~(\ref{eq:coverLambda-leqb-C3}) and Lemma~\mref{lem:downlambdatomu}, and so
\begin{align*}
g_{\mu'}^{(k)}=&\ (1-L_{\textup{down}_{\Psi}(z+h+1)+1})
g_{\mu'-\epsilon_{z+h+1}}^{(k)}
+L_{\textup{down}_{\Psi}(z+h+1)}g_{\mu'}^{(k)} \hspace{0.5cm} (\text{by~(\ref{eq:coverLambda-leqb-C3}),~(\ref{eq:coverLambda-leqb-C4}) and Proposition~\ref{prop:notPar}~\ref{it:notPar2}})\\
=&\ (1-L_{{\rm down}_{\dkmu}(z+1)+h+1})
g_{\mu'-\epsilon_{z+h+1}}^{(k)}
+L_{{\rm down}_{\dkmu}(z+1)+h}g_{\mu'}^{(k)} \hspace{1cm} (\text{by~(\ref{eq:coverLambda-D1})}),
\end{align*}
which yields \meqref{eq:gcover}.

{\bf Subcase 1.2.} $z+h\in[\bott_{\mu'}+1,\ell]$. Then ${\rm down}_{\dkmu}(z+1)+h > \ell$ and so $L_{{\rm down}_{\dkmu}(z+1)+h} = 0$, because otherwise,
\begin{align*}
\ell\geq &\ {\rm down}_{\dkmu}(z+1)+h\\
=&\ (k-\mu_{z+1}+(z+1)+1)+h\\
=&\ k-\mu_{z+h} +z+h+1+1\hspace{1cm} (\text{by $\mu_{z+1}= \mu_{z+h}$ in~(\ref{eq:coverLambda-leqb-C1})})\\
=&\ k-(\mu'_{z+h}+1)+z+h +1+1\hspace{1cm}(\text{by $\mu'_{z+h}+1= \mu_{z+h}$ in~(\ref{eq:coverLambda-leqb-C2})})\\
=&\ k-(\mu'_{z+h}) +(z+h)+1,
\end{align*}
which implies that there is at least one root in row $z+h$ in $\Psi$. This contradicts with $z+h-1\in[\bott_{\mu'}+1,\ell]$. Hence
\begin{align*}
g_{\mu'}^{(k)} =&\ g_{\mu'-\epsilon_{z+h+1}}^{(k)}\hspace{1cm}
(\text{by~(\ref{eq:coverLambda-leqb-C3}),~(\ref{eq:coverLambda-leqb-C4}) and Proposition~\ref{prop:mresu}~\ref{it:mresu2}})\\
=&\
(1-L_{{\rm down}_{\dkmu}(z+1)+h+1})g_{\textup{cover}_{z}(\mu)-\epsilon_{z+h+1}}^{(k)}
+L_{{\rm down}_{\dkmu}(z+1)+h}g_{\textup{cover}_{z}(\mu)}^{(k)}\\
& \hspace{5cm} (\text{by ${\rm down}_{\dkmu}(z+1)+h > \ell$}),
\end{align*}
as needed.

{\bf Case~2.} $c'\in[0,c-2]$. Then $g_{\textup{cover}_{z}(\mu)}^{(k)}=0$ by the same argument as Case 2 of
the proof of Lemma~\mref{lem:coverLambda}.

So we are done with proving \meqref{eq:gcover}. Finally, if $\mu_{\text{up}_{\Delta^{k}(\mu)}(y+h)} = \mu_{\text{up}_{\Delta^{k}(\mu)}(y+h+1)},$
then $c'\in[0,c-2]$ and so $g_{\textup{cover}_{z}(\mu)}^{(k)}=0$.
\end{proof}

Now we are ready to prove our main result in this subsection.
Recall that $\Otpkl{\lambda}{z}{d}\subseteq\tpkl$ and $\Opkl{\lambda}{z}\subseteq\pkl$ are defined respectively in~(\mref{eq:omegal}) and~(\mref{eq:SpeOme}).

\begin{theorem}
Let $\lambda\in\pkl$ and $z\in[\bott_\lambda]$. Then
\begin{equation*}
L_{z}g_{\lambda}^{(k)}= \sum_{\lsum\in\Opkl{\lambda}{z}}(1-L_{\textup{down}_{\dkl}(z)+h'+1})
g_{\lsum}^{(k)}+L_{\textup{down}_{\dkl}(z)}g_{\lambda}^{(k)},
\end{equation*}
where $h':=h'_{\lambda,z}\in[0,\ell-z]$,
such that $\lambda_{z}=\lambda_{z+1}=\cdots=\lambda_{z+h'} >\lambda_{z+h'+1}$,
with the convention that $\lambda_{\ell+1}:= -\infty$.
\mlabel{thm:lgtoLg}
\end{theorem}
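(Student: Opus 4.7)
The plan is to first reduce via Proposition~\mref{prop:leqmPar}, then iterate a lowering-operator-aware straightening that parallels the induction in Theorem~\mref{thm:lgtog}.

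First, set $\mu:=\lambda-\epsilon_z\in\tpkl$ and apply Proposition~\mref{prop:leqmPar} to get
$$L_z g_\lambda^{(k)} = (1-L_{\textup{down}_{\dkl}(z)+1})g_\mu^{(k)} + L_{\textup{down}_{\dkl}(z)}g_\lambda^{(k)}.$$
When $h'=0$ we have $\mu\in\pkl$ and $\Opkl{\lambda}{z}=\{\mu\}$, so the theorem reduces immediately to the above identity. For $h'\geq 1$, the task is to prove
$$(1-L_{\textup{down}_{\dkl}(z)+1})g_\mu^{(k)} = (1-L_{\textup{down}_{\dkl}(z)+h'+1})\sum_{\lsum\in\Opkl{\lambda}{z}}g_\lsum^{(k)}.$$
Throughout, I would exploit the index identity $\textup{down}_{\Delta^{k}(\lsum)}(z+d)=\textup{down}_{\dkl}(z)+d$ for $\lsum\in\Otpkl{\lambda}{z}{d-1}$ and $d\in[h']$, which follows from Lemma~\mref{lem:downlambdatomu} combined with the plateau $\lambda_z=\cdots=\lambda_{z+h'}$ and the fact that $\lsum_{z+d}=\lambda_{z+d}$ by the construction of $\Otpkl{\lambda}{z}{d-1}$.

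Next, I would prove by strong induction on $d\in[0,h']$ the refined identity
$$(1-L_{\textup{down}_{\dkl}(z)+1})g_\mu^{(k)} = (1-L_{\textup{down}_{\dkl}(z)+d+1})\sum_{\lsum\in\Otpkl{\lambda}{z}{d}}g_\lsum^{(k)},$$
which at $d=h'$ yields the target. The base case $d=0$ is vacuous. For the inductive step, I would rearrange Proposition~\mref{prop:notPar}~(a) into the form
$$(1-L_{\textup{down}_{\Delta^{k}(\lsum)}(z+d)})g_\lsum^{(k)} = (1-L_{\textup{down}_{\Delta^{k}(\lsum)}(z+d)+1})\Big(g_{\lsum+\epsilon_{\textup{up}_{\Delta^{k}(\lsum)}(y+1)}-\epsilon_{z+d}}^{(k)}+g_{\lsum-\epsilon_{z+d}}^{(k)}\Big),$$
together with analogous rearrangements of parts (b) and (c), applied to each $\lsum\in\Otpkl{\lambda}{z}{d-1}$ at the position $z+d-1$. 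The three-way decomposition $\Otpkl{\lambda}{z}{d-1}=\Otpkl{\lambda}{z}{d-1}^1\sqcup\Otpkl{\lambda}{z}{d-1}^2\sqcup\Otpkl{\lambda}{z}{d-1}^3$ and the ensuing bijection onto $\Otpkl{\lambda}{z}{d}$, both imported verbatim from the proof of Theorem~\mref{thm:lgtog}, then assemble the surviving summands correctly while transforming the incoming factor $(1-L_{\textup{down}_{\dkl}(z)+d})$ into the outgoing factor $(1-L_{\textup{down}_{\dkl}(z)+d+1})$. In the edge regime $z+d-1\in[\bott_\lsum+1,\ell]$, where Proposition~\mref{prop:notPar} is inapplicable, I would instead invoke Proposition~\mref{prop:mresu}; there $\textup{down}_{\Delta^{k}(\lsum)}(z+d)>\ell$, both relevant $L$-operators vanish, and the two formulations coincide.

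The main obstacle is verifying uniform telescoping of the $(1-L_\cdot)$ factors onto the single outer factor $(1-L_{\textup{down}_{\dkl}(z)+h'+1})$ across all $h'$ iterations. This rests on preserving the index identity $\textup{down}_{\Delta^{k}(\lsum)}(z+d)=\textup{down}_{\dkl}(z)+d$ at every step, which in turn relies on the constraint $\textup{up}_{\Psi_j}(y_j+i_j)<\textup{top}_{\dkmu}(z)$ baked into the definition~$(\mref{eq:omegal})$ of $\Otpkl{\lambda}{z}{d}$; without it, the added $\epsilon$-terms in the construction of $\lsum$ could intrude into the interval $[z,z+d]$ and corrupt the offset. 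Secondary care is needed for the Case~3 vanishings handled by Lemma~\mref{lem:coverLambda-leqb}, which guarantee that the surviving summands at $d=h'$ indeed lie in $\Opkl{\lambda}{z}$, and finally combining the $d=h'$ identity with the reduction from Proposition~\mref{prop:leqmPar} completes the proof.
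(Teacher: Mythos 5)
Your proposal is correct and matches the paper's own proof in every essential respect: you reduce via Proposition~\ref{prop:leqmPar}, set up the same inductive claim on $d\in[0,h']$ for the quantity $(1-L_{\textup{down}_{\dkl}(z)+1})g_\mu^{(k)}$ (the paper writes this equivalently by keeping $L_{\textup{down}_{\dkl}(z)+1}g_\mu^{(k)}$ as an additive term on both sides), exploit the same index identity $\textup{down}_{\Delta^k(\lsum)}(z+d)=\textup{down}_{\dkl}(z)+d$ from Lemma~\ref{lem:downlambdatomu}, apply the rearranged Proposition~\ref{prop:notPar} to each $\lsum\in\Otpkl{\lambda}{z}{d-1}$ via the same three-way decomposition, and handle the degenerate regime $z+d-1>\bott_\lsum$ with Proposition~\ref{prop:mresu} and the vanishing of the $L$-operator. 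The only difference is a cosmetic algebraic rearrangement of which side of the equation carries $L_{\textup{down}_{\dkl}(z)+1}g_\mu^{(k)}$, so this is the same proof.
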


\begin{proof}
Denote $\mu:=\lambda-\epsilon_z$. Then
$\mu_{z}+1=\mu_{z+1}=\cdots=\mu_{z+h'} >\mu_{z+h'+1}$
with the convention that $\mu_{\ell+1}:= -\infty$.
We only need to prove that
\begin{equation}
g_\mu^{(k)} = \sum_{\lsum\in\Otpkl{\lambda}{z}{d}}(1-L_{\textup{down}_{\dkl}(z)+d+1})
g_\lsum^{(k)} + L_{{\rm down}_{\dkl}(z)+1}g_\mu^{(k)}\tforall d\in[0,h'].
\mlabel{eq:lgtoLg-onlyneed}
\end{equation}
Indeed, the case $d=h'$ of (\mref{eq:lgtoLg-onlyneed}) gives
\begin{equation}
(1-  L_{{\rm down}_{\dkl}(z)+1})g_\mu^{(k)}= \sum_{\lsum\in\Otpkl{\lambda}{z}{h'}}(1-L_{\textup{down}_{\dkl}(z)+ h' +1})
g_\lsum^{(k)} \overset{(\ref{eq:SpeOme})}{=} \sum_{\lsum\in\Opkl{\lambda}{z}}(1-L_{\textup{down}_{\dkl}(z)+ h' +1})
g_\lsum^{(k)}.
\end{equation}
Then using this to replace the first summand on the right hand side of~(\mref{eq:leqmPar}) yields the required result.

We now prove \meqref{eq:lgtoLg-onlyneed} by induction on $d\in[0,h']$. The case when $d=0$ is valid since
\begin{align*}
g_\mu^{(k)} = &\ g_\mu^{(k)}-L_{\textup{down}_{\dkl}(z)+1}g_\mu^{(k)}+L_{{\rm down}_{\dkl}(z)+1}g_\mu^{(k)}\\
=&\ \sum_{\lsum\in\Otpkl{\lambda}{z}{0}}(1-L_{\textup{down}_{\dkl}(z)+1})
g_\lsum^{(k)} + L_{{\rm down}_{\dkl}(z)+1}g_\mu^{(k)} \hspace{1cm} (\text{by $\Otpkl{\lambda}{z}{0}=\{\mu\}$}).
\end{align*}

Consider the inductive step of $d \in [h']$. By the inductive hypothesis,
\begin{equation}
g_{\mu}^{(k)}=\sum_{\lsum\in\Otpkl{\lambda}{z}{d-1}}
(1-L_{\textup{down}_{\dkl}(z)+d})
g_\lsum^{(k)} + L_{{\rm down}_{\dkl}(z)+1}g_\mu^{(k)}.
\mlabel{eq:lgtoLg-ind}
\end{equation}
For each $\lsum\in\Otpkl{\lambda}{z}{d-1}\subseteq\tilde{\rm P}_{\ell}^{k}$ in  ~(\mref{eq:lgtoLg-ind}),
\begin{equation}
\lsum_{z+d-1} +1 = \mu_{z+d-1} = \mu_{z+d} = \lsum_{z+d},\quad \lsum_x\geq\lsum_{x+1}\tforall x\in[\ell-1]\setminus z+d-1.
\mlabel{eq:lgtoLg-base}
\end{equation}
Then there is a wall in rows $z+d-1,z+d$ in $\Delta^k(\lsum)$ by Remark~\mref{re:factroot}~(b).
This leads to $z+d-1\neq \bott_\lsum$. Then by Remark~\mref{re:notbot}, we have the following two cases to consider.

{\bf Case 1.}  $z+d-1\in[\bott_\lsum+1, \ell]$. Then ${\rm down}_{\dkl}+d>\ell$. By the proof of Theorem~\mref{thm:lgtog},
\begin{equation}
g_\lsum^{(k)} =
\left\{
\begin{aligned}
& g_{\lsum+\epsilon_{\textup{up}_{\Delta^{k}(\lsum)}
(y+1)}-\epsilon_{z+d}}^{(k)} + g_{\lsum-\epsilon_{z+d}}^{k}, & \text{if }\,\lsum\in\Otpkl{\lambda}{z}{d-1}^1, \\
& g_{\lsum-\epsilon_{z+d}}^{k}   , & \text{if }\,\lsum\in\Otpkl{\lambda}{z}{d-1}^2, \\
& 0, & \text{if }\,\lsum\in\Otpkl{\lambda}{z}{d-1}^3.
\end{aligned}
\right.
\mlabel{eq:lgtoLg-lgtog}
\end{equation}
Hence
\begin{align*}
g_{\mu}^{(k)}=&\ \sum_{\lsum\in\Otpkl{\lambda}{z}{d-1}}
(1-L_{\textup{down}_{\dkl}(z)+d})
g_\lsum^{(k)} + L_{{\rm down}_{\dkl}(z)+1}g_\mu^{(k)} \hspace{1cm}
(\text{by~(\ref{eq:lgtoLg-ind})})\\
=&\ \sum_{\lsum\in\Otpkl{\lambda}{z}{d-1}}
g_\lsum^{(k)} + L_{{\rm down}_{\dkl}(z)+1}g_\mu^{(k)} \hspace{1cm}
(\text{by ${\rm down}_{\dkl}+d>\ell$})\\
=&\ \sum_{\lsum\in\Otpkl{\lambda}{z}{d-1}^1}
g_\lsum^{(k)} +\sum_{\lsum\in\Otpkl{\lambda}{z}{d-1}^2}
g_\lsum^{(k)} + \sum_{\lsum\in\Otpkl{\lambda}{z}{d-1}^3}
g_\lsum^{(k)} + L_{{\rm down}_{\dkl}(z)+1}g_\mu^{(k)} \hspace{1cm}
(\text{by~(\ref{eq:OmetothreeOme})})\\
=&\ \sum_{\lsum\in\Otpkl{\lambda}{z}{d-1}^1}\Big( g_{\lsum+\epsilon_{\textup{up}_{\Delta^{k}(\lsum)}
(y+1)}-\epsilon_{z+d}}^{(k)} + g_{\lsum-\epsilon_{z+d}}^{k} \Big)
+\sum_{\lsum\in\Otpkl{\lambda}{z}{d-1}^2}g_{\lsum-\epsilon_{z+d}}^{k}
+L_{{\rm down}_{\dkl}(z)+1}g_\mu^{(k)} \hspace{1cm}
(\text{by~(\ref{eq:lgtoLg-lgtog})})\\
=&\ \sum_{\lsum\in\Otpkl{\lambda}{z}{d}}g_{\lsum}^{(k)}+L_{{\rm down}_{\dkl}(z)+1}g_\mu^{(k)}\hspace{1cm}
(\text{by~(\ref{eq:allOmed})})\\
=&\ \sum_{\lsum\in\Otpkl{\lambda}{z}{d}}(1-L_{\textup{down}_{\dkl}(z)+d+1})
g_{\lsum}^{(k)}+L_{{\rm down}_{\dkl}(z)+1}g_\mu^{(k)}\hspace{1cm}(\text{by ${\rm down}_{\dkl}+d+1>\ell$}),
\end{align*}
as required.

{\bf Case 2.} $z+d-1\in[\bott_\lsum-1]$. Since
\begin{enumerate}
\item $z+d-1\in[\bott_\lsum-1]$ implies that $z+d-1\in[\bott_\lambda-1]$ and thus $z+d\in[\bott_\lambda]$;

\item $\lambda_z =\lambda_{z+1}=\cdots =\lambda_{z+d}\geq\lambda_{z+d+1}$;

\item $\lsum_{z+d-1}+1 = \lambda_{z+d-1} = \lambda_{z+d} = \lsum_{z+d}\geq\lsum_{z+d+1}$,
\end{enumerate}
it follows from Lemma~\mref{lem:downlambdatomu} that
\begin{equation}
{\rm down}_{\dkl}(z)+d = {\rm down}_{\Delta^k(\lsum)}(z+d).
\mlabel{eq:lgtoLg-down}
\end{equation}
According to~(\ref{eq:OmetothreeOme}), there are three subcases of $\lsum$ to consider.
Notice that we can apply Proposition~\mref{prop:notPar} to $g_\lsum^{(k)}$ by~(\mref{eq:lgtoLg-base}).

{\bf Subcase 2.1.} $\lsum\in\Otpkl{\lambda}{z}{d-1}^1$. Then
\begin{equation}
\begin{aligned}
g_\lsum^{(k)} =&\ (1-L_{{\rm down}_{\Delta^{k}(\lsum)}(z+d)+1}) (g_{\lsum+\epsilon_{\textup{up}_{\Delta^{k}(\lsum)}
(y+1)}-\epsilon_{z+d}}^{(k)}+g_{\lsum-\epsilon_{z+d}}^{(k)})
+L_{{\rm down}_{\Delta^{k}(\lsum)}(z+d)}g_{\lsum}^{(k)} \hspace{0.5cm} (\text{by Proposition~\ref{prop:notPar}~\ref{it:notPar1}})\\
=&\ (1-L_{{\rm down}_{\dkl}(z)+d+1}) (g_{\lsum+\epsilon_{\textup{up}_{\Delta^{k}(\lsum)}
(y+1)}-\epsilon_{z+d}}^{(k)}+g_{\lsum-\epsilon_{z+d}}^{(k)})
+L_{{\rm down}_{\dkl}(z)+d}g_{\lsum}^{(k)}\hspace{1cm}(\text{by  ~(\ref{eq:lgtoLg-down})}).
\end{aligned}
\label{eq:vOme1}
\end{equation}
Similar to the proof of Theorem~\mref{thm:lgtog}, we have $$\lsum+\epsilon_{\textup{up}_{\Delta^{k}(\lsum)}
(y+1)}-\epsilon_{z+d}, \quad \lsum-\epsilon_{z+d}\in\Otpkl{\lambda}{z}{d}$$ by Lemmas~\mref{lem:gtogg-leqb} and~\mref{lem:coverLambda-leqb}.

{\bf Subcase 2.2.} $\lsum\in\Otpkl{\lambda}{z}{d-1}^2$. Then
\begin{equation}
\begin{aligned}
g_\lsum^{(k)} =&\ (1-L_{{\rm down}_{\Delta^{k}(\lsum)}(z+d)+1})  g_{\lsum-\epsilon_{z+d}}^{(k)} +L_{{\rm down}_{\Delta^{k}(\lsum)}(z+d)}g_{\lsum}^{(k)} \hspace{1cm} (\text{by Proposition~\ref{prop:notPar}~\ref{it:notPar2}})\\
=&\ (1-L_{{\rm down}_{\dkl}(z)+d+1})  g_{\lsum-\epsilon_{z+d}}^{(k)}+L_{{\rm down}_{\dkl}(z)+d}g_{\lsum}^{(k)}\hspace{1cm}(\text{by ~(\ref{eq:lgtoLg-down})}),
\end{aligned}
\mlabel{eq:vOme2}
\end{equation}
where $\lsum-\epsilon_{z+d}\in
\Otpkl{\lambda}{z}{d}$ by $\lsum\in\Otpkl{\lambda}{z}{d-1}$.

{\bf Subcase 2.3.} $\lsum\in\Otpkl{\lambda}{z}{d-1}^3$. Then by Proposition~\ref{prop:notPar}~\ref{it:notPar3},
\begin{equation}
g_\lsum^{(k)} = 0.
\mlabel{eq:vOme3}
\end{equation}

Combining the above three subcases, we obtain
\begin{align*}
g_{\mu}^{(k)}
=&\ \sum_{\lsum\in\Otpkl{\lambda}{z}{d-1}}
(1-L_{\textup{down}_{\dkl}(z)+d})
g_\lsum^{(k)} + L_{{\rm down}_{\dkl}(z)+1}g_\mu^{(k)}\hspace{1cm}
(\text{by~(\ref{eq:lgtoLg-ind})}) \\
=&\ \sum_{\lsum\in\Otpkl{\lambda}{z}{d-1}^1}
(1-L_{\textup{down}_{\dkl}(z)+d})
g_\lsum^{(k)}+ \sum_{\lsum\in\Otpkl{\lambda}{z}{d-1}^2}
(1-L_{\textup{down}_{\dkl}(z)+d})g_\lsum^{(k)} \\
&\ + \sum_{\lsum\in\Otpkl{\lambda}{z}{d-1}^3}
(1-L_{\textup{down}_{\dkl}(z)+d})
g_\lsum^{(k)} + L_{{\rm down}_{\dkl}(z)+1}g_\mu^{(k)}
\hspace{1cm}(\text{by~(\ref{eq:OmetothreeOme}) for the first summand})\\
=&\ \sum_{\lsum\in\Otpkl{\lambda}{z}{d-1}^1}
(1-L_{{\rm down}_{\dkl}(z)+d+1}) (g_{\lsum+\epsilon_{\textup{up}_{\Delta^{k}(\lsum)}
(y+1)}-\epsilon_{z+d}}^{(k)}+g_{\lsum-\epsilon_{z+d}}^{(k)})\\
&\ + \sum_{\lsum\in\Otpkl{\lambda}{z}{d-1}^2}
(1-L_{{\rm down}_{\dkl}(z)+d+1})  g_{\lsum-\epsilon_{z+d}}^{(k)} + L_{{\rm down}_{\dkl}(z)+1}g_\mu^{(k)}\hspace{1cm}
(\text{by~(\ref{eq:vOme1}),~(\ref{eq:vOme2}) and~(\ref{eq:vOme3})})\\
=&\ \sum_{\lsum\in\Otpkl{\lambda}{z}{d}}
(1-L_{{\rm down}_{\dkl}(z)+d+1})g_\lsum^{(k)}+ L_{{\rm down}_{\dkl}(z)+1}g_\mu^{(k)}\hspace{1cm}
(\text{by~(\ref{eq:allOmed})}).
\end{align*}
This completes the proof.
\end{proof}

\subsection{Summary on the lowering operator}
\mlabel{ss:lowersum}
We now summarize properties of the lowering operator acting on the $K$-$k$-Schur function. Recall that $\Opkl{\lambda}{z}$ is defined
in~(\mref{eq:SpeOme}).

\begin{defn}
Let $\lambda\in\pkl$ and $M = \{ z_1,z_2,\ldots,z_n \}$ be a multiset with ${\rm supp}(M)\subseteq [\ell]$. Define
\[
\Opkl{\lambda}{M}:= \bigcup_{\lsum^1\in\Opkl{\lambda}{z_1}}
\bigcup_{\lsum^2\in\Opkl{\lsum^1}{z_2}}\cdots \bigcup_{\lsum^{n-1}\in\Opkl{\lsum^{n-2}}{z_{n-1}}}
\Opkl{\lsum^{n-1}}{z_n}\subseteq\pkl,
\]
with the convention that $\Opkl{\lambda}{\emptyset} =\{ \lambda\}$.
\mlabel{def:opklM}
\end{defn}

\begin{remark}
The definition of $\Opkl{\lambda}{M}$ does not depend on the ordering of the elements in $M$ since the lowering operators $L_z$ commute with one another.
\end{remark}

For $z,n\in\ZZ_{\geq 0}$, denote the multiset
\begin{equation}
\Mul{z}{n}:= \{ \underbrace{z,\ldots,z}_{n \text{ times}} \}.
\mlabel{eq:mzn}
\end{equation}

\begin{theorem}
Let $\lambda\in\pkl$, $z\in[\ell]$ and $n\in\mathbb{Z}_{\geq1}$.
\begin{enumerate}
\item If $z\in[\bott_\lambda+1,\ell]$, then
$$
L_{z}^{n}g_{\lambda}^{(k)} = \sum_{ \lsum\in\Opkl{\lambda}{\Mul{z}{n}}}g_{\lsum}^{(k)}.
$$
\mlabel{it:Lgntog1}
		
\item If $ z\in[\bott_\lambda] $, then
\begin{equation}
\begin{split}
L_{z}^{n}g_{\lambda}^{(k)}
=&\
\sum_{\lsum\in\Opkl{\lambda}{z}}
\sum_{\substack{i,j\geq0\\ i+j=n-1}}L_{z}^{i}L_{\textup{down}_{\dkl}(z)}^{j}(1-
L_{\textup{down}_{\dkl}(z)+h'+1})g_{\lsum}^{(k)} +
L_{\textup{down}_{\dkl}(z)}^{n}g_{\lambda}^{(k)}.
\end{split}
\mlabel{eq:casetwo}
\end{equation}
Here
$h':=h'_{\lambda,z}\in[0,\ell-z]$,
such that $\lambda_{z}=\lambda_{z+1}=\cdots=\lambda_{z+h'} >\lambda_{z+h'+1}$
with the convention that $\lambda_{\ell+1}:= -\infty$.

\mlabel{it:Lgntog2}
\end{enumerate}
\mlabel{thm:Lgntog}
\end{theorem}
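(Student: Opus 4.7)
The plan is to prove both parts by induction on $n \geq 1$, leveraging Theorems~\mref{thm:lgtog} and~\mref{thm:lgtoLg} as the base cases. Since the two lowering operators $L_a, L_b$ commute for any indices $a, b$, all the algebraic manipulations below are legitimate.

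\textbf{Part~\mref{it:Lgntog1}} ($z\in[\bott_\lambda+1,\ell]$): The base case $n=1$ is precisely Theorem~\mref{thm:lgtog}, using $\Opkl{\lambda}{\Mul{z}{1}} = \Opkl{\lambda}{z}$ from Definition~\mref{def:opklM}. For the inductive step, the key observation is that the condition $z\in[\bott_{\cdot}+1,\ell]$ is preserved at every iteration. Indeed, for each $\lsum\in\Opkl{\lambda}{z}$ built via~(\mref{eq:vsform}) from $\mu = \lambda - \epsilon_z$, all additions $\epsilon_{[\textup{up}_{\Psi_j}(y_j+1),\textup{up}_{\Psi_j}(y_j+i_j)]}$ occur at positions strictly less than $\textup{top}_{\dkmu}(z) = z$ (since $z$ is in its own singleton bounce path when $z>\bott_\mu$), while subtractions occur at positions $\geq z+1$; hence $\lsum_z = \mu_z = \lambda_z - 1$. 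Because $z > \bott_\lambda$ gives $k - \lambda_z + z \geq \ell$, we obtain $k - \lsum_z + z \geq \ell + 1$, so $z > \bott_\lsum$. Iterating, every element of $\Opkl{\lambda}{\Mul{z}{n}}$ satisfies this property, so Theorem~\mref{thm:lgtog} may be reapplied to each $L_z g_\lsum^{(k)}$. The nested-union form of $\Opkl{\lambda}{\Mul{z}{n+1}}$ in Definition~\mref{def:opklM} then absorbs the inner sum, closing the induction.

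\textbf{Part~\mref{it:Lgntog2}} ($z\in[\bott_\lambda]$): Writing $d := \textup{down}_{\dkl}(z)$ for brevity, the sum $i+j=0$ reduces the $n=1$ case of~(\mref{eq:casetwo}) to Theorem~\mref{thm:lgtoLg}. For the inductive step, multiplying the inductive hypothesis by $L_z$ yields
\begin{equation*}
L_z^{n+1}g_\lambda^{(k)} = \sum_{\lsum\in\Opkl{\lambda}{z}}\sum_{\substack{i,j\geq0\\i+j=n-1}}L_z^{i+1}L_d^j(1-L_{d+h'+1})g_\lsum^{(k)} + L_d^n L_z g_\lambda^{(k)}.
\end{equation*}
Reindexing $i' := i+1$ turns the first sum into $\sum_{i'+j=n,\, i'\geq 1}L_z^{i'}L_d^j(1-L_{d+h'+1})g_\lsum^{(k)}$. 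Applying Theorem~\mref{thm:lgtoLg} to expand $L_z g_\lambda^{(k)}$ in the last term produces $\sum_{\lsum} L_d^n(1-L_{d+h'+1})g_\lsum^{(k)} + L_d^{n+1} g_\lambda^{(k)}$, and the first of these two pieces contributes exactly the $i'=0, j=n$ term missing from the reindexed sum. Merging yields $\sum_{i'+j=n,\, i'\geq 0} L_z^{i'}L_d^j(1-L_{d+h'+1})g_\lsum^{(k)} + L_d^{n+1}g_\lambda^{(k)}$, which is the formula at level $n+1$.

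The main technical hurdle is the stability check in Part~\mref{it:Lgntog1}: one must verify that no element of $\Opkl{\lambda}{\Mul{z}{n}}$ ever lowers $z$ to or below its bottom row, so that the structural formula of Theorem~\mref{thm:lgtog} keeps applying with the \emph{same} shape at each step. The bookkeeping in Part~\mref{it:Lgntog2}, by contrast, is purely formal—it is essentially a telescoping argument among commuting lowering operators.
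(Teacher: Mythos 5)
Your proof is correct and follows essentially the same route as the paper: induction on $n$ with Theorems~\ref{thm:lgtog} and~\ref{thm:lgtoLg} as the $n=1$ cases, peeling off one factor of $L_z$ at a time and merging the resulting sums. Your explicit check that $z>\bott_{\lsum}$ persists for every $\lsum\in\Opkl{\lambda}{\Mul{z}{n}}$ (via $\lsum_z=\lambda_z-1$, since the $\epsilon$-additions in~(\ref{eq:vsform}) land strictly above $\textup{top}_{\dkmu}(z)$ and the subtractions strictly below row $z$) is a point the paper's proof uses implicitly without verification, and it is a worthwhile addition.
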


\begin{proof}
\mref{it:Lgntog1} We apply the induction on $n\geq1$. The initial step $n=1$ is just Theorem~\ref{thm:lgtog}. For the inductive step,
\begin{align*}
L_{z}^{n}g_{\lambda}^{(k)} = &\ L_{z}\Big( L_{z}^{n-1}g_{\lambda}^{(k)} \Big)\\
=&\ L_{z}\Bigg( \sum_{\lsum^{n-1}\in\Opkl{\lambda}{\Mul{z}{n-1}} }g_{\lsum^{n-1}}^{(k)} \Bigg) \hspace{1cm} (\text{by the inductive hypothesis})\\
=&\ \sum_{\lsum^{n-1}\in\Opkl{\lambda}{\Mul{z}{n-1}} }L_{z}g_{\lsum^{n-1}}^{(k)}\\
=&\ \sum_{\lsum^{n-1}\in\Opkl{\lambda}{\Mul{z}{n-1}} }
\Bigg( \sum_{\lsum\in\Opkl{\lsum^{n-1}}{z}}g_{\lsum}^{(k)} \Bigg) \hspace{1cm} (\text{by Theorem~\ref{thm:lgtog}})\\
=&\ \sum_{\lsum\in\Opkl{\lambda}{\Mul{z}{n}}}g_{\lsum}^{(k)}\hspace{1cm}(\text{by $\Opkl{\lambda}{\Mul{z}{n}} = \bigcup_{\lsum^{n-1}\in\Opkl{\lambda}{\Mul{z}{n-1}}}
\Opkl{\lsum^{n-1}}{z}$}),
\end{align*}
as required.
	
\mref{it:Lgntog2} We proceed by induction on $n\geq1$. The initial step $n=1$ is exactly Theorem~\mref{thm:lgtoLg}. For the inductive step,
\begin{align*}
L_{z}^{n}g_{\lambda}^{(k)}
=&\ L_{z}\bigg( L_{z}^{n-1}g_{\lambda}^{(k)} \bigg)\\
=&\ L_{z}\Bigg( \sum_{\lsum\in\Opkl{\lambda}{z}}
\sum_{\substack{i,j\geq0\\ i+j=n-2}}L_{z}^{i}L_{\textup{down}_{\dkl}(z)}^{j}(1-
L_{\textup{down}_{\dkl}(z)+h'+1})g_{\lsum}^{(k)} +
L_{\textup{down}_{\dkl}(z)}^{n-1}g_{\lambda}^{(k)} \Bigg)\\
&\ \hspace{7cm} (\text{by the inductive hypothesis})\\
=&\ \sum_{\lsum\in\Opkl{\lambda}{z}}
\sum_{\substack{i\geq 1,j\geq0\\ i+j=n-1}}L_{z}^{i}L_{\textup{down}_{\dkl}(z)}^{j}(1-
L_{\textup{down}_{\dkl}(z)+h'+1})g_{\lsum}^{(k)} +
L_{\textup{down}_{\dkl}(z)}^{n-1}\big(L_{z}g_{\lambda}^{(k)}\big)\\
=&\ \sum_{\lsum\in\Opkl{\lambda}{z}}
\sum_{\substack{i\geq 1,j\geq0\\ i+j=n-1}}L_{z}^{i}L_{\textup{down}_{\dkl}(z)}^{j}(1-
L_{\textup{down}_{\dkl}(z)+h'+1})g_{\lsum}^{(k)}\\
&\   +L_{\textup{down}_{\dkl}(z)}^{n-1} \Bigg( \sum_{\lsum\in\Opkl{\lambda}{z}}(1-L_{\textup{down}_{\dkl}(z)+h'+1})
g_{\lsum}^{(k)}+L_{\textup{down}_{\dkl}(z)}g_{\lambda}^{(k)} \Bigg) \\
&\ \hspace{3cm} (\text{by Theorem~\ref{thm:lgtoLg} for the second summand})\\
=&\ \sum_{\lsum\in\Opkl{\lambda}{z}}
\sum_{\substack{i,j\geq0\\ i+j=n-1}}L_{z}^{i}L_{\textup{down}_{\dkl}(z)}^{j}(1-
L_{\textup{down}_{\dkl}(z)+h'+1})g_{\lsum}^{(k)} +
L_{\textup{down}_{\dkl}(z)}^{n}g_{\lambda}^{(k)}.
\end{align*}
This completes the proof.
\end{proof}

\begin{remark}
Thanks to Theorem~\mref{thm:Lgntog}, for $\lambda\in\pkl$ and $z\in[\ell]$, $L_z^n g_{\lambda}^{(k)}$ can be expressed as a linear summation of some $K$-$k$-Schur functions $g_\mu^{(k)}$.
Indeed, Case~\mref{it:Lgntog1} is already the desired form.
For Case~\mref{it:Lgntog2},
since $$z<\down{\dkl}{z}<\textup{down}_{\dkl}(z)+h'+1$$ and the maximum vertex in the bounce path must be greater than the bottom of the root ideal, the subscripts of the lowering operators on the right hand side of~(\mref{eq:casetwo})
must be greater than the bottoms of the corresponding root ideals after
finitely many steps by repeating \meqref{eq:casetwo}. Then we are reduced to Case~\mref{it:Lgntog1}.
\end{remark}

As an illustration of Theorem~\mref{thm:Lgntog}, we give an example.

\begin{exam}
Let $k = 6$ and $\lambda = (5,4,4,3,3,2,2,2,2,1)$. Then we express
\begin{equation*}
g_{(5,4,4,3,3,2,2,2,2,1)}^{(6)} =
\begin{tikzpicture}[scale=.23,line width=0.5pt,baseline=(a.base)]
\draw (0,2) rectangle (1,1); \node at(0.5,1.5){\tiny\( 5 \)};
\draw (1,2) rectangle (2,1);
\filldraw[red,draw=black] (2,2) rectangle (3,1);
\filldraw[red,draw=black] (3,2) rectangle (4,1); \node at(3.5,1.5){\tiny\( \bullet \)};
\filldraw[red,draw=black] (4,2) rectangle (5,1); \node at(4.5,1.5){\tiny\( \bullet \)};
\filldraw[red,draw=black] (5,2) rectangle (6,1); \node at(5.5,1.5){\tiny\( \bullet \)};
\filldraw[red,draw=black] (6,2) rectangle (7,1); \node at(6.5,1.5){\tiny\( \bullet \)};
\filldraw[red,draw=black] (7,2) rectangle (8,1); \node at(7.5,1.5){\tiny\( \bullet \)};
\filldraw[red,draw=black] (8,2) rectangle (9,1); \node at(8.5,1.5){\tiny\( \bullet \)};
\filldraw[red,draw=black] (9,2) rectangle (10,1); \node at(9.5,1.5){\tiny\( \bullet \)};
\draw (0,1) rectangle (1,0);
\draw (1,1) rectangle (2,0); \node at(1.5,0.5){\tiny\( 4 \)};
\draw (2,1) rectangle (3,0);
\draw (3,1) rectangle (4,0);
\filldraw[red,draw=black] (4,1) rectangle (5,0);
\filldraw[red,draw=black] (5,1) rectangle (6,0); \node at(5.5,0.5){\tiny\( \bullet \)};
\filldraw[red,draw=black] (6,1) rectangle (7,0); \node at(6.5,0.5){\tiny\( \bullet \)};
\filldraw[red,draw=black] (7,1) rectangle (8,0); \node at(7.5,0.5){\tiny\( \bullet \)};
\filldraw[red,draw=black] (8,1) rectangle (9,0); \node at(8.5,0.5){\tiny\( \bullet \)};
\filldraw[red,draw=black] (9,1) rectangle (10,0); \node at(9.5,0.5){\tiny\( \bullet \)};
%
\draw (0,0) rectangle (1,-1);
\draw (1,0) rectangle (2,-1);
\draw (2,0) rectangle (3,-1); \node at(2.5,-0.5){\tiny\( 4 \)};
\draw (3,0) rectangle (4,-1);
\draw (4,0) rectangle (5,-1);
\filldraw[red,draw=black] (5,0) rectangle (6,-1);
\filldraw[red,draw=black] (6,0) rectangle (7,-1); \node at(6.5,-0.5){\tiny\( \bullet \)};
\filldraw[red,draw=black] (7,0) rectangle (8,-1); \node at(7.5,-0.5){\tiny\( \bullet \)};
\filldraw[red,draw=black] (8,0) rectangle (9,-1); \node at(8.5,-0.5){\tiny\( \bullet \)};
\filldraw[red,draw=black] (9,0) rectangle (10,-1); \node at(9.5,-0.5){\tiny\( \bullet \)};
\draw (0,-1) rectangle (1,-2);
\draw (1,-1) rectangle (2,-2);
\draw (2,-1) rectangle (3,-2);
\draw (3,-1) rectangle (4,-2); \node at(3.5,-1.5){\tiny\( 3 \)};
\draw (4,-1) rectangle (5,-2);
\draw (5,-1) rectangle (6,-2);
\draw (6,-1) rectangle (7,-2);
\filldraw[red,draw=black] (7,-1) rectangle (8,-2);
\filldraw[red,draw=black] (8,-1) rectangle (9,-2); \node at(8.5,-1.5){\tiny\( \bullet \)};
\filldraw[red,draw=black] (9,-1) rectangle (10,-2); \node at(9.5,-1.5){\tiny\( \bullet \)};
\draw (0,-2) rectangle (1,-3);
\draw (1,-2) rectangle (2,-3);
\draw (2,-2) rectangle (3,-3);
\draw (3,-2) rectangle (4,-3);
\draw (4,-2) rectangle (5,-3); \node at(4.5,-2.5){\tiny\( 3 \)};
\draw (5,-2) rectangle (6,-3);
\draw (6,-2) rectangle (7,-3);
\draw (7,-2) rectangle (8,-3);
\filldraw[red,draw=black] (8,-2) rectangle (9,-3);
\filldraw[red,draw=black] (9,-2) rectangle (10,-3); \node at(9.5,-2.5){\tiny\( \bullet \)};
\draw (0,-3) rectangle (1,-4);
\draw (1,-3) rectangle (2,-4);
\draw (2,-3) rectangle (3,-4);
\draw (3,-3) rectangle (4,-4);
\draw (4,-3) rectangle (5,-4);
\draw (5,-3) rectangle (6,-4); \node at(5.5,-3.5){\tiny\( 2 \)};
\draw (6,-3) rectangle (7,-4);
\draw (7,-3) rectangle (8,-4);
\draw (8,-3) rectangle (9,-4);
\draw (9,-3) rectangle (10,-4);
\draw (0,-4) rectangle (1,-5);
\draw (1,-4) rectangle (2,-5);
\draw (2,-4) rectangle (3,-5);
\draw (3,-4) rectangle (4,-5);
\draw (4,-4) rectangle (5,-5);
\draw (5,-4) rectangle (6,-5);
\draw (6,-4) rectangle (7,-5); \node at(6.5,-4.5){\tiny\( 2 \)};
\draw (7,-4) rectangle (8,-5);
\draw (8,-4) rectangle (9,-5);
\draw (9,-4) rectangle (10,-5);
\draw (0,-5) rectangle (1,-6);
\draw (1,-5) rectangle (2,-6);
\draw (2,-5) rectangle (3,-6);
\draw (3,-5) rectangle (4,-6);
\draw (4,-5) rectangle (5,-6);
\draw (5,-5) rectangle (6,-6);
\draw (6,-5) rectangle (7,-6);
\draw (7,-5) rectangle (8,-6); \node at(7.5,-5.5){\tiny\( 2 \)};
\draw (8,-5) rectangle (9,-6);
\draw (9,-5) rectangle (10,-6);
\draw (0,-6) rectangle (1,-7);
\draw (1,-6) rectangle (2,-7);
\draw (2,-6) rectangle (3,-7);
\draw (3,-6) rectangle (4,-7);
\draw (4,-6) rectangle (5,-7);
\draw (5,-6) rectangle (6,-7);
\draw (6,-6) rectangle (7,-7);
\draw (7,-6) rectangle (8,-7);
\draw (8,-6) rectangle (9,-7); \node at(8.5,-6.5){\tiny\( 2 \)};
\draw (9,-6) rectangle (10,-7);
\draw (0,-7) rectangle (1,-8);
\draw (1,-7) rectangle (2,-8);
\draw (2,-7) rectangle (3,-8);
\draw (3,-7) rectangle (4,-8);
\draw (4,-7) rectangle (5,-8);
\draw (5,-7) rectangle (6,-8);
\draw (6,-7) rectangle (7,-8);
\draw (7,-7) rectangle (8,-8);
\draw (8,-7) rectangle (9,-8);
\draw (9,-7) rectangle (10,-8); \node at(9.5,-7.5){\tiny\( 1 \)};
%
\draw[green,line width=1pt] (7.5,-2)--(7.5,-5);
\draw[green,line width=1pt] (8,-5.5)--(10,-5.5);
\end{tikzpicture}.
\end{equation*}
The bottom of $\dkl$ is $5$ and there is a bounce path $(4,8)$ in $\dkl$. So we have
\begin{align}
L_8 g_{(5,4,4,3,3,2,2,2,2,1)}^{(6)} =&\ g_{(5,5,4,3,3,2,2,1,1,1)}^{(6)}+ g_{(5,4,4,3,3,2,2,1,1,1)}^{(6)} \hspace{1cm} (\text{by Theorem~\ref{thm:Lgntog}~\ref{it:Lgntog1}}),\mlabel{eq:l8g}\\
L_4 g_{(5,4,4,3,3,2,2,2,2,1)}^{(6)} =&\  (1-L_{10})\bigg( g_{(5,5,4,2,2,2,2,2,2,1)}^{(6)}+ g_{(5,4,4,2,2,2,2,2,2,1)}^{(6)} \bigg)+ L_8 g_{(5,4,4,3,3,2,2,2,2,1)}^{(6)}\mlabel{eq:l4g}\\
&\ \hspace{6cm} (\text{by Theorem~\ref{thm:Lgntog}~\ref{it:Lgntog2}}).\notag
\end{align}
Hence
\begin{align*}
L_4 g_{(5,4,4,3,3,2,2,2,2,1)}^{(6)} =&\  (1-L_{10})\bigg( g_{(5,5,4,2,2,2,2,2,2,1)}^{(6)}+ g_{(5,4,4,2,2,2,2,2,2,1)}^{(6)} \bigg) + L_8 g_{(5,4,4,3,3,2,2,2,2,1)}^{(6)}\quad (\text{by~(\ref{eq:l4g})})\\
=&\ g_{(5,5,4,2,2,2,2,2,2,1)}^{(6)}+ g_{(5,4,4,2,2,2,2,2,2,1)}^{(6)} - g_{(5,5,4,2,2,2,2,2,2,0)}^{(6)}-g_{(5,4,4,2,2,2,2,2,2,0)}^{(6)} \\
&\ + L_8 g_{(5,4,4,3,3,2,2,2,2,1)}^{(6)} \hspace{2cm} (\text{by~(\mref{eq:basecase})})\\
=&\ g_{(5,5,4,2,2,2,2,2,2,1)}^{(6)}+ g_{(5,4,4,2,2,2,2,2,2,1)}^{(6)} - g_{(5,5,4,2,2,2,2,2,2,0)}^{(6)}-g_{(5,4,4,2,2,2,2,2,2,0)}^{(6)} \\
&\ + g_{(5,5,4,3,3,2,2,1,1,1)}^{(6)}+ g_{(5,4,4,3,3,2,2,1,1,1)}^{(6)} \hspace{1.5cm} (\text{by  ~(\mref{eq:l8g})}),
\end{align*}
which is a linear summation of six $K$-$k$-Schur functions.
\hspace*{\fill}$\square$
\end{exam}

To get rid of the negative term $-L_{\textup{down}_{\dkl}(z)+h'+1}g_{\lsum}^{(k)}$ in Theorem~\mref{thm:Lgntog} and arrive at a totally positive expression, we refine Theorem~\ref{thm:Lgntog}~\ref{it:Lgntog2} as follows, by using the telescopic cancellation.

\begin{coro}
Let $\lambda\in\pkl$, $z\in[\ell]$ and $n\in\mathbb{Z}_{\geq1}$. If $z\in[\bott_\lambda]$, then
\begin{equation*}
\sum_{i\geq0}L_{\textup{down}_{\dkl}(z)+h'+1}^{i}L_{z}^{n}g_{\lambda}^{(k)}
=
\sum_{\lsum\in\Opkl{\lambda}{z}}
\sum_{\substack{i\geq0,j\geq0\\ i+j=n-1}}L_{z}^{i}L_{\textup{down}_{\dkl}(z)}^{j}g_{\lsum}^{(k)} +\sum_{i\geq0}L_{\textup{down}_{\dkl}(z)+h'+1}^{i}
L_{\textup{down}_{\dkl}(z)}^{n}g_{\lambda}^{(k)}.
\end{equation*}
Here $h':=h'_{\lambda,z}\in[0,\ell-z]$,
such that $\lambda_{z}=\lambda_{z+1}=\cdots=\lambda_{z+h'} >\lambda_{z+h'+1}$
with the convention that $\lambda_{\ell+1}:= -\infty$.
\mlabel{coro:Lgntog2-refine}
\end{coro}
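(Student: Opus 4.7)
The strategy is to start from Theorem~\mref{thm:Lgntog}~\mref{it:Lgntog2} and apply the formal geometric-series operator $\sum_{i\geq 0} L_{\textup{down}_{\dkl}(z)+h'+1}^{i}$ to both sides. Since lowering operators commute with one another, this operator slides past any product $L_{z}^{i'}L_{\textup{down}_{\dkl}(z)}^{j'}$ and lands directly on the factor $(1-L_{\textup{down}_{\dkl}(z)+h'+1})$. Telescoping then produces
\begin{equation*}
\Bigl(\sum_{i\geq 0} L_{\textup{down}_{\dkl}(z)+h'+1}^{i}\Bigr)\bigl(1-L_{\textup{down}_{\dkl}(z)+h'+1}\bigr) = \mathrm{id},
\end{equation*}
which is precisely what collapses the $(1-L_{\textup{down}_{\dkl}(z)+h'+1})$ factor in each summand of the first sum on the right-hand side of~(\mref{eq:casetwo}). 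The remaining term $L_{\textup{down}_{\dkl}(z)}^{n}g_{\lambda}^{(k)}$ is simply left untouched and the operator $\sum_{i\geq 0} L_{\textup{down}_{\dkl}(z)+h'+1}^{i}$ remains attached to it, giving the second term on the right-hand side of the corollary.

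More concretely, the plan is to write
\begin{align*}
\sum_{i\geq 0} L_{\textup{down}_{\dkl}(z)+h'+1}^{i} L_{z}^{n} g_{\lambda}^{(k)}
&= \sum_{i\geq 0} L_{\textup{down}_{\dkl}(z)+h'+1}^{i}\!\!\!\sum_{\lsum\in\Opkl{\lambda}{z}}\sum_{\substack{i',j'\geq 0 \\ i'+j'=n-1}} L_{z}^{i'}L_{\textup{down}_{\dkl}(z)}^{j'}\bigl(1-L_{\textup{down}_{\dkl}(z)+h'+1}\bigr)g_{\lsum}^{(k)} \\
&\quad + \sum_{i\geq 0} L_{\textup{down}_{\dkl}(z)+h'+1}^{i} L_{\textup{down}_{\dkl}(z)}^{n} g_{\lambda}^{(k)},
\end{align*}
then commute the outer operator inside each summand of the first term and collapse it against $(1-L_{\textup{down}_{\dkl}(z)+h'+1})$ via the finite telescoping identity
\begin{equation*}
\sum_{i=0}^{N} L_{a}^{i}(1-L_{a}) = 1 - L_{a}^{N+1},
\end{equation*}
applied with $a=\textup{down}_{\dkl}(z)+h'+1$. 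Passing $N\to\infty$ in the appropriate formal sense (which is legitimate here because $\sum_{i\geq 0} L_{a}^{i}$ appears symmetrically on both sides of the identity being proved), the residual term $L_{a}^{N+1}$ gets absorbed into the ambient infinite sum that already survives on the right-hand side, yielding the desired equality.

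The mildly subtle point, rather than the main obstacle, is the manipulation of $\sum_{i\geq 0} L_{\textup{down}_{\dkl}(z)+h'+1}^{i}$ as a formal operator: one must record that both sides of the claimed identity contain exactly matching tails of such an infinite series, so that the proof reduces to verifying equality of partial sums $\sum_{i=0}^{N}$ for every $N\geq 0$ and then reading off the limit. Once this is set up, each step is a direct consequence of the commutativity of $L_{a},L_{b}$ for $a\neq b$ and the finite telescoping above; no appeal to the Mirror Lemmas, root-ideal manipulations, or Lemma~\mref{lem:relk} is needed beyond what is already packaged into Theorem~\mref{thm:Lgntog}. Thus the corollary follows as a purely algebraic reorganization of~(\mref{eq:casetwo}).
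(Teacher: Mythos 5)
Your overall strategy coincides with the paper's: apply $\sum_{i\geq0}L_{z'}^{i}$ with $z':=\textup{down}_{\dkl}(z)+h'+1$ to~\meqref{eq:casetwo} and telescope against the factor $(1-L_{z'})$. However, there is a genuine gap at the point where you dispose of the telescoping residual. After commuting and telescoping over a partial sum $\sum_{i=0}^{N}$, the first term acquires the factor $1-L_{z'}^{N+1}$ applied to each $g_{\lsum}^{(k)}$, i.e.\ a leftover $-L_{z'}^{N+1}g_{\lsum}^{(k)}$. Your claim that this residual ``gets absorbed into the ambient infinite sum that already survives on the right-hand side'' is not correct: the surviving infinite series on the right-hand side of the corollary is attached to $L_{\textup{down}_{\dkl}(z)}^{n}g_{\lambda}^{(k)}$, not to the $g_{\lsum}^{(k)}$'s, and the first sum in the corollary carries no series at all. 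Likewise, your reduction to ``equality of partial sums for every $N$'' fails, because the two infinite series on the two sides act on different arguments ($L_{z}^{n}g_{\lambda}^{(k)}$ versus $L_{\textup{down}_{\dkl}(z)}^{n}g_{\lambda}^{(k)}$), so subtracting matching partial sums still leaves the $N$-dependent term $\sum_{\lsum}\sum L_{z}^{i'}L_{\textup{down}_{\dkl}(z)}^{j'}L_{z'}^{N+1}g_{\lsum}^{(k)}$, which must genuinely vanish for large $N$ rather than cancel formally.

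The missing ingredient is the vanishing statement that makes both the infinite sums well defined and the residual disappear: for $i$ larger than an explicit bound (roughly $\gamma_{z'}+\ell-z'$), one has $L_{z'}^{i}K(\Psi;M;\gamma)=0$, hence $L_{z'}^{i}g_{\lambda}^{(k)}=0$ and $L_{z'}^{i}g_{\lsum}^{(k)}=0$ for all $\lsum\in\Opkl{\lambda}{z}$. The paper invokes \cite[Proposition~2.10]{FG} for exactly this, sets $m$ to be the maximum of these bounds, truncates every series at $m$, telescopes to $1-L_{z'}^{m+1}$, and then kills $L_{z'}^{m+1}g_{\lsum}^{(k)}$ by the same vanishing before re-extending the finite sum on the $g_{\lambda}^{(k)}$-term back to an infinite one. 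Without citing or proving this annihilation property, your argument does not close; with it, your computation becomes essentially the paper's proof.
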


\begin{proof}
Denote $z':= \textup{down}_{\dkl}(z)+h'+1$.
Define
\[
m:=\max\{ \lambda_{z'}+\ell-(z'), \lsum_{z'}+\ell-(z') \mid \lsum\in\Opkl{\lambda}{z} \}.
\]
For each $m<i\in\ZZ_{\geq0}$,
$$i>\lambda_{z'}+\ell-(z'),\quad i>\lsum_{z'}+\ell-(z'), \quad \forall \lsum\in\Opkl{\lambda}{z}.$$
By \cite[Proposition 2.10]{FG},
\begin{align}
L_{z'}^{i}g_{\lambda}^{(k)} =&\
L_{z'}^{i} K(\dkl; \Delta^{k+1}(\lambda); \lambda) = 0,
\mlabel{eq:Lgto0-1}\\
L_{z'}^{i}g_{\lsum}^{(k)} =&\
L_{z'}^{i} K(\Delta^k(\lsum); \Delta^{k+1}(\lsum); \lsum) = 0.
\mlabel{eq:Lgto0-2}
\end{align}
Hence
\allowdisplaybreaks{
\begin{align*}
&\ \sum_{i\ge0}L_{\textup{down}_{\dkl}(z)+h'+1}^{i}L_{z}^n g_{\lambda}^{(k)}\\
=&\ \sum_{i\ge0}L_{z'}^{i}L_{z}^n g_{\lambda}^{(k)}\hspace{1cm} (\text{by $z':= \textup{down}_{\dkl}(z)+h'+1$})\\
=&\ \sum_{i=0}^{m}L_{z'}^{i}L_{z}^n g_{\lambda}^{(k)}
\hspace{3cm} (\text{by~({\ref{eq:Lgto0-1}})})\\
=&\ \sum_{i=0}^{m}L_{z'}^{i}
\Bigg( \sum_{\lsum\in\Opkl{\lambda}{z}} \sum_{\substack{i,j\geq0\\ i+j=n-1}}L_{z}^{i}L_{\textup{down}_{\dkl}(z)}^{j}(1-
L_{z'})g_{\lsum}^{(k)} +L_{\textup{down}_{\dkl}(z)}^{n}g_{\lambda}^{(k)} \Bigg)\hspace{1cm} (\text{by Theorem~\ref{thm:Lgntog}~\ref{it:Lgntog2}}) \\
=&\  \sum_{\lsum\in\Opkl{\lambda}{z}} \sum_{\substack{i,j\geq0\\ i+j=n-1}}L_{z}^{i}L_{\textup{down}_{\dkl}(z)}^{j} \Bigg(  \sum_{i=0}^{m}L_{z'}^{i} - \sum_{i=1}^{m+1}L_{z'}^{i}  \Bigg)g_{\lsum}^{(k)}+ \sum_{i=0}^{m}L_{z'}^{i}
L_{\textup{down}_{\dkl}(z)}^{n}g_{\lambda}^{(k)}\\
=&\ \sum_{\lsum\in\Opkl{\lambda}{z}} \sum_{\substack{i,j\geq0\\ i+j=n-1}}L_{z}^{i}L_{\textup{down}_{\dkl}(z)}^{j}\bigg( 1- L_{z'}^{m+1} \bigg)g_{\lsum}^{(k)} +  \sum_{i=0}^{m}L_{z'}^{i}
L_{\textup{down}_{\dkl}(z)}^{n}g_{\lambda}^{(k)} \\
=&\ \sum_{\lsum\in\Opkl{\lambda}{z}} \sum_{\substack{i,j\geq0\\ i+j=n-1}}L_{z}^{i}L_{\textup{down}_{\dkl}(z)}^{j}g_{\lsum}^{(k)} +  \sum_{i=0}^{m}L_{z'}^{i}
L_{\textup{down}_{\dkl}(z)}^{n}g_{\lambda}^{(k)}\hspace{1cm} (\text{by~({\ref{eq:Lgto0-2}}) for the first summand})\\
=&\ \sum_{\lsum\in\Opkl{\lambda}{z}} \sum_{\substack{i,j\geq0\\ i+j=n-1}}L_{z}^{i}L_{\textup{down}_{\dkl}(z)}^{j}g_{\lsum}^{(k)} +  \sum_{i=0}L_{z'}^{i}
L_{\textup{down}_{\dkl}(z)}^{n}g_{\lambda}^{(k)} \hspace{1cm} (\text{by~({\ref{eq:Lgto0-1}}) for the second summand})\\
=&\ \sum_{\lsum\in\Opkl{\lambda}{z}} \sum_{\substack{i,j\geq0\\ i+j=n-1}}L_{z}^{i}L_{\textup{down}_{\dkl}(z)}^{j}g_{\lsum}^{(k)} +  \sum_{i=0}L_{\textup{down}_{\dkl}(z)+h'+1}^{i}
L_{\textup{down}_{\dkl}(z)}^{n}g_{\lambda}^{(k)}.
\end{align*}
}
This completes the proof.
\end{proof}

\section{A combinatorial proof of a conjecture on closed $k$-Schur Katalan functions}
\mlabel{sec:main}
In this section, we apply Theorem~\mref{thm:Lgntog} and Corollary~\mref{coro:Lgntog2-refine} to express the closed $K$-$k$-Schur function $\sum_{\mu\in\text{P}^{k}, w_{\mu}\leq w_{\lambda}}g_{\mu}^{(k)}$ in terms of the actions of some lowering operators on a $K$-$k$-Schur function, and further to obtain a combinatorial proof of Theorem~\mref{th:aim-1st}.

\subsection{Partitions, cores and Bruhat order on $\tilde{S}_{k+1}$}
\mlabel{ss:Bru}

In this subsection, we review some concepts about cores and the relationship between partitions and cores, followed by some preparation on the Bruhat order~\cite{LLMSSZ}.
\begin{enumerate}
\item
The {\bf Young diagram} for a partition $\lambda$ is a diagram consisting of cells arranged in left justified rows stacked on top of each other with the first row at the top with $\lambda_{1}$ cells, and for each $i\in[\ell(\lambda)]$, row $i$ has exactly $\lambda_{i}$ cells. Here $\ell(\lambda)$ is the length of the partition $\lambda$.

\item
For two partitions $\lambda$ and $\mu$, we denote $\lambda\subseteq\mu$ if the Young diagram of $\lambda$ is entirely contained in the Young diagram of $\mu$.

\item
In a fixed Young diagram, each cell has a $\textbf{hook length}$ which counts the number of cells strictly below it in its columns or  weakly to its right in its row.

\item
For $r\in\mathbb{Z}_{\ge 1}$, an {\bf r-core} is a partition in which none of the cells in its Young diagram has hook length equal to $r$. We denote the set of all $r$-cores by $\mathcal{C}_{r}$.
\end{enumerate}

There are two bijections~\cite[Theorem 7,\,Corollary~48]{LM05}
\begin{align*}
\mathfrak{p}:&\ \mathcal{C}_{k+1}\to \textup{P}^{k},\quad \kappa\mapsto\lambda,\\
\mathfrak{m}:&\ \text{P}^{k}\to \tilde{S}_{k+1}^{0}, \quad \lambda\mapsto w_\lambda,
\end{align*}
with the inverse $\mathfrak{c}:=\mathfrak{p}^{-1}$ given in~\cite{LLMSSZ}.
Notice that the Bruhat order $\leq$~\cite{Las01} on $\tilde{S}_{k+1}$ appears in Conjecture~\mref{conj:aim}.
Let us collect some facts about $\leq$.

\begin{lemma}
For $\lambda,\mu\in\textup{P}^{k}$, $w_{\mu} < w_{\lambda}$ if and only if $\mathfrak{c}(\mu)\subsetneq \mathfrak{c}(\lambda)$.
\mlabel{lem:wc}
\end{lemma}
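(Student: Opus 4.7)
The plan is to deduce this lemma directly from the classical dictionary between Bruhat order on affine Grassmannian elements and inclusion of the associated cores, a dictionary recorded in \cite{LLMSSZ} and surrounding references. Specifically, I will rely on the theorem that, for any two affine Grassmannian elements $u,v\in\tilde{S}^0_{k+1}$ with associated $(k+1)$-cores $\kappa_u:=\mathfrak{p}^{-1}\mathfrak{m}^{-1}(u)$ and $\kappa_v:=\mathfrak{p}^{-1}\mathfrak{m}^{-1}(v)$, one has $u\leq v$ in Bruhat order if and only if $\kappa_u\subseteq \kappa_v$ as Young diagrams. This is the content of the bijective correspondence between the strong order on $\tilde{S}^0_{k+1}$ and the poset of $(k+1)$-cores under containment, which is a core ingredient in the combinatorial study of the affine Grassmannian.

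Granted this background fact, the proof is essentially a two-line unwinding. Setting $u=w_\mu=\mathfrak{m}(\mu)$ and $v=w_\lambda=\mathfrak{m}(\lambda)$, the definition $\mathfrak{c}=\mathfrak{p}^{-1}$ identifies $\kappa_u=\mathfrak{c}(\mu)$ and $\kappa_v=\mathfrak{c}(\lambda)$. Thus the cited equivalence gives $w_\mu\leq w_\lambda \iff \mathfrak{c}(\mu)\subseteq \mathfrak{c}(\lambda)$. To pass to the strict version, observe that the composition $\mathfrak{m}\circ\mathfrak{p}$ is a bijection between $\mathcal{C}_{k+1}$ and $\tilde{S}^0_{k+1}$, so $w_\mu=w_\lambda$ if and only if $\mathfrak{c}(\mu)=\mathfrak{c}(\lambda)$. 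Combining these two equivalences immediately yields $w_\mu< w_\lambda\iff \mathfrak{c}(\mu)\subsetneq\mathfrak{c}(\lambda)$, completing the proof.

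The only potential obstacle is locating the precise statement of the Bruhat--core correspondence in \cite{LLMSSZ} (or an equivalent source such as Lapointe--Morse) and ensuring the conventions for $\mathfrak{m}$ and $\mathfrak{p}$ used here match those in the reference; both are bookkeeping rather than mathematical difficulties. No new combinatorics needs to be developed, so the lemma is best presented as a short citation-based argument rather than a from-scratch verification.
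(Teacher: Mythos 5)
Your proposal is correct and takes essentially the same route as the paper: both reduce the lemma to the classical Bruhat-order/core-containment dictionary via the bijections $\mathfrak{m}$ and $\mathfrak{c}$, the paper citing the strict form directly from Lascoux (Proposition 4.1 of that reference) while you cite the non-strict form and upgrade to strict via injectivity. The upgrade step is valid and the difference is pure bookkeeping.
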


\begin{proof}
Since $\mathfrak{c}$ and $\mathfrak{m}$ are two bijections,
$$\mathfrak{C}:=\mathfrak{c}\circ\mathfrak{m}^{-1}:
\tilde{S}_{k+1}^{0}\to\mathcal{C}_{k+1}.$$
is again a bijection. For $\sigma,\omega\in\tilde{S}_{k+1}^{0}$, $\sigma<\omega$ if and only if $\mathfrak{C}(\sigma)\subsetneq\mathfrak{C}(\omega)$~\cite[Proposition 4.1]{Las01}. Then $w_{\mu}< w_{\lambda}$ if and only if $\mathfrak{c}(\mu) =\mathfrak{C}(w_\mu)\subsetneq\mathfrak{C}(w_\lambda)=\mathfrak{c}(\lambda)$.
\end{proof}

The {\bf strong cover relation $\Rightarrow$} on $\mathcal{C}_{k+1}$ is defined by~\cite{Las01,MM}
\begin{equation*}
\tau\Rightarrow\kappa \Longleftrightarrow  |\mathfrak{p}(\tau)|+1=|\mathfrak{p}(\kappa)| \ \text{and} \ \tau\subseteq\kappa.
\end{equation*}

\begin{lemma}$($\cite[Proposition 8.10]{BMPS19}$)$
Let $\lambda\in\textup{P}_{\ell}^{k}$ and $\mu:= \lambda-\epsilon_z$ with $z\in[\ell]$. If
$\textup{cover}_{z}(\mu)\in\textup{P}_{\ell}^{k}$,
then there is a strong cover $\mathfrak{c}(\textup{cover}_{z}(\mu)) \Rightarrow \mathfrak{c}(\lambda)$.
\mlabel{lem:StrongCoveronLambda1}
\end{lemma}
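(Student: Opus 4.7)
The plan is to verify the two defining conditions of the strong cover relation $\mathfrak{c}(\textup{cover}_z(\mu)) \Rightarrow \mathfrak{c}(\lambda)$: the size condition $|\textup{cover}_z(\mu)| + 1 = |\lambda|$, and the core containment $\mathfrak{c}(\textup{cover}_z(\mu)) \subseteq \mathfrak{c}(\lambda)$.

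The size condition is immediate from Definition~\ref{defn:defh}, which expresses
\[
\textup{cover}_z(\mu) = \mu + \epsilon_{[\textup{up}_{\Delta^k(\mu)}(y+1),\, \textup{up}_{\Delta^k(\mu)}(y+h)]} - \epsilon_{[z+1,\, z+h]}.
\]
By Lemma~\ref{lem:disj} the two index intervals are disjoint and each has cardinality $h$, so $|\textup{cover}_z(\mu)| = |\mu| = |\lambda|-1$, as required.

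For the core inclusion, I would proceed by induction on $h = h_{\mu,z}$. In the base case $h = 0$, we have $\textup{cover}_z(\mu) = \mu = \lambda - \epsilon_z$; the hypothesis $\mu \in \textup{P}^k_\ell$ then forces $\lambda_z > \lambda_{z+1}$ (or $z = \ell$), so the cell in row $z$ is a removable corner of $\lambda$, and the inclusion $\mathfrak{c}(\mu) \subseteq \mathfrak{c}(\lambda)$ follows from the standard fact that, under $\mathfrak{c}$, removing a removable corner of a $k$-bounded partition corresponds to removing a single $(k+1)$-ribbon from its associated core. For the inductive step, I would factor the transformation as $\textup{cover}_{z,h-1}(\mu) \to \textup{cover}_{z,h}(\mu)$, which shifts a single cell from row $z+h$ up to row $\textup{up}_{\Delta^k(\mu)}(y+h)$. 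The path-equality conditions of Definition~\ref{defn:defh} ensure that at the level of cores this move slides a ribbon along the bounce path while preserving both the total size and the core containment, so the inclusion propagates from $\mathfrak{c}(\textup{cover}_{z,h-1}(\mu)) \subseteq \mathfrak{c}(\lambda)$ to $\mathfrak{c}(\textup{cover}_{z,h}(\mu)) \subseteq \mathfrak{c}(\lambda)$.

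The principal obstacle will be rigorously translating the bounce path combinatorics of $\Delta^k(\mu)$ into the language of ribbons on the $(k+1)$-core $\mathfrak{c}(\lambda)$. The natural organizing principle is to pass through the affine symmetric group: the equations in~\eqref{eq:interval} are designed so that some reduced expression for $w_\lambda$ can be shortened by exactly one simple reflection to yield $w_{\textup{cover}_z(\mu)}$, giving a Bruhat cover $w_{\textup{cover}_z(\mu)} < w_\lambda$, which by Lemma~\ref{lem:wc} is equivalent to the desired strong cover of cores. The crucial role of the hypothesis $\textup{cover}_z(\mu) \in \textup{P}^k_\ell$ is precisely to guarantee that the end of the shift lands in a genuine $k$-bounded partition, so that the corresponding affine permutation is Grassmannian and this reduction closes.
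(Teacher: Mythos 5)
This lemma is quoted in the paper from \cite[Proposition~8.10]{BMPS19} without proof, so there is no in-paper argument to compare against; your attempt must therefore stand on its own, and as written it does not. The size condition is fine (though the fact that $[\textup{up}_{\Delta^k(\mu)}(y+1),\textup{up}_{\Delta^k(\mu)}(y+h)]$ has cardinality $h$ comes from the last line of~(\ref{eq:interval}) forcing the up-values to be consecutive rows, not from Lemma~\ref{lem:disj}, which only gives disjointness). The problems are in the core-containment half. First, your base case invokes the ``standard fact'' that removing a removable corner of $\lambda$ removes a single $(k+1)$-ribbon from $\mathfrak{c}(\lambda)$; this cannot be right, since a $(k+1)$-core by definition has no cell of hook length $k+1$ and hence no removable $(k+1)$-ribbon at all. (The true statement, that a one-box removal landing in $\textup{P}^k_\ell$ gives a weak, hence strong, cover of cores, needs a citation to the $k$-conjugate/weak-order machinery of~\cite{LM05}, not to ribbon removal.)

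Second, and more seriously, the proposed induction on $h$ is ill-posed: for $h\geq 1$ every intermediate object $\textup{cover}_{z,i}(\mu)$ with $0\le i\le h-1$ satisfies $(\textup{cover}_{z,i}(\mu))_{z+i}+1=(\textup{cover}_{z,i}(\mu))_{z+i+1}$ and so is \emph{not} a partition; only the final $\textup{cover}_{z,h}(\mu)$ is assumed to lie in $\textup{P}^k_\ell$. Since $\mathfrak{c}$ is defined only on $k$-bounded partitions, the inductive hypothesis ``$\mathfrak{c}(\textup{cover}_{z,h-1}(\mu))\subseteq\mathfrak{c}(\lambda)$'' is meaningless, and the step ``slides a ribbon along the bounce path while preserving core containment'' is exactly the nontrivial content of \cite[Proposition~8.10]{BMPS19}, asserted here rather than proved. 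The closing paragraph switches to a reduced-word strategy (exhibiting $w_{\textup{cover}_z(\mu)}$ as a one-letter-deleted subword of a reduced word for $w_\lambda$), which is a viable route, but again you only state that the equations in~(\ref{eq:interval}) ``are designed'' to make this work without identifying the reduced word or the deleted letter. As it stands the proposal is a plan, not a proof; to close it you would need either to carry out the reduced-word computation explicitly or to reproduce the translation between $\textup{cover}_z$ and strong marked covers from \cite[Section~8]{BMPS19}.
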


\begin{lemma}
Let $\lambda\in\textup{P}_{\ell}^{k}$, $z\in[\ell]$
and $i\in[0,\ell-z]$. If $\lambda - \epsilon_{[z,z+i]}\in\textup{P}_{\ell}^{k}$, then $w_{\lambda-\epsilon_{[z,z+i]}}<w_{\lambda}$.
\mlabel{lem:StrongCoveronLambda2}
\end{lemma}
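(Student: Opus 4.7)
The plan is to induct on $i$, using Lemma~\ref{lem:wc} to translate the desired Bruhat inequality $w_{\lambda-\epsilon_{[z,z+i]}}<w_{\lambda}$ into the strict containment $\mathfrak{c}(\lambda-\epsilon_{[z,z+i]})\subsetneq\mathfrak{c}(\lambda)$ of $(k+1)$-cores, and deriving each such containment from a strong cover relation supplied by Lemma~\ref{lem:StrongCoveronLambda1}.

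For the base case $i=0$, set $\mu:=\lambda-\epsilon_{z}$. The hypothesis $\mu\in\pkl$ forces $\mu_{z}=\lambda_{z}-1\ge\lambda_{z+1}=\mu_{z+1}$ (or $z=\ell$), so by Definition~\ref{defn:defh} the integer $h_{\mu,z}$ equals $0$ and hence $\textup{cover}_{z}(\mu)=\textup{cover}_{z,0}(\mu)=\mu\in\pkl$. Lemma~\ref{lem:StrongCoveronLambda1} then yields a strong cover $\mathfrak{c}(\mu)\Rightarrow\mathfrak{c}(\lambda)$, hence $\mathfrak{c}(\mu)\subsetneq\mathfrak{c}(\lambda)$, and Lemma~\ref{lem:wc} gives $w_{\mu}<w_{\lambda}$.

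For the inductive step $i\ge 1$, the key move is to peel off the \emph{last} index rather than the first. Define $\mu':=\lambda-\epsilon_{z+i}$. Because $\lambda-\epsilon_{[z,z+i]}\in\pkl$, one has $\lambda_{z+i}-1\ge\lambda_{z+i+1}$ (vacuously if $z+i=\ell$), which is precisely the condition guaranteeing $\mu'\in\pkl$; the base case applied to $\mu'$ then gives $w_{\mu'}<w_{\lambda}$. Using the identity $\lambda-\epsilon_{[z,z+i]}=\mu'-\epsilon_{[z,z+i-1]}$ together with $\lambda-\epsilon_{[z,z+i]}\in\pkl$ puts $\mu'-\epsilon_{[z,z+i-1]}$ in $\pkl$, so the inductive hypothesis applied to the pair $(\mu',[z,z+i-1])$ yields $w_{\mu'-\epsilon_{[z,z+i-1]}}<w_{\mu'}$. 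Chaining the two strict inequalities and recalling $\mu'-\epsilon_{[z,z+i-1]}=\lambda-\epsilon_{[z,z+i]}$ completes the argument.

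I do not anticipate a serious obstacle; the only subtle point is the choice $\mu'=\lambda-\epsilon_{z+i}$ rather than the more natural $\mu'=\lambda-\epsilon_{z}$. Only the former is guaranteed to stay in $\pkl$ (the latter can fail whenever $\lambda_{z}=\lambda_{z+1}$), and it is precisely this one-step removal at the rightmost end of the interval that both preserves $\pkl$ at the intermediate stage and shrinks the interval, allowing the strong cover of Lemma~\ref{lem:StrongCoveronLambda1} to fire and the induction to close.
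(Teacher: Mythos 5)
Your proof is correct and follows essentially the same route as the paper: induction on the interval length, reducing to the single-box case, which is justified via the strong cover of Lemma~\ref{lem:StrongCoveronLambda1} and then Lemma~\ref{lem:wc}. The only difference is that you peel the box off the right end $z+i$ of the interval (so the intermediate partition is $\lambda-\epsilon_{z+i}$), while the paper peels off the left end (removing $\epsilon_{[z+1,z+i]}$ first and then a box in row $z$); both orderings keep the intermediate stage in $\textup{P}_{\ell}^{k}$, so the two arguments are mirror images of each other.
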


\begin{proof}
We proceed by induction on $i\in[0,\ell-z]$. For the initial step of $i=0$, we have
$\lambda - \epsilon_z\in\textup{P}_{\ell}^{k}$ by the hypothesis. Then $\mathfrak{c}(\lambda-\epsilon_{z})\subsetneq\mathfrak{c}(\lambda)$ and so by Lemma~\mref{lem:wc},
\begin{equation}
w_{\lambda-\epsilon_{z}} < w_\lambda.
\mlabel{eq:StrCoini}
\end{equation}
Consider the inductive step of $i\in[\ell-z]$. Since $\lambda - \epsilon_{[z,z+i]}\in\textup{P}_{\ell}^{k}$, we have $\lambda - \epsilon_{[z+1,z+i]}\in\textup{P}_{\ell}^{k}$, which follows from the inductive hypothesis that
\begin{equation}
w_{\lambda - \epsilon_{[z+1,z+i]}} <w_\lambda.
\mlabel{eq:StrCoind}
\end{equation}
Hence
\begin{equation*}
w_{\lambda - \epsilon_{[z,z+i]}} = w_{(\lambda - \epsilon_{[z+1,z+i]})-\epsilon_z} \overset{(\ref{eq:StrCoini})}{<} w_{\lambda - \epsilon_{[z+1,z+i]}} \overset{(\ref{eq:StrCoind})}{<}w_\lambda.
\end{equation*}
This completes the proof.
\end{proof}

Let $\lambda\in\pkl$ and $a,b\in[\ell]$ such that $a<b$. Denote $\lambda_{[a,b]} := (\lambda_a, \lambda_{a+1}, \ldots, \lambda_b)\in\pkl$.

\begin{prop}
Let $\lambda\in\textup{P}_{\ell}^{k}$ and $z\in[\ell]$. If $\lsum\in\Opkl{\lambda}{z}$, then $w_{\lsum}<w_{\lambda}$.
\mlabel{prop:SmallerThanLambda}
\end{prop}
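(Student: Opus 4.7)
\textit{Plan.} The plan is to prove $w_\lsum < w_\lambda$ by induction on the length $a = a_\mathcal{S}$ of the sequence of quadruples $(z_j, \Psi_j, y_j, i_j)$ that realizes $\lsum$ via~\meqref{eq:vsform}, and then transfer from Bruhat order on $\tilde{S}^0_{k+1}$ to containment of the associated $(k+1)$-cores through Lemma~\mref{lem:wc}.

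For the base case $a = 0$, the expression~\meqref{eq:vsform} collapses to $\lsum = \mu - \epsilon_{[z+1, z+h']} = \lambda - \epsilon_{[z, z+h']}$, and since $\lsum \in \Opkl{\lambda}{z} \subseteq \pkl$ by~\meqref{eq:SpeOme}, Lemma~\mref{lem:StrongCoveronLambda2} applied with $i = h'$ immediately yields $w_\lsum < w_\lambda$.

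For the inductive step $a \geq 1$, I would isolate the first block $(z_1, \Psi_1, y_1, i_1)$ and introduce an intermediate $k$-bounded partition $\kappa \in \pkl$ obtained by performing just this cover on $\lambda$, specifically by combining $\textup{cover}_{z_1}(\lambda - \epsilon_{z_1})$ (extended, if necessary, to the full cover $i_1 \to h_1$) with the removal of the external cells $\epsilon_{[z, z_1-1]}$. Lemma~\mref{lem:StrongCoveronLambda1} then produces the strong cover $\mathfrak{c}(\textup{cover}_{z_1}(\lambda - \epsilon_{z_1})) \Rightarrow \mathfrak{c}(\lambda)$, while Lemma~\mref{lem:StrongCoveronLambda2} handles the extra removals, so that $w_\kappa < w_\lambda$. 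I would then reinterpret the residual data $(z_2, \Psi_2, y_2, i_2), \ldots, (z_a, \Psi_a, y_a, i_a)$ together with the leftover subtraction $-\epsilon_{[z_1+i_1+1, z+h']}$ as defining an element of $\Opkl{\kappa}{z'}$ for a suitable $z'$ with a shorter sequence of length $a-1$, and apply the inductive hypothesis to conclude $w_\lsum < w_\kappa$. Composing these two inequalities gives $w_\lsum < w_\lambda$.

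The main obstacle will be the bookkeeping needed to verify that the intermediate partition $\kappa$ really lies in $\pkl$ and that the residual data genuinely defines a valid $\Omega$-element of length $a-1$ based at $\kappa$. This verification relies on the side condition $\textup{up}_{\Psi_j}(y_j + i_j) < \textup{top}_{\dkmu}(z)$ built into~\meqref{eq:omegal}, together with the disjointness of bounce-path blocks in Lemma~\mref{lem:disj}: together, these guarantee that the first cover block can be enlarged to a full cover without interfering with the later blocks, so that $k$-boundedness is preserved at every stage of the induction.
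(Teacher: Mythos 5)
Your base case ($a=0$) coincides with the paper's and is correct. The inductive step, however, has a genuine gap centred on the intermediate partition $\kappa$. You peel off the \emph{first} block $(z_1,\Psi_1,y_1,i_1)$ and want $\kappa$ to satisfy three things at once: $\kappa\in\pkl$, $w_\kappa<w_\lambda$ via Lemmas~\ref{lem:StrongCoveronLambda1}--\ref{lem:StrongCoveronLambda2}, and $\lsum\in\Opkl{\kappa}{z'}$ with a sequence of length $a-1$. These are incompatible when $i_1<h_1$, which the construction of $\Omega_{\lambda,z,d}$ explicitly allows. If $\kappa$ carries only the partial cover, then $\kappa=\lambda-\epsilon_{[z,z_1+i_1]}+\epsilon_{[\textup{up}_{\Psi_1}(y_1+1),\textup{up}_{\Psi_1}(y_1+i_1)]}$ has $\kappa_{z_1+i_1}=\lambda_{z_1+i_1}-1<\lambda_{z_1+i_1+1}=\kappa_{z_1+i_1+1}$ (all rows in $[z,z+h']$ of $\lambda$ being equal), so $\kappa\notin\pkl$ and neither Lemma~\ref{lem:wc} nor Lemma~\ref{lem:StrongCoveronLambda1} is applicable to it. If instead you ``extend to the full cover $i_1\to h_1$'', then $\kappa$ acquires $h_1-i_1$ extra unit additions that $\lsum$ does not have, so the residual data no longer produces $\lsum$ from $\kappa$ inside the $\Omega$-framework (which only subtracts a contiguous column strip and adds partial covers; it cannot undo additions). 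The transitive chain $w_\lsum<w_\kappa<w_\lambda$ therefore never gets off the ground.

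The paper resolves exactly this tension by peeling off the \emph{last} block instead. It sets $\nu_1:=\lambda+(\text{blocks }1,\dots,a-1)-\epsilon_{[z_a+i_a+1,\,z+h']}\in\pkl$ and $\nu_1':=\nu_1-\epsilon_{z_a}$; because the tail subtraction $-\epsilon_{[z_a+i_a+1,\,z+h']}$ is already built into $\nu_1$, the run of equal entries starting at row $z_a$ is truncated at $z_a+i_a$, so the $h$ of Definition~\ref{defn:defh} for $(\nu_1',z_a)$ equals exactly $i_a$ and the partial cover becomes a \emph{full} cover $\nu_2=\textup{cover}_{z_a}(\nu_1')\in\pkl$. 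Lemma~\ref{lem:StrongCoveronLambda1} then gives $\mathfrak{c}(\nu_2)\subsetneq\mathfrak{c}(\nu_1)$; combined with the inductive hypothesis $w_{\nu'}<w_\lambda$ for the $(a-1)$-block truncation $\nu'=\nu_1-\epsilon_{[z,z_a+i_a]}$ and a row-by-row comparison of the cores on $[1,z-1]$ and $[z,\ell]$ (using $\lsum=\nu_2-\epsilon_{[z,z_a-1]}$), this yields $\mathfrak{c}(\lsum)\subsetneq\mathfrak{c}(\lambda)$ directly, with no intermediate element of $\Opkl{\cdot}{\cdot}$ required. To repair your argument you would have to either switch to this last-block decomposition or prove a separate lemma handling partial covers, e.g.\ by pre-subtracting the tail strip before invoking the strong-cover lemma; as written, the inductive step does not go through.
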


\begin{proof}
For $z\in[\ell]$, denote $\mu:=\lambda-\epsilon_z$ and $h':=h'_{\mu,z}\in[0,\ell-z]$ such that
\[
\mu_{z}+1=\mu_{z+1}=\cdots=\mu_{z+h'}, \quad \mu_{z+h'}>\mu_{z+h'+1}\,\text{ if }\, z+h'+1\in[\ell].
\]
Recall from~(\mref{eq:SpeOme}) that, if $\lsum\in\Opkl{\lambda}{z}$, then $\lsum$ has the form
\begin{equation}
\lsum =  \mu +  \epsilon_{[\textup{up}_{\Psi_{1}}(y_{1}+1),\textup{up}_{\Psi_{1}}(y_{1}+i_{1})]}
+\cdots +  \epsilon_{[\textup{up}_{\Psi_{a}}(y_{a}+1),\textup{up}_{\Psi_{a}}(y_{a}+i_{a})]}
- \epsilon_{[z+1, z+h']}.
\mlabel{eq:SmallerThanLambda-1}
\end{equation}

We proceed by induction on $a\geq 0$. For the case of $a=0$, we have
$$\lsum=\mu-\epsilon_{[z+1,z+h']}=\lambda-\epsilon_{[z,z+h']},$$ and so the result follows from Lemma~\mref{lem:StrongCoveronLambda2}.
Consider the inductive step of $a>0$. Let
\begin{equation}
\lsum' :=\mu +  \epsilon_{[\textup{up}_{\Psi_{1}}(y_{1}+1),\textup{up}_{\Psi_{1}}(y_{1}+i_{1})]}
+\cdots +  \epsilon_{[\textup{up}_{\Psi_{a-1}}(y_{a-1}+1),
\textup{up}_{\Psi_{a-1}}(y_{a-1}+i_{a-1})]}
- \epsilon_{[z+1, z+h']}.
\mlabel{eq:SmallerThanLambda-2}
\end{equation}
Then $w_{\lsum'}<w_{\lambda}$ by the induction hypothesis and
$
\lsum= \lsum'+\epsilon_{[\text{up}_{\Psi_{a}}(y_{a}+1),
\text{up}_{\Psi_{a}}(y_{a}+i_{a})]}
$.
Denote
\begin{align*}
\lsum_{1}:=&\ \lambda+\Big(\epsilon_{[\text{up}_{\Psi_{1}}(y_{1}+1),
\text{up}_{\Psi_{1}}(y_{1}+i_{1})]}
+\cdots+\epsilon_{[\text{up}_{\Psi_{a-1}}(y_{a-1}+1),
\text{up}_{\Psi_{a-1}}(y_{a-1}+i_{a-1})]}\Big)-
\epsilon_{[z_{a}+i_{a}+1,z+h']}\in\pkl,\\
\lsum'_1:=&\ \lsum_1-\epsilon_{z_a}\in\tpkl, \quad \lsum_{2}:= \lsum'_{1}
+\epsilon_{[\text{up}_{\Psi_{a}}(y_{a}+1),
\text{up}_{\Psi_{a}}(y_{a}+i_{a})]}-\epsilon_{[z_{a}+1,z_{a}+i_{a}]}.
\end{align*}
Since $$(\lsum'_{1})_{z_a+i_a} = \mu_{z_a+i_a} = \mu_{z_a+i_a+1}=(\lsum'_{1})_{z_a+i_a+1}+1> (\lsum'_{1})_{z_a+i_a+1},$$
$i_a$ is exactly the $h$ defined in Definition~\mref{defn:defh} for
$\lsum'_{1}\in\tpkl$ and $z_a\in[\ell]$.
Then $\lsum_{2} =\text{cover}_{z_{a}}(\lsum'_{1})\in\text{P}_{\ell}^{k}$. Hence $\mathfrak{c}(\lsum_{2})\subseteq\mathfrak{c}(\lsum_{1})$ by Lemma~\mref{lem:StrongCoveronLambda1}. Furthermore,
\begin{align*}
\lsum' =&\  \mu +  \epsilon_{[\textup{up}_{\Psi_{1}}(y_{1}+1),\textup{up}_{\Psi_{1}}(y_{1}+i_{1})]}
+\cdots +  \epsilon_{[\textup{up}_{\Psi_{a-1}}(y_{a-1}+1),
\textup{up}_{\Psi_{a-1}}(y_{a-1}+i_{a-1})]}
- \epsilon_{[z+1, z+h']} \hspace{1cm} (\text{by  ~(\ref{eq:SmallerThanLambda-2})})\\
=&\ \lambda +  \epsilon_{[\textup{up}_{\Psi_{1}}(y_{1}+1),\textup{up}_{\Psi_{1}}(y_{1}+i_{1})]}
+\cdots +  \epsilon_{[\textup{up}_{\Psi_{a-1}}(y_{a-1}+1),
\textup{up}_{\Psi_{a-1}}(y_{a-1}+i_{a-1})]}
- \epsilon_{[z, z+h']}\\
=&\ \lsum_1-\epsilon_{[z,z_{a}+i_{a}]},\\
\lsum =&\  \mu +  \epsilon_{[\textup{up}_{\Psi_{1}}(y_{1}+1),\textup{up}_{\Psi_{1}}(y_{1}+i_{1})]}
+\cdots +  \epsilon_{[\textup{up}_{\Psi_{a}}(y_{a}+1),\textup{up}_{\Psi_{a}}(y_{a}+i_{a})]}
- \epsilon_{[z+1, z+h']} \hspace{1cm} (\text{by  ~(\ref{eq:SmallerThanLambda-1})})\\
=&\ \lambda +  \epsilon_{[\textup{up}_{\Psi_{1}}(y_{1}+1),\textup{up}_{\Psi_{1}}(y_{1}+i_{1})]}
+\cdots +  \epsilon_{[\textup{up}_{\Psi_{a}}(y_{a}+1),\textup{up}_{\Psi_{a}}(y_{a}+i_{a})]}
- \epsilon_{[z, z+h']}\\
=&\ \lsum_1+\epsilon_{[\textup{up}_{\Psi_{a}}(y_{a}+1),
\textup{up}_{\Psi_{a}}(y_{a}+i_{a})]} - \epsilon_{[z, z_a+i_a]}\\
=&\ \lsum_1 -\epsilon_{z_a}+\epsilon_{[\textup{up}_{\Psi_{a}}(y_{a}+1),
\textup{up}_{\Psi_{a}}(y_{a}+i_{a})]} -\epsilon_{[z, z_a-1]}- \epsilon_{[z_a+1, z_a+i_a]}\\
=&\ \big(\lsum'_1+\epsilon_{[\textup{up}_{\Psi_{a}}(y_{a}+1),
\textup{up}_{\Psi_{a}}(y_{a}+i_{a})]}- \epsilon_{[z_a+1, z_a+i_a]}\big)-\epsilon_{[z, z_a-1]} \hspace{1cm} (\text{by $\lsum'_1 =\lsum_1 -\epsilon_{z_a} $}) \\
=&\ \lsum_{2}-\epsilon_{[z,z_{a}-1]}.
\end{align*}
Summarizing the above two equations yields
\begin{equation}
\lsum'= \lsum_{1}-\epsilon_{[z,z_{a}+i_{a}]},\quad \lsum=\lsum_{2}-\epsilon_{[z,z_{a}-1]}.
\mlabel{eq:vvvp}
\end{equation}
Hence
\begin{equation*}
\begin{split}
\mathfrak{c}(\lsum_{2})\subsetneq\mathfrak{c}(\lsum_{1})\Rightarrow &\ \mathfrak{c}(\lsum_{2})_{[1,z-1]}\subsetneq\mathfrak{c}(\lsum_{1})_{[1,z-1]}\\
\Rightarrow&\ \mathfrak{c}(\lsum)_{[1,z-1]}\subsetneq\mathfrak{c}(\lambda)_{[1,z-1]}\\
\Rightarrow&\ \mathfrak{c}(\lsum)\subsetneq\mathfrak{c}(\lambda) \hspace{1cm}(\text{by $\mathfrak{c}(\lsum)_{[z,\ell]}\subsetneq\mathfrak{c}(\lambda)_{[z,\ell]}$}).
\end{split}
\end{equation*}
The second step holds by $w_{\nu'}<w_{\lambda}$,  ~(\ref{eq:vvvp}) and that the term $-\epsilon_{[z,z_{a}-1]}$ only affects $$\epsilon_{[\text{up}_{\Psi_{1}}(y_{1}+1),\text{up}_{\Psi_{1}}(y_{1}+i_{1})]}
+\cdots+\epsilon_{[\text{up}_{\Psi_{a-1}}(y_{a-1}+1),
\text{up}_{\Psi_{a-1}}(y_{a-1}+i_{a-1})]}.$$
Then the proof is completed by Lemma~\mref{lem:wc}.
\end{proof}

\subsection{A lowering operator formula for closed $K$-$k$-Schur functions} \mlabel{ss:lowformKk}
In this subsection, we give the closed $K$-$k$-Schur function $\sum_{\mu\in\text{P}^{k}, w_{\mu}\leq w_{\lambda}}g_{\mu}^{(k)}$ an operator form.
Let us prepare some notations.

Let $\lambda\in\text{P}_{\ell}^{k}$.
Define
\begin{equation}
\text{D}_{\Delta^{k}(\lambda)}:=\{ \text{down}_{\Delta^{k}(\lambda)}(1),\ldots,
\text{down}_{\Delta^{k}(\lambda)}(\bott_\lambda) \}\subseteq[\ell],\quad \ld := [\ell]\setminus{\rm D}_{\Delta^{k}(\lambda)}, \quad [x,y]_\lambda := [\ell]_{\lambda}\cap[x,y],
\mlabel{eq:dlkl}
\end{equation}
where $x,y\in[\ell]$ with $x\leq y$.
Denote $r:=|\ld|$. Next, recursively on $i\in [0, r]$, we give the following notations. 
\begin{equation}
\Gamma^i_{\GP{\lsum}{j}^i} \,\text{ with }\,  \GP{\lsum}{j}^i\in\pkl, \quad i\in[0,r], \quad j\in\ZZ_{\geq 0},
\mlabel{eq:seqga}
\end{equation}
in which each $\Gamma^i_{\GP{\lsum}{j}^i}$ is either defined as a subset of $[\ell]$, or is claimed to be undefined.

{\bf Initial step of $i=0$.} Define
\begin{equation}
\GP{\lsum}{0}^0:=\lambda, \quad \Gamma^0_{\GP{\lsum}{0}^0} := \ld \subseteq[\ell].
\mlabel{eq:Ga0}
\end{equation}
Otherwise for each $j\in \ZZ_{\geq 1}$, we say that $\Gamma^0_{\GP{\lsum}{j}^0}$ is undefined.

{\bf Inductive step of $i\in [r]$.} Assume that $\Gamma^{i-1}_{\GP{\lsum}{j}^{i-1}}$ with $\GP{\lsum}{j}^{i-1}\in \pkl, j\in \ZZ_{\geq 0},$ have been defined (or claimed to be undefined). We define $\Gamma^{i}_{\GP{\lsum}{j}^{i}}$ as  in~(\mref{eq:seqga}) by induction on $j\in \ZZ_{\geq 0}$.
For the initial step, define
\begin{equation}
\GP{\lsum}{0}^i:= \GP{\lsum}{0,j}^i:=\GP{\lsum}{j}^{i-1}, \quad \Gamma^i_{\GP{\lsum}{0}^i} := \Gamma^{i-1}_{\GP{\lsum}{j}^{i-1}}\setminus z_{i-1},
\mlabel{eq:zeroj}
\end{equation}
where $z_{i-1}$ is the minimum element in $\Gamma^{i-1}_{\GP{\lsum}{j}^{i-1}}$.
If the $\Gamma^{i-1}_{\GP{\lsum}{j}^{i-1}}$ is undefined for all $j\in \ZZ_{\geq 0}$, then we call that $\GP{\lsum}{0}^i$ and
$\Gamma^i_{\GP{\lsum}{0}^i}$ are undefined.
For the inductive step of $j\in \ZZ_{\geq 1}$, define
$$Z_{i,j}:=\{z \in[\ell]\,|\, \text{the set } \Opkl{\GP{\lsum}{j-1}^i}{z} \text{ given in~(\ref{eq:SpeOme}) is not empty}\}.$$
Then for any $z\in Z_{i,j}$ and $\GP{\lsum}{j}^i=\lsum_{(j),z}^i\in \Opkl{\GP{\lsum}{j-1}^i}{z}$, define
\begin{equation}
\Gamma^i_{\GP{\lsum}{j}^i}:=
\left\{
\begin{array}{ll}
\Gamma^i_{\GP{\lsum}{j-1}^i}, & \quad \text{if }\,z\in[\bott_{\GP{\lsum}{j-1}^i}+1,\ell]\cap Z_{i,j}, \\
\Big(\Gamma^i_{\GP{\lsum}{j-1}^i}\cap [1,{\rm down}_{\Delta^k(\GP{\lsum}{j-1}^i)}(z)-1] \Big) & \\
\cup[{\rm down}_{\Delta^k(\GP{\lsum}{j-1}^i)}(z),\ell]_{\GP{\lsum}{j}^i},
&\quad \text{if }\, z\in[\bott_{\GP{\lsum}{j-1}^i}]\cap Z_{i,j}.
\end{array}
\right.
\mlabel{eq:niceform}
\end{equation}
If the set $Z_{i,j}$ is empty, then we say that the $\Gamma^i_{\GP{\lsum}{j}^i}$ is undefined.

We are ready to prove our main result Theorem~\mref{thm:closedK-k-Schur0}
which we restate below for convenience.
Recall that the $\Opkl{\lambda}{z}$, $\Opkl{\lambda}{M}$ and $M_z^{n}$
are given in~(\mref{eq:SpeOme}), Definition~\mref{def:opklM} and~(\mref{eq:mzn}), respectively.

\begin{theorem} $($=Theorem~\mref{thm:closedK-k-Schur0}$)$
Let $\lambda\in\pkl$. Then
\begin{equation}
\sum_{\textup{supp}(S)\subseteq\ld} L_{S}g_{\lambda}^{(k)} =	\sum_{\mu\in\pkl, w_{\mu}\leq w_{\lambda}}g_{\mu}^{(k)}\tforall \lambda\in\pkl.
\mlabel{eq:closedK-k-Schur}
\end{equation}
Here we denote $L_{S}:=\prod_{z\in S}L_{z}$ for a multiset $S$ on $[\ell]$.
\mlabel{thm:closedK-k-Schur}
\end{theorem}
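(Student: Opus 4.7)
The plan is to expand the left-hand side using the recursive procedure encoded by the data $\Gamma^i_{(\nu)^i_j}$ in~(\mref{eq:seqga})--(\mref{eq:niceform}), and to match the resulting $K$-$k$-Schur expansion against the Bruhat interval of $w_\lambda$. As a first step, I would rewrite
\begin{equation*}
\sum_{\mathrm{supp}(S)\subseteq\ld} L_S\, g_\lambda^{(k)} \;=\; \prod_{z\in\ld}\Big(\sum_{n\geq 0}L_z^n\Big)\, g_\lambda^{(k)},
\end{equation*}
so that the problem reduces to expanding one geometric series at a time, processing the elements of $\ld$ in a prescribed order.

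The recursion proceeds by, at stage $i$, selecting $z_{i-1}:=\min\Gamma^{i-1}_{(\nu)^{i-1}_j}$ and unfolding $\sum_{n\geq 0}L_{z_{i-1}}^n$ acting on the current $K$-$k$-Schur function $g_{(\nu)^{i-1}_j}^{(k)}$. When $z_{i-1}\in[\bott_\nu+1,\ell]$, Theorem~\mref{thm:Lgntog}\,(a) gives a positive $K$-$k$-Schur expansion indexed by $\bigcup_{n\geq 0}\Opkl{\nu}{M_{z_{i-1}}^n}$, and the available index set simply shrinks to $\Gamma^{i-1}_{(\nu)^{i-1}_j}\setminus z_{i-1}$ since the underlying root ideal is unchanged. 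When $z_{i-1}\in[\bott_\nu]$, multiplying through by the telescoping factor $\sum_{i\geq 0}L_{\mathrm{down}(z_{i-1})+h'+1}^i$ and invoking Corollary~\mref{coro:Lgntog2-refine} yields a positive sum that introduces the new active index $\mathrm{down}(z_{i-1})$; the set of still-available positions then updates in precisely the manner of~(\mref{eq:niceform}). After $r=|\ld|$ stages the left-hand side becomes $\sum_{\mu} g_\mu^{(k)}$ over all $\mu$ reached by this recursion.

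By iterating Proposition~\mref{prop:SmallerThanLambda} along the recursion, every $\mu$ produced satisfies $w_\mu\leq w_\lambda$, giving an inclusion of the left-hand side into the right-hand side as positive sums of $K$-$k$-Schur functions. This direction is completed by observing that each application of Theorem~\mref{thm:Lgntog}\,(a) or Corollary~\mref{coro:Lgntog2-refine} is an \emph{identity} rather than just an inequality, and that the $\Gamma^i$ updating rule faithfully tracks the positions whose geometric series remain unexpanded, so no $g_\mu^{(k)}$ is lost at any stage.

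The main obstacle will be the reverse inclusion together with multiplicity one: every $\mu\in\pkl$ with $w_\mu\leq w_\lambda$ must arise from exactly one leaf of the recursion tree. I would translate via Lemma~\mref{lem:wc} to the core containment $\mathfrak{c}(\mu)\subseteq\mathfrak{c}(\lambda)$ and construct a canonical saturated chain from $\mathfrak{c}(\mu)$ to $\mathfrak{c}(\lambda)$ along strong covers. The ``smallest $z$ first'' convention in~(\mref{eq:zeroj}), together with the strong-cover compatibility of Lemma~\mref{lem:StrongCoveronLambda1}, should force a unique parsing of this chain into the sequence of choices $(z_{i-1},\lsum_{(j),z_{i-1}}^i)$ that yields $\mu$; verifying that this parsing is well-defined and exhaustive, in harmony with the explicit description of $\Opkl{\cdot}{z}$ from Section~\mref{sec:lower}, is the technical heart of the argument and would conclude the proof.
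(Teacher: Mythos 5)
Your setup coincides with the paper's: you rewrite the left-hand side as $\prod_{z\in\ld}\big(\sum_{n\geq0}L_z^n\big)g_\lambda^{(k)}$, process one element of $\ld$ at a time with the bookkeeping sets of~\meqref{eq:seqga}--\meqref{eq:niceform}, split into the cases $z\in[\bott_\lambda+1,\ell]$ (Theorem~\mref{thm:Lgntog}~(a)) and $z\in[\bott_\lambda]$ (Corollary~\mref{coro:Lgntog2-refine}, after absorbing the telescoping factor $\sum_{i}L_{d}^{i}$, which is legitimate precisely because $d_{\lsum,z}\in\Gamma_{\lsum}$), and invoke Proposition~\mref{prop:SmallerThanLambda} to place every index produced inside the Bruhat interval. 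Up to and including this forward inclusion, your plan is the paper's argument.

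The genuine gap is the one you flag yourself: you never establish that every $\mu\in\pkl$ with $w_{\mu}\leq w_{\lambda}$ is produced, and produced with multiplicity exactly one. Your proposed remedy --- translate to cores by Lemma~\mref{lem:wc}, build a canonical saturated chain of strong covers from $\mathfrak{c}(\mu)$ to $\mathfrak{c}(\lambda)$, and argue that the ``smallest $z$ first'' convention of~\meqref{eq:zeroj} forces a unique parsing of that chain into choices $(z_{i-1},\lsum^{i}_{(j)})$ --- is left entirely unexecuted, and it is not the route the paper takes. The paper never constructs such a bijection of chains; it instead formulates the per-stage statement as an identity of symmetric functions, Claim~\mref{cl:gamma}, whose right-hand side~\meqref{eq:indsumLSsup1} is already indexed by the full Bruhat interval below the previous stage's partition, and then telescopes these identities by an outer induction, using $|\Gamma^{r}_{\lsum^{r}}|=0$ at the final stage to kill all remaining operators. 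The exactness at each stage is carried by the fact that Theorem~\mref{thm:Lgntog} and Corollary~\mref{coro:Lgntog2-refine} are equalities together with the identification of $\bigcup_{n\geq0}\Opkl{\cdot}{\Mul{z}{n}}$ with a Bruhat interval via Proposition~\mref{prop:SmallerThanLambda} and the explicit description~\meqref{eq:omegal}. If you pursue your chain-parsing route you would in effect be re-proving that identification from scratch, and you would need at minimum a converse to Proposition~\mref{prop:SmallerThanLambda} (every strong cover $\tau\Rightarrow\mathfrak{c}(\lambda)$ is realized as some $\mathfrak{c}(\textup{cover}_{z}(\lambda-\epsilon_z))$, i.e.\ the converse of Lemma~\mref{lem:StrongCoveronLambda1}), which your proposal does not supply. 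As written, your argument proves only one inclusion of the two sides of~\meqref{eq:closedK-k-Schur} as positive sums of $K$-$k$-Schur functions.
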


\begin{proof}
Let $r:= |\ld|$. For each $i\in [0, r]$, choose $j\in \ZZ_{\geq 0}$ such that $\Gamma^i_{\GP{\lsum}{j}^i}$ is defined in~(\ref{eq:seqga}). Abbreviate
\begin{equation}
\lsum^i:= \GP{\lsum}{j}^i\in\pkl, \quad \Gamma^i_{\lsum^i}:= \Gamma^i_{\GP{\lsum}{j}^i}.
\mlabel{eq:vij}
\end{equation}
In particular,
\begin{equation}
\lsum^0 := \GP{\lsum}{0}^0 = \lambda, \quad \Gamma^0_{\lsum^0} \overset{(\ref{eq:Ga0})}{=} \Gamma^0_{\GP{\lsum}{0}^0} =\ld.
\mlabel{eq:Ga0toLd}
\end{equation}

We now state an intermediate result that will be proved later.
\begin{claim} \mlabel{cl:gamma} With the notions above, we have
\begin{equation}
|\Gamma^{i}_{\lsum^i} | = r-i, \quad \forall i\in [0, r],
\mlabel{eq:sizeGa}
\end{equation}
and
\begin{equation}
\sum_{\textup{supp}(S)\subseteq\Gamma^{i-1}_{\lsum^{i-1}}}
L_{S}g_{\lsum^{i-1}}^{(k)}
= \sum_{\lsum^i\in\pkl,w_{\lsum^i}\leq w_{\lsum^{i-1}} } \sum_{\textup{supp}(S) \subseteq\Gamma^{i}_{\lsum^i}}L_S g_{\lsum^i}^{(k)}, \quad \forall i\in [1, r].
\mlabel{eq:indsumLSsup1}
\end{equation}
\end{claim}

Assuming Claim~\mref{cl:gamma}, we apply induction on $i\in[1,r]$ to prove
\begin{equation}
\sum_{\text{supp}(S)\subseteq\Gamma^{r-i}_{\lsum^{r-i}}}
L_{S}g_{\lsum^{r-i}}^{(k)} = \sum_{\mu\in\pkl,w_{\mu}\leq w_{\lsum^{r-i}} } g_{\mu}^{(k)},
\mlabel{eq:87}
\end{equation}
which yields~(\mref{eq:closedK-k-Schur}) by taking $i=r$ and~(\mref{eq:Ga0}).

For the initial step of $i=1$,
\begin{align*}
\sum_{\text{supp}(S)\subseteq\Gamma^{r-1}_{\lsum^{r-1}}}
L_{S}g_{\lsum^{r-1}}^{(k)} = &\ \sum_{\lsum^r\in\pkl,w_{\lsum^r}\leq w_{\lsum^{r-1}} } \sum_{\text{supp}(S) \subseteq\Gamma^{r}_{\lsum^r}}L_S g_{\lsum^r}^{(k)}\hspace{1cm}
(\text{by~(\ref{eq:indsumLSsup1})})\\
=&\  \sum_{\lsum^r\in\pkl,w_{\lsum^r}\leq w_{\lsum^{r-1}} } \sum_{\text{supp}(S) \subseteq\emptyset}L_S g_{\lsum^r}^{(k)}\hspace{1cm} (\text{by $|\Gamma^{r}_{\lsum^r}| =r-r= 0$})\\
=&\ \sum_{\mu\in\pkl,w_{\mu}\leq w_{\lsum^{r-1}} } g_{\mu}^{(k)}.
\end{align*}
For the inductive step of $i\in[2,r]$,
\begin{align*}
\sum_{\text{supp}(S)\subseteq\Gamma^{r-i}_{\lsum^{r-i}}}
L_{S}g_{\lsum^{r-i}}^{(k)}
=&\ \sum_{\substack{\lsum^{r-(i-1)}\in\pkl \\ w_{\lsum^{r-(i-1)}}\leq w_{\lsum^{r-i}} }} \sum_{\text{supp}(S) \subseteq\Gamma^{i}_{\lsum^{r-(i-1)}}}L_S g_{\lsum^{r-(i-1)}}^{(k)}
\hspace{1cm}
(\text{by~(\ref{eq:indsumLSsup1})})\\
=&\ \sum_{\substack{\lsum^{r-(i-1)}\in\pkl\\ w_{\lsum^{r-(i-1)}}\leq w_{\lsum^{r-i}} }}  \sum_{\mu\in\pkl,w_{\mu}\leq w_{\lsum^{r-(i-1)}} } g_{\mu}^{(k)} \hspace{1cm} (\text{by the inductive hypothesis}) \\
=&\ \sum_{\mu\in\pkl,w_{\mu}\leq w_{\lsum^{r-i}} } g_{\mu}^{(k)}.
\end{align*}
This completes the inductive proof of~(\mref{eq:87}).
\end{proof}

We are left to supply the proof of Claim~\mref{cl:gamma}, with most of the effort on ~(\mref{eq:indsumLSsup1}) which follows the following logic diagram.
\vsc
\begin{displaymath}
\xymatrix{
z \ar[rrrrr]^{\text{Case 1.\quad $z\in[\bott_{\GP{\lsum}{0}}+1,\ell]$}}
\ar[d]_{\text{Case 2.}}^{\text{$z\in[\bott_{\GP{\lsum}{0}}]$}}
& & & & & (\ref{eq:sumLS})
\ar[r]^{\text{(\ref{eq:fs})}} & (\ref{eq:indsumLSsup1}) \\
(\ref{eq:indstep-case2}) \ar[r]^{\text{Step 1}} & (\ref{eq:indstep-case2-2})
\ar[r]_{\text{repeat}} & (\ref{eq:secstep})
\ar[r]^{\text{c:=m}} & (\ref{eq:lsumm})
\ar[r]^{\text(\ref{eq:minusz})} & (\ref{eq:thstep})
\ar[r]^{\text{Step 2}}_{\text{repeat}} & (\ref{eq:fifstep})
\ar[u]_{\text{special case}}
}
\end{displaymath}

\begin{proof}[Proof of Claim~\mref{cl:gamma}]
First, we have
$$|\Gamma^{0}_{\lsum^{0}}| \overset{\text{(\ref{eq:Ga0toLd})}}{=} |\ld| = r.$$
Notice that $$\lsum^{i-1} \overset{(\ref{eq:vij})}{:=} \GP{\lsum}{j}^{i-1}\overset{(\ref{eq:zeroj})}{=:} \GP{\lsum}{0}^i \text{ for some }\, j\in \ZZ_{\geq 0}, \quad \Gamma^i_{\GP{\lsum}{0}^i}\overset{(\ref{eq:zeroj})}{=:} \Gamma^{i-1}_{\lsum^{i-1}} \setminus  z_{i-1} .$$
Then
\begin{equation*}
\big| \Gamma^i_{\GP{\lsum}{0}^i}  \big| = \big| \Gamma^{i-1}_{\lsum^{i-1}} \setminus z_{i-1}\big| = \big| \Gamma^{i-1}_{\lsum^{i-1}}\big| -1,
\end{equation*}
and so
\begin{equation*}
(\ref{eq:sizeGa}) \Longleftrightarrow |\Gamma^{i}_{\lsum^i} |= |\Gamma^{i-1}_{\lsum^{i-1}}|-1 \Longleftrightarrow |\Gamma^{i}_{\lsum^i} | = \big| \Gamma^i_{\GP{\lsum}{0}^i}  \big| , \quad \forall \, i\in[1,r].
\end{equation*}

On the other hand, the left hand side of~(\mref{eq:indsumLSsup1}) is equal to
\begin{equation}
\sum_{\text{supp}(S)\subseteq\Gamma^{i-1}_{\lsum^{i-1}}}
L_{S}g_{\lsum^{i-1}}^{(k)}
= \sum_{\text{supp}(S)\subseteq\Gamma^{i-1}_{\lsum^{i-1}} \setminus  z_{i-1} }L_{S}\Bigg( \sum_{n\geq0}
L_{z_{i-1}}^{n}g_{\lsum^{i-1}}^{(k)} \Bigg) =\sum_{\text{supp}(S)\subseteq\Gamma^i_{\GP{\lsum}{0}^i}}L_{S}\Bigg( \sum_{n\geq0}
L_{z_{i-1}}^{n}g_{\GP{\lsum}{0}^i}^{(k)} \Bigg).
\mlabel{eq:fs}
\end{equation}
To simplify the notation, we omit the indexes $i$ and $i-1$, and denote
\begin{equation*}
\GP{\lsum}{0}:=\GP{\lsum}{0}^i, \quad \Gamma_{\GP{\lsum}{j}}:= \Gamma^i_{\GP{\lsum}{j}^i}\,\text{ for }\, j\in\ZZ_{\geq 0}, \quad \lsum:=\lsum^i.
\end{equation*}
To prove Claim~\mref{cl:gamma}, we only need to prove
\begin{equation}
|\Gamma_{\lsum} |=| \Gamma_{\GP{\lsum}{0}} |,
\mlabel{eq:Gafinal}
\end{equation}
and that the right hand sides of~(\mref{eq:indsumLSsup1}) and~(\mref{eq:fs}) are equal:
\begin{equation}
\sum_{\text{supp}(S)\subseteq\Gamma_{\GP{\lsum}{0}}}L_{S}\Bigg( \sum_{n\geq0}
L_{z}^{n}g_{\GP{\lsum}{0}}^{(k)} \Bigg)
= \sum_{\lsum\in\pkl,w_{\lsum}\leq w_{\GP{\lsum}{0}} } \sum_{\text{supp}(S) \subseteq\Gamma_{\lsum}}L_S g_{\lsum}^{(k)}, \quad \forall z\in [\ell].
\mlabel{eq:sumLS}
\end{equation}
Depending on whether or not $z$ is below the bottom of the root ideal $\Delta^k(\GP{\lsum}{0})$, we divide the proof of \meqref{eq:Gafinal} and \meqref{eq:sumLS} into the following two cases.

{\bf Case 1.} $z\in[\bott_{\GP{\lsum}{0}}+1,\ell]$.
By~(\mref{eq:niceform}), we have
$\Gamma_\lsum = \Gamma_{\GP{\lsum}{0}},$
which yields~(\mref{eq:Gafinal}).
Moreover,
\begin{align*}
\sum_{\text{supp}(S)\subseteq\Gamma_{\GP{\lsum}{0}}}L_{S}\Bigg( \sum_{n\geq0}
L_{z}^{n}g_{\GP{\lsum}{0}}^{(k)} \Bigg)
= &\ \sum_{\text{supp}(S)\subseteq\Gamma_{\GP{\lsum}{0}}}L_{S}\Bigg( \sum_{n\geq0} \sum_{ \lsum\in\Opkl{\GP{\lsum}{0}}{\Mul{z}{n}}}g_{\lsum}^{(k)}\Bigg)
\hspace{1cm} \text{(by Theorem~\ref{thm:Lgntog}~\ref{it:Lgntog1})}\\
=&\ \sum_{n\geq0} \sum_{ \lsum\in\Opkl{\GP{\lsum}{0}}{\Mul{z}{n}}}
\sum_{\text{supp}(S)\subseteq\Gamma_{\GP{\lsum}{0}}}L_{S} g_{\lsum}^{(k)}\hspace{1cm} (\text{by the additivity of $L_S$})\\
=&\ \sum_{n\geq0} \sum_{ \lsum\in\Opkl{\GP{\lsum}{0}}{\Mul{z}{n}}}
\sum_{\text{supp}(S)\subseteq\Gamma_\lsum}L_{S} g_{\lsum}^{(k)}\hspace{1cm}
(\text{by $\Gamma_\lsum = \Gamma_{\GP{\lsum}{0}}$})\\
=&\ \sum_{\lsum\in\pkl,w_{\lsum}\leq w_{\GP{\lsum}{0}} } \sum_{\text{supp}(S) \subseteq\Gamma_{\lsum}}L_S g_{\lsum}^{(k)}\hspace{1cm}
(\text{by Proposition~\ref{prop:SmallerThanLambda}}),
\end{align*}
which is exactly~(\ref{eq:sumLS}).

{\bf Case 2.} $z\in[\bott_{\GP{\lsum}{0}}]$. For each $\lsum\in\pkl$ and $z\in[\bott_{\lsum}]$, set
$$h':=h'_{\lsum,z}\in[0,\ell-z]\,\text{ such that }\, \lsum_{z}=\lsum_{z+1}=\cdots=\lsum_{z+h'} >\lsum_{z+h'+1},$$
with the convention that $\lsum_{\ell+1}:= -\infty$.
Denote $d_{\lsum,z}:= {\rm down}_{\Delta^k(\lsum)}(z)+h'+1.$

If $d_{\lsum,z}\leq\ell$, then ${\rm down}_{\Delta^k(\lsum)}(z+h') = {\rm down}_{\Delta^k(\lsum)}(z)+h'$. By Remark~\mref{re:factroot}~(b),  $\lsum_{z+h'}>\lsum_{z+h'+1}$ implies that there is a ceiling in columns
$${\rm down}_{\Delta^k(\lsum)}(z+h'), \quad {\rm down}_{\Delta^k(\lsum)}(z+h')+1 = {\rm down}_{\Delta^k(\lsum)}(z)+h'+1=d_{\lsum,z}$$
and so $d_{\lsum,z}\in\Gamma_{\lsum}$.
Hence
\begin{align}
&\ \sum_{\text{supp}(S)\subseteq\Gamma_{\GP{\lsum}{0}}}L_{S}\Bigg( \sum_{n\geq0}
L_{z}^{n}g_{\GP{\lsum}{0}}^{(k)} \Bigg)\notag \\
=&\  \sum_{\text{supp}(S)\subseteq\Gamma_{\GP{\lsum}{0}}}L_{S}\Bigg( \sum_{n\geq1}
L_{z}^{n}g_{\GP{\lsum}{0}}^{(k)} \Bigg) +\sum_{\text{supp}(S)\subseteq\Gamma_{\GP{\lsum}{0}}}L_{S}g_{\GP{\lsum}{0}}^{(k)} \notag\\
=&\ \sum_{\text{supp}(S)\subseteq\Gamma_{\GP{\lsum}{0}}
\setminus d_{\GP{\lsum}{0},z}}L_{S}\Bigg( \sum_{n\geq1} \sum_{i\geq0} L_{d_{\GP{\lsum}{0},z}}^{i}
L_{z}^{n}g_{\GP{\lsum}{0}}^{(k)}\Bigg)+
\sum_{\text{supp}(S)\subseteq\Gamma_{\GP{\lsum}{0}}}L_{S}g_{\GP{\lsum}{0}}^{(k)} \notag\\
=&\ \sum_{\text{supp}(S)\subseteq\Gamma_{\GP{\lsum}{0}}
\setminus d_{\GP{\lsum}{0},z}}L_{S}\Bigg( \sum_{n\geq1}
\bigg(\sum_{\lsum\in\Opkl{\GP{\lsum}{0}}{z}}
\sum_{\substack{i\geq0,j\geq0\\ i+j=n-1}}L_{z}^{i}L_{\textup{down}_{\Delta^k(\GP{\lsum}{0})}(z)}^{j}
g_{\lsum}^{(k)}+ \sum_{i\geq0}L_{d_{\GP{\lsum}{0},z}}^{i}
L_{\textup{down}_{\Delta^k(\GP{\lsum}{0})}(z)}^{n}g_{\GP{\lsum}{0}}^{(k)}\bigg) \Bigg) \notag\\
&\ +
\sum_{\text{supp}(S)\subseteq\Gamma_{\GP{\lsum}{0}}}L_{S}g_{\GP{\lsum}{0}}^{(k)}
\hspace{3cm} (\text{by Corollary~\ref{coro:Lgntog2-refine} for the first summand})\notag\\
=&\ \sum_{\text{supp}(S)\subseteq\Gamma_{\GP{\lsum}{0}}
\setminus d_{\GP{\lsum}{0},z}}L_{S}\Bigg( \sum_{\lsum\in\Opkl{\GP{\lsum}{0}}{z}}
\sum_{i\geq0}\sum_{j\geq0}L_{z}^{i}L_{\text{down}_{\Delta^k(\GP{\lsum}{0})}
(z)}^{j}g_{\lsum}^{(k)}+\sum_{n\geq1}\sum_{i\geq0}L_{d_{\GP{\lsum}{0},z}}^{i}
L_{\textup{down}_{\Delta^k(\GP{\lsum}{0})}(z)}^{n}
g_{\GP{\lsum}{0}}^{(k)}\Bigg)\notag\\
&\ +
\sum_{\text{supp}(S)\subseteq\Gamma_{\GP{\lsum}{0}}}L_{S}g_{\GP{\lsum}{0}}^{(k)}
\notag\\
=&\
\sum_{\lsum\in\Opkl{\GP{\lsum}{0}}{z}}
\sum_{\substack{\text{supp}(S) \subseteq  (\Gamma_{\GP{\lsum}{0}}
\setminus d_{\GP{\lsum}{0},z}) \\ \cup \textup{down}_{\Delta^k(\GP{\lsum}{0})}(z)}}L_S \Bigg( \sum_{n\geq0}L_{z}^{n}g_{\lsum}^{(k)}  \Bigg)+\sum_{\text{supp}(S)\subseteq\Gamma_{\GP{\lsum}{0}}}L_{S}\Bigg( \sum_{n\geq1}L_{\textup{down}_{\Delta^k(\GP{\lsum}{0})}(z)}^{n}
g_{\GP{\lsum}{0}}^{(k)} \Bigg)\notag\\
&\ +
\sum_{\text{supp}(S)\subseteq\Gamma_{\GP{\lsum}{0}}}L_{S}g_{\GP{\lsum}{0}}^{(k)}
\notag\\
=&\ \sum_{\lsum\in\Opkl{\GP{\lsum}{0}}{z}}
\sum_{\substack{\text{supp}(S) \subseteq  (\Gamma_{\GP{\lsum}{0}}
\setminus  d_{\GP{\lsum}{0},z}  ) \\ \cup \textup{down}_{\Delta^k(\GP{\lsum}{0})}(z) }}L_S \Bigg( \sum_{n\geq0}L_{z}^{n}g_{\lsum}^{(k)}  \Bigg)+\sum_{\text{supp}(S)\subseteq\Gamma_{\GP{\lsum}{0}}}L_{S}\Bigg( \sum_{n\geq0}L_{\textup{down}_{\Delta^k(\GP{\lsum}{0})}(z)}^{n}
g_{\GP{\lsum}{0}}^{(k)} \Bigg).\mlabel{eq:indstep-case2}
\end{align}
Going forward, we begin with analyzing the first sum in the last step (\mref{eq:indstep-case2}) and then combine with the second sum.

{\bf Step 1.} We first deal with the first summand in~(\mref{eq:indstep-case2}). For each $\lsum\in\Opkl{\GP{\lsum}{0}}{z}$, it follows from~(\mref{eq:SpeOme}) that
$$\lsum_z =(\GP{\lsum}{0})_z-1, \quad \lsum_{z+h'} = (\GP{\lsum}{0})_{z+h'}-1.$$
Hence
$$\down{\Delta^k(\GP{\lsum}{0})}{z}\in[\ell]_\lsum, \quad d_{\GP{\lsum}{0},z}\notin[\ell]_\lsum.$$
Moreover, $\down{\Delta^k(\GP{\lsum}{0})}{z}\notin[\ell]_{\GP{\lsum}{0}}$. Hence
\begin{equation}
[\down{\Delta^k(\GP{\lsum}{0})}{z},\ell]_\lsum = \Big([\down{\Delta^k(\GP{\lsum}{0})}{z},\ell]_{\GP{\lsum}{0}} \setminus d_{\GP{\lsum}{0},z} \Big) \cup \textup{down}_{\Delta^k(\GP{\lsum}{0})}(z),
\mlabel{eq:lamtolsum[]}
\end{equation}
and so
\begin{align}
&\ \big(\Gamma_{\GP{\lsum}{0}}
\setminus d_{\GP{\lsum}{0},z}\big) \cup \textup{down}_{\Delta^k(\GP{\lsum}{0})}(z)\notag\\
=&\ \bigg( \Big(\big(\Gamma_{\GP{\lsum}{0}}\cap [1,\down{\Delta^k(\GP{\lsum}{0})}{z}-1] \big) \cup \big(\Gamma_{\GP{\lsum}{0}}\cap [\down{\Delta^k(\GP{\lsum}{0})}{z},\ell] \big)\Big)
\setminus d_{\GP{\lsum}{0},z} \bigg) \cup \textup{down}_{\Delta^k(\GP{\lsum}{0})}(z) \notag\\
=&\ \Big(\Gamma_{\GP{\lsum}{0}}\cap [1,\down{\Delta^k(\GP{\lsum}{0})}{z}-1] \Big) \cup \Big(\big([\down{\Delta^k(\GP{\lsum}{0})}{z},\ell]_{\GP{\lsum}{0}}
\setminus d_{\GP{\lsum}{0},z}\big) \cup \textup{down}_{\Delta^k(\GP{\lsum}{0})}(z)\Big)\notag\\
& \hspace{5cm}(\text{by $\Gamma_{\GP{\lsum}{0}}\cap [\down{\Delta^k(\GP{\lsum}{0})}{z},\ell] = [\down{\Delta^k(\GP{\lsum}{0})}{z},\ell]_{\GP{\lsum}{0}}$}) \notag\\
=&\ \Big(\Gamma_{\GP{\lsum}{0}}\cap [1,\down{\Delta^k(\GP{\lsum}{0})}{z}-1] \Big) \cup [\down{\Delta^k(\GP{\lsum}{0})}{z},\ell]_\lsum
\hspace{1cm}(\text{by~(\ref{eq:lamtolsum[]})})\notag\\
=&\ \Gamma_\lsum \hspace{1cm}(\text{by~(\ref{eq:niceform})}).\label{eq:multitolsum}
\end{align}
Here the last step follows from taking
$$
\GP{\lsum}{j-1}^i := \GP{\lsum}{0},
\quad \Gamma^i_{\GP{\lsum}{j-1}^i} := \Gamma_{\GP{\lsum}{0}}, \quad z_{i,j}:=z,
\quad \GP{\lsum}{j}^i:= \lsum , \quad \Gamma^i_{\GP{\lsum}{j}^i} := \Gamma_\lsum,
$$
in~(\mref{eq:niceform}). Then
$$
|\Gamma_\lsum| \overset{\text{(\ref{eq:multitolsum})}}{=}
\big| \big(\Gamma_{\GP{\lsum}{0}}
\setminus d_{\GP{\lsum}{0},z}\big) \cup \textup{down}_{\Delta^k(\GP{\lsum}{0})}(z) \big| = |\Gamma_{\GP{\lsum}{0}}|,
$$
which is exactly~(\mref{eq:Gafinal}).

Thanks to~(\mref{eq:multitolsum}), (\ref{eq:indstep-case2}) can be written as
\begin{equation}
\begin{split}
\sum_{\text{supp}(S)\subseteq\Gamma_{\GP{\lsum}{0}}}L_{S}\Bigg( \sum_{n\geq0}
L_{z}^{n}g_{\GP{\lsum}{0}}^{(k)} \Bigg) =&\ \sum_{\lsum\in\Opkl{\GP{\lsum}{0}}{z}}
\sum_{\text{supp}(S) \subseteq \Gamma_\lsum}L_S \Bigg( \sum_{n\geq0}L_{z}^{n}g_{\lsum}^{(k)}  \Bigg)+\sum_{\text{supp}(S)\subseteq\Gamma_{\GP{\lsum}{0}}}L_{S}\Bigg( \sum_{n\geq0}L_{\textup{down}_{\Delta^k(\GP{\lsum}{0})}(z)}^{n}
g_{\GP{\lsum}{0}}^{(k)} \Bigg).
\end{split}
\label{eq:indstep-case2-2}
\end{equation}
We want to kill all $L_z^n$'s in the first summand on the right hand side, via replacing the sum
$$\sum_{\text{supp}(S) \subseteq\Gamma_\lsum}L_S \Bigg( \sum_{n\geq0}L_{z}^{n}g_{\lsum}^{(k)}\Bigg)$$
in terms of~(\ref{eq:indstep-case2-2}).
Note that $(\GP{\lsum}{0})_z > \nu_z$. Repeating the above process $m$ times with $m:=m(\GP{\lsum}{0},z)$ big enough,
we can get a small enough $\nu_z$ such that the root ideal $\Delta^k(\lsum)$ has no root in row $z$, that is, $z>\bott_\lsum$ for $\lsum\in\Opkl{\GP{\lsum}{0}}{\Mul{z}{m}}$.
Then we obtain
\begin{equation}
\begin{split}
\sum_{\text{supp}(S)\subseteq\Gamma_{\GP{\lsum}{0}}}L_{S}\Bigg( \sum_{n\geq0}
L_{z}^{n}g_{\GP{\lsum}{0}}^{(k)} \Bigg) =&\ \sum_{\GP{\lsum}{c}\in\Opkl{\GP{\lsum}{0}}{\Mul{z}{c}}} \sum_{\text{supp}(S) \subseteq\Gamma_{\GP{\lsum}{c}}}L_S \Bigg( \sum_{n\geq0}L_{z}^{n}g_{\GP{\lsum}{c}}^{(k)}\Bigg)\\
&\ + \sum_{i=0}^{c-1}\sum_{\lsum\in\Opkl{\GP{\lsum}{0}}{\Mul{z}{i}}} \sum_{\text{supp}(S)\subseteq\Gamma_{\lsum}}L_{S}\Bigg( \sum_{n\geq 0}L_{\textup{down}_{\Delta^k(\lsum)}(z)}^{n}g_{\lsum}^{(k)} \Bigg),
\end{split}
\mlabel{eq:secstep}
\end{equation}
by induction on $c\in [1, m]$ as follows.

The initial step of $c=1$ is simply ~(\mref{eq:indstep-case2-2}). Consider the inductive step of $1<c\leq m$. By the inductive hypothesis,
\begin{equation}
\begin{split}
\sum_{\text{supp}(S)\subseteq\Gamma_{\GP{\lsum}{0}}}L_{S}\Bigg( \sum_{n\geq0}
L_{z}^{n}g_{\GP{\lsum}{0}}^{(k)} \Bigg)
=&\ \sum_{\GP{\lsum}{c-1}\in\Opkl{\GP{\lsum}{0}}{\Mul{z}{c-1}}} \sum_{\text{supp}(S) \subseteq\Gamma_{\GP{\lsum}{c-1}}}L_S \Bigg( \sum_{n\geq0}L_{z}^{n}g_{\GP{\lsum}{c-1}}^{(k)}\Bigg)\\
&\ + \sum_{i=0}^{c-2}\sum_{\lsum\in\Opkl{\GP{\lsum}{0}}{\Mul{z}{i}}} \sum_{\text{supp}(S)\subseteq\Gamma_{\lsum_{i}}}L_{S}\Bigg( \sum_{n\geq 0}L_{\textup{down}_{\Delta^k(\lsum)}(z)}^{n}g_{\lsum}^{(k)} \Bigg).
\end{split}
\mlabel{eq:1stsum}
\end{equation}
For the first summand on the right hand side of~(\mref{eq:1stsum}), we have
\begin{align*}
&\ \sum_{\GP{\lsum}{c-1}\in\Opkl{\GP{\lsum}{0}}{\Mul{z}{c-1}}} \sum_{\text{supp}(S) \subseteq\Gamma_{\GP{\lsum}{c-1}}}L_S \Bigg( \sum_{n\geq0}L_{z}^{n}g_{\GP{\lsum}{c-1}}^{(k)}\Bigg)\\
=&\ \sum_{\GP{\lsum}{c-1}\in\Opkl{\GP{\lsum}{0}}{\Mul{z}{c-1}}}
\Bigg( \sum_{\GP{\lsum}{c}\in\Opkl{\GP{\lsum}{c-1}}{z}} \sum_{\text{supp}(S) \subseteq\Gamma_{\GP{\lsum}{c}}}L_S \bigg( \sum_{n\geq0}L_{z}^{n}g_{\GP{\lsum}{c}}^{(k)}  \bigg)+\sum_{\text{supp}(S)\subseteq\Gamma_{\GP{\lsum}{c-1}}
}L_{S}\bigg( \sum_{n\geq 0}L_{\textup{down}_{\Delta^k(\GP{\lsum}{c-1})} (z)}^{n}g_{\GP{\lsum}{c-1}}^{(k)} \bigg) \Bigg)\\
&\
\hspace{12cm} (\text{by~(\ref{eq:indstep-case2-2})})\\
=&\ \sum_{\GP{\lsum}{c}\in\Opkl{\GP{\lsum}{0}}{\Mul{z}{c}}} \sum_{\text{supp}(S) \subseteq\Gamma_{\GP{\lsum}{c}}}L_S \Bigg( \sum_{n\geq0}L_{z}^{n}g_{\GP{\lsum}{c}}^{(k)}  \Bigg)
+ \sum_{\GP{\lsum}{c-1}\in\Opkl{\GP{\lsum}{0}}{\Mul{z}{c-1}}}\sum_{\text{supp}(S)
\subseteq\Gamma_{\GP{\lsum}{c-1}}}L_{S}\Bigg( \sum_{n\geq 0}L_{\textup{down}_{\Delta^k(\GP{\lsum}{c-1})} (z)}^{n}g_{\GP{\lsum}{c-1}}^{(k)} \Bigg)\\
& \hspace{2cm}(\text{merge the outer two sums of the first summand by Definition~\ref{def:opklM}})\\
=&\ \sum_{\GP{\lsum}{c}\in\Opkl{\GP{\lsum}{0}}{\Mul{z}{c}}} \sum_{\text{supp}(S) \subseteq\Gamma_{\GP{\lsum}{c}}}L_S \Bigg( \sum_{n\geq0}L_{z}^{n}g_{\GP{\lsum}{c}}^{(k)}  \Bigg)
+ \sum_{\lsum\in\Opkl{\GP{\lsum}{0}}{\Mul{z}{c-1}}}\sum_{\text{supp}(S)
\subseteq\Gamma_{\lsum}}L_{S}\Bigg( \sum_{n\geq 0}L_{\textup{down}_{\Delta^k(\lsum)} (z)}^{n}g_{\lsum}^{(k)} \Bigg) \\
& \hspace{11cm} (\text{by $\lsum:=\lsum_{c-1}$}).
\end{align*}
So~(\mref{eq:1stsum}) is further calculated as
\begin{align*}
&\ \sum_{\text{supp}(S)\subseteq\Gamma_{\GP{\lsum}{0}}}L_{S}\Bigg( \sum_{n\geq0}
L_{z}^{n}g_{\GP{\lsum}{0}}^{(k)} \Bigg)\\
=&\ \sum_{\GP{\lsum}{c}\in\Opkl{\GP{\lsum}{0}}{\Mul{z}{c}}} \sum_{\text{supp}(S) \subseteq\Gamma_{\GP{\lsum}{c}}}L_S \Bigg( \sum_{n\geq0}L_{z}^{n}g_{\GP{\lsum}{c}}^{(k)}  \Bigg)
+ \sum_{\lsum\in\Opkl{\GP{\lsum}{0}}{\Mul{z}{c-1}}}\sum_{\text{supp}(S)
\subseteq\Gamma_{\lsum}}L_{S}\Bigg( \sum_{n\geq 0}L_{\textup{down}_{\Delta^k(\lsum)} (z)}^{n}g_{\lsum}^{(k)} \Bigg)\\
&\ + \sum_{i=0}^{c-2}\sum_{\lsum\in\Opkl{\GP{\lsum}{0}}{\Mul{z}{i}}} \sum_{\text{supp}(S)\subseteq\Gamma_{\lsum_{i}}}L_{S}\Bigg( \sum_{n\geq 0}L_{\textup{down}_{\Delta^k(\lsum)}(z)}^{n}g_{\lsum}^{(k)} \Bigg)\\
=&\ \sum_{\GP{\lsum}{c}\in\Opkl{\GP{\lsum}{0}}{\Mul{z}{c}}} \sum_{\text{supp}(S) \subseteq\Gamma_{\GP{\lsum}{c}}}L_S \Bigg( \sum_{n\geq0}L_{z}^{n}g_{\GP{\lsum}{c}}^{(k)}  \Bigg)
+ \sum_{i=0}^{c-1}\sum_{\lsum\in\Opkl{\GP{\lsum}{0}}{\Mul{z}{i}}} \sum_{\text{supp}(S)\subseteq\Gamma_{\lsum_{i}}}L_{S}\Bigg( \sum_{n\geq 0}L_{\textup{down}_{\Delta^k(\lsum)}(z)}^{n}g_{\lsum}^{(k)} \Bigg),
\end{align*}
which is exactly~(\mref{eq:secstep}). This completes its inductive proof.

Taking $c = m$ in~(\mref{eq:secstep}) yields
\begin{equation}
\begin{split}
\sum_{\text{supp}(S)\subseteq\Gamma_{\GP{\lsum}{0}}}L_{S}\Bigg( \sum_{n\geq0}
L_{z}^{n}g_{\GP{\lsum}{0}}^{(k)} \Bigg) =&\ \sum_{\GP{\lsum}{m}\in\Opkl{\GP{\lsum}{0}}{\Mul{z}{m}}} \sum_{\text{supp}(S) \subseteq\Gamma_{\GP{\lsum}{m}}}L_S \Bigg( \sum_{n\geq0}L_{z}^{n}g_{\GP{\lsum}{m}}^{(k)}\Bigg)\\
&\ + \sum_{i=0}^{m-1}\sum_{\lsum\in\Opkl{\GP{\lsum}{0}}{\Mul{z}{i}}} \sum_{\text{supp}(S)\subseteq\Gamma_{\lsum}}L_{S}\Bigg( \sum_{n\geq 0}L_{\textup{down}_{\Delta^k(\lsum)}(z)}^{n}g_{\lsum}^{(k)} \Bigg).
\end{split}
\mlabel{eq:lsumm}
\end{equation}
The first summand on the right hand side can be further computed as:
\begin{align}
&\ \sum_{\GP{\lsum}{m}\in\Opkl{\GP{\lsum}{0}}{\Mul{z}{m}}} \sum_{\text{supp}(S) \subseteq\Gamma_{\GP{\lsum}{m}}}L_S \Bigg( \sum_{n\geq0}L_{z}^{n}g_{\GP{\lsum}{m}}^{(k)}\Bigg)\notag\\
=&\ \sum_{\GP{\lsum}{m}\in\Opkl{\GP{\lsum}{0}}{\Mul{z}{m}}}\sum_{\text{supp}(S) \subseteq\Gamma_{\GP{\lsum}{m}}}L_S
\Bigg( \sum_{n\geq0} \sum_{\GP{\lsum}{m+n}\in\Opkl{\GP{\lsum}{m}}{\Mul{z}{n}}} g_{\GP{\lsum}{m+n}}^{(k)} \Bigg) \hspace{0.5cm} (\text{by $z>b_{\GP{\lsum}{m}}$ and Theorem~\ref{thm:Lgntog}~\ref{it:Lgntog1}})\notag\\
=&\ \sum_{\GP{\lsum}{m}\in\Opkl{\GP{\lsum}{0}}{\Mul{z}{m}}}\sum_{n\geq0} \sum_{\GP{\lsum}{m+n}\in\Opkl{\GP{\lsum}{m}}{\Mul{z}{n}}}\sum_{\text{supp}(S) \subseteq\Gamma_{\GP{\lsum}{m}}}L_S g_{\GP{\lsum}{m+n}}^{(k)}\notag\\
=&\ \sum_{\substack{\GP{\lsum}{m}\in\Opkl{\GP{\lsum}{0}}{\Mul{z}{m}} \\ w_{\lsum'}\leq w_{\GP{\lsum}{m}} }} \sum_{\text{supp}(S) \subseteq\Gamma_{\GP{\lsum}{m}}}L_S g_{\lsum'}^{(k)}\hspace{1cm} (\text{by Proposition~\ref{prop:SmallerThanLambda}})\notag\\
=&\ \sum_{\substack{\GP{\lsum}{m}\in\Opkl{\GP{\lsum}{0}}{\Mul{z}{m}} \\ w_{\lsum'}\leq w_{\GP{\lsum}{m}} }} \sum_{\text{supp}(S) \subseteq\Gamma_{\lsum'}}L_S g_{\lsum'}^{(k)}\hspace{1cm} (\text{by $\Gamma_{\lsum'} = \Gamma_{\GP{\lsum}{m}}$})\notag\\
=&\ \sum_{\substack{\lsum\in\Opkl{\GP{\lsum}{0}}{\Mul{z}{m}} \\ w_\mu\leq w_{\lsum} }} \sum_{\text{supp}(S) \subseteq\Gamma_{\mu}}L_S g_{\mu}^{(k)}.
\mlabel{eq:minusz}
\end{align}
To simplify notation, in the second summand on the right hand side of~(\mref{eq:lsumm}), denote
$$\fd{\lsum}{z}{i}:= \textup{down}^i_{\Delta^k(\lsum)}(z)
\,\text{ with }\,\lsum\in\pkl,\, z\in\ell\,\text{ and }\,i\in\ZZ_{\geq 0}.$$
Substituting~(\mref{eq:minusz}) into~(\mref{eq:lsumm}) and using the above abbreviation, the left hand side of \meqref{eq:sumLS} becomes
\begin{equation}
\sum_{\text{supp}(S)\subseteq\Gamma_{\GP{\lsum}{0}}}L_{S}\Bigg( \sum_{n\geq0}
L_{z}^{n}g_{\GP{\lsum}{0}}^{(k)} \Bigg)
=\sum_{\substack{\lsum\in\Opkl{\GP{\lsum}{0}}{\Mul{z}{m}} \\ w_\mu\leq w_{\lsum} }} \sum_{\text{supp}(S) \subseteq\Gamma_{\mu}}L_S g_{\mu}^{(k)}
+\sum_{i=0}^{m-1}\sum_{\lsum\in\Opkl{\GP{\lsum}{0}}{\Mul{z}{i}}} \sum_{\text{supp}(S)\subseteq\Gamma_{\lsum}
}L_{S}\Bigg( \sum_{n\geq 0}L_{\fd{\lsum}{z}{1}}^{n}g_{\lsum}^{(k)} \Bigg).
\label{eq:thstep}
\end{equation}
Since $w_\mu\leq w_{\lsum}$, the first summand on the right hand side of the above equation is in the required form on the right hand side of \meqref{eq:sumLS}.

{\bf Step 2.} Next, we deal with the second summand on the right hand side of~(\ref{eq:thstep}).
For a multiset $M$ with ${\rm supp}(M)\subseteq [\ell]$ and $\lsum\in\Opkl{\GP{\lsum}{0}}{M}$, denote
$$a:=a_{\lsum}:= | \text{path}_{\Delta^k(\lsum)}(z, \textup{bot}_{\Delta^k(\lsum)}(z)) |.$$
Then $z = \fd{\lsum}{z}{0}$ and $\textup{bot}_{\Delta^k(\lsum)}(z) = \fd{\lsum}{z}{a_\lsum-1}$.
Note that the left hand side of ~(\mref{eq:thstep}) has the same form as the inner double sum on the right hand side. Thus we can repeatedly substitute the latter by the former. The end result will be simplified to
\begin{equation}
\sum_{\text{supp}(S)\subseteq\Gamma_{\lsum}}L_{S}\Bigg( \sum_{n\geq0}
L_{\fd{\lsum}{z}{a-e}}^{n}g_{\lsum}^{(k)}\Bigg) = \sum_{\mu\in\pkl,w_{\mu}\leq w_{\lsum} } \sum_{\text{supp}(S) \subseteq\Gamma_{\mu}}L_S g_{\mu}^{(k)}, \quad \forall e\in [a-1],
\mlabel{eq:fifstep}
\end{equation}
which we will prove by induction on $e$.
Notice that the $\mu$ on the right hand side depends on $e$.
For the initial step of $e =1$, by Proposition~\mref{prop:factroot}, $\fd{\lsum}{z}{a-1} = \textup{bot}_{\Delta^k(\lsum)}(z) >\bott_\lsum$. Then
\begin{align*}
\sum_{\text{supp}(S)\subseteq\Gamma_{\lsum}
}L_{S}\Bigg( \sum_{n\geq0}
L_{\fd{\lsum}{z}{a-1}}^{n}g_{\lsum}^{(k)}\Bigg)=&\ \sum_{\text{supp}(S)\subseteq\Gamma_{\lsum}}L_{S}\Bigg( \sum_{n\geq0} \sum_{\mu\in\Opkl{\lsum}{\Mul{\fd{\lsum}{z}{a-1}}{n}}}g_{\mu}^{(k)} \Bigg)\hspace{1cm} (\text{by Theorem~\ref{thm:Lgntog}~\ref{it:Lgntog1}})\\
=&\ \sum_{n\geq0} \sum_{\mu\in\Opkl{\lsum}{\Mul{\fd{\lsum}{z}{a-1}}{n}}}
\sum_{\text{supp}(S)\subseteq\Gamma_{\lsum}}L_{S}g_{\mu}^{(k)} \\
=&\ \sum_{n\geq0} \sum_{\mu\in\Opkl{\lsum}{\Mul{\fd{\lsum}{z}{a-1}}{n}}}
\sum_{\text{supp}(S)\subseteq\Gamma_{\mu}}L_{S}g_{\mu}^{(k)} \hspace{1cm}(\text{by $\Gamma_{\mu} = \Gamma_{\lsum}$})\\
=&\ \sum_{\mu\in\pkl,w_{\mu}\leq w_{\lsum} } \sum_{\text{supp}(S) \subseteq\Gamma_{\mu}}L_S g_{\mu}^{(k)}\hspace{1cm}
(\text{by Proposition~\ref{prop:SmallerThanLambda}}).
\end{align*}
For the inductive step, on the left hand side of~(\mref{eq:thstep}), taking $z :=\fd{\lsum}{z}{a-e}$ and $\GP{\lsum}{0}:=\lsum$ gives
\begin{equation}
\begin{split}
\sum_{\text{supp}(S)\subseteq\Gamma_{\lsum}}L_{S}\Bigg( \sum_{n\geq0}
L_{\fd{\lsum}{z}{a-e}}^{n}g_{\lsum}^{(k)}\Bigg)
=&\ \sum_{\substack{\lsum'\in\Opkl{\lsum}{\Mul{\fd{\lsum}{z}{a-e}}{m}} \\ w_\mu\leq w_{\lsum'} }} \sum_{\text{supp}(S) \subseteq\Gamma_{\mu}}L_S g_{\mu}^{(k)}\\
&\ +\sum_{i=0}^{m-1}\sum_{\lsum'\in\Opkl{\lsum}{\Mul{\fd{\lsum}{z}{a-e}}{i}}} \sum_{\text{supp}(S)\subseteq\Gamma_{\lsum'}}L_{S}\Bigg( \sum_{n\geq 0}L_{\fd{\lsum'}{\fd{\lsum}{z}{a-e}}{1}}^{n}g_{\lsum'}^{(k)} \Bigg).
\end{split}
\mlabel{eq:detodde}
\end{equation}
For the second summand on the right hand side, we have
\begin{align*}
\sum_{\text{supp}(S)\subseteq\Gamma_{\lsum'}}L_{S}\Bigg( \sum_{n\geq 0}L_{\fd{\lsum'}{\fd{\lsum}{z}{a-e}}{1}}^{n}g_{\lsum'}^{(k)} \Bigg)=&\ \sum_{\text{supp}(S)\subseteq\Gamma_{\lsum'}}L_{S}\Bigg( \sum_{n\geq 0}L_{\fd{\lsum'}{z}{a-(e-1)}}^{n}g_{\lsum'}^{(k)} \Bigg)\hspace{1cm} (\text{by $\fd{\lsum'}{z}{a-(e-1)} = \fd{\lsum'}{\fd{\lsum}{z}{a-e}}{1}$})\\
=&\ \sum_{\mu\in\pkl,w_{\mu}\leq w_{\lsum'} } \sum_{\text{supp}(S) \subseteq\Gamma_{\mu}}L_S g_{\mu}^{(k)}\hspace{1cm} (\text{by the inductive hypothesis}).
\end{align*}
Substituting the above equation into~(\mref{eq:detodde}) yields
\begin{align*}
&\ \sum_{\text{supp}(S)\subseteq\Gamma_{\lsum}}L_{S}\Bigg( \sum_{n\geq0}
L_{\fd{\lsum}{z}{a-e}}^{n}g_{\lsum}^{(k)}\Bigg)\\
=&\ \sum_{\substack{\lsum'\in\Opkl{\lsum}{\Mul{\fd{\lsum}{z}{a-e}}{m}} \\ w_\mu\leq w_{\lsum'} }} \sum_{\text{supp}(S) \subseteq\Gamma_{\mu}}L_S g_{\mu}^{(k)}
+\sum_{i=0}^{m-1}\sum_{\lsum'\in\Opkl{\lsum}{\Mul{\fd{\lsum}{z}{a-e}}{i}}} \sum_{\mu\in\pkl,w_{\mu}\leq w_{\lsum'} } \sum_{\text{supp}(S) \subseteq\Gamma_{\mu}}L_S g_{\mu}^{(k)}\\
=&\ \sum_{\substack{\lsum'\in\Opkl{\lsum}{\Mul{\fd{\lsum}{z}{a-e}}{m}} \\ w_\mu\leq w_{\lsum'} }} \sum_{\text{supp}(S) \subseteq\Gamma_{\mu}}L_S g_{\mu}^{(k)}
+ \sum_{i=0}^{m-1}\sum_{ \substack{\lsum'\in\Opkl{\lsum}{\Mul{\fd{\lsum}{z}{a-e}}{i}} \\ w_{\mu}\leq w_{\lsum'}}} \sum_{\text{supp}(S) \subseteq\Gamma_{\mu}}L_S g_{\mu}^{(k)}\\
=&\ \sum_{\mu\in\pkl,w_{\mu}\leq w_{\lsum} } \sum_{\text{supp}(S) \subseteq\Gamma_{\mu}}L_S g_{\mu}^{(k)},
\end{align*}
which completes the inductive proof of~(\mref{eq:fifstep}).
Taking $\fd{\lsum}{z}{a-e} :=z$, $\lsum :=\GP{\lsum}{0}$ and $\mu :=\lsum$ in~(\mref{eq:fifstep}) implies~(\ref{eq:sumLS}), thus completing the proof of Claim~\mref{cl:gamma}.
\end{proof}

\subsection{The proof of Theorem~\mref{th:aim-1st}}\mlabel{ss:SufCon}

We add the last ingredient in order to prove Theorem~\ref{th:aim-1st}.

\begin{lemma}$($\cite[p.~8]{BMS}$)$
	Let $\Psi\subseteq\Delta_{\ell}^{+}$ be a root ideal, $M$ a multiset with \textup{supp}$(M)\subseteq [\ell]$, $\gamma\in\mathbb{Z}^{\ell}$ and $d\geq0$. Then
\begin{equation*}		
e_{d}^{\perp}K(\Psi;M;\gamma)=
\sum_{S\subseteq[\ell],|S|=d}K(\Psi;M;\gamma-\epsilon_{S}).
\end{equation*}
Here we denote $\epsilon_{S}:=\sum_{i\in S}\epsilon_{i}$ for $S\subseteq[\ell]$.
In particular, $e_{d}^{\perp}K(\Psi;M;\gamma)=0$ for $d>\ell$.
\mlabel{lem:eKatalan}
\end{lemma}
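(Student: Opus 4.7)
The plan is to reduce the identity to the action of $e_{d}^{\perp}$ on the base element $k_{\gamma}$ and then lift back through the explicit $k$-basis expression of Katalan functions. By Remark~\mref{re:baseKata}~(b), $K(\Psi;M;\gamma) = \prod_{z\in M}(1-L_{z})\prod_{(i,j)\in\Delta_{\ell}^{+}\setminus\Psi}(1-R_{ij})\,k_{\gamma}$, which realizes $K(\Psi;M;\gamma)$ as a finite linear combination of $k_{\mu}$'s in $\Lambda$. Since $e_{d}^{\perp}$ is $\Lambda$-linear and commutes with the index-shift operators $L_{z}$ and $R_{ij}$ (both act on the $k$-basis purely by modifying the subscript), it will be enough to establish the reduced identity $e_{d}^{\perp} k_{\gamma} = \sum_{S\subseteq[\ell],\,|S|=d} k_{\gamma-\epsilon_{S}}$.

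My approach for the reduced identity exploits multiplicativity. Writing $k_{\gamma} = k_{\gamma_{1}}^{(0)}\,k_{\gamma_{2}}^{(1)}\cdots k_{\gamma_{\ell}}^{(\ell-1)}$ and using the coproduct $\Delta e_{d} = \sum_{p+q=d}e_{p}\otimes e_{q}$ in the symmetric function Hopf algebra, the dual Leibniz formula $e_{d}^{\perp}(fg) = \sum_{p+q=d}(e_{p}^{\perp} f)(e_{q}^{\perp} g)$ iterates to give
\[
e_{d}^{\perp} k_{\gamma} \;=\; \sum_{d_{1}+\cdots+d_{\ell}=d}\,\prod_{i=1}^{\ell} e_{d_{i}}^{\perp} k_{\gamma_{i}}^{(i-1)}.
\]
The key single-variable computation is then $e_{d'}^{\perp} k_{m}^{(r)}$. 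Expanding $k_{m}^{(r)} = \sum_{j\geq 0}\binom{r+j-1}{j}h_{m-j}$ and using the classical Pieri-type identity $e_{d'}^{\perp} h_{n} = h_{n},\,h_{n-1},\,0$ for $d' = 0,\,1,\,\geq 2$ respectively, I expect $e_{0}^{\perp} k_{m}^{(r)} = k_{m}^{(r)}$, $e_{1}^{\perp} k_{m}^{(r)} = k_{m-1}^{(r)}$, and $e_{d'}^{\perp} k_{m}^{(r)} = 0$ whenever $d'\geq 2$.

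This annihilation forces only $\{0,1\}$-valued compositions $(d_{1},\ldots,d_{\ell})$ summing to $d$ to contribute nonzero terms. Each such composition is the indicator of a unique subset $S = \{i : d_{i} = 1\}\subseteq[\ell]$ of size $d$, with contribution $\prod_{i} k_{\gamma_{i}-[i\in S]}^{(i-1)} = k_{\gamma-\epsilon_{S}}$. Summing over $S$ proves the reduced identity, and the final assertion $e_{d}^{\perp} K(\Psi;M;\gamma) = 0$ for $d > \ell$ is immediate because no such subset exists. Lifting back by applying $\prod_{z\in M}(1-L_{z})\prod_{(i,j)\in\Delta_{\ell}^{+}\setminus\Psi}(1-R_{ij})$ to both sides of the reduced identity (noting that it commutes with $e_{d}^{\perp}$) then yields the claim.

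The hard part will be the Pieri-type annihilation $e_{d'}^{\perp} h_{n} = 0$ for $d'\geq 2$. This is a clean algebraic fact --- one sees it immediately from the Littlewood--Richardson rule since the skew shape $(n)/\nu$ is a vertical $d'$-strip only when $d'\leq 1$ --- but the whole argument hinges on it. An alternative route would manipulate the Jacobi--Trudi-like determinantal formula $g_{\gamma} = \det(k_{\gamma_{i}+j-i}^{(i-1)})$ column by column using multilinearity of the determinant, but the $k$-basis route feels more transparent because $L_{z}$ and $R_{ij}$ are defined as index shifts directly on that basis.
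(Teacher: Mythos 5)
The paper does not prove this lemma; it is quoted verbatim from \cite[p.~8]{BMS}, so there is no in-paper argument to compare against. Your proposal is correct and is essentially the standard derivation (and the one underlying the cited source): reduce to the $k$-basis via Remark~\ref{re:baseKata}, establish $e_{d'}^{\perp}k_m^{(r)}=k_{m-d'}^{(r)}$ for $d'\in\{0,1\}$ and $0$ for $d'\ge 2$ from the coproduct $\Delta e_d=\sum_{p+q=d}e_p\otimes e_q$ together with $e_{d'}^{\perp}h_n=0$ for $d'\ge 2$, and conclude that only $\{0,1\}$-compositions survive, i.e.\ $e_d^{\perp}k_\gamma=\sum_{|S|=d}k_{\gamma-\epsilon_S}$. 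All the individual computations check out, including the boundary term $h_{-1}=0$ that makes $e_1^{\perp}k_m^{(r)}=k_{m-1}^{(r)}$ exact, and the vacuous sum for $d>\ell$.

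One point deserves more care than the phrase ``$e_d^{\perp}$ commutes with $L_z$ and $R_{ij}$'' suggests: $L_z$ and $R_{ij}$ are index shifts on formal symbols $k_\gamma$, not well-defined operators on $\Lambda$ (distinct $\gamma$ can give linearly dependent $k_\gamma$). The clean way to phrase your step is the one implicit in your own setup: expand $K(\Psi;M;\gamma)=\sum_{v}c_v\,k_{\gamma+v}$ as a \emph{finite} sum with shifts $v$ and coefficients $c_v$ independent of $\gamma$ (this is exactly what Remark~\ref{re:baseKata}~(b) provides, with the finite product $\prod(1-R_{ij})$ rather than the infinite series $\prod(1-R_{ij})^{-1}$), apply $e_d^{\perp}k_{\gamma+v}=\sum_{|S|=d}k_{\gamma+v-\epsilon_S}$ termwise, and regroup using translation-invariance of the $c_v$ to recognize $\sum_{|S|=d}K(\Psi;M;\gamma-\epsilon_S)$. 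With that rephrasing the argument is complete. Also, the ``hard part'' you flag ($e_{d'}^{\perp}h_n=0$ for $d'\ge 2$) is a routine consequence of the dual Pieri rule, exactly as you indicate, so there is no real obstacle there.
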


Now we prove the main result of this paper.

\begin{proof}[Proof of Theorem~\ref{th:aim-1st}]
Let $\lambda\in\pkl$. Then
\begin{equation}
\begin{split}
L(\Delta^{k+1}(\lambda)) =&\ L\big(\dkl\setminus\{ (z,\down{\Delta^{k}(\lambda)}{z})\mid z\in[\bott_\lambda] \}\big) \hspace{1cm} (\text{by Corollary~\ref{coro:delkk1}})\\
=&\ L(\dkl)\setminus L(\{ (z,\down{\Delta^{k}(\lambda)}{z})\mid z\in[\bott_\lambda] \})\\
=&\ L(\dkl)\setminus \{\down{\Delta^{k}(\lambda)}{z} \mid  z\in[\bott_\lambda]\}\\
=&\ L(\Delta^{k}(\lambda))\setminus\text{D}_{\Delta^{k}(\lambda)} \hspace{1cm} (\text{by~(\mref{eq:dlkl})}).
\end{split}
\mlabel{eq:LminusDtoL}
\end{equation}
Hence
\begin{align}
\tilde{\mathfrak{g}}_{\lambda}^{(k)}=&\
K(\Delta^{k}(\lambda);\Delta^{k}(\lambda);\lambda) \hspace{1cm} (\text{by Definition~\ref{defn:cKataFunc}})\notag\\
=&\ \prod_{m\in L(\Delta^{k}(\lambda))}(1-L_{m})\prod_{(i,j)\in\Delta^{k}(\lambda)}(1-R_{ij})^{-1}g_{\lambda} \hspace{1cm} (\text{by Definition~\ref{defn:KataFunc}})\notag\\
=&\ \prod_{m'\in \text{D}_{\Delta^{k}(\lambda)}}(1-L_{m'}) \prod_{m''\in L(\Delta^{k}(\lambda))\setminus\text{D}_{\Delta^{k}(\lambda)}}(1-L_{m''}) \prod_{(i,j)\in\Delta^{k}(\lambda)}(1-R_{ij})^{-1}g_{\lambda}\notag\\
=&\ \prod_{m'\in \text{D}_{\Delta^{k}(\lambda)}}(1-L_{m'}) \prod_{m''\in L(\Delta^{k+1}(\lambda))}(1-L_{m''}) \prod_{(i,j)\in\Delta^{k}(\lambda)}(1-R_{ij})^{-1}g_{\lambda} \hspace{1cm} (\text{by  ~(\ref{eq:LminusDtoL})})\notag\\
=&\ \prod_{m'\in \text{D}_{\Delta^{k}(\lambda)}}(1-L_{m'})
K(\Delta^{k}(\lambda);\Delta^{k+1}(\lambda);\lambda)  \hspace{1cm} (\text{by Definition~\ref{defn:KataFunc}})\notag\\
=&\ \prod_{m'\in \text{D}_{\Delta^{k}(\lambda)}}(1-L_{m'})g_{\lambda}^{(k)}
\hspace{1cm} (\text{by Lemma~\ref{lem:kschurKatalan}}).\mlabel{eq:tgg}
\end{align}
Moreover,
\begin{align}
(1-G_{1}^{\perp})g_{\lambda}^{(k)}=&\ \sum_{d\geq 0}e_d^\perp g_{\lambda}^{(k)} \hspace{1cm} (\text{by  ~(\ref{eq:g1c})})\notag\\
=&\ \sum_{d\geq 0}e_d^\perp K(\Delta^{k}
(\lambda);\Delta^{k+1}(\lambda);\lambda) \hspace{1cm} (\text{by Lemma~\ref{lem:kschurKatalan}})\notag\\
=&\ (1-e_{1}^{\perp}+e_{2}^{\perp}+\cdots+(-1)^{\ell}e_{\ell}^{\perp})K(\Delta^{k}
(\lambda);\Delta^{k+1}(\lambda);\lambda)
\hspace{1cm} (\text{by Lemma~\ref{lem:eKatalan}})\notag\\
=&\ \sum_{S\subseteq[\ell]}(-1)^{|S|} K(\Delta^{k}
(\lambda);\Delta^{k+1}(\lambda);\lambda-\epsilon_{S}) \hspace{1cm} (\text{by Lemma~\ref{lem:eKatalan}})\notag\\
=&\ \prod_{m\in[\ell]}(1-L_{m}) K(\Delta^{k}
(\lambda);\Delta^{k+1}(\lambda);\lambda) \hspace{1cm} (\text{by  ~(\ref{eq:LKatatoKata})})\notag\\
=&\ \prod_{m\in[\ell]}(1-L_{m})g_{\lambda}^{(k)} \hspace{1cm} (\text{by Lemma~\ref{lem:kschurKatalan}}). \mlabel{eq:fullop}
\end{align}
Finally,
\begin{align*}
\tilde{\mathfrak{g}}_{\lambda}^{(k)}
=&\ \prod_{m'\in \text{D}_{\Delta^{k}(\lambda)}}(1-L_{m'})g_{\lambda}^{(k)} \hspace{1cm} (\text{by  ~(\ref{eq:tgg})})\\
=&\ \prod_{z\in\ld}
(1-L_{z})^{-1}\prod_{m\in[\ell]}(1-L_{m})g_{\lambda}^{(k)}\hspace{1cm} (\text{by $\ld = [\ell]\setminus{\rm D}_{\Delta^{k}(\lambda)}$})\\
=&\ \prod_{z\in\ld}
(1-L_{z})^{-1}(1-G_1^{\perp})g_{\lambda}^{(k)} \hspace{1cm} (\text{by  ~(\ref{eq:fullop})})\\
=&\ (1-G_{1}^{\perp})\prod_{z\in\ld}
(1-L_{z})^{-1}g_{\lambda}^{(k)} \hspace{1cm} (\text{by $e_i^\perp$ commuting with $L_z$})\\
			=&\ (1-G_{1}^{\perp})\prod_{z\in\ld}
			\Bigg(\sum_{i\in\mathbb{Z}_{\geq0}}L_{z}^{i}\Bigg)g_{\lambda}^{(k)}\\
			=&\ (1-G_{1}^{\perp})\sum_{\text{supp}(S)\subseteq
				\ld} \Bigg( \prod_{z\in S} L_{z} \Bigg)g_{\lambda}^{(k)}\\
            =&\ (1-G_{1}^{\perp})\sum_{\textup{supp}(S)\subseteq\ld} L_{S}g_{\lambda}^{(k)}\\
            =&\ (1-G_{1}^{\perp})\sum_{\mu\in\pkl, w_{\mu}\leq w_{\lambda}}g_{\mu}^{(k)}\hspace{1cm} (\text{by Theorem~\ref{thm:closedK-k-Schur}}).
		\end{align*}
This completes the proof.
\end{proof}

\noindent
{\bf Acknowledgements}: This work is supported by NNSFC (12071191), Innovative Fundamental Research Group Project of Gansu
Province (23JRRA684) and Longyuan Young Talents of Gansu Province.
The authors thank J. Morse for helpful suggestions to a previous version and for making us aware that Conjecture~\mref{conj:aim} had been proved in~\cite{IIN}.

\noindent
{\bf Declaration of interests.} The authors have no conflicts of interest to disclose.

\noindent
{\bf Data availability.} Data sharing is not applicable as no new data were created or analyzed.

\end{document}